\definecolor{gr}{rgb}   {0.,   0.69,   0.23 }
\definecolor{bl}{rgb}   {0.,   0.5,   1. }
\definecolor{mg}{rgb}   {0.85,  0.,    0.85}
\definecolor{yl}{rgb}   {0.8,  0.7,   0.}
\definecolor{or}{rgb}  {0.7,0.2,0.2}
\tikzset{
	dot/.style={circle,fill=black,draw=black,inner sep=0pt,minimum size=0.5mm},
	>=stealth,
	}
\tikzset{
	dot2/.style={circle,fill=black,draw=black,inner sep=0pt,minimum size=0.2mm},
	>=stealth,
	}
\tikzset{
	ddot/.style={circle,fill=white,draw=black,inner sep=0pt,minimum size=0.8mm},
	>=stealth,
	}
\tikzset{decision/.style={ 
        draw,
        diamond,
        aspect=1.5
    }}
\tikzset{dia2/.style
={diamond,fill=white,draw=black,inner sep=0pt,minimum size=1mm},
	>=stealth,
	}
\tikzset{dia/.style
={star,fill=black,draw=black,inner sep=0pt,minimum size=1mm},
	>=stealth,
	}
\tikzset{dia/.style
={diamond,fill=black,draw=black,inner sep=0pt,minimum size=1.3mm},
	>=stealth,
	}
\def\DeclareSymbol#1#2#3{\xsavebox{#1}{\tikz[baseline=#2,scale=0.15]{#3}}}
\def\<#1>{\xusebox{#1}}
\newcommand{\pe}{\mathbin{\scaleobj{0.7}{\tikz \draw (0,0) node[shape=circle,draw,inner sep=0pt,minimum size=8.5pt] {\scriptsize  $=$};}}}
\newcommand{\pl}{\mathbin{\scaleobj{0.7}{\tikz \draw (0,0) node[shape=circle,draw,inner sep=0pt,minimum size=8.5pt] {\scriptsize $<$};}}}
\newcommand{\pg}{\mathbin{\scaleobj{0.7}{\tikz \draw (0,0) node[shape=circle,draw,inner sep=0pt,minimum size=8.5pt] {\scriptsize $>$};}}}
\newcommand{\pez}{\mathbin{\scaleobj{0.7}{\tikz \draw (0,0) node[shape=circle,draw,
fill=white, 
inner sep=0pt,minimum size=8.5pt]{} ;}}}
\tikzset{>=stealth',
         cvertex/.style={circle,draw=black,inner sep=1pt,outer sep=3pt},
         vertex/.style={circle,fill=black,inner sep=1pt,outer sep=3pt},
         star/.style={circle,fill=yellow,inner sep=0.75pt,outer sep=0.75pt},
         tvertex/.style={inner sep=1pt,font=\scriptsize},
         gap/.style={inner sep=0.5pt,fill=white}}
\tikzstyle{mybox} = [draw=black, fill=blue!10, very thick,
\tikzstyle{boxtitle} =[fill=blue!50, text=white,rectangle,rounded corners]
\tikzstyle{decision} = [diamond, draw, fill=blue!20,
\tikzstyle{block} = [rectangle, draw, fill=blue!20,
\tikzstyle{line} = [draw, very thick, color=black!50, -latex']
\tikzstyle{cloud} = [draw, ellipse,fill=red!40, 
\tikzstyle{cloud2} = [draw, ellipse,fill=red!30, text=white,text width=10em, node distance=2.5cm, text centered, minimum height=4em]
\tikzstyle{cloud3} = [draw, ellipse, fill=cyan!30, 
\tikzstyle{cloud4} = [draw, ellipse,fill=orange!70, node distance=2.5cm,
\tikzstyle{cloud5} = [draw, ellipse,fill=red!20, node distance=2.5cm,
\tikzstyle{cloud6} = [draw, ellipse,fill=red!20, node distance=2.5cm,
\tikzset{
    position/.style args={#1:#2 from #3}{
        at=(#3.#1), anchor=#1+180, shift=(#1:#2)
    }
}
\newtheorem{theorem}{Theorem} [section]
\newtheorem{lemma}[theorem]{Lemma}
\newtheorem{proposition}[theorem]{Proposition}
\newtheorem{remark}[theorem]{Remark}
\newtheorem{definition}[theorem]{Definition}
\DeclareMathOperator*{\supp}{supp}
\DeclareMathOperator{\med}{med}
\newcommand{\noi}{\noindent}
\newcommand{\Z}{\mathbb{Z}}
\newcommand{\R}{\mathbb{R}}
\newcommand{\T}{\mathbb{T}}
\let\P= \undefined
\newcommand{\P}{\mathbf{P}}
\newcommand{\E}{\mathbb{E}}
\renewcommand{\L}{\mathcal{L}}
\newcommand{\F}{\mathcal{F}}
\newcommand{\al}{\alpha}
\newcommand{\be}{\beta}
\newcommand{\dl}{\delta}
\newcommand{\nb}{\nabla}
\newcommand{\Dl}{\Delta}
\newcommand{\eps}{\varepsilon}
\newcommand{\kk}{\kappa}
\newcommand{\g}{\gamma}
\newcommand{\G}{\Gamma}
\newcommand{\ld}{\lambda}
\newcommand{\s}{\sigma}
\newcommand{\ft}{\widehat}
\newcommand{\wt}{\widetilde}
\newcommand{\cj}{\overline}
\newcommand{\dt}{\partial_t}
\newcommand{\dd}{\partial}
\newcommand{\ta}{\theta}
\renewcommand{\l}{\ell}
\renewcommand{\o}{\omega}
\renewcommand{\O}{\Omega}
\newcommand{\les}{\lesssim}
\newcommand{\ges}{\gtrsim}
\newcommand{\jb}[1]
{\langle #1 \rangle}
\newcommand{\ind}{\mathbf 1}
\newcommand{\too}{\longrightarrow}
\def\e{\eps}
\newcommand{\N}{\mathbb{N}}
\renewcommand{\H}{\mathcal{H}}
\newcommand{\Sep}{\<70>_{\hspace{-1.7mm} N}}
\newcommand{\Sepp}{\<7>_{N}}
\newcommand{\fSepp}{\ft{\<7>}_{N}}
\newtheorem*{ackno}{Acknowledgements}
\newcommand{\I}{\mathcal{I}}
\newcommand{\If}{\mathfrak{I}}
\newcommand{\RR}{\mathcal{R}}
\numberwithin{equation}{section}
\numberwithin{theorem}{section}
\DeclareMathOperator{\Sym}{\mathtt{Sym}}
\newcommand{\EE}{\mathcal{E}}
\newcommand{\hf}{\mathfrak{h}}
\newcommand{\Hf}{\mathfrak{H}}
\newcommand{\TT}{\mathcal{T}}
\begin{document}
\baselineskip = 14pt

\title[3-$d$ cubic SNLW 
with almost space-time white noise]
{Three-dimensional  stochastic cubic nonlinear wave equation
with almost space-time white noise}

\author[T.~Oh, Y.~Wang, and Y.~Zine]
{Tadahiro Oh, Yuzhao Wang, and Younes Zine}

\address{
Tadahiro Oh, School of Mathematics\\
The University of Edinburgh\\
and The Maxwell Institute for the Mathematical Sciences\\
James Clerk Maxwell Building\\
The King's Buildings\\
Peter Guthrie Tait Road\\
Edinburgh\\ 
EH9 3FD\\
 United Kingdom}

\email{hiro.oh@ed.ac.uk}

\address{
Yuzhao Wang\\
School of Mathematics\\
Watson Building\\
University of Birmingham\\
Edgbaston\\
Birmingham\\
B15 2TT\\ United Kingdom}

\email{y.wang.14@bham.ac.uk}

\address{
Younes Zine,  School of Mathematics\\
The University of Edinburgh\\
and The Maxwell Institute for the Mathematical Sciences\\
James Clerk Maxwell Building\\
The King's Buildings\\
Peter Guthrie Tait Road\\
Edinburgh\\ 
EH9 3FD\\
 United Kingdom}

\email{y.p.zine@sms.ed.ac.uk}

\subjclass[2020]{35L71, 60H15, 60L40}

\dedicatory{Dedicated to 
Professor Istv\'an Gy\"ongy  on the
occasion of his seventieth birthday}

\keywords{stochastic nonlinear wave equation; nonlinear wave equation}

\begin{abstract}
We study the stochastic cubic nonlinear wave  equation (SNLW) 
 with an additive noise on the three-dimensional torus $\T^3$. 
 In particular, we prove local well-posedness of the (renormalized) SNLW when the noise 
 is almost a space-time white noise. 
In recent years, 
the paracontrolled calculus has played a crucial  role  
 in the  well-posedness study of  singular SNLW on $\T^3$
 by Gubinelli, Koch, and the first author (2018), 
 Okamoto, Tolomeo, and the first author (2020), 
 and Bringmann (2020).
 Our approach, however, 
 does not rely on the paracontrolled calculus.
 We instead proceed with the second order expansion 
 and study the resulting equation for the residual term, 
 using multilinear dispersive smoothing.

\end{abstract}

%
\maketitle

%
\tableofcontents

\newpage

\section{Introduction}
\label{SEC:1}

\subsection{Singular stochastic nonlinear wave equation}

In this paper, 
we study the following Cauchy problem
for the stochastic nonlinear wave equation (SNLW)
with a cubic nonlinearity
on the three dimensional torus
$\T^3=(\R/(2\pi \Z))^3$, driven by an additive noise:
\begin{align}
\begin{cases}
\dt^2 u  + (1 -  \Dl)  u +  u^3  = \phi \xi\\
(u, \dt u) |_{t = 0} = (u_0, u_1), 
\end{cases}
\quad (x, t) \in \T^3\times \R,
\label{SNLW1}
\end{align}

\noi
where
$\xi(x, t)$ denotes a (Gaussian) space-time white noise on $\T^3\times \R$
with the space-time covariance given by
\[ \E\big[ \xi(x_1, t_1) \xi(x_2, t_2) \big]
= \dl(x_1 - x_2) \dl (t_1 - t_2) \]

\noi
and $\phi$ is a bounded operator on $L^2(\T^3)$.
Our main goal is to present a concise proof of  local well-posedness of~\eqref{SNLW1}, 
when $\phi$ is the Bessel potential of order $\al$:
\begin{align}
\phi = \jb{\nb}^{-\al} = (1- \Dl)^{-\frac \al2}
\label{P1}
\end{align}

\noi
for any $\al > 0$.
Namely, we consider \eqref{SNLW1} with an ``almost'' space-time white noise.

Given $\al \in \R$, let $\phi = \phi_\al$ be as in \eqref{P1}.
Then, a standard computation shows that  the stochastic convolution: 
\begin{align}
 \<1>  
= \I  (\jb{\nb}^{-\al}\xi)
\label{P2}
\end{align}
\noi
belongs almost surely to $C(\R; W^{s, \infty}(\T^3))$
for any $s < \al - \frac 12$. See Lemma \ref{LEM:sto1} below.
Here,  we adopted Hairer's convention to denote  stochastic terms by trees;
the vertex ``\,$\<dot>$\,'' in $\<1>$ corresponds to 
the random noise $\phi \xi = \jb{\nb}^{-\al} \xi$,
while the edge denotes the Duhamel integral operator:
 \begin{align}
 \I = (\dt^2 + (1 - \Dl))^{-1}, 
\label{P3}
 \end{align}
 
 \noi
corresponding to 
the forward fundamental solution to the linear wave equation.
Note that 
when $\al > \frac 12$, 
 the stochastic convolution $\<1>$
 is a function of   positive (spatial) regularity $\al - \frac 12-\eps$.\footnote{In this discussion, 
 we only discuss spatial regularities.
 Moreover, we do not worry about the regularity of the initial data $(u_0, u_1)$.}
Then, by  proceeding with 
the first order expansion:
\begin{align*}
u = \<1> + v
\end{align*}

\noi
and studying the equation for the residual term $v = u - \<1>$, 
we can  show that \eqref{SNLW1} is locally well-posed,
when $\al > \frac 12$.
See \cite{BT2, Poc} in the case 
of the deterministic cubic nonlinear wave equation (NLW):
\begin{align}
\dt^2 u  + (1 -  \Dl)  u +  u^3  = 0
\label{NLW0}
\end{align}

\noi
with random initial data.
Furthermore, by controlling the growth of the $\H^1$-norm 
of the residual term $v$ via a Gronwall-type argument, we can prove global well-posedness
of \eqref{SNLW1},  when $\al > \frac 12$.\footnote{This globalization argument is the only place, 
where the defocusing nature of the nonlinearity plays a role.
See also Remarks \ref{REM:triv} and \ref{REM:gwp}.
In particular, all the local-in-time results, 
including Theorem \ref{THM:1},   also hold in the focusing case.}
See \cite{BT2}.

When $\al \le \frac 12$, 
solutions  to \eqref{SNLW1} are expected to be  merely  distributions
of  negative regularity $\al -\frac 12  -\eps$, inheriting the regularity 
of the stochastic convolution, 
and thus we need to consider the renormalized version of \eqref{SNLW1},
which formally reads
\begin{align}
\begin{cases}
\dt^2 u  + (1 -  \Dl)  u +  u^3 - \infty\cdot u  = \jb{\nb}^{-\al} \xi\\
(u, \dt u) |_{t = 0} = (u_0, u_1), 
\end{cases}
\label{SNLW2}
\end{align}

\noi
where the formal expression $u^3- \infty\cdot u $ 
denotes the renormalization of the cubic power~$u^3$. 
In the range $\frac 14 < \al \leq \frac 12$, 
a straightforward computation 
 with the second order expansion:
\begin{align*}
u = \<1> - \<30> + v
\end{align*}

\noi
yields local well-posedness of the renormalized SNLW \eqref{SNLW2}
(in the sense of Theorem \ref{THM:1} below).
Here, the second order process $\<30>$ is defined by 
\begin{align*}
 \<30> = \I (\<3>), 
\end{align*}

\noi
where $\<3>$ denotes the renormalized version of $\<1>^3$.
See
 \cite{OPTz}
for this argument in the context of the deterministic renormalized cubic NLW \eqref{NLW0}
with random initial data.

We state our main result.

\begin{theorem}\label{THM:1}
Let $0 < \al \le \frac 12$.
Given $ s >  \frac 12$, 
let $(u_0, u_1) \in \H^{s}(\T^3) = H^s(\T^3)\times H^{s-1}(\T^3)$.
Then, there exists a unique local-in-time solution to the renormalized cubic SNLW \eqref{SNLW2}
with $(u, \dt u)|_{t = 0} = (u_0, u_1)$.

More precisely, given $N \in \N$, let $ \xi_N =  \pi_N \xi$, 
where $\pi_N$ is the frequency projector onto the spatial frequencies $\{|n|\leq N\}$
defined in~\eqref{pi} below.
Then,  there exists a sequence of time-dependent constants $\{\s_N(t)\}_{N\in \N}$
 tending to $\infty$ \textup{(}see \eqref{sigma1} below\textup{)} such that, given small $\eps = \eps(s) > 0$, 
the solution $u_N$ to the following truncated renormalized SNLW\textup{:}
\begin{align}
\begin{cases}
\dt^2 u_N + (1-  \Dl)  u_N  + u_N^3  - 3\s_N u_N  = \jb{\nb}^{-\al}  \xi_N\\
(u_N, \dt u_N)|_{t = 0} = (u_0, u_1)
\end{cases}
\label{SNLW3}
\end{align}
converges to a non-trivial\footnote{Here, non-triviality means that the limiting process $u$ is not zero 
 or a linear solution. 
 As we see below, the limiting process $u$ admits a decomposition 
$ u = \<1> - \<30> + v$, 
where the residual term $v$ satisfies the nonlinear equation~\eqref{SNLW11}.
See Remark \ref{REM:triv}\,(ii)
on a triviality result for the unrenormalized equation.
See also  \cite{HRW,OOR, OPTz, ORSW} for related triviality results.} stochastic process $u \in C([-T, T]; H^{\al -\frac 12 -\eps} (\T^3))$ almost surely,
where $T = T(\o)$ is an almost surely positive stopping time.
\end{theorem}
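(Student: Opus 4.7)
The plan is to carry out a second order Da Prato--Debussche expansion at the truncated level, writing $u_N = \<1>_N - \<30>_N + v_N$, to derive the equation satisfied by the residual $v_N$, and to solve it by a contraction mapping argument in a Bourgain-type space $X^{s,b}$ of regularity $s > \frac 12$ adapted to the linear wave operator $\dt^2 + 1 - \Dl$. By design, the worst singularities are placed into the explicit stochastic objects $\<1>_N$ and $\<30>_N$, while the residual $v_N$ will be of positive Sobolev regularity; the products of $v_N$ with the singular stochastic data are then handled via multilinear dispersive smoothing rather than via paracontrolled calculus.

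First I would construct and control the relevant stochastic objects. A standard Wiener chaos computation (cf.\ Lemma \ref{LEM:sto1}) shows that, almost surely, $\<1>_N$ converges in $C_T \mathcal C^{\al-\frac 12 - \eps}(\T^3)$ to $\<1>$, and the Wick renormalizations $\<2>_N := \<1>_N^2 - \s_N$ and $\<3>_N := \<1>_N^3 - 3\s_N \<1>_N$ converge in $C_T \mathcal C^{2\al - 1 - \eps}$ and $C_T \mathcal C^{3\al - \frac 32 - \eps}$, respectively, with $\<30>_N = \I(\<3>_N)$ enjoying one extra derivative of regularity from the Duhamel integration. Similar estimates also give a.s.\ convergence of any further multilinear objects (e.g.\ $\I(\<1>_N \<2>_N)$) that may be needed as auxiliary data to regularize the residual in the low-$\al$ regime.

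Substituting $u_N = \<1>_N + w_N$ with $w_N = -\<30>_N + v_N$ into \eqref{SNLW3} and using $(\dt^2 + 1 - \Dl)\<1>_N = \pi_N \jb{\nb}^{-\al}\xi$ together with $(\dt^2 + 1 - \Dl)\<30>_N = \<3>_N$, the cubic power expands as
\begin{equation*}
u_N^3 - 3\s_N u_N = \<3>_N + 3\<2>_N w_N + 3\<1>_N w_N^2 + w_N^3,
\end{equation*}
so the $\<3>_N$ contribution cancels and the residual equation for $v_N$ takes the schematic form
\begin{equation*}
(\dt^2 + 1 - \Dl) v_N = -v_N^3 + 3(\<30>_N - \<1>_N) v_N^2 - 3\bigl(\<2>_N - 2\<1>_N \<30>_N + \<30>_N^2\bigr) v_N + F_N,
\end{equation*}
where $F_N := 3\<2>_N \<30>_N - 3\<1>_N \<30>_N^2 + \<30>_N^3$ is a purely stochastic forcing. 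The critical terms on the right-hand side are $\<2>_N v_N$ and $\<1>_N v_N^2$: for small $\al$, the spatial regularity of $\<2>_N$ is $2\al - 1 - \eps < 0$, and direct Sobolev multiplication with $v_N \in H^s$, $s > \frac 12$, fails once $s < 1 - 2\al$.

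This is the main obstacle, and it is where I would invoke multilinear dispersive smoothing. Working in $X^{s,b}$ with $b$ slightly above $\frac 12$, the Duhamel operator $\I$ applied to products such as $\<2>_N v_N$ gains additional regularity beyond what Sobolev multiplication predicts, because $v_N$ itself lies in the range of $\I$ and the associated trilinear frequency convolution is constrained by the wave resonance relation (transversality/divisor estimates for the wave cone). All remaining nonlinear terms, including $v_N^3$, $\<30>_N v_N^2$, $\<1>_N \<30>_N v_N$, and the forcing $F_N$, then reduce to standard product and Strichartz estimates, given $s > \frac 12$ and the stochastic regularities above. A Banach fixed point on a ball in $X^{s,b}_T$ yields a unique solution $v_N$ on some random interval $[-T_N(\o), T_N(\o)]$ with a lower bound on $T_N$ that is uniform in $N$; applying the same multilinear estimates to the differences $v_N - v_M$ produces a Cauchy estimate, hence $v_N \to v$ in $X^{s,b}_T$ almost surely, and consequently $u_N \to u = \<1> - \<30> + v$ in $C([-T,T]; H^{\al - \frac 12 - \eps}(\T^3))$, as claimed.
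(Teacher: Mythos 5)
Your overall strategy (the second-order expansion $u_N = \<1>_N - \<30>_N + v_N$, followed by a contraction argument in $X^{s,b}$ with $s>\frac12$) is the same as the paper's, and your derivation of the residual equation is correct. However, two essential ingredients are missing or misstated. First, the purely stochastic forcing $F_N = 3\<2>_N\<30>_N - 3\<1>_N\<30>_N^2 + \<30>_N^3$ does \emph{not} reduce to standard product and Strichartz estimates when $\al$ is small: $\<2>_N\<30>_N$ has regularity only $2\al-1-$, which is below the required $-\frac12+$ once $\al<\frac14$, and $\<1>_N\<30>_N^2$ is likewise out of reach deterministically. The paper must construct $\<320> = \I(\<30>\<2>)$ and $\<70> = \I(\<30>^2\<1>)$ as genuine stochastic objects of regularity $\al+\frac12-$ in $X^{s,b}$, via quintic and septic Wiener-chaos decompositions combined with Bringmann's counting estimates (Lemma \ref{LEM:sto3}). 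Similarly, the resonant part of the product $\<30>_N\<1>_N$, which multiplies $v_N$ linearly, is not classically defined for small $\al$ since $(\al-)+(\al-\tfrac12-)<0$; it requires the probabilistic construction of Lemma \ref{LEM:sto2}. You gesture at "further multilinear objects that may be needed" but then assert the opposite for $F_N$; this is a genuine gap, not a detail.

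Second, and more seriously, your justification for the critical term $\<2>_N v_N$ rests on the heuristic that the smoothing occurs "because $v_N$ itself lies in the range of $\I$." If the gain truly depended on that structural property of $v_N$, a plain contraction in $X^{s,b}_T$ would not close: you would have to impose and propagate a paracontrolled ansatz, which is exactly what the paper is designed to avoid. What is actually needed (Lemma \ref{LEM:ran1}) is that the random operator $\If^{\<2>_N}: v \mapsto \I(\<2>_N v)$ is bounded from $X^{\frac12+\dl_1,\frac12+\dl_2}$ to itself for \emph{arbitrary} $v$, uniformly in $N$; this is proved by viewing the kernel $H(n,n_3,\tau,\tau_3)$ as a random matrix and invoking the random tensor estimates of Bourgain and Deng--Nahmod--Yue together with Bringmann's deterministic tensor bounds. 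Without this operator-norm estimate (or a paracontrolled substitute with the attendant system of equations), the fixed-point step is unjustified. A smaller point: the trilinear term $\<1>\, v^2$ is handled in the paper by a purely deterministic Littlewood--Paley/Strichartz argument (Lemmas \ref{LEM:trilin1} and \ref{LEM:trilin2}), since the regularities already sum to a positive number and only integrability is at issue; no probabilistic smoothing enters there, contrary to what your write-up suggests.
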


Stochastic nonlinear wave equations 
 have been studied extensively
in various settings; 
see \cite[Chapter 13]{DPZ14} for the references therein.
In particular, over the last few years, we have  witnessed a rapid progress
in the theoretical understanding of
 nonlinear wave equations with singular stochastic forcing
 and/or rough random initial data; see 
\cite{OTh2, GKO, GKO2, GKOT,  OPTz,ORTz, OOR, OOTz, Tolo, 
Deya1, 
 ORSW, OOComp, ORW, Deya2, ORSW2, OOTol, Bring, 
 OOTol2}.
  In \cite{GKO2}, 
Gubinelli, Koch, and the first author studied 
the quadratic SNLW on $\T^3$:
\begin{align}
\dt^2 u  + (1 -  \Dl)  u +  u^2   = \xi.
\label{SNLW4}
\end{align}

\noi
By adapting the paracontrolled calculus \cite{GIP}, 
originally introduced 
by Gubinelli, Imkeller, and Perkowski
in the study of stochastic parabolic PDEs,  to the dispersive setting, 
the authors of \cite{GKO2} reduced~\eqref{SNLW4} into a system of two unknowns.
This system  was then shown to be locally well-posed
by exploiting the following two ingredients:
(i)~multilinear dispersive smoothing coming from a multilinear interaction of 
 random waves (see also \cite{OOComp, Bring})
 and 
(ii) novel 
random operators (the so-called 
paracontrolled operators) which incorporate
the paracontrolled structure in their definition.
These random operators
are used to replace
 commutators which are standard in the parabolic paracontrolled approach~\cite{CC, MW1}.

More recently, 
Okamoto, Tolomeo, and the first author \cite{OOTol}
and Bringmann \cite{Bring}
independently studied the following SNLW
with a cubic Hartree-type nonlinearity:\footnote{In \cite{Bring}, Bringmann studied
the corresponding deterministic Hartree NLW with random initial data.}
\begin{align}
\dt^2 u  + (1 -  \Dl)  u +  (V*u^2) u   = \xi, 
\label{SNLW5}
\end{align}

\noi
where $V$ is the kernel of  the Bessel potential 
$\jb{\nb}^{-\be}$ 
of order $\be > 0$.\footnote{We point out that
the scope of the papers \cite{OOTol, Bring} goes much further than what is described here.
The main goal of \cite{OOTol} is to study the focusing
problem, in particular the (non-)construction of the focusing Gibbs measure
associated to the focusing Hartree SNLW.  They  identified
the critical value $\be = 2$ and 
proved sharp global well-posedness of the focusing problem
(with a small coefficient in front of the nonlinearity when $\be = 2$).
%
On the other hand, the main goal in \cite{Bring}
is the construction of global-in-time dynamics in the defocusing case, 
where there was a significant difficulty in  adapting Bourgain's invariant measure argument
\cite{BO94, BO96}. This is due to (i) the singularity of  the associated Gibbs measure
 with respect to the base Gaussian free field 
for $0 < \be \leq \frac 12$
 \cite{OOTol, Bring1} and (ii) the paracontrolled structure imposed in the local theory, which
must be propagated in the construction of global-in-time solutions.
See the introductions of \cite{OOTol, Bring} for further discussion.
}
In \cite{OOTol}, the authors proved local well-posedness
for $\be > 1$ by viewing the nonlinearity as the nested bilinear interactions
and utilizing the paracontrolled operators introduced in \cite{GKO2}.
In \cite{Bring}, Bringmann went much further
and proved local well-posedness of \eqref{SNLW5}
for any $\be > 0$.
The main strategy in \cite{Bring}
is to extend the paracontrolled approach in \cite{GKO2}
to the cubic setting.
The main task is then  
to study regularity  properties of various random operators
and random distributions.
This was done by an intricate  combination of 
deterministic analysis, stochastic analysis, counting arguments, 
the random matrix/tensor approach by Bourgain \cite{BO96, BO97} 
and Deng, Nahmod, and Yue~\cite{DNY2}, 
and 
the physical space approach via 
the (bilinear) Strichartz estimates due to 
Klainerman and Tataru~\cite{KT}, 
analogous to the random data Cauchy theory 
for the nonlinear Schr\"odinger equations on $\R^d$ as in \cite{BOP1, BOP2, BOP3}.

From the scaling point of view, 
the cubic SNLW \eqref{SNLW2} with a slightly smoothed space-time white noise
(i.e.~small $\al >0$) is essentially the same as the Hartree SNLW
\eqref{SNLW5} with small $\be > 0$.
Hence, Theorem \ref{THM:1} is expected to hold in view of Bringmann's recent result \cite{Bring}.
The main point of this paper is that we present a concise proof of 
Theorem \ref{THM:1}
{\it without} using the paracontrolled calculus.
In the next subsection, we outline our strategy.

Due to the time reversibility of the equation, 
we only consider positive times in the remaining part of the paper.

\begin{remark}\rm

The equations \eqref{SNLW1}
and \eqref{SNLW2}
indeed correspond
to the stochastic nonlinear Klein-Gordon equations.
The same results with  inessential modifications
also hold for
the stochastic nonlinear wave equation,
where we replace the linear part  in \eqref{SNLW1}
and \eqref{SNLW2}
by $\dt^2 u- \Dl u$.
In the following, we simply refer to \eqref{SNLW1} 
and \eqref{SNLW2} as 
the stochastic nonlinear wave equations.

\end{remark}

\begin{remark}\rm
Our argument also applies to the deterministic (renormalized)
cubic NLW on~$\T^3$ with random initial data of the form:
\begin{equation*}
(u_0^\o, u_1^\o)  = \bigg(\sum_{n \in \Z^3} \frac{g_n(\o)}{\jb{n}^{1+\al}}e^{in\cdot x}, 
\sum_{n \in \Z^3} \frac{h_n(\o)}{\jb{n}^{\al}}e^{in\cdot x}\bigg), 
\end{equation*}

\noi
where the series   $\{ g_n \}_{n \in \Z^3}$ and  $\{ h_n \}_{n \in \Z^3}$ are two families of 
independent standard   complex-valued  Gaussian random variables  
 conditioned that  $g_n=\overline{g_{-n}}$,  $h_n=\overline{h_{-n}}$, 
$n \in \Z^3$.
In particular, Theorem \ref{THM:1} provides an improvement
of the main result (almost sure local well-posedness) in~\cite{OPTz}
from $\al > \frac 14$ to $\al > 0$.

\end{remark}

\begin{remark}\label{REM:triv}\rm
(i) The first part of the statement in Theorem \ref{THM:1}
is merely a formal statement in view
of the divergent  behavior $\s_N (t) \to \infty$ for $t\ne 0$.
In the next subsection, we provide a precise meaning
to what it means to be a solution to \eqref{SNLW2}
and also make the uniqueness statement more precise.
See Remark \ref{REM:uniq}.

\smallskip
\noi
(ii) In the case of the defocusing cubic SNLW with damping:
\begin{align*}
\dt^2 u  + \dt u + (1 -  \Dl)  u +  u^3   = \jb{\nb}^{-\al} \xi, 
\end{align*}

\noi
 a combination of our argument with that in \cite{OOR} yield the
 following triviality result.
Consider 
the following truncated (unrenormalized)  SNLW with damping:
\begin{align*}
\begin{cases}
\dt^2 u_N + \dt u_N + (1-  \Dl)  u_N  + u_N^3   = \jb{\nb}^{-\al}  \xi_N\\
(u_N, \dt u_N)|_{t = 0} = (u_0, u_1), 
\end{cases}
\end{align*}

\noi
where $\xi_N = \pi_N \xi$.
As we remove the regularization (i.e.~take $N\to \infty$), 
the solution $u_N$ converges in probability 
to the trivial function $u_\infty \equiv 0$
for any (smooth) initial data $(u_0, u_1)$. 
See \cite{OOR} for details.

\end{remark}

\begin{remark}\rm

(i) In our proof, we use the Fourier restriction norm method
(i.e.~the $X^{s, b}$-spaces defined in \eqref{Xsb}), following \cite{OTh2, Bring}.
While it may be  possible to give a proof of Theorem \ref{THM:1}
based only on the physical-side spaces (such as the Strichartz spaces)
as in \cite{GKO, GKO2, GKOT}, 
we do not pursue this direction
since our main goal is to present a concise proof of Theorem \ref{THM:1}
by adapting various estimates in  \cite{Bring}
to our current setting.
Note that the use of the physical-side spaces would allow us to take 
the initial data $(u_0, u_1)$ in the critical space $\H^\frac{1}{2}(\T^3)$
(for the cubic NLW on $\T^3$).
See for example  \cite{GKO}.
 One may equally use 
 the Fourier restriction norm method adapted to the space of functions of bounded $p$-variation and its pre-dual, 
  introduced and developed by Tataru, Koch, and their collaborators 
  \cite{KochT, HHK, HTT}, 
which would also allow us to 
 take 
the initial data $(u_0, u_1)$ in the critical space $\H^\frac{1}{2}(\T^3)$.
See for example \cite{BOP2, OOP}
in the context of the nonlinear Schr\"odinger equations with random initial data.
Since our main focus is to handle rough noises
(and not about rough deterministic initial data), 
we do not pursue this direction.

\smallskip

\noi
(ii) On $\T^3$, the Bessel potential  $\phi_\al = \jb{\nb}^{-\al}$
is Hilbert-Schmidt from $L^2(\T^3)$ to $H^s(\T^3)$
for $s < \al - \frac 32$.
It would be of interest to extend Theorem \ref{THM:1}
to a general Hilbert-Schmidt operator~$\phi$, say  from 
$L^2(\T^3)$ to $H^{\al - \frac 32}(\T^3)$ as in \cite{DD, OPW, OOk}.\footnote{Or a general $\g$-radonifying
operator $\phi$ as in \cite{FOW}, 
where the authors proved local well-posedness
of the one-dimensional stochastic cubic nonlinear Schr\"odinger equation
with an almost space-time white noise.}
Note that our argument uses the independence of the Fourier coefficients
of the stochastic convolution $\<1>$
but that such independence will be lost for 
a general Hilbert-Schmidt operator $\phi$.
\end{remark}

\begin{remark}\label{REM:gwp}\rm
(i) When $\al  = 0$, 
SNLW \eqref{SNLW2} with damping
\begin{align}
\dt^2 u  + \dt u + (1 -  \Dl)  u +  u^3 - \infty\cdot u  =  \xi
\label{SNLW8}
\end{align}

\noi
corresponds to 
 the so-called canonical stochastic quantization equation\footnote{Namely, the Langevin equation
 with the momentum $v = \dt u$.}
 for the Gibbs measure given by the $\Phi^4_3$-measure on $u$
 and the white noise measure on $\dt u$. See~\cite{RSS}.
In this case (i.e.~when $\al = 0$), 
our approach and the more sophisticated approach of Bringmann \cite{Bring}
for~\eqref{SNLW5} with  $\be > 0$
completely break down.
This is a very challenging problem, 
for which one would certainly need to use the paracontrolled approach in \cite{GKO2, OOTol, Bring}
and combine with the techniques in~\cite{DNY2}.

\smallskip

\noi
(ii) 
As mentioned above, when $\al > \frac 1 2$, 
the globalization argument by Burq and Tzvetkov \cite{BT2} yields
global well-posedness of 
SNLW \eqref{SNLW1} with $\phi$ as in \eqref{P1}.
When $\al = 0$, 
we expect that (a suitable adaptation of) Bourgain's invariant measure argument
would yield almost sure global well-posedness
once we could prove local well-posedness of \eqref{SNLW8}
(but this is a very challenging problem).
It would be of interest to investigate the issue of global well-posedness
of \eqref{SNLW2} for $0 < \al \le \frac 12$.
See \cite{GKOT, Tolo}
for the global well-posedness results 
on SNLW with an additive space-time white noise
in the two-dimensional case.


\end{remark}

\subsection{Outline of the proof}
Let us now describe the strategy  to prove Theorem \ref{THM:1}.
 Let  $W$ denote a cylindrical Wiener process on $L^2(\T^3)$:\footnote{By convention, 
 we endow $\T^3$ with the normalized Lebesgue measure $(2\pi)^{-3} dx$.}
\begin{align*}
W(t)
 = \sum_{n \in \Z^3} B_n (t) e_n, 
\end{align*}

\noi
where 
$e_n(x) = e^{ i n \cdot x}$ and 
$\{ B_n \}_{n \in \Z^3}$ 
is defined by 
$B_n(t) = \jb{\xi, \ind_{[0, t]} \cdot e_n}_{ x, t}$.
Here, $\jb{\cdot, \cdot}_{x, t}$ denotes 
the duality pairing on $\T^3\times \R$.
As a result, 
we see that $\{ B_n \}_{n \in \Z^3}$ is a family of mutually independent complex-valued
Brownian motions conditioned so that $B_{-n} = \cj{B_n}$, $n \in \Z^3$. 
In particular, $B_0$ is  a standard real-valued Brownian motion.
Note that we have, for any $n \in \Z^2$,  
 \[\text{Var}(B_n(t)) = \E\big[
 \jb{\xi, \ind_{[0, t]} \cdot e_n}_{x, t}\cj{\jb{\xi, \ind_{[0, t]} \cdot e_n}_{x, t}}
 \big] = \|\ind_{[0, t]} \cdot e_n\|_{L^2_{x, t}}^2 = t.\]

With this notation, we can formally write 
the stochastic convolution $\<1> = \I(\jb{\nb}^{-\al} \xi) $ in~\eqref{P2} 
as
\begin{align}
\<1>  
 = \int_{0}^t \frac{\sin ((t-t')\jb{\nb})}{\jb{\nb}^{1+\al}}dW(t')
 = \sum_{n \in \Z^3} e_n 
  \int_0^t \frac{\sin ((t - t') \jb{ n })}{\jb{ n }^{1+\al}} d B_n (t'), 
\label{sto1}
\end{align}

\noi
where $\jb{\nb} = \sqrt{1-\Dl}$ and $\jb{n} = \sqrt{1 + |n|^2}$.
We indeed construct the stochastic convolution $\<1>$ in~\eqref{sto1}
as the  limit of
 the truncated stochastic convolution  $\<1>_N$ defined by 
\begin{align}
\<1>_N 
= \I(\pi_N \jb{\nb}^{-\al} \xi) 
= \sum_{\substack{n \in \mathbb{Z}^3 \\ |n| \le N}} e_n 
  \int_0^t \frac{\sin ((t - t') \jb{ n })}{\jb{ n }^{1+\al}} d B_n (t')
\label{sto2}
\end{align}

\noi
for $N \in \N$, 
where $\pi_N$  denotes the (spatial) frequency projector 
defined  by 
\begin{align}
\pi_N f = 
\sum_{ |n| \leq N}  \ft f (n)  \, e_n.
\label{pi}
\end{align}

\noi
A standard computation shows that 
the sequence $\{ \<1>_N\}_{N \in \N}$
is almost surely Cauchy in\footnote{Hereafter, we use $a-$ 
(and $a+$) to denote $a- \eps$ (and $a+ \eps$, respectively)
for arbitrarily small $\eps > 0$.
If this notation appears in an estimate, 
then  an implicit constant 
is allowed to depend on $\eps> 0$ (and it usually diverges as $\eps \to 0$).
} 
$C([0,T];W^{\al - \frac 12 - ,\infty}(\T^3))$ 
and thus converges almost surely to some limit, which we denote by~$\<1>$, 
in the same space.
See Lemma \ref{LEM:sto1} below.

We then define the Wick powers  $\<2>_N$ 
and $\<3>_N$  by 
\begin{align}
\begin{split}
\<2>_N(x, t)   & = (\<1>_N(x, t))^2 - \s_N(t), \\
\<3>_N (x, t)  & = (\<1>_N(x, t))^3 - 3 \s_N(t) \cdot \<1>_N(x, t), 
\end{split}
\label{sto3}
\end{align}

\noi
and the second order process $\<30>_N$ by 
\begin{align}
\<30>_N    = \I(\<3>_N), 
\label{sto4}
\end{align}

\noi
where $\I$ denotes the Duhamel integral operator in \eqref{P3}.
Here, $\s_N(t)$ is defined by\footnote{In our spatially homogeneous setting,
the variance $\s_N(t)$ is independent of $x \in \T^3$.} 
\begin{align}
\begin{split}
\s_N(t) & = \E\big[ (\<1>_N(x, t))^2\big]
=  \sum_{|n|\leq N}
\int_0^t \bigg[\frac{\sin((t - t')\jb{n})}{\jb{n}^{1+\al}} \bigg]^2 dt'\\
& = \sum_{|n|\leq N} \bigg\{\frac{t}{2\jb{n}^{2+2\al}} - \frac{\sin(2t \jb{n})}{4\jb{n}^{3+2\al}}\bigg\}
\sim 
\begin{cases}
t \log N , & \text{for } \al = \frac 12, \\
t N^{1-2\al} , & \text{for } 0 < \al< \frac 12.
\end{cases}
\end{split}
\label{sigma1}
\end{align}

\noi
We point out  that 
a standard argument shows that 
$\<2>_N$ and 
$\<3>_N$
converge almost surely to $\<2>$ 
in $C([0,T];W^{2\al -1 - ,\infty}(\T^3))$ 
and to 
$\<3>$
in $C([0,T];W^{3\al -\frac 32 - ,\infty}(\T^3))$, respectively,
but that we do not need these regularity properties 
of the Wick powers $\<2>$ and $\<3>$ in this paper.

As for the second order process $\<30>_N$ in \eqref{sto4}, 
if we 
 proceed with a ``parabolic thinking'',\footnote{Namely, 
if we only take into account  the (uniformly bounded in $N$) regularity 
$3\al - \frac 32 -$ of  $\<3>_N$
and  one degree of smoothing
from the Duhamel integral operator $\I$ {\it without} taking into account the product structure
and the oscillatory nature of the linear wave propagator.}
then we expect that  $\<30>_N$ has
regularity\footnote{By ``regularity'', we mean 
the spatial regularity $s$ of $\<30>_N$
as an element in $C([0,T];W^{s,\infty}(\T^3))$,  
uniformly bounded in $N \in \N$.} 
$3\al - \frac 12 - = (3\al - \frac 32 -) + 1$, 
which is negative for $\al \le \frac 16$.
In the dispersive setting, however, 
we can exhibit 
 multilinear smoothing
by exploiting multilinear dispersion
coming from an interaction of 
 (random) waves.
 In fact, by adapting the argument in \cite{Bring}
to our current problem, 
we can show an extra $\sim \frac 12$-smoothing
for 
$\<30>_N$, uniformly in $N \in \N$, and
for the limit $\<30> = \I(\<3>) = \lim_{N \to \infty} \<30>_N$
and thus they 
have positive regularity. 
See Lemma \ref{LEM:sto1}.
As in~\cite{GKO2, Bring}, 
such multilinear smoothing plays a fundamental role
in our analysis.

Let us now start with the 
truncated renormalized SNLW \eqref{SNLW3}
and obtain the limiting formulation of our problem.
By proceeding with the second order expansion:
\begin{align}
u_N = \<1>_N - \<30>_N + v_N, 
\label{exp3}
\end{align}

\noi
we  rewrite
\eqref{SNLW3}
as
\begin{align}
\begin{split}
(\dt^2  +1 -  \Dl)  v_N 
&  =  - (v_N + \<1> _N- \<30>_N)^3 +3\s_N (\<1>_N - \<30>_N + v_N) + \<3>_N\\
& = -  v_N^3  + 3 (\<30>_N - \<1>_N)v_N^2 
- 3( \<30>_N^2 -2 \<30>_N\<1>_N) v_N
-3  \<2>_N v_N\\
& \quad + \<30>_N^3
  -3\<30>_N^2 \<1>_N 
  + 3\<30>_N\<2>_N, 
\end{split}
\label{SNLW9}
\end{align}

\noi
where we used \eqref{sto3}.
The main problem in studying
singular stochastic PDEs lies in making sense of various products.
In this formal discussion, 
let us apply the following  ``rules'':

\begin{itemize}

\item A product of functions of regularities $s_1$ and $s_2$
is defined if $s_1 + s_2 > 0$.
When $s_1 > 0$ and $s_1 \geq s_2$, the resulting product has regularity $s_2$.

\smallskip

\item A product of stochastic objects (not depending on the unknown)
is always well defined, possibly with a renormalization.
The product of stochastic objects of regularities $s_1$ and $s_2$
has regularity $\min( s_1, s_2, s_1 + s_2)$.

\end{itemize}

\smallskip

We postulate that the unknown $v$
has regularity $\frac 12+$,\footnote{As for the unknown $v$, 
we measure its regularity in (the local-in-time version of) the $X^{s, \frac 12+}$-norm.}
which is subcritical with respect to the standard scaling heuristics
for the three-dimensional cubic NLW.
In order to close the Picard iteration argument, 
we need all the terms on the right-hand side of \eqref{SNLW9}
to have regularity $-\frac 12+$.
With the aforementioned regularities
of the stochastic terms $\<1>_N$, $\<2>_N$, and $\<30>_N$
and applying the rules above, 
we can handle the products on the right-hand side of \eqref{SNLW9}, 
giving regularity $-\frac 12+$, 
{\it except} for the following terms (for small $\al >0$):
\begin{align}
 \<30>_N\<1>_N v_N, 
\qquad 
\<2>_Nv_N, 
\qquad \text{and} \qquad
 \<30>_N\<2>_N.
 \label{sto5}
\end{align}

\noi
As for the first term  $\<30>_N\<1>_N v_N$,  
we first use stochastic analysis to make sense of 
$\<30>_N\<1>_N$ 
with regularity $\al -\frac 12 -$, 
uniformly in $N \in \N$,  (see Lemma \ref{LEM:sto2})
and then 
interpret the product as
\[  \<30>_N\<1>_N v_N = 
( \<30>_N\<1>_N) v_N.
\]

\noi
Note that the right-hand side is well defined  since the sum of the regularities
is positive: $(\al -\frac 12 -) + (\frac 12 +) > 0$.
The last product 
 $\<30>_N\<2>_N$ in \eqref{sto5} makes sense but the resulting regularity is 
 $2\al - 1-$, smaller than  the required regularity $ -\frac 12+$,
 when $\al$ is close to $0$.
As for the second term in \eqref{sto5},
it  depends on the unknown $v_N$ and 
thus the product does not make sense (at this point)
since the sum of regularities is negative
(when $\al > 0$ is small).

As we see below, 
by studying the last two terms in \eqref{sto5} under the Duhamel integral operator $\I$,
we can indeed give a meaning to them
and exhibit extra $(\frac 12+)$-smoothing
with the resulting regularity $\frac 12+$ (under $\I$), 
which allows us to close the argument.  
By writing \eqref{SNLW9}
with initial data $(u_0, u_1)$
in the Duhamel formulation, we have
\begin{align}
\begin{split}
  v_N 
& = S(t) (u_0, u_1) + \I\big(-  v_N^3  + 3 (\<30>_N - \<1>_N)v_N^2 
- 3 \<30>_N^2 v_N\big) \\
& \quad
+6\I\big((\<30>_N\<1>_N) v_N\big)
-3 \If^{  \<2>_N }(v_N)\\
& \quad +
\I\big( \<30>_N^3
  -3\<30>_N^2 \<1>_N \big)
  + 3 \,  \<320>_N, 
\end{split}
\label{SNLW10}
\end{align}

\noi
where  $S(t) (u_0, u_1) = \cos (t\jb{\nb}) u_0 + 
\frac{\sin (t\jb{\nb})}{\jb{\nb}}u_1$ denotes the (deterministic) linear solution.
Here, 
$\If^{  \<2>_N }$ denotes the random operator defined by 
\begin{align}
 \If^{  \<2>_N }(v)
 = \I( \<2>_Nv)
\label{ran1}
\end{align}

\noi
and (as the notation suggests), the last term in \eqref{SNLW10} 
is defined by 
\begin{align}
 \<320>_N = \I( \<30>_N\<2>_N)
\label{sto6}
\end{align}

\noi
(without a renormalization).
By exploiting random multilinear dispersion, 
we show that 
\begin{itemize}
\item
 the random operator 
$\If^{  \<2>_N }$ maps functions of regularity $\frac 12+$
to those of regularity $\frac 12+$
(measured in the $X^{s, b}$-spaces)
with the operator norm uniformly bounded in $N \in \N$
and 
$\If^{  \<2>_N }$
converges to some limit, denoted by 
$\If^{  \<2>}$, as $N \to \infty$.
We study the random operator $\If^{\<2>_N}$ via the random matrix approach
\cite{BO96, BO97,Rich, DNY2, Bring}.\footnote{We also mention
a recent preprint \cite{Seong}, where the 
 random matrix approach is also used to 
 prove probabilistic local well-posedness
 of the Zakharov-Yukawa system on the two-dimensional torus $\T^2$.}
See Lemma \ref{LEM:ran1}.

\smallskip

\item
the third order process $ \<320>_N $
has regularity $\frac 12 +$
(measured in the $X^{s, b}$-spaces)
with the  norm uniformly bounded in $N \in \N$
and 
$ \<320>_N $
converges to some limit, denoted by~$ \<320>$, as $N \to \infty$.
See Lemma \ref{LEM:sto3}.

\end{itemize}

\noi
We deduce these claims
as corollaries
to Bringmann's
work \cite{Bring}.
In \cite{Bring}, 
the 
smoothing  coming from the  potential $V = \jb{\nb}^{-\be}$ in the 
Hartree nonlinearity $(V*u^2)u$ played an important role.
In our problem, this is replaced by the smoothing $\jb{\nb}^{-\al}$
on the noise
and we reduce our problem to that in \cite{Bring}, 
essentially 
by the following simple observation:
\begin{align}
\prod_{j = 1}^k \jb{n_j}^{-\g}
\les \jb{n_1+ \cdots + n_k}^{-\g}
\label{PP1}
\end{align}

\noi
for any $\g \ge 0$.

\begin{remark}\rm

In the following, we
also set 
\begin{align}
\Sep = \I(\<30>_N^2 \<1>_N).
\label{sto6a}
\end{align}

\noi
By carrying out analysis analogous to 
(but more involved than) 
that for $\<30>_N \<1>_N$
studied in Lemma \ref{LEM:sto2}
below, 
we can
show that 
$\{\<30>_N^2 \<1>_N\}_{N \in \N}$
forms a Cauchy sequence in 
$C([0,T];W^{\al - \frac 12 - ,\infty}(\T^3))$
 almost surely, 
 thus converging to some limit 
$\<30>^2\<1>$.
In this paper, however, 
we proceed with space-time analysis as in \cite{Bring}.
Namely, we study $\Sep$ in the $X^{s, b}$-spaces
and show that it converges to some limit denoted by $\<70>$.
See Lemma \ref{LEM:sto3}.

\end{remark}

Putting everything together, 
we can take $N \to \infty$ in \eqref{SNLW10}
and obtain the following limiting equation
for $v = u - \<1> + \<30>$:
\begin{align}
\begin{split}
  v
& = 
 S(t) (u_0, u_1) + 
\I\big(-  v^3  + 3 (\<30> - \<1>)v^2 
- 3 \<30>^2 v\big)\\ 
& \quad +6\I\big((\<30>\<1>) v\big)
-3 \If^{  \<2> }(v)\\
& \quad 
+ \I\big( \<30>^3\big)
  -3\, \<70>
  + 3 \,  \<320>.
\end{split}
\label{SNLW11}
\end{align}

\noi
By the Fourier restriction norm method with the Strichartz estimates, 
we can then prove
local well-posedness of \eqref{SNLW11}
in the deterministic manner.
Namely, given the following enhanced data set
\begin{align}
\Xi = \big (u_0, u_1, 
\<1>, \<30>, 
 \<30>\<1>, 
\<320>, \<70>, 
\If^{  \<2> }\big)
\label{data1}
\end{align}

\noi
of appropriate regularities
(depicted by stochastic analysis), 
there exists a unique local-in-time solution
$v$ to \eqref{SNLW11}, 
continuously depending on the enhanced data set $\Xi$.
See Proposition \ref{PROP:LWP}
for a precise statement.

This local well-posedness result
together with the convergence of 
$\<1>_N$ and  $\<30>_N$
 then yields
the convergence of 
$u_N = 
\<1>_N - \<30>_N + v_N$ in \eqref{exp3}
to the limiting process
\begin{align*}
u = \<1> - \<30> + v, 
\end{align*}

\noi
where $v$ is the solution to \eqref{SNLW11}.

\begin{remark}\label{REM:trilin}\rm
In terms of regularity counting, 
the sum of the regularities in $ \<1>\cdot v^2 $
is positive.
In the parabolic setting, one may then proceed with a product estimate.
In the current dispersive setting, however, 
integrability of functions plays an important role
and thus we need to proceed with care.
See Lemmas~\ref{LEM:trilin1} and~\ref{LEM:trilin2}.

\end{remark}

\begin{remark}\label{REM:uniq}\rm
(i) 
By the use of stochastic analysis, 
the stochastic terms
$\<1>$, $\<30>$, 
 $\<30>\<1>$, 
$\<320>$,
$\<70>$, 
and  
$\If^{  \<2> }$
in the enhanced data set 
are defined as the unique limits
of their truncated versions.
Furthermore, 
by deterministic analysis, we prove that 
a solution $v$ to \eqref{SNLW11}
is pathwise unique in an appropriate class. 
Therefore, under the decomposition  $u = \<1> - \<30> + v$, 
the uniqueness of~$u$ 
refers to 
(a) the uniqueness of $\<1>$ and $\<30>$ 
as the limits of $\<1>_N$ and $\<30>_N$
and (b) the uniqueness of $v$ as a solution to \eqref{SNLW11}.

\smallskip

\noi
(ii) In this paper, we work with the 
 frequency projector $\pi_N$
 with a sharp cutoff function on the frequency side.
It is also possible to work
with  smooth mollifiers $\eta_{\dl}(x) = \dl^{-3}\eta(\dl^{-1}x)$, 
where 
  $\eta \in C^\infty(\R^3 ; [0, 1])$ is a smooth, non-negative, even function with 
 $\int \eta dx = 1$ and $\supp \eta \subset (-\pi, \pi]^3\simeq \T^3$.
In this case, working with
\begin{align}
\begin{cases}
\dt^2 u_\dl + (1-  \Dl)  u_\dl  + u_\dl^3  - 3\s_\dl u_\dl  = \jb{\nb}^{-\al} \eta_\dl *  \xi\\
(u_\dl, \dt u_\dl)|_{t = 0} = (u_0, u_1), 
\end{cases}
\label{SNLW12}
\end{align}

\noi
we can show that a solution $u_\dl$ to \eqref{SNLW12} converges 
in probability to some limit  $u$ in $ C([-T_\o, T_\o]; H^{\al -\frac 12 -\eps} (\T^3))$ as $\dl \to 0$.
Furthermore, 
 the limit $u_\dl$ is independent of the choice of a mollification kernel~$\eta$
 and agrees with the limiting process $u$ constructed in Theorem \ref{THM:1}.
This is the second meaning of the uniqueness of the limiting process $u$.

\end{remark}

\begin{remark}\rm
(i) 
From the ``scaling'' point of view,  our problem for $0 < \al \ll 1$ is more difficult than the quadratic SNLW
\eqref{SNLW4} considered in \cite{GKO2}, 
where the paracontrolled calculus played an essential role.
On the other hand, 
for the proof of Theorem \ref{THM:1}, 
we do not need to use 
 the paracontrolled ansatz for the remainder terms $v  = u - \<1> + \<30>$
thanks to the smoothing on the noise
and the use of space-time estimates, 
which allows us to place $v$ in the subcritical regularity $\frac 12 + $.

Our approach to \eqref{SNLW2} and Bringmann's approach 
in \cite{Bring}
crucially exploit various multilinear smoothing, gaining  $\sim \frac 12$-derivative.
When $\al = 0$ (or $\be = 0$ in the Hartree SNLW~\eqref{SNLW5}), 
such multilinear smoothing seems to give (at best) $\frac 12$-smoothing
and thus the arguments in this paper and in \cite{Bring}
break down in the $\al = 0$ case.

\smallskip

\noi
(ii)  In \cite{GKO2}, 
Gubinelli, Koch, and the first author studied
the quadratic SNLW on $\T^3$ with an additive space-time white noise
(i.e.~$\al = 0$):
\begin{align}
\dt^2 u  + (1 -  \Dl)  u +  u^2  = \xi.
\label{q1}
\end{align}

\noi
With the Wick renormalization and the second order expansion
$u = \<1> - \<20> + v$, where $\<20> = \I(\<2>)$, 
the remainder term $v = u - \<1> + \<20>$
satisfies  
\begin{align}
(\dt^2  +1 -  \Dl)  v 
  =  - (v  - \<20>)^2  - 2 \<1> v + 2\<1> \<20>.
\label{q2}
\end{align}

\noi
As observed in \cite{GKO2}, 
the main issue in studying \eqref{q2} comes
from the regularity $\frac 12 - $
of $v$, which is inherited from the regularity $-\frac 12 - $ of  $\<1>\<20>$.
As a result, the product 
$\<1> v $ in \eqref{q2}  is not well defined since the sum of the regularities
of $\<1>$ and $v$ is negative.
As in 
\eqref{ran1}, 
it is tempting to directly 
define 
the random operator 
$\If^{\<1>} (v) = \I(\<1> v)$,  
using the random matrix estimates.
However, there is an issue 
in handling 
 the ``high $\times$ high $\to$ low'' interaction
 and thus the random matrix approach alone is not sufficient to close the argument.
In \cite{GKO2}, 
this issue was overcome by a paracontrolled ansatz
and an iteration of the Duhamel formulation.
We point out that 
the use of  the paracontrolled ansatz in \cite{GKO2}
led to the following paracontrolled operator
$\If_{\pl}(v) = \I(v\pl \<1>)$, 
which avoids the undesirable 
 high $\times$ high $\to$ low interaction.
Instead of the paracontrolled calculus, 
one may  use the random averaging operator
from~\cite{DNY1}
together with an iteration of the Duhamel formulation.
We, however, point out that 
due to the problematic
 high $\times$ high interaction, 
 the random averaging operator  as introduced in \cite{DNY1} alone (without iterating the Duhamel formulation)
 does not seem to be sufficient to study the quadratic SNLW \eqref{q1}.

\end{remark}

\medskip

\noi
$\bullet$ {\bf Organization of the paper.}
In Section \ref{SEC:2}, we go over the basic definitions and lemmas 
from deterministic and stochastic analysis.
In Section \ref{SEC:LWP}, we first 
state the almost sure regularity and convergence
properties
of (the truncated versions of) the  stochastic objects in the enhanced data set
$\Xi$ in \eqref{data1}.
Then, we present the proof of our main result (Theorem~\ref{THM:1}).
In Section~\ref{SEC:sto2}, 
we establish
 the almost sure regularity and convergence
properties
of the stochastic objects in the enhanced data set.
In Section \ref{SEC:A}, 
we recall the counting lemmas from~\cite{Bring}
which play a crucial role in Section \ref{SEC:sto2}.
In Sections \ref{SEC:B} and \ref{SEC:C}, 
we provide the basic definitions and lemmas
on multiple stochastic integrals 
and (random) tensors, respectively.

\section{Notations and basic lemmas}
\label{SEC:2}

We write $ A \les B $ to denote an estimate of the form $ A \leq CB $. 
Similarly, we write  $ A \sim B $ to denote $ A \les B $ and $ B \les A $ and use $ A \ll B $ 
when we have $A \leq c B$ for small $c > 0$.
We also use  $ a+ $ (and $ a- $) to mean  $ a + \eps $ (and $ a-\eps $, respectively)
 for arbitrarily small $ \eps >0 $.

When we work with space-time function spaces, we use short-hand notations such as
 $C_T H^s_x  = C([0, T]; H^s(\T^3))$.

When there is no confusion,
we simply use $\ft{u}$ or $\F(u)$
to denote
the spatial, temporal, or space-time Fourier transform
of $u$, depending on the context.
We also use $\F_x$, $\F_t$, and $\F_{x, t}$
to denote
the spatial, temporal, and space-time Fourier transforms, respectively.

We use the following short-hand notation:
$n_{ij} = n_i + n_j$, etc.
For example, $n_{123} = n_1 + n_2 + n_3$.

\subsection{Sobolev spaces 
 and Besov spaces}

Let $s \in \R$ and $1 \leq p \leq \infty$.
We define the $L^2$-based Sobolev space $H^s(\T^3)$
by the norm:
\begin{align*}
\| f \|_{H^s} = \| \jb{n}^s \ft f (n) \|_{\l^2_n}
\end{align*}

\noi
and set $\H^s(\T^3)$ to be 
\[\H^s(\T^3) = H^s(\T^3)\times H^{s-1}(\T^3).\]

\noi
We also define the $L^p$-based Sobolev space $W^{s, p}(\T^3)$
by the norm:
\begin{align*}
\| f \|_{W^{s, p}} = \big\| \F^{-1} (\jb{n}^s \ft f(n))\big\|_{L^p}.
\end{align*}

\noi
When $p = 2$, we have $H^s(\T^3) = W^{s, 2}(\T^3)$.

Let $\phi:\R \to [0, 1]$ be a smooth  bump function supported on $\big[-\frac{8}{5}, \frac{8}{5}\big]$ 
and $\phi\equiv 1$ on $\big[-\frac 54, \frac 54\big]$.
For $\xi \in \R^3$, we set $\phi_0(\xi) = \phi(|\xi|)$
and 
\[\phi_{j}(\xi) = \phi\big(\tfrac{|\xi|}{2^j}\big)-\phi\big(\tfrac{|\xi|}{2^{j-1}}\big)\]

\noi
for $j \in \N$.
Note that we have 
\begin{align}
\sum_{j \in \N_0} \phi_j(\xi) = 1
\label{phi1}
 \end{align}

\noi
for any $\xi \in \R^3$.
Then, for $j \in \N_0 := \N \cup\{0\}$, 
we define  the Littlewood-Paley projector  $\P_j$ 
as the Fourier multiplier operator with a symbol $\phi_j$.
Thanks to  \eqref{phi1}, 
we have 
\begin{align}
 f = \sum_{j = 0}^\infty \P_j f.
 \label{para1a}
\end{align}

Next, we recall the following paraproduct decomposition due to 
 Bony~\cite{Bony}.
See \cite{BCD, GIP} for further details.
Let $f$ and $g$ be  functions  on $\T^3$
of regularities $s_1$ and $s_2$, respectively.
Using~\eqref{para1a}, 
we write the product $fg$ as
\begin{align}
\begin{split}
fg 
& 
= f\pl g + f \pe g + f \pg g \\
 : \! & = \sum_{j < k-2} \P_{j} f \, \P_k g
+ \sum_{|j - k|  \leq 2} \P_{j} f\,  \P_k g
+ \sum_{k < j-2} \P_{j} f\,  \P_k g.
\end{split}
\label{para1}
\end{align}

\noi
The first term 
$f\pl g$ (and the third term $f\pg g$) is called the paraproduct of $g$ by $f$
(the paraproduct of $f$ by $g$, respectively)
and it is always well defined as a distribution
of regularity $\min(s_2, s_1+ s_2)$.
On the other hand, 
the resonant product $f \pe g$ is well defined in general 
only if $s_1 + s_2 > 0$.

We briefly recall the basic properties of the Besov spaces $B^s_{p, q}(\T^3)$
defined by the norm:
\begin{equation*}
\| u \|_{B^s_{p,q}} = \Big\| 2^{s j} \| \P_{j} u \|_{L^p_x} \Big\|_{\l^q_j(\N_0)}.
\end{equation*}

\noi
Note that  $H^s(\T^3) = B^s_{2,2}(\T^3)$.

\begin{lemma}\label{LEM:para}
\textup{(i) (paraproduct and resonant product estimates)}
Let $s_1, s_2 \in \R$ and $1 \leq p, p_1, p_2, q \leq \infty$ such that 
$\frac{1}{p} = \frac 1{p_1} + \frac 1{p_2}$.
Then, we have 
\begin{align}
\| f\pl g \|_{B^{s_2}_{p, q}} \les 
\|f \|_{L^{p_1}} 
\|  g \|_{B^{s_2}_{p_2, q}}.  
\label{para2a}
\end{align}

\noi
When $s_1 < 0$, we have
\begin{align}
\| f\pl g \|_{B^{s_1 + s_2}_{p, q}} \les 
\|f \|_{B^{s_1 }_{p_1, q}} 
\|  g \|_{B^{s_2}_{p_2, q}}.  
\label{para2}
\end{align}

\noi
When $s_1 + s_2 > 0$, we have
\begin{align}
\| f\pe g \|_{B^{s_1 + s_2}_{p, q}} \les 
\|f \|_{B^{s_1 }_{p_1, q}} 
\|  g \|_{B^{s_2}_{p_2, q}}  .
\label{para3}
\end{align}

\noi
\textup{(ii)}
Let $s_1 <  s_2 $ and $1\leq p, q \leq \infty$.
Then, we have 
\begin{align} 
\| u \|_{B^{s_1}_{p,q}} 
&\les \| u \|_{W^{s_2, p}}.
\label{embed}
\end{align}

\end{lemma}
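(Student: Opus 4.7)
The plan is to deduce all three paraproduct/resonant estimates by standard Bony calculus, exploiting frequency localization of the Littlewood--Paley blocks, and then derive (ii) from a simple pointwise inequality. The key geometric fact is that the Fourier support of $\P_j f \, \P_k g$ is contained in a ball of radius $\les 2^{\max(j,k)}$, so inside $f \pl g$ (where $j < k-2$) the block $\P_j f \, \P_k g$ lives in an annulus of size $\sim 2^k$, whereas inside $f \pe g$ (where $|j-k| \le 2$) it lives in a ball of radius $\les 2^j$. Consequently, $\P_l(f \pl g)$ receives contributions only from $k \sim l$, while $\P_l(f \pe g)$ receives contributions only from $j \gtrsim l$.

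For \eqref{para2a}, I would combine the above localization with H\"older in $x$ and the $L^{p_1}$-boundedness of the low-frequency projector $\P_{<k-2}$:
\[
\| \P_l(f \pl g) \|_{L^p} \les \sum_{k \sim l} \| \P_{<k-2} f \|_{L^{p_1}} \| \P_k g \|_{L^{p_2}} \les \| f \|_{L^{p_1}} \| \P_l g \|_{L^{p_2}},
\]
and then multiply by $2^{s_2 l}$ and take $\ell^q$ in $l$. For \eqref{para2}, the same reduction applies, but I replace the bound on $\| \P_{<k-2} f \|_{L^{p_1}}$ by a H\"older (in $j$) argument exploiting $s_1 < 0$: writing $\| \P_{<k-2} f \|_{L^{p_1}} \le \sum_{j < k-2} 2^{-s_1 j}(2^{s_1 j} \| \P_j f \|_{L^{p_1}})$ and using that $\sum_{j < k-2} 2^{-s_1 j q'} \les 2^{-s_1 k q'}$ yields $\| \P_{<k-2} f \|_{L^{p_1}} \les 2^{-s_1 k} \| f \|_{B^{s_1}_{p_1, q}}$, and the remaining factor $2^{-s_1 l} \| \P_l g \|_{L^{p_2}}$ is exactly what is needed to produce the $B^{s_1 + s_2}_{p, q}$-norm on the left.

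For the resonant estimate \eqref{para3}, setting $A_j = 2^{s_1 j} \| \P_j f \|_{L^{p_1}}$ and $B_j = 2^{s_2 j} \| \P_j g \|_{L^{p_2}}$, the localization gives
\[
2^{(s_1 + s_2) l} \| \P_l(f \pe g) \|_{L^p} \les \sum_{j \ge l - O(1)} 2^{(s_1 + s_2)(l - j)} A_j B_j.
\]
Here the hypothesis $s_1 + s_2 > 0$ makes the convolution kernel absolutely summable, so Young's inequality on $\ell^q$ bounds the left-hand side in $\ell^q_l$ by $\| (A_j B_j) \|_{\ell^q_j}$, and the trivial embedding $\| A \|_{\ell^\infty} \le \| A \|_{\ell^q}$ then yields $\| A_j B_j \|_{\ell^q} \le \| A \|_{\ell^q} \| B \|_{\ell^q}$, giving the claim. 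Finally for \eqref{embed}, H\"older in $j$ against the geometric sequence $(2^{(s_1 - s_2) j})_j \in \ell^q$ (which is finite since $s_1 < s_2$) together with the pointwise inequality $\sup_j |a_j| \le \big(\sum_j a_j^2\big)^{1/2}$ gives
\[
\| u \|_{B^{s_1}_{p, q}} \les \sup_j \, 2^{s_2 j} \| \P_j u \|_{L^p} = \| u \|_{B^{s_2}_{p, \infty}} \les \big\| \big(\textstyle\sum_j (2^{s_2 j} |\P_j u|)^2\big)^{1/2} \big\|_{L^p} = \| u \|_{W^{s_2, p}}.
\]

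None of the steps is genuinely hard; the only subtle point is the bookkeeping for \eqref{para3}, where the choice of a single index $q$ on both factors on the right forces one to absorb an $\ell^\infty$-norm rather than splitting via $1/q = 1/q_1 + 1/q_2$ H\"older as in the more general statement. Since this lemma is entirely classical (see, e.g., Bony~\cite{Bony} and Bahouri--Chemin--Danchin~\cite{BCD}), I would most likely record the argument tersely or simply cite these references.
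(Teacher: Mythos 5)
Your argument for the three product estimates is the standard Bony/Littlewood--Paley proof and is correct: the frequency-localization facts you invoke (the low-high blocks of $f\pl g$ contribute to $\P_l$ only for $k\sim l$; the resonant blocks contribute only for $j\ge l-O(1)$), the uniform $L^{p_1}$-bound on $\P_{<k-2}$, the H\"older-in-$j$ step using $s_1<0$, and the Young's-inequality step using $\l^1$-summability of $2^{(s_1+s_2)m}\ind_{m\le O(1)}$ are all sound, including the endpoints $q\in\{1,\infty\}$. The paper does not prove these at all --- it simply cites \cite{BCD, MW2} --- so you are essentially supplying the proof that the paper outsources.

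The one genuine (if minor) gap is in your proof of (ii). Your final step identifies $\big\|\big(\sum_j(2^{s_2 j}|\P_j u|)^2\big)^{1/2}\big\|_{L^p}$ with $\|u\|_{W^{s_2,p}}$; this square-function characterization of $W^{s,p}$ is Littlewood--Paley theory and holds only for $1<p<\infty$. The lemma is stated for $1\le p\le\infty$, and the paper actually applies \eqref{embed} with $p=\infty$ (e.g.\ in the proof of Lemma \ref{LEM:sto2}, where the paraproducts of $\<30>_N$ and $\<1>_N$ are controlled starting from $W^{s,\infty}$-bounds), so the endpoint matters here. The fix is the route the paper itself indicates: write $\P_j u=\big(2^{s_2 j}\P_j\jb{\nb}^{-s_2}\big)\big(2^{-s_2 j}\jb{\nb}^{s_2}u\big)$ and use that the Fourier multiplier $2^{s_2 j}\P_j\jb{\nb}^{-s_2}$ has convolution kernel with $L^1$-norm bounded uniformly in $j$, so that $2^{s_2 j}\|\P_j u\|_{L^p}\les\|u\|_{W^{s_2,p}}$ for every $1\le p\le\infty$; then your H\"older against the geometric sequence $\{2^{(s_1-s_2)j}\}_{j}\in\l^q$ finishes the argument at all endpoints.
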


The product estimates \eqref{para2a},  \eqref{para2},  and \eqref{para3}
follow easily from the definition \eqref{para1} of the paraproduct 
and the resonant product.
See \cite{BCD, MW2} for details of the proofs in the non-periodic case
(which can be easily extended to the current periodic setting).
The embedding \eqref{embed}
 follows from the $\l^{q}$-summability 
of $\big\{2^{(s_1 - s_2)j}\big\}_{j \in \N_0}$ for $s_1 < s_2$
and the uniform boundedness of the Littlewood-Paley projector $\P_j$.

We also recall the following product estimate from \cite{GKO}.

\begin{lemma}\label{LEM:gko}
Let $0\leq s\leq 1$.
%
%
%
%
%
Let   $1<p,q,r<\infty$ such that $s \geq   3\big(\frac{1}{p}+\frac{1}{q}-\frac{1}{r}\big)$.
Then, we have
$$
\|\jb{\nb}^{-s}(fg)\|_{L^r(\T^3)}
\lesssim\| \jb{\nb}^{-s} f\|_{L^{p}(\T^3)} \| \jb{\nb}^{s} g\|_{L^{q}(\T^3)} .
$$
\end{lemma}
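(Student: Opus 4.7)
The estimate is a standard product/Kato--Ponce type inequality, and the plan is to proceed by a short chain of Sobolev embeddings and Hölder's inequality, without invoking any paraproduct machinery. The idea is that the negative regularity on the left-hand side can be cashed in for an integrability gain via Sobolev embedding, while the Sobolev regularities of $\jb{\nb}^{-s}f$ and $\jb{\nb}^{s}g$ can likewise be exchanged for integrability and absorbed by Hölder.

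Concretely, for suitably chosen auxiliary exponents $1 < p_0, q_0, r_0 < \infty$ with $\tfrac{1}{p_0}+\tfrac{1}{q_0} = \tfrac{1}{r_0}$, the proof assembles the chain
\begin{align*}
\|\jb{\nb}^{-s}(fg)\|_{L^r(\T^3)}
&\lesssim \|fg\|_{L^{r_0}(\T^3)} \\
&\leq \|f\|_{L^{p_0}(\T^3)}\, \|g\|_{L^{q_0}(\T^3)} \\
&\lesssim \|\jb{\nb}^{-s}f\|_{L^p(\T^3)}\, \|\jb{\nb}^{s} g\|_{L^q(\T^3)},
\end{align*}
using in the first step the Sobolev embedding $W^{s, r_0}(\T^3) \hookrightarrow L^r(\T^3)$ (valid for $\tfrac{1}{r_0} \leq \tfrac{1}{r}+\tfrac{s}{3}$), in the second step Hölder's inequality, and in the third step the embeddings $W^{s,p} \hookrightarrow L^{p_0}$ and $W^{s,q} \hookrightarrow L^{q_0}$ (valid for $\tfrac{1}{p_0} \geq \tfrac{1}{p}-\tfrac{s}{3}$ and $\tfrac{1}{q_0} \geq \tfrac{1}{q}-\tfrac{s}{3}$). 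The first and third steps also implicitly use the Mikhlin-type $L^a(\T^3)$-boundedness of $\jb{\nb}^{\pm s}$ for $1 < a < \infty$.

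The main task — and the only place where the hypothesis is actually used — is to check that these Sobolev constraints together with the Hölder identity admit a choice of $p_0, q_0, r_0$ lying strictly inside $(1, \infty)$. Summing the lower bounds on $\tfrac{1}{p_0}+\tfrac{1}{q_0}$ and comparing with the upper bound on $\tfrac{1}{r_0}$ requires $\tfrac{1}{p}+\tfrac{1}{q}-\tfrac{2s}{3} \leq \tfrac{1}{r}+\tfrac{s}{3}$, i.e.\ $s \geq \tfrac{1}{p}+\tfrac{1}{q}-\tfrac{1}{r}$, which is comfortably implied by the stated hypothesis $s \geq 3\bigl(\tfrac{1}{p}+\tfrac{1}{q}-\tfrac{1}{r}\bigr)$. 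The generous factor $3$ in the hypothesis provides ample slack to keep $p_0, q_0, r_0$ away from the endpoint values $\{1, \infty\}$ where the Sobolev embeddings above would degenerate; combined with the open-range assumption $1 < p, q, r < \infty$ on the original exponents, the remaining bookkeeping of selecting $p_0, q_0, r_0$ that simultaneously satisfy all four constraints is routine.
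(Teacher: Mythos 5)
Your chain breaks at the third step as applied to the factor $f$, and the failure is structural rather than a matter of exponent bookkeeping. The lemma controls $f$ only through $\|\jb{\nb}^{-s}f\|_{L^p}$, i.e.\ in the negative-order space $W^{-s,p}(\T^3)$, whereas your argument needs $\|f\|_{L^{p_0}}\les \|\jb{\nb}^{-s}f\|_{L^p}$, which is the ``embedding'' $W^{-s,p}\hookrightarrow L^{p_0}$. No such embedding holds for $s>0$: take $f=\jb{\nb}^{s}e^{iNx_1}=\jb{N}^{s}e^{iNx_1}$, so that $\|\jb{\nb}^{-s}f\|_{L^p}=1$ while $\|f\|_{L^{p_0}}=\jb{N}^{s}\to\infty$. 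The embedding you actually quote, $W^{s,p}\hookrightarrow L^{p_0}$, bounds $\|f\|_{L^{p_0}}$ by $\|\jb{\nb}^{+s}f\|_{L^p}$, which has the wrong sign of $s$ relative to the right-hand side of the lemma. Your first step is what makes this unrepairable: by discarding the outer $\jb{\nb}^{-s}$ before touching the product, you give up the only mechanism available to compensate for the negative regularity of $f$.

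The estimate is genuinely a paraproduct inequality and cannot be reduced to H\"older plus Sobolev embeddings for the bare functions. One decomposes $fg=f\pl g+f\pe g+f\pg g$ as in \eqref{para1}: in $f\pg g$ ($f$ at high frequency $N$, $g$ low) the output also lives at frequency $\sim N$, so the outer $\jb{\nb}^{-s}$ supplies the factor $N^{-s}$ converting $\P_N f$ into $\jb{\nb}^{-s}\P_N f$; in $f\pl g$ the outer $\jb{\nb}^{-s}$ cancels against the $\jb{\nb}^{s}$ placed on $g$; and the resonant piece uses $(-s)+s\ge 0$ together with Bernstein/Sobolev to pass between the exponents $p,q,r$, which is exactly where the dimensional factor $3$ and the full hypothesis $s\ge 3\big(\frac1p+\frac1q-\frac1r\big)$ enter --- the weaker condition $s\ge \frac1p+\frac1q-\frac1r$ produced by your accounting is another sign that the essential structure has been lost. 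This is the proof in \cite{GKO}, to which the paper defers; the paper's own contribution is only the remark that the case of strict inequality in the exponent condition follows from the case of equality via the nesting $L^{r_1}(\T^3)\subset L^{r_2}(\T^3)$ for $r_1\ge r_2$.
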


Note that
while  Lemma \ref{LEM:gko} 
was shown only for 
$s =   3\big(\frac{1}{p}+\frac{1}{q}-\frac{1}{r}\big)$
in \cite{GKO}, 
the general case
$s \geq    3\big(\frac{1}{p}+\frac{1}{q}-\frac{1}{r}\big)$
follows the embedding $L^{r_1}(\T^3) \subset 
L^{r_2}(\T^3)$, $r_1 \geq r_2$.

\subsection{Fourier restriction norm method
and 
Strichartz estimates}

We first  recall  the so-called  $X^{s, b}$-spaces, 
also known as the hyperbolic Sobolev spaces,
due to Klainerman-Machedon~\cite{KM} and Bourgain~\cite{BO93}, 
defined by the norm:
\begin{align}
\|u\|_{X^{s, b} ( \T^3 \times\R)} = \|\jb{n}^s \jb{|\tau|- \jb{n}}^b \ft u(n, \tau)\|_{\l^2_n L^2_\tau( \Z^3\times \R)}.
\label{Xsb}
\end{align}

\noi
For $ b > \frac{1}{2}$, we have $X^{s, b} \subset C(\R; H^s(\T^3))$.
Given an  interval $I \subset \R$,
we define the local-in-time version $X^{s, b}(I)$
as a restriction norm:
\begin{align}
 \|u \|_{X^{s, b}(I)} = \inf\big\{ \|v\|_{X^{s, b}(\T^3 \times \R)}: \, v|_I = u\big\}.
 \label{Xsb2}
\end{align}

\noi
When $I = [0, T]$, we set $X^{s, b}_T = X^{s, b}(I)$.
%
%

Next, we recall  the Strichartz estimates
for the  linear wave/Klein-Gordon  equation. 
Given  $0 \leq s \leq 1$, 
we say that a pair $(q, r)$ is $s$-admissible
if $2 < q \leq \infty$, 
 $2 \leq r < \infty$, 
\begin{align*}
\frac{1}{q} + \frac 3r  =  \frac 32 - s
\qquad \text{and} \qquad 
\frac 1q + \frac{1}{r} \leq \frac 1 2.
\end{align*}

\noi
Then, we have the following Strichartz estimates.

\begin{lemma}\label{LEM:Str1}
 Given $0 \leq s \leq 1$,
let $(q, r)$ be $s$-admissible.
Then, we have 
\begin{align}
\|S(t)(\phi_0, \phi_1)  \|_{L^q_TL^r_x(\T^3)}
\les
\|(\phi_0 ,\phi_1) \|_{\H^{s}(\T^3)}
\label{Str1}
\end{align}

\noi
for any 
 $0 < T \leq 1$.
\end{lemma}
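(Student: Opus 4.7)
The plan is to reduce the bound to dyadic frequency-localized estimates for the half-wave propagators $e^{\pm it\jb{\nb}}$ and then recover the Klein-Gordon flow by Littlewood-Paley summation. Writing $\cos(t\jb\nb)=\tfrac12(e^{it\jb\nb}+e^{-it\jb\nb})$ and $\jb\nb^{-1}\sin(t\jb\nb)=\tfrac{1}{2i}\jb\nb^{-1}(e^{it\jb\nb}-e^{-it\jb\nb})$, it suffices to establish, for each $j\in\N_0$ with $N:=2^j$, the frequency-localized estimate
\begin{align*}
\|e^{\pm it\jb{\nb}}\P_j f\|_{L^q_T L^r_x(\T^3)} \lesssim N^{s}\|\P_j f\|_{L^2(\T^3)}
\end{align*}
with constant uniform in $j$ and in $0<T\leq 1$. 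The case $j=0$ is immediate from $L^2\hookrightarrow L^r$ on $\T^3$ and the $L^2$-unitarity of $e^{it\jb{\nb}}$; for $j\geq 1$ the symbol $\jb{n}$ restricted to $|n|\sim N$ is comparable to $|n|$ with symbol derivatives of the expected order, so the half-Klein-Gordon propagator may be replaced by the half-wave propagator up to a zero-order smooth Fourier multiplier, which is harmless in $L^r$.

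I would then transfer this dyadic bound from $\R^3$, where it is classical, to $\T^3$ using the short-time restriction $T\leq 1$. The cleanest route is via finite speed of propagation: the frequency-localized Klein-Gordon kernel on $\T^3$ equals, up to smoothing errors, the periodization of its $\R^3$ analogue, and for $|t|\leq 1$ the nontrivial periodic shifts contribute only rapidly decaying tails, since the Euclidean kernel is essentially supported in a ball of radius $O(1)$. Equivalently, one may argue directly on $\T^3$ through the standard frequency-localized dispersive estimate for the half-wave group and the $TT^*$ / Hardy-Littlewood-Sobolev machinery of Ginibre-Velo and Keel-Tao. The hypothesis $r<\infty$ together with $\tfrac{1}{q}+\tfrac{1}{r}\leq\tfrac12$ keeps the pair strictly inside the wave-admissible range in three dimensions, while the scaling identity $\tfrac1q+\tfrac3r=\tfrac32-s$ is precisely what converts the dispersive decay into the $N^s$ loss on the data.

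Finally, I would sum in $j$. When $q<\infty$, the Littlewood-Paley square function together with Minkowski's inequality in $\l^2$ yields
\begin{align*}
\|S(t)(\phi_0,\phi_1)\|_{L^q_T L^r_x}
\lesssim\Big(\sum_{j\geq 0}\|\P_j S(t)(\phi_0,\phi_1)\|_{L^q_T L^r_x}^2\Big)^{1/2},
\end{align*}
which after the dyadic estimate is controlled by $\big(\sum_{j\geq 0}2^{2js}\|\P_j(\phi_0,\phi_1)\|^2_{L^2\times H^{-1}}\big)^{1/2}\sim\|(\phi_0,\phi_1)\|_{\H^s}$; the remaining case $q=\infty$, $r=2$, $s=0$ reduces to the trivial energy estimate. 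The main obstacle is the transfer step: global-in-time Strichartz bounds for the wave/Klein-Gordon group on $\T^3$ at Euclidean admissibility are \emph{false} without a derivative loss, so exploiting $T\leq 1$ in an essential way is what prevents any $2^{j\eps}$ loss that would upset the scaling balance. The finite-speed-of-propagation reduction (or the equivalent direct kernel analysis) is precisely what makes the dyadic bound hold uniformly over all scales $N=2^j$ and all $0<T\leq 1$.
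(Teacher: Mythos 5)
Your proposal is correct and follows exactly the route the paper indicates: it cites the Euclidean Strichartz estimates of Ginibre--Velo, Lindblad--Sogge, and Keel--Tao and transfers them to $\T^3$ for $0<T\leq 1$ via finite speed of propagation, which is precisely your dyadic reduction plus periodization/kernel-localization argument spelled out in detail. The Littlewood--Paley summation and the handling of the $q=\infty$ endpoint are standard and pose no issue since $q,r\geq 2$ for admissible pairs.
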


See Ginibre-Velo \cite{GV}, Lindblad-Sogge \cite{LS},  and Keel-Tao \cite{KeelTao}
for the Strichartz estimates  on $\R^d$.
See also \cite{KSV}. 
The Strichartz estimates \eqref{Str1} on $\T^3$ in Lemma \ref{LEM:Str1} follows from 
those on $\R^3$ and the finite speed of propagation.

When $b > \frac 12$, the $X^{s, b}$-spaces enjoy the transference principle.
In particular, 
as a corollary to Lemma \ref{LEM:Str1}, 
we obtain the following space-time estimate.
See \cite{KS, TAO} for the proof.

\begin{lemma}\label{LEM:Str2}
Let $0 < T \leq 1$. 
Given $0 \leq s \leq 1$,
let $(q, r)$ be $s$-admissible.
Then, for $b > \frac{1}{2}$, we have 
\begin{align*}
\| u \|_{L^q_T L^r_x} \les
\| u\|_{X_T^{s, b}}.
\end{align*}

\end{lemma}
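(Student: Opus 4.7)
The plan is to invoke the standard transference principle, reducing the desired space-time bound to the homogeneous Strichartz estimate of Lemma~\ref{LEM:Str1}. First, using the restriction-norm definition \eqref{Xsb2}, I choose an extension $v \in X^{s,b}(\T^3 \times \R)$ of $u$ satisfying $\|v\|_{X^{s,b}} \leq 2\|u\|_{X_T^{s,b}}$. Since $\|u\|_{L^q_T L^r_x} \leq \|v\|_{L^q_T L^r_x}$, it suffices to bound this latter quantity by $\|v\|_{X^{s,b}}$.

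Next, I would split $v = v_+ + v_-$ according to $\pm \tau \geq 0$ in its space-time Fourier support. For each sign, the change of variables $\tau = \pm(\jb{n} + \sigma)$ with $\sigma \in \R$ yields the representation
\begin{align*}
v_\pm(x, t) = c \int_\R e^{\pm i t \sigma} \bigl(e^{\pm i t \jb{\nb}} f^\pm_\sigma\bigr)(x) \, d\sigma,
\end{align*}
where $\ft{f^\pm_\sigma}(n) = \ft v\bigl(n,\pm(\jb{n}+\sigma)\bigr)$ and $c$ is an absolute constant. The modulation $e^{\pm i t \sigma}$ has unit modulus, and each factor $e^{\pm i t \jb{\nb}} f^\pm_\sigma$ is a linear Klein--Gordon evolution with initial datum $f^\pm_\sigma$.

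Then, by Minkowski's inequality in $\sigma$ (valid since $q, r \geq 2$) followed by the Strichartz estimate \eqref{Str1} applied to each $f^\pm_\sigma$,
\begin{align*}
\|v_\pm\|_{L^q_T L^r_x} \leq \int_\R \bigl\|e^{\pm i t \jb{\nb}} f^\pm_\sigma\bigr\|_{L^q_T L^r_x}\, d\sigma \les \int_\R \|f^\pm_\sigma\|_{H^s}\, d\sigma.
\end{align*}
Since $b > \tfrac 12$, Cauchy--Schwarz in $\sigma$ against the weight $\jb{\sigma}^b$ gives
\begin{align*}
\int_\R \|f^\pm_\sigma\|_{H^s}\, d\sigma \leq \biggl(\int_\R \jb{\sigma}^{-2b}\, d\sigma\biggr)^{1/2} \biggl(\int_\R \jb{\sigma}^{2b} \|f^\pm_\sigma\|_{H^s}^2\, d\sigma\biggr)^{1/2} \les \|v\|_{X^{s, b}},
\end{align*}
where the last bound uses the change of variables $\tau = \pm(\jb{n}+\sigma)$ together with~\eqref{Xsb}. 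Taking the infimum over admissible extensions~$v$ yields the claim.

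The argument presents no serious obstacle: the hypothesis $b > \frac12$ enters only through the finiteness of $\int_\R \jb{\sigma}^{-2b}\, d\sigma$, and everything else is direct book-keeping on the space-time Fourier side. The sole minor technicality is the $\tau = 0$ overlap when splitting $v = v_+ + v_-$, which lies on a null set and is handled by assigning sharp cutoffs $\ind_{\pm \tau \geq 0}$ to the two halves.
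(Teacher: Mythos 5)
Your argument is the standard transference-principle proof (extension, splitting by the sign of $\tau$, foliation over the modulation variable $\sigma$, Minkowski plus the homogeneous Strichartz estimate of Lemma~\ref{LEM:Str1} on each slice, and Cauchy--Schwarz in $\sigma$ using $b>\tfrac12$), which is exactly what the paper invokes by citing the transference principle and the references \cite{KS, TAO}. The details check out, including the identification $e^{\pm it\jb{\nb}}f = S(t)(f, \pm i\jb{\nb}f)$ needed to apply \eqref{Str1}, so the proposal is correct and follows the same route.
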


We also state the nonhomogeneous linear  estimate.
See  \cite{GTV}.

\begin{lemma}\label{LEM:lin1}
Let $ - \frac 12 < b' \leq 0 \leq b \leq b'+1$.
Then, for $0 < T \leq 1$, we have 
\begin{align*}
\| \I(F)\|_{X^{s, b}_T}= 
\bigg\|  \int_0^t \frac{\sin((t-t')\jb{\nb})}{\jb{\nb}} F(t') dt'\bigg\|_{X^{s, b}_T}
\les T^{1-b+b'} \|F\|_{X^{s-1, b'}_T}.
\end{align*}

\end{lemma}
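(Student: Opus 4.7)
The plan is to prove this via the standard Fourier-analytic approach to the restriction-norm method; the argument is classical (see e.g.\ Ginibre--Tsutsumi--Velo or Tao's monograph), but let me outline how I would organize it.

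\smallskip
\noi
\textbf{Step 1 (Reduction to a global estimate).}
By the definition~\eqref{Xsb2} of the local-in-time norm, it suffices to find, for each extension of $F$ realizing the $X^{s-1,b'}_T$-norm up to a factor of $2$, an extension of $\I(F)$ that is estimated in the global $X^{s,b}$-norm. To this end, I would fix a smooth cutoff $\eta \in C_c^\infty(\R)$ with $\eta \equiv 1$ on $[-1,1]$ and $\supp \eta \subset [-2,2]$, and study
\[
v(t) \, := \, \eta(t/T)\int_0^t \frac{\sin((t-t')\jb{\nb})}{\jb{\nb}}\, F(t')\, dt',
\]
which coincides with $\I(F)$ on $[0,T]$. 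The task then reduces to showing $\|v\|_{X^{s,b}} \lesssim T^{1-b+b'}\|F\|_{X^{s-1,b'}}$ for globally defined $F$.

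\smallskip
\noi
\textbf{Step 2 (Fourier representation).}
Using Euler's formula, I would split $\sin((t-t')\jb{\nb}) = \tfrac{1}{2i}(e^{i(t-t')\jb{\nb}} - e^{-i(t-t')\jb{\nb}})$ and treat the two resulting half-wave propagators separately (they are symmetric). For the $+$ branch, inserting $F(x,t') = \sum_n e^{in\cdot x}\int e^{it'\tau}\widetilde F(n,\tau)\,d\tau$ (where $\widetilde F$ is the space-time Fourier transform of the extension) and computing the inner time integral gives
\[
\int_0^t e^{i(t-t')\jb{n}}e^{it'\tau}\, dt' \; = \; \frac{e^{it\tau} - e^{it\jb{n}}}{i(\tau-\jb{n})}.
\]
Thus, after multiplying by $\eta(t/T)/\jb{n}$ and taking the space-time Fourier transform in $(x,t)$, one obtains, at output frequency $(n,\tau')$, a kernel of the form
\[
K(\tau',\tau;n) \, := \, \frac{1}{\jb{n}}\cdot\frac{T\widehat{\eta}(T(\tau'-\tau)) - T\widehat{\eta}(T(\tau'-\jb{n}))}{i(\tau-\jb{n})}
\]
against $\widetilde F(n,\tau)$.

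\smallskip
\noi
\textbf{Step 3 (Modulation decomposition).}
I would dyadically split $F$ according to the input modulation $M_{\text{in}} := \jb{|\tau|-\jb{n}}$ into a ``high modulation'' piece where $M_{\text{in}} \gtrsim T^{-1}$ and a ``low modulation'' piece where $M_{\text{in}} \lesssim T^{-1}$. For the high-modulation piece, the bound $|\tau-\jb{n}|^{-1}\lesssim T$ combined with rapid decay of $\widehat\eta$ gives the required control, since integrating over $\tau$ against the weight $\jb{|\tau'|-\jb{n}}^b\jb{|\tau|-\jb{n}}^{-b'}$ produces at most a factor $T^{-b'}\cdot T\cdot T^{-b} = T^{1-b+b'}$ (using $b-b'\leq 1$). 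For the low-modulation piece the denominator is singular, so I would first use the mean-value bound
\[
\bigl|T\widehat{\eta}(T(\tau'-\tau)) - T\widehat{\eta}(T(\tau'-\jb{n}))\bigr| \lesssim T^{2}|\tau-\jb{n}|\,\bigl(\jb{T(\tau'-\tau)}^{-N}+\jb{T(\tau'-\jb{n})}^{-N}\bigr)
\]
to cancel the denominator, trading it for an extra factor of $T$, and then conclude as before.

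\smallskip
\noi
\textbf{Step 4 (Assembly and main obstacle).}
With the pointwise kernel bounds in hand, Schur's test in $\tau,\tau'$ (or equivalently Young's inequality, using that $\widehat\eta$ is Schwartz) yields $L^2_\tau\to L^2_{\tau'}$ boundedness with the claimed $T$-gain, uniformly in $n$. Summing in $n$ with the weight $\jb{n}^s/\jb{n}^{s-1} = \jb{n}$ exactly cancels the $\jb{n}^{-1}$ from the Duhamel formula, giving the final estimate. The main technical obstacle is the case $b'<0$ with $b$ close to $b'+1$: there one has to carefully exploit both the smoothing of the time cutoff (decay of $\widehat\eta$) and the cancellation across the two terms of the numerator of $K$ in the low-modulation regime, as neither piece alone suffices when $|\tau'|$ is comparable to neither $|\tau|$ nor $\jb{n}$.
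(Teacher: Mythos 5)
The paper does not prove this lemma --- it is quoted directly from Ginibre--Tsutsumi--Velo \cite{GTV} --- and your outline is precisely the standard argument from that reference: time cutoff $\eta(t/T)$, explicit kernel $\big(e^{it\tau}-e^{it\jb{n}}\big)/\big(i(\tau-\jb{n})\big)$ from the Duhamel integral, splitting at input modulation $T^{-1}$, and cancellation between the two numerator terms in the low-modulation regime. So the approach is correct and complete in structure. Two points to repair when writing it out. First, in Step 3 the displayed product $T^{-b'}\cdot T\cdot T^{-b}$ equals $T^{1-b-b'}$, not $T^{1-b+b'}$; moreover $T^{-b'}$ is not a valid upper bound for $\jb{|\tau|-\jb{n}}^{-b'}$ on the high-modulation region $\jb{|\tau|-\jb{n}}\gtrsim T^{-1}$, where that quantity is unbounded (recall $-b'\ge 0$). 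The correct accounting pairs the input weight with the kernel denominator, $\jb{|\tau|-\jb{n}}^{-1-b'}\les T^{1+b'}$, and then uses $\jb{|\tau'|-\jb{n}}^{b}\les \jb{|\tau|-\jb{n}}^{b}\jb{\tau'-\tau}^{b}$ together with $b-1-b'\le 0$ to arrive at $T^{1-b+b'}$; the final exponent you state is right, but the intermediate bookkeeping as written would not survive scrutiny. Second, the hypothesis $b'>-\tfrac12$ never appears in your outline: it is needed to make $\jb{\tau-\jb{n}}^{-1-b'}$ square-integrable on $\{|\tau-\jb{n}|\gtrsim T^{-1}\}$ when treating the boundary term $e^{it\jb{n}}/(\tau-\jb{n})$, whose $(\tau,\tau')$-dependence factorizes and is therefore handled by Cauchy--Schwarz rather than Schur. (Also note that your mean-value bound in the low-modulation regime is legitimate only because $T|\tau-\jb{n}|\les 1$ there, so the two arguments of $\widehat{\eta}$ are comparable; this should be said explicitly, since the stated inequality fails for general arguments.)
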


In the following, we briefly go over the main trilinear estimate
for the basic 
 local well-posedness of the cubic NLW \eqref{NLW0} in $\H^{\frac 12+\eps}(\T^3)$.

\begin{lemma}\label{LEM:tri}
Fix small $\dl_1, \dl_2 > 0$ with $4\dl_2 \leq \dl_1$.
Then, we have
\begin{align}
\| \I(u_1u_2u_3)\|_{X^{\frac 12 + \dl_1 , \frac 12 + \dl_2}_T}
\les
T^{\dl_2}\prod_{j = 1}^3 \| u_j\|_{X_T^{\frac 12 +\dl_1, \frac 12+\dl_2}}
\label{L0}
\end{align}

\noi
for any $0 < T\leq 1$.
\end{lemma}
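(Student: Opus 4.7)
The plan is to combine the Duhamel estimate of Lemma \ref{LEM:lin1} with the $L^4_{t,x}$-Strichartz estimate of Lemma \ref{LEM:Str2}, together with a dyadic Littlewood--Paley decomposition and a duality argument in the $X^{s,b}$-spaces.

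First, I would apply Lemma \ref{LEM:lin1} with $b = \tfrac12+\delta_2$ and $b' = -\tfrac12+2\delta_2$; the hypotheses $-\tfrac12 < b' \le 0 \le b \le b'+1$ are satisfied since $4\delta_2 \le \delta_1$ and both parameters are small. This produces the factor $T^{1-b+b'} = T^{\delta_2}$ and reduces \eqref{L0} to the trilinear estimate
\begin{align*}
\|u_1u_2u_3\|_{X^{-\frac12+\delta_1, -\frac12+2\delta_2}_T} \lesssim \prod_{j=1}^3 \|u_j\|_{X^{\frac12+\delta_1,\frac12+\delta_2}_T}.
\end{align*}
By the duality of the $X^{s,b}$-spaces, this is equivalent to a quadrilinear space-time integral bound, tested against a function $u_0 \in X^{\frac12-\delta_1,\frac12-2\delta_2}_T$.

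Next, I would perform a Littlewood--Paley decomposition $u_k = \sum_{N_k \in 2^{\N_0}} \P_{N_k} u_k$ for $k = 0,1,2,3$. The Fourier convolution constraint $n_0 = n_1+n_2+n_3$ forces $N_0 \lesssim \max(N_1, N_2, N_3)$, and by symmetry I may assume $N_1 \ge N_2 \ge N_3$, so $N_0 \lesssim N_1$. On each dyadic quadruple, H\"older's inequality in spacetime together with the $L^4_{t,x}$-Strichartz bound from Lemma~\ref{LEM:Str2} (using the $\tfrac12$-admissible pair $(4,4)$) applied to each $\P_{N_j} u_j$ gives $\|\P_{N_j} u_j\|_{L^4_{t,x}} \lesssim N_j^{1/2} \|\P_{N_j} u_j\|_{X^{0,\frac12+\delta_2}_T} \sim N_j^{-\delta_1}\|\P_{N_j} u_j\|_{X^{\frac12+\delta_1,\frac12+\delta_2}_T}$ for $j=1,2,3$, and a similar bound for $u_0$ after distributing the weight $\jb{n}^{\frac12-\delta_1}$. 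The resulting dyadic contribution carries a gain factor $(N_0 N_1 N_2 N_3)^{-\delta_1}$; summability over all four dyadic scales is then secured by Cauchy--Schwarz using the subcritical gap $\delta_1 > 0$.

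The main technical obstacle occurs in the duality step, since the test function $u_0$ has modulation index $\tfrac12 - 2\delta_2 < \tfrac12$, so the $L^4_{t,x}$-Strichartz estimate of Lemma~\ref{LEM:Str2} does not apply to $u_0$ directly. To bypass this, one further decomposes $u_0$ dyadically in the modulation variable $\jb{|\tau|-\jb n}$ and exploits the wave-equation resonance identity
\begin{align*}
(\tau_0 - \jb{n_0}) - \sum_{j=1}^3 (\tau_j - \jb{n_j}) = \sum_{j=1}^3 \jb{n_j} - \jb{n_0},
\end{align*}
which forces at least one of the four modulations to be large whenever the resonance defect is large, thus permitting Strichartz to be applied to the three $u_j$'s with the high-modulation factor absorbed into the weight. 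Alternatively, a smooth time-cutoff transfers $u_0$ into an $X^{s,\frac12+}$-space at the cost of a benign, absorbable $T$-factor. This near-critical bookkeeping, together with the careful balancing of the Strichartz loss $N_j^{1/2}$ against the regularity weights $N_j^{\pm(\frac12+\delta_1)}$, is where the analytic care is concentrated; the remaining summations are routine.
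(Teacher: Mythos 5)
Your reduction via Lemma \ref{LEM:lin1} and duality is exactly the paper's first step, and you correctly identify the crux: the dual test function $w$ lives in $X^{\frac12-\dl_1,\frac12-2\dl_2}$ with modulation exponent \emph{below} $\frac12$, so Lemma \ref{LEM:Str2} does not apply to it. But you do not actually resolve this, and neither of your two proposed fixes works as stated. The fallback of ``a smooth time-cutoff transfers $u_0$ into an $X^{s,\frac12+}$-space'' is false: multiplication by a time cutoff cannot raise the modulation exponent past $\frac12$ (note $X^{s,b}\subset C_tH^s$ for $b>\frac12$, while $X^{s,\frac12-}$ contains functions unbounded in time, and a cutoff does not change that). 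The modulation-decomposition/resonance-identity route is only sketched; carrying it out requires a genuine case analysis (when the large modulation falls on one of $u_1,u_2,u_3$, that factor can no longer be placed in $L^4$ via Strichartz either, and placing it in $L^2_{t,x}$ forces the remaining factors into $L^6$-type norms that the regularity $\frac12+\dl_1$ does not support without further interpolation). In addition, your claimed dyadic gain $(N_0N_1N_2N_3)^{-\dl_1}$ is wrong for the $N_0$ factor: distributing the weight gives $\|\P_{N_0}u_0\|_{X^{0,\cdot}}\lesssim N_0^{-\frac12+\dl_1}$, so pairing it with the Strichartz loss $N_0^{\frac12}$ yields a \emph{loss} $N_0^{+\dl_1}$, which must be absorbed using $N_0\lesssim N_1$; this is repairable but not what you wrote.

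The paper's resolution is simpler and avoids both Littlewood--Paley and modulation decompositions: interpolate the $L^4_{T,x}$ Strichartz bound $\|u\|_{L^4_{T,x}}\lesssim\|u\|_{X_T^{\frac12,\frac12+\dl_0}}$ with the trivial identity $\|u\|_{L^2_{T,x}}=\|u\|_{X_T^{0,0}}$ to get
\begin{align*}
\|w\|_{L^{\frac{4}{1+2\dl_1}}_{T,x}}\lesssim\|w\|_{X_T^{\frac12-\dl_1,\,\frac12-\frac12\dl_1}},
\end{align*}
which requires only modulation exponent $\frac12-\frac12\dl_1\le\frac12-2\dl_2$ — this is precisely where the hypothesis $4\dl_2\le\dl_1$ enters. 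One then compensates by placing $u_1,u_2,u_3$ in $L^{\frac{12}{3-2\dl_1}}_{T,x}$ via the $\big(\frac12+\frac23\dl_1\big)$-admissible pair, and a single application of H\"older closes the argument with no dyadic summation at all. You should adopt this interpolation device to treat the dual function; without it (or a fully executed modulation analysis) your proof has a genuine gap.
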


\begin{proof}

Recall that $(q, r) = (4, 4)$ is $\frac 12$-admissible.
Then, in view of Lemma \ref{LEM:Str2}, 
interpolating 
\begin{align}
\| u \|_{L^4_{T, x}} \les
\| u\|_{X_T^{\frac 12 , \frac 12+\dl_0}}
\qquad \text{and}
\qquad 
\| u \|_{L^2_{T, x}} = 
\| u\|_{X_T^{0, 0}}
\label{L0a}
\end{align}

\noi
with small $\dl_0 > 0$, 
we obtain 
\begin{align}
\| u \|_{L^\frac{4}{1+2\dl_1}_{T, x}} \les
\| u\|_{X_T^{\frac 12 -\dl_1, \frac 12-\frac 12\dl_1}}.
\label{L1}
\end{align}

\noi
Moreover, noting that $\big(\frac{12}{3-2\dl_1},\frac{12}{3-2\dl_1}\big)$
is $\big(\frac 12 + \frac 23 \dl_1\big)$-admissible,  
we obtain from  Lemma \ref{LEM:Str2} that 
\begin{align}
\| u \|_{L^\frac{12}{3-2\dl_1}_{T, x}} \le C_{\dl_1, \dl_2}
\| u\|_{X_T^{\frac 12 +\frac 23 \dl_1, \frac 12+\dl_2}}
\label{L2}
\end{align}

\noi
for any $\dl_2 > 0$.

Hence, 
from Lemma \ref{LEM:lin1}, duality, H\"older's inequality, \eqref{L1},
 and \eqref{L2}, we obtain
\begin{align*}
\| \I(u_1u_2u_3)\|_{X^{\frac 12 + \dl_1 , \frac 12 + \dl_2}_T}
& \les T^{\dl_2} 
\| u_1u_2u_3\|_{X^{-\frac 12 + \dl_1, - \frac 12 + 2\dl_2}}\\
& = T^{\dl_2} \sup_{\|w\|_{X^{\frac 12 - \dl_1,  \frac 12 - 2\dl_2}}= 1}
\bigg|\int_0^T \int_{\T^3} u_1u_2u_3 w dx dt\bigg|\\
& \leq T^{\dl_2} \sup_{\|w\|_{X^{\frac 12 - \dl_1,  \frac 12 - 2\dl_2}}= 1}
 \bigg( \prod_{j = 1}^3 \| u_j \|_{L^\frac{12}{3-2\dl_1}_{T, x}}\bigg)
\| w \|_{L^\frac{4}{1+2\dl_1}_{T, x}} \\
& \les
T^{\dl_2} \prod_{j = 1}^3 \| u_j\|_{X_T^{\frac 12 +\frac 23 \dl_1, \frac 12+\dl_2}}, 
\end{align*}

\noi
provided that $0 < 4\dl_2 \leq \dl_1 \ll 1$.
This proves \eqref{L0}.
\end{proof}

We conclude this part by establishing the following trilinear estimate, 
which will be used to control the term $\<1> \, v^2$ in \eqref{SNLW11}.
See  Proposition 8.6 in \cite{Bring}
for an analogous trilinear estimate.

\begin{lemma}\label{LEM:trilin1}
Let $\dl_1, \dl_2 > 0$ be sufficiently small such that $8 \dl_2 \leq \dl_1$.  Then, we have
\begin{align}
\|u_1  u_2  u_3 \|_{X^{-\frac12 +\dl_1, -\frac{1}{2}+2\dl_2}_T}
\les
\|u_1\|_{L^\infty_T W^{  - \frac 12 + 2\dl_1, \infty}_x}
\| u_2\|_{X^{\frac 12 + \dl_1, \frac 12 + \dl_2 }_T}
\|u_3 \|_{X^{\frac 12 + \dl_1 , \frac 12 + \dl_2}_T}
\label{L4}
\end{align}

\noi
for any $0 < T \le 1$.

\end{lemma}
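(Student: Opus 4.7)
My plan is to adapt the proof of Lemma~\ref{LEM:tri}, the new difficulty being the negative spatial regularity of the factor $u_1$, which is controlled in a physical-side norm (rather than a dispersive $X^{s,b}$-norm). By $X^{s,b}$-duality on $[0,T]$, the desired estimate reduces to bounding
\[
\bigg|\int_0^T \int_{\T^3} u_1 u_2 u_3 w \, dx\, dt\bigg|
\]
uniformly over test functions $w$ with $\|w\|_{X^{\frac12-\delta_1,\frac12-2\delta_2}_T}\leq 1$. Unlike Lemma~\ref{LEM:tri}, no $T^{\delta_2}$ needs to be produced, since the statement has no gain in $T$.

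The key new step is to transfer the negative spatial smoothness of $u_1$ onto the remaining factors. Setting $\sigma = \frac12 - 2\delta_1 > 0$ and using the self-adjointness of the Bessel potential on $\T^3$, I write
\[
\int_{\T^3} u_1(u_2 u_3 w) \, dx = \int_{\T^3} \bigl(\jb{\nabla}^{-\sigma} u_1\bigr) \cdot \jb{\nabla}^{\sigma}(u_2 u_3 w) \, dx,
\]
so that H\"older in space-time gives
\[
\bigg|\int_0^T \int_{\T^3} u_1 u_2 u_3 w \, dx\, dt\bigg| \leq \|u_1\|_{L^\infty_T W^{-\sigma,\infty}_x} \cdot \|\jb{\nabla}^\sigma(u_2 u_3 w)\|_{L^1_{T,x}}.
\]
The fractional Leibniz rule then distributes $\jb{\nabla}^\sigma$ onto one of $u_2$, $u_3$, or $w$, producing three symmetric terms. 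Each is bounded by a three-factor H\"older inequality in space-time, where the Strichartz estimates (Lemma~\ref{LEM:Str2}), applied for example with the $\frac12$-admissible pair $(4,4)$, and the embedding $X^{s,\frac12+}_T \hookrightarrow C_T H^s_x$, control the $u_2, u_3$ factors (and their derivatives) by $\|u_j\|_{X^{\frac12+\delta_1,\frac12+\delta_2}_T}$. The test function $w$ is controlled via the embedding $X^{\frac12-\delta_1,0}_T = L^2_T H^{\frac12-\delta_1}_x$ followed by spatial Sobolev embedding; alternatively, Lemma~\ref{LEM:gko} can be applied directly, taking $p$ large and $q$ slightly above $\frac32$ to bound $\|\jb{\nabla}^{-(\frac12-\delta_1)}(u_1(u_2 u_3))\|_{L^2_x}$ by $\|u_1\|_{W^{-\frac12+2\delta_1,\infty}_x}\|u_2 u_3\|_{W^{\frac12-\delta_1,q}_x}$, and then handling the remaining product by fractional Leibniz and Strichartz.

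The main obstacle is the exponent numerology. On one hand, the Strichartz admissibility conditions $\frac1q + \frac3r = \frac32 - s$ with $\frac1q + \frac1r \leq \frac12$ are quite restrictive on $\T^3$, particularly for factors of small regularity; on the other hand, the test function $w$ has time-regularity index $b = \frac12 - 2\delta_2 < \frac12$, which sits just below the Strichartz threshold and requires either the $X^{s,0}$-embedding or a Littlewood--Paley dyadic decomposition to exploit spatial-frequency orthogonality. The assumption $8\delta_2 \leq \delta_1$ is calibrated precisely to ensure enough slack for all the H\"older, Strichartz, and Lemma~\ref{LEM:gko} constraints to close simultaneously. An analogous trilinear estimate is established as Proposition~8.6 in \cite{Bring}, where this bookkeeping is performed in detail, so the proof here follows that template.
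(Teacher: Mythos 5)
Your main route is sound but genuinely different from the paper's. You reduce to the dual pairing $\int_0^T\int u_1u_2u_3 w$, write $u_1 = \jb{\nb}^{\s}(\jb{\nb}^{-\s}u_1)$ with $\s = \tfrac12-2\dl_1$, pull $\jb{\nb}^{-\s}u_1$ out in $L^\infty_{T,x}$, and distribute $\jb{\nb}^{\s}$ over $u_2u_3w$ by a fractional Leibniz rule. The resulting three terms do close: when the derivative lands on $u_2$ (or $u_3$) one uses the $3\dl_1$-admissible pair $(q,r)=\big(\tfrac{2}{3\dl_1},\tfrac{2}{1-3\dl_1}\big)$ for $\jb{\nb}^{\s}u_2\in X^{3\dl_1,\frac12+\dl_2}$ together with $u_3\in L^4_{T,x}$ and $w\in L^{4/(1+2\dl_1)}_{T,x}$ via \eqref{L1} (this is where $4\dl_2\le\dl_1$ enters); when it lands on $w$, one has $\jb{\nb}^\s w\in X^{\dl_1,\frac12-2\dl_2}\hookrightarrow L^{2/(1-\dl_1)}_{T,x}$ by interpolating \eqref{L0a}, against $u_2,u_3\in L^4_{T,x}$. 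In each case the H\"older exponents sum to $1-O(\dl_1)$, so the $L^1$-endpoint of the Leibniz rule can even be avoided by perturbing exponents. The paper instead runs a Littlewood--Paley trichotomy on the output frequency $N_{123}$ versus $N_1,N_{23}$: when $N_{123}\sim\max(N_1,N_{23})$ the weight $N_{123}^{-\frac12+\dl_1}$ absorbs the loss from $u_1$ directly, and when $N_{123}\ll N_1\sim N_{23}$ the high frequency $N_3\gtrsim N_1$ of $u_3$ absorbs it, using the dual bound \eqref{L7} for the output. What the paper's route buys is self-containedness (only \eqref{L0a}, \eqref{L1}, \eqref{L2}, \eqref{L7} are used, with explicit dyadic gains that make the summation transparent); what yours buys is brevity, at the cost of importing a fractional Leibniz rule that is not among the paper's stated tools.

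Two concrete caveats. First, your ``alternative'' route via Lemma \ref{LEM:gko} with $r=2$ and $q$ slightly above $\tfrac32$ does not close: after distributing $\jb{\nb}^{\frac12-\dl_1}$ over $u_2u_3$ in $L^{\frac32+}_x$, the factor carrying the derivative sits at spatial regularity $\approx 2\dl_1$ and hence can occupy at best $L^{2+}_x$, forcing the other factor into $L^{6-}_x$; but for the wave equation the admissibility constraint $\tfrac1q+\tfrac1r\le\tfrac12$ means $L^r_x$ with $r$ near $6$ requires regularity $\ge\tfrac23$, far above $\tfrac12+\dl_1$, and Sobolev embedding from $H^{\frac12+\dl_1}$ only reaches $L^{3+}_x$. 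This dead end is precisely what pushes the paper (and Bringmann) to the dyadic case analysis, where the gain comes from frequency comparability rather than from a single global product estimate. Second, your write-up defers the entire exponent bookkeeping to Proposition 8.6 of \cite{Bring}; since that bookkeeping is the whole content of the lemma, a complete proof must actually exhibit the admissible pairs and verify the H\"older budget as above rather than assert that $8\dl_2\le\dl_1$ ``is calibrated precisely'' to make it work.
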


\begin{proof}
By applying the Littlewood-Paley decompositions, 
we have
\begin{align*}
& \text{LHS of }
 \eqref{L4} \\
& \quad \leq 
\sum_{j_1, j_{23}, j_{123} = 0}^\infty 
\big\|\P_{j_{123}}\big(\P_{j_1} u_1  \P_{j_{23}}(u_2  u_3)\big) \big\|_{X^{-\frac12 +\dl_1, -\frac{1}{2}+2\dl_2}_T}.
\end{align*}

\noi
For simplicity of notation, we set $N_1 = 2^{j_1}$, 
$N_{23} = 2^{j_{23}}$, and $N_{123} = 2^{j_{123}}$,
denoting the dyadic frequency sizes 
of $n_1$ (for $u_1$), $n_{23}$ (for $u_2u_3$), 
and $n_{123}$ (for $u_1 u_2 u_3$), respectively.
We set  $v_k = \P_{j_k} u_k$.
In view of $n_{123} = n_1 + n_{23}$, we separately estimate
the contributions from 
(i) ~$N_{123} \sim \max(N_1, N_{23})$
and 
(ii) $N_{123} \ll \max(N_1, N_{23})$.

\smallskip

\noi
$\bullet$ {\bf Case 1:}  $N_{123} \sim \max(N_1, N_{23})$.
\\
\indent
By 
H\"older's inequality
and the $L^4$-Strichartz estimate~\eqref{L0a}, we have 
\begin{align*}
\big\|\P_{j_{123}} \big(v_1 \P_{j_{23}} & (u_2  u_3)\big) \big\|_{X^{-\frac12 +\dl_1, -\frac{1}{2}+2\dl_2}_T}
 \les
N_{123}^{-\frac 12 + \dl_1}
\|v_1  \P_{j_{23}}(u_2  u_3) \|_{L^2_{T, x}}\\
& \les
N_{123}^{- \dl_1} 
\|u_1 \|_{L^\infty_T W^{-\frac 12 + 2\dl_1, \infty}_x}
\prod_{j= 2}^3\| u_j  \|_{L^4_{T, x}}\\
& \les 
N_{123}^{- \dl_1} 
\|u_1\|_{L^\infty_T W^{  - \frac 12 +2 \dl_1, \infty}_x}
\prod_{j = 2}^3\| u_j\|_{X^{\frac 12 , \frac 12 + \dl_2}_T}.
\end{align*}

\noi
This is  summable in 
dyadic $N_1,  N_{23}, N_{123}\ge 1$, 
yielding \eqref{L4} in this case.

\smallskip

\noi
$\bullet$ {\bf Case 2:}  $N_{123} \ll \max(N_1, N_{23})$.
\\
\indent
In this case, we further apply the 
Littlewood-Paley decompositions for $u_2$ and $u_3$
and write 
\[ u_2 u_3 = \sum_{j_2, j_3= 0}^\infty (\P_{j_2} u_2)(\P_{j_3} u_3).\]
Without loss of generality, assume $N_3  \ge N_2$, where $N_k  = 2^{j_k}$, $k = 2, 3$.
Then, we have 
\begin{align}
N_{123} \les N_1 \sim N_{23} \les N_3.
\label{L6a}
\end{align}

By duality and  \eqref{L1} (with $\dl_1 = 4\dl_2$), we have 
\begin{align}
\begin{split}
\| \P_j u \|_{X^{0, - \frac 12 + 2\dl_2}_T}
& = \sup_{\|  v \|_{X^{0,  \frac 12 - 2\dl_2}}= 1}\bigg|
\int_0^T\int_{\T^3} (\P_j u) \big((\P_{j-1} + \P_j + \P_{j+1})v\big) dx dt 
\bigg|\\
& \les 2^{(\frac 12 - 4\dl_2)j} \|\P_j u \|_{L^\frac{4}{3-8 \dl_2}_{T, x}}.
\end{split}
\label{L7}
\end{align}

\noi
Then,  
from   \eqref{L7}, 
\eqref{L2}, and 
\eqref{L6a} with $8 \dl_2 \leq \dl_1$, we have
\begin{align*}
\big\|\P_{j_{123}} &  \big(v_1 \P_{j_{23}}  (v_2  v_3)\big) \big\|_{X^{-\frac12 +\dl_1, -\frac{1}{2}+2\dl_2}_T}
 \les
N_{123}^{-\frac 12 + \dl_1}
 N_{123}^{\frac 12 - 4\dl_2}
\|v_1 \P_{j_{23}}  (v_2  v_3) \|_{L^\frac{4}{3-8 \dl_2}_{T, x}}\\
& \les
N_{123}^{\dl_1 - 4\dl_2} N_1^{\frac 12 - 2\dl_1}
 \|v_1 \|_{L^\infty_T W^{-\frac 12 + 2 \dl_1, \infty}_x}
\| v_2 \|_{L^\frac{4}{1 - 8 \dl_2}_{T, x}}
\|v_3 \|_{L^2_{T, x}}\\
& \les
N_{123}^{ \dl_1 - 4\dl_2} N_1^{\frac 12 - 2\dl_1} N_3^{-\frac 12 - \dl_1}\\
& \quad 
\times  \|u_1 \|_{L^\infty_T W^{-\frac 12 + 2 \dl_1, \infty}_x}
\|u_2 \|_{X^{\frac 12 + 8 \dl_2 , \frac 12 + \dl_2}_T}
\|u_3 \|_{X^{\frac 12 + \dl_1 , 0}_T}\\
& \les
N_{123}^{ - 4\dl_2}
N_1^{- \dl_1}
 N_3^{ - \dl_1}
\|u_1 \|_{L^\infty_T W^{-\frac 12 + 2 \dl_1, \infty}_x}
\prod_{j = 2}^3 \|u_j \|_{X^{\frac 12 + \dl_1 , \frac 12 + \dl_2}_T}.
\end{align*}

\noi
This is summable 
in 
dyadic $N_1,  N_2, N_3, N_{23}, N_{123}\ge 1$, 
yielding \eqref{L4} in this case.
\end{proof}

\subsection{On discrete convolutions}

Next, we recall the following basic lemma on a discrete convolution.

\begin{lemma}\label{LEM:SUM}
\textup{(i)}
Let $d \geq 1$ and $\al, \be \in \R$ satisfy
\[ \al+ \be > d  \qquad \text{and}\qquad \al, \be < d.\]
\noi
Then, we have
\[
 \sum_{n = n_1 + n_2} \frac{1}{\jb{n_1}^\al \jb{n_2}^\be}
\les \jb{n}^{d - \al - \be}\]

\noi
for any $n \in \Z^d$.

\smallskip

\noi
\textup{(ii)}
Let $d \geq 1$ and $\al, \be \in \R$ satisfy $\al+ \be > d$.
\noi
Then, we have
\[
 \sum_{\substack{n = n_1 + n_2\\|n_1|\sim|n_2|}} \frac{1}{\jb{n_1}^\al \jb{n_2}^\be}
\les \jb{n}^{d - \al - \be}\]

\noi
for any $n \in \Z^d$.

\end{lemma}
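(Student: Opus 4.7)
The plan is to establish both estimates by partitioning the convolution sum according to the relative dyadic sizes of $|n_1|$, $|n_2|$, and $|n|$, and then handle each region with an elementary sum bound of the form $\sum_{|m|\les M}\jb{m}^{-\gamma}\les M^{d-\gamma}$ for $\gamma < d$ (and $\les 1$ for $\gamma > d$).

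First I would prove (ii), which is the simpler estimate and also the main subcase of (i). In the region $|n_1|\sim|n_2|$, the constraint $n_1+n_2=n$ forces $|n_1|,|n_2|\ges |n|$, so
\begin{align*}
\sum_{\substack{n_1+n_2=n\\|n_1|\sim|n_2|}} \frac{1}{\jb{n_1}^{\al}\jb{n_2}^{\be}}
\les \sum_{|n_1|\ges \jb{n}} \jb{n_1}^{-\al-\be}
\les \jb{n}^{d-\al-\be},
\end{align*}
where the last inequality uses only $\al+\be>d$. This proves (ii).

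Next, for (i), I would split the convolution as
\begin{align*}
\sum_{n=n_1+n_2}\frac{1}{\jb{n_1}^\al\jb{n_2}^\be}
= \bigg(\sum_{|n_1|\ll|n_2|}+\sum_{|n_2|\ll|n_1|}+\sum_{|n_1|\sim|n_2|}\bigg)\frac{1}{\jb{n_1}^\al\jb{n_2}^\be}.
\end{align*}
The resonant piece $|n_1|\sim|n_2|$ is handled by (ii). In the region $|n_1|\ll|n_2|$ we have $|n_2|\sim|n|$, so fixing $n_2$ and summing over $|n_1|\les|n|$ gives
\begin{align*}
\sum_{|n_1|\les\jb{n}}\frac{1}{\jb{n_1}^\al}\cdot\jb{n}^{-\be}\les \jb{n}^{d-\al}\cdot\jb{n}^{-\be}=\jb{n}^{d-\al-\be},
\end{align*}
where the bound on the inner sum uses precisely the hypothesis $\al<d$. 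The symmetric region $|n_2|\ll|n_1|$ is handled identically using $\be<d$, and summing the three contributions yields (i).

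No step is really the main obstacle here, as this is a standard counting lemma; the only care required is to ensure that the hypothesis $\al,\be<d$ in (i) is used exactly to guarantee that the ``one small frequency'' sums produce $\jb{n}^{d-\al}$ (resp.\ $\jb{n}^{d-\be}$) rather than being bounded by an $n$-independent constant, which would otherwise fail to match the target bound $\jb{n}^{d-\al-\be}$ (recall $d-\al-\be<0$ under $\al+\be>d$). The relaxed hypothesis in (ii) reflects the fact that in the resonant regime we can absorb both weights simultaneously using only the joint exponent $\al+\be$.
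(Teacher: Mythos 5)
Your proof is correct and is essentially the standard elementary argument: the paper itself gives no proof and simply defers to Lemmas 4.1 and 4.2 of \cite{MWX}, where the same decomposition into the regions $|n_1|\ll|n_2|$, $|n_2|\ll|n_1|$, $|n_1|\sim|n_2|$ and the same elementary dyadic sum bounds are used. You also correctly identify where each hypothesis ($\al,\be<d$ versus only $\al+\be>d$) enters.
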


Namely, in the resonant case (ii), we do not have the restriction $\al, \be < d$.
Lemma \ref{LEM:SUM} follows
from elementary  computations.
See, for example,  Lemmas 4.1 and 4.2 in \cite{MWX} for the proof.

\subsection{Tools from stochastic analysis}

We conclude this section by recalling useful lemmas
from stochastic analysis.
See \cite{Bog, Shige, Nua} for basic definitions.
See also Appendix \ref{SEC:B} for basic definitions
and properties
for multiple stochastic integrals.

Let $(H, B, \mu)$ be an abstract Wiener space.
Namely, $\mu$ is a Gaussian measure on a separable Banach space $B$
with $H \subset B$ as its Cameron-Martin space.
Given  a complete orthonormal system $\{e_j \}_{ j \in \N} \subset B^*$ of $H^* = H$, 
we  define a polynomial chaos of order
$k$ to be an element of the form $\prod_{j = 1}^\infty H_{k_j}(\jb{x, e_j})$, 
where $x \in B$, $k_j \ne 0$ for only finitely many $j$'s, $k= \sum_{j = 1}^\infty k_j$, 
$H_{k_j}$ is the Hermite polynomial of degree $k_j$, 
and $\jb{\cdot, \cdot} = \vphantom{|}_B \jb{\cdot, \cdot}_{B^*}$ denotes the $B$--$B^*$ duality pairing.
We then 
denote the closure  of 
polynomial chaoses of order $k$ 
under $L^2(B, \mu)$ by $\mathcal{H}_k$.
The elements in $\H_k$ 
are called homogeneous Wiener chaoses of order $k$.
We also set
\[ \H_{\leq k} = \bigoplus_{j = 0}^k \H_j\]

\noi
 for $k \in \N$.

Let $L = \Dl -x \cdot \nabla$ be 
 the Ornstein-Uhlenbeck operator.\footnote{For simplicity, 
 we write the definition of the Ornstein-Uhlenbeck operator $L$
 when $B = \R^d$.}
Then, 
it is known that 
any element in $\mathcal H_k$ 
is an eigenfunction of $L$ with eigenvalue $-k$.
Then, as a consequence
of the  hypercontractivity of the Ornstein-Uhlenbeck
semigroup $U(t) = e^{tL}$ due to Nelson \cite{Nelson2}, 
we have the following Wiener chaos estimate
\cite[Theorem~I.22]{Simon}.
See also \cite[Proposition~2.4]{TTz}.

\begin{lemma}\label{LEM:hyp}
Let $k \in \N$.
Then, we have
\begin{equation*}
\|X \|_{L^p(\O)} \leq (p-1)^\frac{k}{2} \|X\|_{L^2(\O)}
 \end{equation*}
 
 \noi
 for any $p \geq 2$
 and any $X \in \H_{\leq k}$.

\end{lemma}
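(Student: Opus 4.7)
The plan is to deduce the bound directly from Nelson's hypercontractive inequality for the Ornstein--Uhlenbeck semigroup $U(t) = e^{tL}$, referenced just above the lemma: for $2 \le q \le p$, $U(t) : L^q(B, \mu) \to L^p(B, \mu)$ is a contraction whenever $e^{2t} \ge (p-1)/(q-1)$. I only need the special case $q = 2$, which gives contractivity $U(t) : L^2(\O) \to L^p(\O)$ as soon as $e^{2t} \ge p-1$. The essential structural input is the spectral property already recorded in the excerpt: every homogeneous chaos $X_j \in \H_j$ is an eigenfunction of $L$ with eigenvalue $-j$, so $U(t) X_j = e^{-jt} X_j$. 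This lets us invert $U(t)$ explicitly on the spectral range $\H_{\le k}$ and convert any upper bound on $\|U(t) Y\|_{L^p}$ into an upper bound on $\|X\|_{L^p}$.

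Concretely, I would decompose $X \in \H_{\le k}$ as the $L^2(\O)$-orthogonal sum $X = \sum_{j=0}^k X_j$ with $X_j \in \H_j$ and, for $t \ge 0$ to be chosen, define
\[
Y := \sum_{j=0}^{k} e^{jt} X_j,
\]
so that $U(t) Y = \sum_{j=0}^k e^{jt} e^{-jt} X_j = X$ by linearity and the eigenvalue identity. Taking the critical time $t^{\ast} := \tfrac{1}{2} \log(p-1) \ge 0$ (so that $e^{2t^{\ast}} = p-1$), Nelson's bound applied to $Y$ at $t = t^{\ast}$ gives
\[
\|X\|_{L^p(\O)} \;=\; \|U(t^{\ast}) Y\|_{L^p(\O)} \;\le\; \|Y\|_{L^2(\O)}.
\]
Since the chaoses $\H_j$ are mutually orthogonal in $L^2(\O)$, I then bound
\[
\|Y\|_{L^2(\O)}^2 \;=\; \sum_{j=0}^{k} e^{2 j t^{\ast}} \|X_j\|_{L^2(\O)}^2 \;\le\; e^{2 k t^{\ast}} \sum_{j=0}^{k} \|X_j\|_{L^2(\O)}^2 \;=\; e^{2 k t^{\ast}} \|X\|_{L^2(\O)}^2,
\]
using $0 \le j \le k$ and $t^{\ast} \ge 0$. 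Combining the two displays with $e^{k t^{\ast}} = (p-1)^{k/2}$ yields the desired $\|X\|_{L^p(\O)} \le (p-1)^{k/2} \|X\|_{L^2(\O)}$.

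Since Nelson's hypercontractivity is imported as a black box, all remaining steps are elementary and there is no genuine obstacle; the lemma is a textbook consequence of the cited result. The only mild subtlety worth flagging is that a naive chaos-by-chaos triangle inequality $\|X\|_{L^p} \le \sum_j \|X_j\|_{L^p} \le (p-1)^{k/2}\sum_j \|X_j\|_{L^2}$ would introduce an unwanted combinatorial factor growing with $k$ after Cauchy--Schwarz, whereas building the preimage $Y$ in one stroke and collapsing the chaos decomposition only at the $L^2$-level at the very end preserves the optimal constant $(p-1)^{k/2}$.
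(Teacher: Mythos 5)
Your proof is correct and follows exactly the route the paper takes: the paper states the lemma as a consequence of Nelson's hypercontractivity for the Ornstein--Uhlenbeck semigroup and cites \cite{Simon, TTz} rather than writing out the argument, and your derivation (build the preimage $Y=\sum_j e^{jt^*}X_j$ via the eigenvalue identity, apply contractivity at $e^{2t^*}=p-1$, then use orthogonality of the chaoses at the $L^2$ level) is precisely the standard proof found in those references. No gaps.
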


The following lemma will be used in studying regularities of stochastic objects.
We say that a stochastic process $X:\R_+ \to \mathcal{D}'(\T^d)$
is spatially homogeneous  if  $\{X(\cdot, t)\}_{t\in \R_+}$
and $\{X(x_0 +\cdot\,, t)\}_{t\in \R_+}$ have the same law for any $x_0 \in \T^d$.
Given $h \in \R$, we define the difference operator $\dl_h$ by setting
\begin{align*}
\dl_h X(t) = X(t+h) - X(t).
\end{align*}

\begin{lemma}\label{LEM:reg}
Let $\{ X_N \}_{N \in \N}$ and $X$ be spatially homogeneous stochastic processes
$:\R_+ \to \mathcal{D}'(\T^d)$.
Suppose that there exists $k \in \N$ such that 
  $X_N(t)$ and $X(t)$ belong to $\H_{\leq k}$ for each $t \in \R_+$.

\smallskip
\noi\textup{(i)}
Let $t \in \R_+$.
If there exists $s_0 \in \R$ such that 
\begin{align}
\E\big[ |\ft X(n, t)|^2\big]\les \jb{n}^{ - d - 2s_0}
\label{reg1}
\end{align}

\noi
for any $n \in \Z^d$, then  
we have
$X(t) \in W^{s, \infty}(\T^d)$, $s < s_0$, 
almost surely.

\smallskip
\noi\textup{(ii)}
Suppose that $X_N$, $N \in \N$, satisfies \eqref{reg1}.
Furthermore, if there exists $\g > 0$ such that 
\begin{align*}
\E\big[ |\ft X_N(n, t) - \ft X_M(n, t)|^2\big]\les N^{-\g} \jb{n}^{ - d - 2s_0}
\end{align*}

\noi
for any $n \in \Z^d$ and $M \geq N \geq 1$, 
then 
$X_N(t)$ is a Cauchy sequence in $W^{s, \infty}(\T^d)$, $s < s_0$, 
almost surely, 
thus converging to some limit 
 in  $W^{s, \infty}(\T^d)$.

\smallskip
\noi\textup{(iii)}
Let $T > 0$ and suppose that \textup{(i)} holds on $[0, T]$.
If there exists $\s \in (0, 1)$ such that 
\begin{align*}
 \E\big[ |\dl_h \ft X(n, t)|^2\big]
 \les \jb{n}^{ - d - 2s_0+ \s}
|h|^\s
\end{align*}

\noi
for any  $n \in \Z^d$, $t \in [0, T]$, and $h \in [-1, 1]$,\footnote{We impose $h \geq - t$ such that $t + h \geq 0$.}
then we have 
$X \in C([0, T]; W^{s, \infty}(\T^d))$, 
$s < s_0 - \frac \s2$,  almost surely.

\smallskip
\noi\textup{(iv)}
Let $T > 0$ and suppose that \textup{(ii)} holds on $[0, T]$.
Furthermore, 
if there exists $\g > 0$ such that 
\begin{align*}
 \E\big[ |\dl_h \ft X_N(n, t) - \dl_h \ft X_M(n, t)|^2\big]
 \les N^{-\g}\jb{n}^{ - d - 2s_0+ \s}
|h|^\s
\end{align*}

\noi
for any  $n \in \Z^d$, $t \in [0, T]$,  $h \in [-1, 1]$, and $M\ge N \geq 1$, 
then 
$X_N$ is a Cauchy sequence in  $C([0, T]; W^{s, \infty}(\T^d))$, $s < s_0 - \frac{\s}{2}$,
almost surely, 
thus converging to some process in 
$C([0, T]; W^{s, \infty}(\T^d))$.

\end{lemma}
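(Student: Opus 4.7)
The plan is to combine three standard tools: the Wiener chaos hypercontractivity of Lemma~\ref{LEM:hyp}, which promotes $L^2(\Omega)$-bounds to $L^p(\Omega)$-bounds at the mild cost $(p-1)^{k/2}$; Bernstein's inequality on dyadic blocks, $\|\P_j f\|_{L^\infty(\T^d)} \lesssim 2^{jd/p} \|\P_j f\|_{L^p(\T^d)}$, which converts $L^p_x$-bounds into $L^\infty_x$-bounds at the cost of $\eps$-derivatives; and, for the time-regularity statements, Kolmogorov's continuity criterion applied to a Banach-space-valued process. The hypothesis of spatial homogeneity guarantees that $\E|\P_j X(x,t)|^p$ is independent of $x\in\T^d$, so we can freely interchange pointwise and $L^p_x$ viewpoints.

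For part (i), I would fix $t$ and compute
\[
\E|\P_j X(x,t)|^2 = \sum_{n\in\Z^d}\phi_j(n)^2\,\E|\ft X(n,t)|^2 \lesssim 2^{-2js_0}
\]
by spatial homogeneity and \eqref{reg1}. Since $\P_j X(t)\in \H_{\leq k}$, Lemma~\ref{LEM:hyp} upgrades this to $\E|\P_j X(x,t)|^p \lesssim (p-1)^{kp/2} 2^{-jps_0}$. Integrating in $x$ and applying Bernstein yields
\[
\E\|\P_j X(t)\|_{L^\infty_x}^p \lesssim (p-1)^{kp/2}\, 2^{jd-jps_0}.
\]
Given $s<s_0$, picking $p>d/(s_0-s)$ makes $\sum_j 2^{jsp}\E\|\P_j X(t)\|_{L^\infty_x}^p$ a convergent geometric series, which places $X(t)$ almost surely in $B^{s}_{\infty,\infty}(\T^d)\hookrightarrow W^{s',\infty}(\T^d)$ for every $s'<s$. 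Part (ii) is identical: the same chain of inequalities applied to the chaos element $X_N(t)-X_M(t)\in\H_{\leq k}$ gains the extra factor $N^{-\g/2}$ from the assumption, which is enough to show that $\{X_N(t)\}$ is a.s.\ Cauchy in $W^{s,\infty}(\T^d)$.

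For parts (iii) and (iv), I would view the process $t\mapsto X(t)$ as taking values in $B^{s}_{\infty,\infty}(\T^d)$ and run Kolmogorov's continuity argument. Applying exactly the same scheme as in (i) to $\dl_h X(t)\in\H_{\leq k}$, using the increment hypothesis in place of \eqref{reg1}, gives
\[
\E\|\P_j \dl_h X(t)\|_{L^\infty_x}^p \lesssim (p-1)^{kp/2}\,|h|^{\s p/2}\,2^{jd-jps_0+jp\s/2}.
\]
For any $s<s_0-\s/2$, a sufficiently large $p$ makes the dyadic sum convergent and yields $\E\|\dl_h X(t)\|_{B^s_{\infty,\infty}}^p \lesssim |h|^{\s p/2}$. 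Choosing additionally $p>2/\s$ meets the hypothesis of Kolmogorov's criterion for Banach-space-valued processes, producing a continuous modification in $C([0,T];B^s_{\infty,\infty}(\T^d))\hookrightarrow C([0,T];W^{s',\infty}(\T^d))$ for $s'<s<s_0-\s/2$. Part (iv) follows by running the same argument on $X_N-X_M$, the factor $N^{-\g/2}$ from the assumption turning the continuity estimate into a Cauchy estimate in $C_TW^{s,\infty}_x$.

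The only delicate point is the book-keeping of the parameters $(p,s,s',\s,\g)$ so that the geometric sums over dyadic frequencies (and, for Kolmogorov, over dyadic time scales) are simultaneously summable; otherwise there is no serious obstacle, as the whole argument is driven by the single mechanism that hypercontractivity plus Bernstein promote $L^2$-type Fourier bounds to almost-sure uniform bounds, modulo an arbitrarily small loss of derivatives.
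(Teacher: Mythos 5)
Your argument is correct and is essentially the proof the paper points to (Proposition~3.6 of \cite{MWX} and the appendix of \cite{OOTz}): spatial homogeneity to reduce to the diagonal Fourier covariance, hypercontractivity (Lemma~\ref{LEM:hyp}) to pass to high moments, Bernstein/Besov embedding to reach $W^{s,\infty}$ via $B^{s}_{\infty,\infty}$, and Kolmogorov's continuity criterion for the time regularity, with the $N^{-\gamma}$ gain plus Borel--Cantelli handling the Cauchy statements. The parameter book-keeping you flag ($p>d/(s_0-s)$ and $p>2/\sigma$) is exactly the adjustment the paper describes, so there is nothing further to add.
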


Lemma \ref{LEM:reg} follows
from a straightforward application of the Wiener chaos estimate
(Lemma~\ref{LEM:hyp}).
For the proof, see Proposition 3.6 in \cite{MWX}
and  Appendix in \cite{OOTz}.
As compared to  Proposition~3.6 in \cite{MWX}, 
we made small adjustments.
In studying the time regularity, we 
made the following modifications:
$\jb{n}^{ - d - 2s_0+ 2\s}\mapsto\jb{n}^{ - d - 2s_0+ \s}$
and $s < s_0 - \s \mapsto s < s_0 - \frac \s2$  
so that it is suitable
for studying  the wave equation.
Moreover, while the result in \cite{MWX} is stated in terms of the
Besov-H\"older space $\mathcal{C}^s(\T^d) = B^s_{\infty, \infty}(\T^d)$, 
Lemma \ref{LEM:reg} handles the $L^\infty$-based Sobolev space $W^{s, \infty}(\T^3)$.
Note that 
the required modification of the proof is straightforward
since $W^{s, \infty}(\T^d)$ and $B^s_{\infty, \infty}(\T^d)$
differ only logarithmically:
\begin{align}
\| f \|_{W^{s, \infty}} \leq 
\sum_{j = 0}^\infty
 \|\P_j f \|_{W^{s, \infty}}
 \les \| f\|_{B^{s+\eps}_{\infty, \infty}}
\label{Bes1}
\end{align}

\noi
for any $\eps > 0$.
For the proof of the almost sure convergence claims, 
see \cite{OOTz}.

%
%
%
%
%
%
%
%
%
%

\section{Local well-posedness of SNLW, $\al > 0$}
\label{SEC:LWP}

In this section, we present the proof of local well-posedness 
of \eqref{SNLW11} (Theorem \ref{THM:1}).
In Subsection \ref{SUBSEC:sto1}, 
we first state the regularity and convergence properties
of the stochastic objects 
in the enhanced data set $\Xi$ in \eqref{data1}.
In Subsection \ref{SUBSEC:LWP2}, we then present a deterministic local well-posedness
result by viewing elements in the enhanced data
set as given (deterministic) distributions and a
given (deterministic)  operator
with prescribed regularity properties.

\subsection{On the stochastic terms}
\label{SUBSEC:sto1}

In this subsection,  
we state the regularity and convergence properties
of the stochastic objects 
in \eqref{data1} whose proofs are presented in Section \ref{SEC:sto2}.

\begin{lemma}\label{LEM:sto1}

Let  $\al  >0$ and $T > 0$.

\smallskip

\noi
\textup{(i)}
For any $s< \al - \frac12 $,
$\{ \<1>_N \}_{N \in \N}$ defined in \eqref{sto2} is
 a Cauchy sequence
in 
$C([0,T];W^{s,\infty}(\T^3))$,  almost surely.
In particular,
denoting the limit by $\<1>$ \textup{(}formally given by \eqref{sto1}\textup{)},
we have
  \[\<1> \in C([0,T];W^{\al - \frac12 - \eps,\infty}(\T^3))
  \]
  
  \noi 
for any $\eps >0$, almost surely.

\smallskip

\noi
\textup{(ii)} 
Let $0 < \al \leq \frac 12$.
Then, for any $s< \al $,
$\{ \<30>_N \}_{N \in \N}$ defined in \eqref{sto4} is
 a Cauchy sequence
in 
$C([0,T];W^{s,\infty}(\T^3))$ almost surely.
In particular,
denoting the limit by $\<30>$,
we have
  \[ \<30> \in C([0,T];W^{\al - \eps,\infty}(\T^3))
  \]
  
  \noi 
for any $\eps>0$, almost surely.

\end{lemma}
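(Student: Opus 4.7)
The plan is to verify the hypotheses of the Kolmogorov-type regularity criterion in Lemma~\ref{LEM:reg} for $X = \<1>_N$ in part (i) and for $X = \<30>_N$ in part (ii). Since $\<1>_N(t)$ lies in the first Wiener chaos and $\<30>_N(t)$ in the third chaos, the hypercontractivity from Lemma~\ref{LEM:hyp} upgrades $L^2(\O)$ bounds to $L^p(\O)$ (and hence, through \eqref{Bes1}, to $W^{s,\infty}_x$ bounds), so it suffices to control the pointwise-in-frequency second moments, their Cauchy-in-$N$ differences, and their time increments.

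For part (i), It\^o's isometry applied to \eqref{sto2} gives
\begin{align*}
\E\big[|\ft{\<1>}_N(n,t)|^2\big]
= \mathbf{1}_{\{|n|\leq N\}} \int_0^t \frac{\sin^2((t-t')\jb{n})}{\jb{n}^{2+2\al}}\, dt'
\lesssim t\, \jb{n}^{-2-2\al},
\end{align*}
which matches the hypothesis of Lemma~\ref{LEM:reg}(i) with $s_0 = \al - \tfrac12$. The Cauchy-in-$N$ bound is identical, with the indicator $\mathbf{1}_{\{N<|n|\leq M\}}$ traded for a factor $N^{-\g}\jb{n}^{\g}$ with arbitrarily small $\g>0$. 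For the time increment, split $\dl_h \ft{\<1>}_N(n,t)$ into the fresh martingale piece over $[t,t+h]$, contributing $\lesssim h\jb{n}^{-2-2\al}$, and the correction on $[0,t]$ arising from the sine difference, which satisfies $|\sin((t+h-t')\jb{n})-\sin((t-t')\jb{n})|^2 \leq (h\jb{n})^{\s}$ for any $\s\in[0,2]$; combining gives the hypothesis of Lemma~\ref{LEM:reg}(iii)--(iv).

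Part (ii) is the delicate step. Starting from
\begin{align*}
\ft{\<30>}_N(n,t) = \int_0^t \frac{\sin((t-t')\jb{n})}{\jb{n}}\,\ft{\<3>}_N(n,t')\, dt',
\end{align*}
and recalling that $\<3>_N$ is the projection of $\<1>_N^{\,3}$ onto the third chaos, Wick's formula expresses $\E[|\ft{\<30>}_N(n,t)|^2]$ as $3!$ times a single discrete convolution of the form
\begin{align*}
\sum_{\substack{n = n_1+n_2+n_3 \\ |n_j|\leq N}} \frac{\mathcal{K}(n,n_1,n_2,n_3,t)}{\prod_{j=1}^{3}\jb{n_j}^{2+2\al}},
\end{align*}
where $\mathcal{K}$ is the oscillatory kernel produced by the double Duhamel time integration. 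A purely ``parabolic'' bound $|\mathcal{K}|\lesssim \jb{n}^{-2}$ followed by two applications of Lemma~\ref{LEM:SUM} yields only $\jb{n}^{-2-6\al}$, which is enough to apply Lemma~\ref{LEM:reg}(i) with $s_0 = \al$ when $\al \geq \tfrac14$, but fails for smaller $\al$. For $\al<\tfrac14$ one has to extract multilinear dispersive smoothing from the trilinear phase $\pm\jb{n}\pm\jb{n_1}\pm\jb{n_2}\pm\jb{n_3}$ hidden inside $\mathcal{K}$ via integration by parts in $t'$; this is precisely the analysis carried out by Bringmann \cite{Bring} for the corresponding Hartree objects, and the reduction to his framework is made possible by the pointwise inequality \eqref{PP1}, which lets one redistribute the Bessel weights $\jb{n_j}^{-\al}$ as a Hartree-type factor $\jb{n_{123}}^{-\al}$. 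The $N$-Cauchy and time-increment variants follow from the same computation, with the indicator $\mathbf{1}_{\max_j|n_j|>N}$ traded for an $N^{-\g}$ factor and the sine-difference bound $(h\jb{n})^\s$ inserted as in part (i).

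The main obstacle is the extraction of the multilinear dispersive smoothing in part (ii) for $0<\al<\tfrac14$, where the parabolic heuristic genuinely fails; this technical input is imported from \cite{Bring} via \eqref{PP1}. Once this is in hand, the remaining Kolmogorov-type bookkeeping for both parts is a mechanical verification of Lemma~\ref{LEM:reg}.
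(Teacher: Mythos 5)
Your part (i) is exactly the paper's argument (variance bound from \eqref{sigma1}, mean-value/interpolation for the time increment, then Lemma \ref{LEM:reg}), so there is nothing to add there. For part (ii) you take a genuinely different route from the paper, although the paper itself flags your route as viable (``it is possible to prove this part by proceeding as in [GKO2, OOComp], i.e.\ without the use of the $X^{s,b}$-spaces''). You estimate the fixed-time second moment $\E[|\ft{\<30>}_N(n,t)|^2]$ directly and feed it into the Kolmogorov-type Lemma \ref{LEM:reg}; the required bound $\jb{n}^{-3-2\al+}$ is exactly the paper's \eqref{XY2}, which the paper only derives later (inside the proof of Lemma \ref{LEM:sto2}) as a by-product. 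The paper's own proof of (ii) instead establishes an $X^{\al-1-\dl_1,-\frac12+\dl_2}$ bound on the cubic Wick power $\<3>_N$ itself --- via the stochastic Fubini theorem, It\^o isometry, the reduction \eqref{PP1} to Bringmann's cubic counting estimate (Lemma \ref{LEM:A1}), and an interpolation with a trivial $X^{0,0}$ bound to push $b$ above $-\frac12$ --- and then transfers this to $\<30>_N=\I(\<3>_N)$ in $C_TW^{s,\infty}_x$ using the nonhomogeneous linear estimate and a space-time Sobolev embedding. The dispersive input (Bringmann's counting lemmas, accessed through \eqref{PP1}) is identical in both routes, and your identification of $\al=\frac14$ as the threshold where the purely parabolic convolution bound $\jb{n}^{-2-6\al}$ stops sufficing is correct. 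What the paper's route buys is that the $X^{s,b}$ bound on $\<3>_N$ is reusable and uniform with the treatment of the quintic, septic, and operator terms; what your route buys is a shorter path to Lemma \ref{LEM:sto1}(ii) alone. One caveat: your closing claim that the remaining bookkeeping is ``mechanical'' undersells the time-increment estimate for the cubic object --- the sine-difference factorization must be threaded through the oscillatory counting argument so that the gain $(h\jb{n})^{\s}$ survives --- but this is the same routine modification the paper also omits, so it is not a gap.
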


\begin{remark}\rm
(i) As mentioned in Section \ref{SEC:1}, 
a parabolic thinking gives regularity $3\al - \frac 12-$
for~$\<30>$.   
Lemma \ref{LEM:sto1}\,(ii) states that, when $\al > 0$ is small,  
we indeed gain about $\frac 12$-regularity by 
exploiting multilinear dispersion
as in the quadratic case studied in \cite{GKO2}.
We point out that our proof is based on an adaptation 
of Bringmann's analysis on the corresponding term
in the Hartree case \cite{Bring}
and thus the regularities we obtain 
in Lemma \ref{LEM:sto1}\,(ii)
as well as 
Lemmas~\ref{LEM:sto2}, \ref{LEM:sto3}, and \ref{LEM:ran1}
may not be sharp (especially for large $\al>0$; see, for example, a crude bound~\eqref{X7a}).
They are, however, sufficient for our purpose.

\smallskip

\noi
(ii) In this section, we only state almost sure convergence
but the same argument also yields 
 convergence in $L^p(\O)$
 with an exponential tail estimate (as in \cite{GKOT, OOTol, Bring}).
Our goal is, however, to prove local well-posedness
and thus the almost sure convergence suffices for our purpose.

\smallskip

\noi

\end{remark}

\begin{lemma}\label{LEM:sto2}

Let $0 < \al \leq \frac 12$ and $T >0$. 
Let  $\{ \<1>_N \}_{N \in \N}$ and $\{ \<30>_N \}_{N \in \N}$ 
be as in \eqref{sto2} and~\eqref{sto4}.
Then, for any $s< \al - \frac12 $,
$\big\{  \<30>_N\<1>_N \big\}_{N \in \N}$ is
 a Cauchy sequence
in 
$C([0,T];W^{s,\infty}(\T^3))$ almost surely.
In particular,
denoting the limit by $\<30>\<1>$,
we have
  \[ \<30>\<1> \in C([0,T];W^{\al - \frac12 - \eps,\infty}(\T^3))
  \]
  
  \noi 
for any $\eps >0$, almost surely.

%
%

\end{lemma}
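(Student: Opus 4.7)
The plan is to expand the product $\<30>_N \<1>_N$ into homogeneous Wiener chaoses using the Wick product formula, and then apply the regularity criterion of Lemma~\ref{LEM:reg}. Writing
\begin{align*}
\widehat{\<30>_N \<1>_N}(n,t) = \sum_{\substack{n = n_1+n_2+n_3+n_4 \\ |n_i|\le N}} \int_0^t \frac{\sin((t-s)\jb{n_{123}})}{\jb{n_{123}}} \, :\!X_{n_1}(s) X_{n_2}(s) X_{n_3}(s)\!:\, X_{n_4}(t)\, ds,
\end{align*}
where $X_{n_j}(s) = \int_0^s \frac{\sin((s-s')\jb{n_j})}{\jb{n_j}^{1+\al}}\,dB_{n_j}(s')$, I would apply the product formula for Wick-ordered Gaussians to split this into a fully Wick-ordered fourth-chaos part $[\<30>_N\<1>_N]^{(4)}$ and a second-chaos part $[\<30>_N\<1>_N]^{(2)}$ arising from the three pairings of $X_{n_4}(t)$ with $X_{n_i}(s)$ (forcing $n_4=-n_i$).

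Next, I would estimate the second moment of each chaos component in the form $\E[|\widehat{\<30>_N\<1>_N}(n,t)|^2] \lesssim \jb{n}^{-2-2\al+\eps}$, which is exactly the threshold needed in Lemma~\ref{LEM:reg}(i) to conclude regularity $s < \al - \tfrac{1}{2}$. For the fourth-chaos contribution, after applying the Wiener--Itô isometry, the second moment reduces to a sum over pairings $\sigma \in S_4$ of four-fold convolution sums weighted by $\prod_i \jb{n_i}^{-2-2\al} \cdot \jb{n_{123}}^{-2}$ and time integrals of products of sine kernels; here I would use integration by parts in $s,s'$ to extract $\jb{n_{123}}$-type gains and then invoke the counting estimates to be recalled in Section~\ref{SEC:A} (following \cite{Bring}). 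The key reduction is the trivial bound \eqref{PP1}, which allows the factors $\prod_i \jb{n_i}^{-2\al}$ to be absorbed into the $\al=0$ Hartree-type counting bounds already established in \cite{Bring}. For the second-chaos contribution, after the contraction one is left with a double convolution in $n_1, n_2$ and a free sum over the contracted index $n_3$; the covariance $\E[X_{n_3}(s)X_{-n_3}(t)]$ is readily bounded by $t\jb{n_3}^{-2-2\al}$, and combining with the $\jb{n_{123}}^{-2}$ factor and Lemma~\ref{LEM:SUM} yields the same target decay $\jb{n}^{-2-2\al+\eps}$.

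The Cauchy property of $\{\<30>_N\<1>_N\}_{N\in\N}$ follows from the same computations applied to the difference, where the restriction $|n_i| > \min(N,M)$ for at least one index produces a polynomial gain $N^{-\g}$ at the cost of a small loss of regularity, as in Lemma~\ref{LEM:reg}(ii),(iv). The time continuity is obtained in parallel by estimating $\dl_h \widehat{\<30>_N\<1>_N}(n,t)$: decompose $\dl_h$ between the Duhamel endpoint contribution at time $t$ and the difference of the sine kernels on $[0,t]$, each of which supplies a factor $|h|^\s$ at the price of $\jb{n_{123}}^\s$, matching the hypothesis of Lemma~\ref{LEM:reg}(iii),(iv).

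The main obstacle, as in Lemma~\ref{LEM:sto1}(ii), lies in the fourth-chaos bookkeeping: one must combine the sine-kernel gains, the single derivative of smoothing from $\I$, and the noise weights $\jb{n_i}^{-1-\al}$ to produce the target decay $\jb{n}^{-2-2\al+\eps}$ under a four-index convolution constraint. My strategy is to treat the counting estimates from \cite{Bring} as a black box via \eqref{PP1}, thereby avoiding any genuinely new combinatorial input and instead reducing the problem to the analogous estimate already proved there for the Hartree setting.
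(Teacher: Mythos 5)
Your route is genuinely different from the paper's: the paper first performs the paraproduct decomposition \eqref{para1}, disposes of $\<30>_N\pl\<1>_N$ and $\<30>_N\pg\<1>_N$ deterministically via Lemma \ref{LEM:para}, and runs the chaos decomposition only on the resonant product $\<30>_N\pe\<1>_N$; you decompose the full product into $\H_4\oplus\H_2$. That is a legitimate alternative, and your fourth-chaos plan does close: by Jensen and the It\^o isometry one gets
$\E[|\cdot|^2]\les\sum_{n=m+k}\E[|\ft{\<30>}_N(m,t)|^2]\,\E[|\ft{\<1>}_N(k,t)|^2]\les\sum_{n=m+k}\jb{m}^{-3-2\al}\jb{k}^{-2-2\al}\les\jb{n}^{-2-2\al}$
(note that Lemma \ref{LEM:SUM}\,(i) does not apply verbatim here since $3+2\al\ge 3$, but the sum is elementary), where the input $\E[|\ft{\<30>}_N(m,t)|^2]\les\jb{m}^{-3-2\al}$ is \eqref{XY2} and already encodes the dispersive counting of Lemma \ref{LEM:A1a}.

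The gap is in your second chaos. After contracting $n_4$ with, say, $n_3$, the sum over the contracted frequency sits \emph{inside} the $\l^2L^2$ norm produced by the It\^o isometry, so the relevant quantity is $\sum_{n=n_{12}}\prod_{j=1,2}\jb{n_j}^{-2-2\al}\big|\sum_{n_3}(\cdots)\big|^2$. If you bound the covariance crudely by $t\jb{n_3}^{-2-2\al}$ and the Duhamel kernel by $\jb{n_{123}}^{-1}$ (there is no $\jb{n_{123}}^{-2}$ available before squaring), then Lemma \ref{LEM:SUM} gives $\sum_{n_3}\jb{n_3}^{-2-2\al}\jb{n_{123}}^{-1}\les\jb{n}^{-2\al}$, hence a total bound $\jb{n}^{-1-8\al}$. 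This matches the target $\jb{n}^{-2-2\al}$ only for $\al\ge \frac16$; for small $\al$ it loses essentially a full power of $\jb{n}$, so the claim that ``Lemma \ref{LEM:SUM} yields the same target decay'' fails precisely in the regime the lemma is about. The missing ingredient is the dispersive gain in the contracted sum: one must expand the sines, integrate in $t'$ to produce the weight $\jb{\kk_2(\bar n)}^{-1}$, and apply the resonant counting estimate (Lemma \ref{LEM:A2}) to upgrade the contracted sum to $\les\jb{n_{12}}^{-1}$ up to logarithms and a summable dyadic factor $N_1^{-2\al}$; this is what the paper does in \eqref{XY5}--\eqref{XY61}. (Even restricting to the resonant regime $|n_1|\sim|n_{123}|$ would not rescue the static bound, so the oscillatory $t'$-integration is unavoidable here, just as you already anticipate for the fourth chaos.)
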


\begin{lemma}\label{LEM:sto3}

Let $\al > 0$, $T > 0$, 
and $b > \frac 12$ be sufficiently close to $\frac 12$.

\smallskip

\noi
\textup{(i)}   
For any $s < \al + \frac 12$,  
$\big\{ \<320>_N \big\}_{N \in \N}$ defined in \eqref{sto6} is
 a Cauchy sequence
in 
$X^{s, b}([0, T])$.
In particular,
denoting the limit by $\<320>$,
we have
  \[ \<320> \in X^{\al + \frac 12 -\eps, b}([0, T]), 
  \]
  
  \noi 
  for any $\eps > 0$, 
almost surely.

\smallskip

\noi
\textup{(ii)}   
For any $s < \al + \frac 12$, 
$\big\{ \<70>_{\hspace{-1.7mm} N} \big\}_{N \in \N}$ defined in \eqref{sto6a} is
 a Cauchy sequence
in 
$X^{s, b}([0, T])$.
In particular,
denoting the limit by $\<70>$,
we have
  \[ \<70> \in X^{\al + \frac 12 -\eps, b}([0, T]), 
  \]
  
  \noi 
  for any $\eps > 0$, almost surely.

\end{lemma}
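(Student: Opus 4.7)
The plan is to deduce both assertions as corollaries of the corresponding estimates proved by Bringmann in \cite{Bring} for the analogous objects arising in the Hartree SNLW \eqref{SNLW5}, the reduction being effected by the elementary pointwise inequality \eqref{PP1} applied to the smoothing factors $\jb{n_j}^{-\al}$ carried by the noise.

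First I would reduce to a global-in-time estimate by multiplying $\<320>_N$ and $\<70>_N$ by a smooth temporal cutoff $\chi \in C_c^\infty(\R)$ with $\chi \equiv 1$ on $[0,T]$, so that the $X^{s,b}([0,T])$ norm is dominated by the global $X^{s,b}(\T^3 \times \R)$ norm of the extension and the space-time Fourier transform can be computed explicitly. Recalling from \eqref{sto3} that $\<2>_N$ and $\<3>_N$ are centered Wick powers sitting in the second and third Wiener chaoses respectively, I would then expand $\<320>_N = \I(\<30>_N \<2>_N)$ as a five-linear expression in $\<1>_N$ (three factors coming from $\<30>_N = \I(\<3>_N)$ and two from $\<2>_N$) and partition it into homogeneous Wiener chaos components of orders $5$, $3$, and $1$ according to the possible pairwise contractions. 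The same procedure applied to $\<70>_N = \I(\<30>_N^2 \<1>_N)$, which is seven-linear in $\<1>_N$, produces chaos components of orders $7$, $5$, $3$, and $1$.

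For each fixed chaos component of order $k$, the space-time Fourier coefficient admits an explicit multiple-stochastic-integral representation (see Appendix~\ref{SEC:B}) of the schematic form
\[
\sum_{n_1 + \cdots + n_k = n} K_N(n_1, \ldots, n_k, \tau)\, \prod_{j=1}^{k} \jb{n_j}^{-\al}
\]
multiplied by an It\^o--Wiener integral against $B_{n_1}, \ldots, B_{n_k}$, plus lower-order contraction terms involving the kernels appearing in \eqref{sigma1}. Taking $L^2(\O)$ norms and using the orthogonality of multiple stochastic integrals, the $X^{s,b}$ norm reduces to a deterministic quantity of the form
\[
\sum_{n \in \Z^3} \int_\R \jb{n}^{2s} \jb{|\tau| - \jb{n}}^{2b} \sum_{n_1 + \cdots + n_k = n} |K_N(n_1, \ldots, n_k, \tau)|^2 \prod_{j=1}^{k} \jb{n_j}^{-2\al}\, d\tau.
\]
At this point the key step is to apply \eqref{PP1}, possibly after suitable grouping of the factors, to transfer the product of smoothing factors onto the outgoing frequency $n$ and onto the intermediate frequency sums arising inside $\<30>_N$. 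The resulting expression falls exactly within the scope of the estimates for the corresponding Hartree stochastic objects proved in~\cite{Bring} with $\be = \al$, whose proofs invoke the counting lemmas recalled in Section~\ref{SEC:A} together with the random-matrix and random-tensor moment bounds recalled in Section~\ref{SEC:C}.

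For the Cauchy property, I would run the same scheme on the truncation differences $\<320>_M - \<320>_N$ and $\<70>_M - \<70>_N$ with $M \ge N$: the difference restricts at least one dyadic frequency summation to the regime $|n_j| > N$, and the product structure of Bringmann's estimates then yields a quantitative gain $N^{-\g}$ for some $\g > 0$ as soon as $s < \al + \frac 12$ and $b > \frac 12$ are chosen sufficiently close to the thresholds. An upgrade of the $L^2(\O)$ bounds to all $L^p(\O)$ via the hypercontractivity estimate (Lemma~\ref{LEM:hyp}), followed by Borel--Cantelli along the sequence $N = 2^k$, then delivers almost sure convergence in $X^{s,b}([0,T])$. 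The main obstacle I expect is the bookkeeping of the lower-order chaos components, where partial Wick contractions produce resonant denominators and force a delicate separation into frequency regimes; once \eqref{PP1} has been used to align the remaining smoothing with Bringmann's Hartree setup, however, these resonant contributions are handled exactly as in~\cite{Bring} and require no essentially new ideas.
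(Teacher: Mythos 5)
Your proposal follows essentially the same route as the paper: Wiener-chaos decomposition of $\<30>_N\<2>_N$ (orders $5,3,1$) and of $\<30>_N^2\<1>_N$ (orders $7,5,3,1$) via multiple stochastic integrals, reduction of the second moments through It\^o isometry to deterministic counting sums, transfer of the factors $\jb{n_j}^{-\al}$ onto grouped frequencies via \eqref{PP1} so that Bringmann's counting lemmas (Lemmas \ref{LEM:A2}, \ref{LEM:A3}, \ref{LEM:A5}, \ref{LEM:A6}) apply, and then hypercontractivity plus a frequency restriction $\max|n_j|\gtrsim N$ and Borel--Cantelli for the Cauchy property. The only cosmetic deviations are your use of a smooth rather than sharp time cutoff and your not spelling out that the paper handles the lower-order chaos components by fixed-time $H^s_x$ bounds and obtains $b>\tfrac12$ for the non-resonant piece by interpolating a $b<-\tfrac12$ estimate with a crude $X^{0,0}$ bound; neither affects the correctness of the argument.
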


Given Banach spaces $B_1$ and $B_2$, 
we use $\L(B_1; B_2)$ to denote the space
of bounded linear operators from $B_1$ to $B_2$.
We also set
\begin{align}
 \L^{s_1, s_2, b}_{T_0}
=  \bigcap_{0 < T < T_0}
\L\big(X^{s_1, b}([0, T]); X^{s_2, b}([0, T])\big)
\label{Op1}
\end{align}

\noi
endowed 
with the norm given by 
\begin{align}
\| S\|_{\L^{s_1, s_2, b}_{T_0}}
= \sup_{0 < T < T_0} T^{-\ta} \|S\|_{\L(X^{s_1, b}_T; X^{s_2, b}_T)}
\label{Op2}
\end{align}

\noi
for some small $\ta >0$.

\begin{lemma}\label{LEM:ran1}

 Let $\al > 0$ and $T_0>0$. 
 Then, given sufficiently  small  $\dl _1, \dl_2 > 0$, 
 the sequence of 
 the random operators $ \{ \If^{\<2>_N} \}_{N \in \N}$ defined in \eqref{ran1} is a Cauchy sequence 
 in the class $ \L^{\frac 12  + \dl_1, \frac 12 +\dl_1,  \frac 12 +\dl_2}_{T_0}$, 
 almost surely.
 In particular, denoting the limit by $\If^{\<2>}$, we have 
 \[\If^{\<2>} \in 
 \L^{\frac 12  + \dl_1, \frac 12 +\dl_1,  \frac 12 +\dl_2}_{T_0},\] 
 
 \noi
 almost surely.

\end{lemma}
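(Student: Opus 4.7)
The plan is to reduce the operator bound to a random bilinear estimate attackable by the random matrix/tensor approach of Bourgain--Deng--Nahmod--Yue and Bringmann, and then to use high-moment Wiener chaos bounds together with the counting lemmas from Section \ref{SEC:A} to conclude.

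First, in view of \eqref{Op1}--\eqref{Op2}, I fix $0 < T < T_0$ and aim to prove $\|\I(\<2>_N v)\|_{X^{\frac12 + \dl_1, \frac12 + \dl_2}_T} \les T^{\ta} \|v\|_{X^{\frac12 + \dl_1, \frac12 + \dl_2}_T}$ with implicit constant independent of $T$ and $N$. Applying the non-homogeneous linear estimate (Lemma \ref{LEM:lin1}) with $b = \tfrac12 + \dl_2$ and $b' = -\tfrac12 + 2\dl_2$, matters reduce to
\begin{align*}
\|\<2>_N v\|_{X^{-\frac12 + \dl_1,\, -\frac12 + 2\dl_2}_T}
\les \|v\|_{X^{\frac12 + \dl_1,\, \frac12 + \dl_2}_T},
\end{align*}
uniformly in $N$, and to the analogous bound with a gain $N^{-\g}$ (some $\g > 0$) for differences $\<2>_N - \<2>_M$, $M \geq N$. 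By $X^{s,b}$-duality, this in turn is equivalent to
\begin{align*}
\bigg| \int_0^T\! \int_{\T^3} \<2>_N(x,t)\, v(x,t)\, w(x,t)\, dx\, dt \bigg|
\les \|v\|_{X^{\frac12 + \dl_1, \frac12 + \dl_2}_T}
\|w\|_{X^{\frac12 - \dl_1, \frac12 - 2\dl_2}_T},
\end{align*}
again with the corresponding gain for differences.

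Second, I pass to the Fourier side. Expanding $\<2>_N$ as a second Wiener chaos in the Brownian motions $\{B_m\}$ (with the diagonal trace removed by Wick renormalization as in \eqref{sto3}), the space-time Fourier coefficient $\F_{x,t}[\<2>_N](n,\tau)$ is an explicit double stochastic integral over pairs $(m_1,m_2)$ with $m_1 + m_2 = n$, whose kernel is bounded by $\jb{m_1}^{-1-\al} \jb{m_2}^{-1-\al}$. The trilinear form above then becomes the action of a random quadratic form on $(\hat v, \hat w)$ whose kernel is a random tensor $h_N = h_N(n_1, \tau_1;\, n_2, \tau_2;\, \omega) \in \H_{\leq 2}$. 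The desired trilinear bound is equivalent to an operator-norm estimate on $h_N$ between weighted $\l^2_n L^2_\tau$ spaces, the $X^{s,b}$-weights $\jb{n_j}^{s_j}\jb{|\tau_j| - \jb{n_j}}^{b_j}$ serving as boundary weights. I then control all sufficiently high moments of this tensor operator norm by hypercontractivity (Lemma \ref{LEM:hyp}), which reduces matters to deterministic Hilbert--Schmidt-type computations; the resulting double sum over frequencies with wave-modulation weights is handled by the counting lemmas of Section \ref{SEC:A}. The crucial regularity gain comes from the elementary bound
\begin{align*}
\jb{m_1}^{-1-\al} \jb{m_2}^{-1-\al} \les \jb{m_1 + m_2}^{-\al} \jb{m_1}^{-1} \jb{m_2}^{-1},
\end{align*}
a direct consequence of \eqref{PP1}, which transfers the full $2\al$ of Bessel smoothing built into $\<2>_N$ through the bilinear resonance. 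Running the identical computation on $\<2>_N - \<2>_M$ produces the prefactor $N^{-\g}$; Chebyshev with a sufficiently large moment combined with the Borel--Cantelli lemma then upgrades $L^p(\O)$-convergence to almost sure Cauchy convergence in $\L^{\frac12 + \dl_1,\, \frac12 + \dl_1,\, \frac12 + \dl_2}_{T_0}$.

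The main obstacle is the random tensor operator-norm estimate: the modulation weights interact non-trivially with the random kernel, and one has to split $h_N$ according to the pairing structure of the second chaos, handling the pairing (contraction) part as a deterministic diagonal contribution and the genuinely non-pairing part by the moment method. This parallels Bringmann's estimate for the analogous operator appearing in the Hartree setting \cite{Bring}; here the Bessel potential $\jb{\nb}^{-\al}$ on the noise plays the role of the Hartree kernel $\jb{\nb}^{-\be}$ via \eqref{PP1}, and the $T^{\ta}$ factor produced by Lemma \ref{LEM:lin1} is absorbed into the $T^{-\ta}$ built into the norm \eqref{Op2}.
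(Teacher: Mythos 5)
Your overall architecture coincides with the paper's: reduce via Lemma \ref{LEM:lin1} to a bilinear bound on $\<2>_N v$ at negative modulation exponent, pass to the space-time Fourier side, view the resulting kernel as a random matrix acting between $\l^2$ spaces in the frequency variables (with the modulation variables frozen), invoke the random matrix/tensor machinery together with \eqref{PP1} to reduce the deterministic input to Bringmann's counting estimates, and upgrade to almost sure convergence by Chebyshev and Borel--Cantelli. Two steps, however, are stated in a form that would fail if executed literally.

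First, you cannot ``control all sufficiently high moments of this tensor operator norm by hypercontractivity'': the operator norm is a supremum over test functions and is not an element of $\H_{\le 2}$, so Lemma \ref{LEM:hyp} does not apply to it directly. The missing ingredient is the higher-order $TT^*$ argument underlying Lemma \ref{LEM:DNY}: one bounds $\|T\|_{op}^{2m}$ by the Hilbert--Schmidt norm of $(TT^*)^m$, whose matrix entries \emph{do} lie in a fixed Wiener chaos, and only then applies hypercontractivity entrywise. Moreover, the deterministic quantities this produces are the mixed tensor norms $\|\hf^m\|_{n_B \to n_C}$ controlled by Lemma \ref{LEM:Bring}, not the plain Hilbert--Schmidt norm of the kernel; the latter is genuinely insufficient here, since the variable $n_3$ carries only the weight $\jb{n_3}^{-\frac12-\dl_1}$ (see \eqref{H6}), which is far from square-summable on $\Z^3$, and in the hyperbolic sign configurations $\eps_0 = -\eps_3$ the constraint $|\kk(\bar n)-m|\le 1$ barely restricts $n_3$, so the naive $\l^2_{n,n_3}$ computation does not produce the gain $N_{\max}^{-\dl_0}$ needed to sum the dyadic blocks. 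In other words, ``Hilbert--Schmidt-type computations'' is precisely what does not close; the operator-norm gain of the random matrix method is indispensable. Second, the duality reduction directly at modulation exponent $-\frac12+2\dl_2 > -\frac12$ cannot be closed by the crude H\"older step in the modulation variables, because the weight $\jb{\tau}^{-\frac12+2\dl_2}$ inherited from $w$ is not in $L^2_\tau$; the paper instead first proves the frequency-localized bound at exponent $-\frac12-\dl < -\frac12$ (where H\"older in $\tau,\tau_3$ is available, see \eqref{R2}--\eqref{R5}) and then interpolates with a trivial $X^{0,0}$ bound (\eqref{H9}--\eqref{H10}) to recover $-\frac12+2\dl_2$. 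Both repairs are standard and available in Subsection 4.4 and Appendix \ref{SEC:C}, but as written your plan skips over the two points where the argument actually has to work.
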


The following trilinear estimate is an immediate consequence of Lemma \ref{LEM:trilin1}.

\begin{lemma}\label{LEM:trilin2}

Let $\al > 0$. 
Let $\dl_1, \dl_2, \eps  > 0$ be sufficiently small
such that $2\dl_1 + \eps \le \al $.  Then, we have
\begin{align*}
\|\<1>  \, v_1  v_2  \|_{X^{-\frac12 +\dl_1, -\frac{1}{2}+2\dl_2}_T}
\les
\|\<1>\|_{L^\infty_T W^{ \al  - \frac 12 - \eps, \infty}_x}
\| v_1\|_{X^{\frac 12 + \dl_1, \frac 12 + \dl_2 }_T}
\|v_2 \|_{X^{\frac 12 + \dl_1 , \frac 12 + \dl_2}_T}
\end{align*}

\noi
for any $0 < T \le 1$.

\end{lemma}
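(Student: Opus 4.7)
The plan is to read Lemma \ref{LEM:trilin2} as a direct corollary of Lemma \ref{LEM:trilin1}, by inserting $u_1 = \<1>$, $u_2 = v_1$, $u_3 = v_2$ and then cheaply upgrading the regularity exponent measuring $u_1$.

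First, I would note that the exponents $\dl_1, \dl_2$ in Lemma \ref{LEM:trilin2} are of the same type as those in Lemma \ref{LEM:trilin1}, so one may assume that they satisfy the hypothesis $8\dl_2 \le \dl_1$ needed there (shrinking them if necessary, which is consistent with the words ``sufficiently small'' in the statement). Applying Lemma \ref{LEM:trilin1} with the above substitution then gives
\begin{align*}
\|\<1>\, v_1 v_2\|_{X^{-\frac12+\dl_1,-\frac12+2\dl_2}_T}
\lesssim
\|\<1>\|_{L^\infty_T W^{-\frac12+2\dl_1,\infty}_x}
\|v_1\|_{X^{\frac12+\dl_1,\frac12+\dl_2}_T}
\|v_2\|_{X^{\frac12+\dl_1,\frac12+\dl_2}_T}.
\end{align*}

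Next, I would trade the $W^{-\frac12+2\dl_1,\infty}$-norm of $\<1>$ for the $W^{\al-\frac12-\eps,\infty}$-norm using the trivial embedding $W^{s_2,\infty}(\T^3)\hookrightarrow W^{s_1,\infty}(\T^3)$ valid for $s_1\le s_2$. The required inequality $-\tfrac12+2\dl_1 \le \al-\tfrac12-\eps$ is exactly the standing assumption $2\dl_1+\eps \le \al$. Combining the two displays yields the claimed estimate.

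There is no genuine obstacle here: the only point that needs to be verified is that the numerology $8\dl_2 \le \dl_1$ from Lemma \ref{LEM:trilin1} is compatible with the numerology $2\dl_1+\eps\le \al$ from Lemma \ref{LEM:trilin2}, and this is immediate since both sets of constraints can be satisfied simultaneously by choosing $\dl_2 \ll \dl_1 \ll \al$ and $\eps \ll \al$. All the dispersive and Strichartz input has already been packaged inside Lemma \ref{LEM:trilin1}; here $\<1>$ simply plays the role of the rough factor $u_1$ measured in an $L^\infty_T W^{s,\infty}_x$-norm, and its extra spatial regularity (gained from the smoothing $\jb{\nb}^{-\al}$ on the noise, see Lemma \ref{LEM:sto1}\,(i)) is handled by a Sobolev embedding rather than any new multilinear analysis.
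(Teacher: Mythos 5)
Your proposal is correct and matches the paper's treatment exactly: the paper introduces Lemma \ref{LEM:trilin2} as "an immediate consequence of Lemma \ref{LEM:trilin1}", and your argument supplies precisely the intended details (substitute $u_1 = \<1>$, invoke Lemma \ref{LEM:trilin1} with $8\dl_2 \le \dl_1$, and use the embedding $W^{\al-\frac12-\eps,\infty}\hookrightarrow W^{-\frac12+2\dl_1,\infty}$, which is equivalent to the hypothesis $2\dl_1+\eps\le\al$). Nothing is missing.
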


\subsection{Proof of Theorem \ref{THM:1}}
\label{SUBSEC:LWP2}

In this section, we prove the following proposition.
Theorem~\ref{THM:1} then follows
from this proposition and Lemmas \ref{LEM:sto1} - \ref{LEM:ran1}.

\begin{proposition}\label{PROP:LWP}
Let $\al > 0$, $s > \frac 12$,  and $T_0 >0$.
Then, there exists small $\eps = \eps(\al, s)$, 
 $\dl_1  = \dl_1(\al, s) $, 
  $\dl_2 = \dl_2(\al, s) >0$
 such that 
if 
\begin{itemize}
\item   $\<1> $ is a distribution-valued function belonging to $C([0, T_0]; W^{\al -\frac 12 - \eps, \infty}(\T^3))$, 

\smallskip
\item   
$\<30> $ is a distribution-valued function belonging to $C([0, T_0]; W^{\al - \eps, \infty}(\T^3))$, 

\smallskip
\item   
$\<30> \<1>$ is a distribution-valued function belonging to $C([0, T_0]; W^{\al -\frac 12 - \eps, \infty}(\T^3))$, 


\smallskip
\item   
$\<320> $ is a  function belonging to 
$X^{\al + \frac 12 -\eps, \frac 12 +\dl_2}([0, T_0])$,

\smallskip
\item   
$\<70> $ is a function belonging to 
$X^{\al + \frac 12 -\eps, \frac 12 +\dl_2}([0, T_0])$,

\smallskip
\item 
 the operator
 $\If^{\<2>}$ 
belongs to  the class
$ \L^{\frac 12  + \dl_1, \frac 12 +\dl_1,  \frac 12 +\dl_2}_{T_0}$
defined in \eqref{Op1},

\smallskip

\end{itemize}

\noi
then the equation \eqref{SNLW11} is locally well-posed in 
$\H^{s}(\T^3)$.
More precisely, 
given any $(u_0, u_1)\in \H^{s}(\T^3)$, 
there exist $0 < T \le T_0$ 
and   a unique solution $v $ to 
the cubic SNLW~\eqref{SNLW11} on $[0, T]$
in the class 
\[X^{\frac 12 +\dl_1, \frac 12 +\dl_2}([0, T])
\subset C([0, T]; H^{\frac 12 +\dl_1}(\T^3)).\]

\noi
Furthermore, the solution $v$
depends  continuously 
on the enhanced data set
\begin{align}
\Xi = \big (u_0, u_1, 
\<1>, \<30>, 
 \<30>\<1>, 
\<320>, 
\<70>, 
\If^{  \<2> }\big)
\label{data2}
\end{align}

\noi
in the class
\begin{align*}
\mathcal{X}^{s, \al, \eps}_T
& = \H^{s}(\T^3) 
\times 
C([0,T]; W^{\al -\frac 12 - \eps, \infty}(\T^3))\\
& \hphantom{X}
\times 
C([0,T]; W^{\al  - \eps, \infty}(\T^3))
\times 
C([0,T]; W^{\al - \frac 12 - \eps, \infty}(\T^3))\\
& \hphantom{X} 
\times 
X^{\al + \frac 12 -\eps, \frac 12 +\dl_2}([0, T])
\times X^{\al + \frac 12 -\eps, \frac 12 +\dl_2}([0, T])\\
& \hphantom{X}
\times
 \L\big(X^{\frac 12  + \dl_1, \frac 12 +\dl_2}([0, T]); X^{\frac 12 +\dl_2, \frac 12 +\dl_2}([0, T])\big). 
\end{align*}

\end{proposition}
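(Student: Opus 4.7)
The plan is to run a standard Banach fixed point argument for the map $\Gamma : v \mapsto \text{RHS of } \eqref{SNLW11}$ in a closed ball of the space $X_T := X^{\frac{1}{2}+\dl_1, \frac{1}{2}+\dl_2}([0,T])$ for $T \in (0, T_0]$ chosen sufficiently small, where I first fix $\dl_1, \dl_2, \eps > 0$ small (depending on $\al, s$) so that $8\dl_2 \le \dl_1$, $2\dl_1 + \eps \le \al$, and $\frac 12 + \dl_1 < s$. The initial-data piece $S(t)(u_0, u_1)$ is controlled in $X_T$ by the standard linear estimate together with a cutoff in $t$, giving $\|S(t)(u_0,u_1)\|_{X_T} \lesssim \|(u_0,u_1)\|_{\H^s}$ since $s > \frac 12 + \dl_1$.

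Next I treat the purely stochastic terms (independent of $v$). The hypotheses place $\<320>$ and $\<70>$ directly in $X^{\al + \frac12-\eps, \frac12+\dl_2}_T \subset X_T$. For $\I(\<30>^3)$, three factors of $\<30> \in C_T W^{\al -\eps,\infty}_x$ give $\<30>^3 \in C_T W^{\al-\eps,\infty}_x$ by Hölder and the embedding \eqref{embed}, so Lemma \ref{LEM:lin1} yields $\I(\<30>^3) \in X^{\al + 1 - \eps, \frac12 + \dl_2}_T$, safely in $X_T$. For the $v$-dependent contributions I would estimate term by term:
\emph{(a)} $\I(v^3)$ by the core trilinear estimate Lemma \ref{LEM:tri}, producing a cubic power and a $T^{\dl_2}$ factor;
\emph{(b)} $\I(\<1> v^2)$ by combining Lemma \ref{LEM:trilin2} with Lemma \ref{LEM:lin1}, which is the only place the genuinely rough object $\<1>$ meets two copies of $v$;
\emph{(c)} $\I(\<30> v^2)$ by an essentially identical argument to Lemma \ref{LEM:trilin1} but with $\<30>$ playing the role of $\<1>$, which is easier since $\<30>$ has \emph{positive} regularity $\al-\eps$;
\emph{(d)} $\I(\<30>^2 v)$ by first bounding $\<30>^2 \in C_T W^{\al-\eps,\infty}_x$ via Hölder, then applying a paraproduct/product bound (Lemmas \ref{LEM:para} and \ref{LEM:gko}) and Lemma \ref{LEM:lin1};
\emph{(e)} $\I((\<30>\<1>) v)$, where the coefficient sits in $C_T W^{\al-\frac12-\eps,\infty}_x$: since $(\al-\frac12-\eps) + (\frac12+\dl_1) = \al - \eps + \dl_1 > 0$, the paraproduct estimates \eqref{para2}-\eqref{para3} handle the product in $H^{-\frac12 + \dl_1}$ modulo a small loss, after which Lemma \ref{LEM:lin1} closes the $X^{s,b}$-bound;
\emph{(f)} $\If^{\<2>}(v)$ directly by hypothesis, gaining a $T^\theta$ factor from the definition \eqref{Op2} of the norm on $\L^{\frac12+\dl_1,\frac12+\dl_1,\frac12+\dl_2}_{T_0}$.

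Summing all contributions and absorbing the $T^{\dl_2}$ and $T^\theta$ factors, $\Gamma$ maps a ball of fixed radius (depending on $\|\Xi\|_{\mathcal X^{s,\al,\eps}_{T_0}}$) into itself once $T$ is small. The corresponding difference estimates for $\Gamma[v] - \Gamma[\wt v]$ use the same inputs applied to $v^3 - \wt v^3 = (v-\wt v)(v^2 + v \wt v + \wt v^2)$, to $v^2 - \wt v^2 = (v-\wt v)(v + \wt v)$, and to the linear-in-$v$ pieces where the bound is immediate; this gives contractivity on the ball, hence a unique fixed point $v \in X_T \subset C([0,T]; H^{\frac12+\dl_1})$. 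Continuous dependence on $\Xi$ in $\mathcal X^{s,\al,\eps}_T$ follows from the same estimates, since each term on the right-hand side of \eqref{SNLW11} depends multilinearly and continuously on its component of $\Xi$.

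The main obstacle, and the place where the proposition's hypotheses are tailored, is the pair of terms $\I((\<30>\<1>) v)$ and $\If^{\<2>}(v)$. In both, the effective ``coefficient'' has regularity only $\al - \frac12 - \eps$, which is \emph{negative} for small $\al$; a naive pointwise multiplication with $v \in H^{\frac12+\dl_1}$ barely makes sense. The fact that $\<30>\<1>$ is postulated to live in $C_T W^{\al-\frac12-\eps,\infty}_x$ (an $L^\infty$-based space, stronger than $H^{\al-\frac12-\eps}$) is what lets the paraproduct bound in (e) close, and the $X^{s,b}$-to-$X^{s,b}$ boundedness of $\If^{\<2>}$ — encoding the genuine random cancellations of the resonant product $\<2> v$ — is what bypasses the illegal deterministic product in (f). Once those two deterministic inputs are granted with the stated regularities, the remaining estimates are robust enough that the contraction argument closes uniformly in $N$, which in turn yields Theorem \ref{THM:1} upon inserting the stochastic convergence statements from Lemmas \ref{LEM:sto1}--\ref{LEM:ran1}.
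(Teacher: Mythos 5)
Your proposal is correct and follows essentially the same route as the paper: a contraction argument in $X^{\frac12+\dl_1,\frac12+\dl_2}([0,T])$ with the key terms $\I(v^3)$, $\I(\<1> v^2)$, $\I((\<30>\<1>)v)$, and $\If^{\<2>}(v)$ handled respectively by Lemma \ref{LEM:tri}, Lemma \ref{LEM:trilin2}, a product estimate exploiting the $L^\infty$-based regularity of $\<30>\<1>$ (the paper uses Lemma \ref{LEM:gko} where you invoke paraproducts, which is equivalent here), and the postulated operator bound. The only differences are cosmetic: for the benign terms $\I(\<30> v^2)$, $\I(\<30>^2 v)$, and $\I(\<30>^3)$ the paper simply uses H\"older with $\<30>\in L^\infty_{T,x}$ and the $L^4$ Strichartz estimate rather than re-running Lemma \ref{LEM:trilin1} or tracking positive Sobolev regularity of the powers of $\<30>$.
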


\begin{proof}
Given $\al > 0$ and $s > \frac 12$, 
fix small $\eps > 0$ such that $\eps < \min(\al, s - \frac 12 )$.
Given an enhanced data set $\Xi$ as in~\eqref{data2}, 
we set
\begin{align*}
\Xi(\xi)  = 
 \big (
\<1>, \<30>, 
 \<30>\<1>, 
\<30>^2 \<1>, 
\<320>, 
\If^{  \<2> }\big)
\end{align*}

\noi
and 
\begin{align*}
\| \Xi (\xi)  \|_{\mathcal{Y}^{\al,  \eps}_{T_0}}
& =
\|\<1>\|_{C_{T_0} W^{\al -\frac 12  - \eps, \infty}_x} 
+ \|\<30>\|_{C_{T_0} W^{\al  - \eps, \infty}_x} 
+ \| \<30>\<1>\|_{C_{T_0} W^{\al -\frac 12  - \eps, \infty}_x} \\
& \hphantom{X}
+ \big\|\<320>\big\|_{X^{\al + \frac 12 -\eps, \frac 12 +\dl_2}_{T_0}}
+ \big\|\<70>\big\|_{X^{\al + \frac 12 -\eps, \frac 12 +\dl_2}_{T_0}}\\
& \hphantom{X}
+ \|\If^{  \<2> }\|_{\L^{\frac 12  + \dl_1, \frac 12  + \dl_1, \frac 12  + \dl_2}_{T_0}}, 
\end{align*}

\noi
where $\L^{\frac 12  + \dl_1, \frac 12  + \dl_1, \frac 12  + \dl_2}_{T_0}$ is as in \eqref{Op2}.
In the following, we assume that 
\begin{align}
\| \Xi (\xi)  \|_{\mathcal{Y}^{\al,  \eps}_{T_0}}
\leq K
\label{data4}
\end{align}

\noi
for some $K \geq 1$.

Given the enhanced data set $\Xi$ in \eqref{data2}, define a map $\G_\Xi$ by 
\begin{align*}
\G_\Xi(  v)
& = 
 S(t) (u_0, u_1) + 
\I\big(-  v^3  + 3 (\<30> - \<1>)v^2 
- 3 \<30>^2 v\big)\\ 
& \quad +6\I\big((\<30>\<1>) v\big)
-3 \If^{  \<2> }(v)\\
& \quad 
+ \I\big( \<30>^3\big)
  -3\, \<70>
  + 3 \,  \<320>.
\end{align*}

\noi
Fix $0 < T \le T_0$.
From 
Lemmas \ref{LEM:lin1}
and \ref{LEM:Str2} with \eqref{data4}, 
we have
\begin{align}
\begin{split}
\|\I( \<30>v^2 )\|_{X^{\frac 12 +\dl_1, \frac 12 +\dl_2}_T}
& \les T^{\dl_2}  \| \<30>v^2 \|_{X^{-\frac 12 +\dl_1, -\frac 12 +2\dl_2}_T}
\le T^{\ta}  \| \<30>\|_{L^\infty_{T, x}}
\|v \|_{L^4_{T, x}}^2\\
& \les 
T^{\ta}  K 
\|v \|_{X^{\frac 12 +\dl_1, \frac 12 +\dl_2}_T}^2
\end{split}
\label{D2}
\end{align}

\noi
and
\begin{align}
\begin{split}
\|\I( \<30>^2v )\|_{X^{\frac 12 +\dl_1, \frac 12 +\dl_2}_T}
& \le 
T^{\ta}  \| \<30>\|_{L^\infty_{T, x}}^2
\|v \|_{L^2_{T, x}}
 \le 
T^{\ta}  K^2 
\|v \|_{X^{\frac 12 +\dl_1, \frac 12 +\dl_2}_T}
\end{split}
\label{D3}
\end{align}

\noi
for some $\ta > 0$.
Similarly, we have
\begin{align}
\|\I( \<30>^3 )\|_{X^{\frac 12 +\dl_1, \frac 12 +\dl_2}_T}
& \les T^{\dl_2}  \| \<30>^3 \|_{X^{-\frac 12 +\dl_1, -\frac 12 +2\dl_2}_T}
\le T^{\ta}  \| \<30>\|_{L^\infty_{T, x}}^3
\le T^{\ta}  K^3 .
\label{D3a}
\end{align}

\noi
From 
Lemma \ref{LEM:lin1}
and Lemma \ref{LEM:gko}
with \eqref{data4}, we have
\begin{align}
\begin{split}
\big\|\I\big((\<30>\<1>) v\big)\big\|_{X^{\frac 12 +\dl_1, \frac 12 +\dl_2}_T}
& \les T^{\dl_2}
\|(\<30>\<1>) v\|_{X^{- \frac 12 +\dl_1,-  \frac 12 +2\dl_2}_T}
\les T^{\dl_2}
\|(\<30>\<1>) v\|_{L^2_T H^{-\frac 12 + \dl_1}_x}\\
& \les T^{\ta}
\|\<30>\<1>\|_{L^\infty_T W^{-\frac 12 + \dl_1, \infty}_x}
\| v\|_{L^2_T H^{\frac 12 - \dl_1}_x}\\
& \les T^{\ta}
K 
\| v\|_{X^{\frac 12 +\dl_1, \frac 12 +\dl_2}_T}, 
\end{split}
\label{D4}
\end{align}

\noi
provided that $\dl_1 + \eps \leq \al$.
From \eqref{Op2} and \eqref{data4}, we have
\begin{align}
\|\If^{  \<2> }(v)\|_{X^{\frac 12 +\dl_1, \frac 12 +\dl_2}_T}
\leq T^\ta \|\If^{  \<2> }\|_{\L^{\frac 12 +\dl_1, \frac 12 +\dl_1, \frac 12 +\dl_2}_{T_0}}
\|v\|_{X^{\frac 12 +\dl_1, \frac 12 +\dl_2}_T}
\leq T^\ta K
\|v\|_{X^{\frac 12 +\dl_1, \frac 12 +\dl_2}_T}.
\label{D5}
\end{align}

\noi
Hence, by applying
Lemmas \ref{LEM:Str1} 
and \ref{LEM:lin1}, 
then
Lemma \ref{LEM:tri}, 
\eqref{D2}, 
Lemma 
\ref{LEM:trilin2}, 
\eqref{D3},  \eqref{D4}, \eqref{D5},  \eqref{D3a}, and Lemma \ref{LEM:sto3}
with  \eqref{data4}, we have 
\begin{align*}
\|\G_\Xi(  v)\|_{X^{\frac 12 +\dl_1, \frac 12 +\dl_2}_T}
& \les \|(u_0, u_1)\|_{\H^s}
+ T^\ta 
\Big( \|  v\|_{X^{\frac 12 +\dl_1, \frac 12 +\dl_2}_T}^3
+ K^3 \Big) + K.
\end{align*}

\noi
An analogous computation yields
a difference estimate
on $\G_\Xi(  v_1) - \G_\Xi(  v_2)$.
Therefore, Proposition \ref{PROP:LWP}
follows from a standard contraction argument.
\end{proof}

\section{Regularities of the stochastic terms}
\label{SEC:sto2}

In this section, we present the proof of Lemmas \ref{LEM:sto1} - \ref{LEM:ran1},
which are basic tools in applying Proposition \ref{PROP:LWP}
to finally prove Theorem \ref{THM:1}.
In view of the local well-posedness result in~\cite{OPTz}, 
we assume that 
$0 < \al \le \frac 14$ in the following.
Without loss of generality, we assume that $T \leq 1$.
The main tools in this section are the counting 
estimates from \cite[Section 4]{Bring} and the random matrix estimate
(see Lemma \ref{LEM:DNY} below) from \cite{DNY2},
which capture the multilinear dispersive effect of the wave equation.
For readers' convenience, 
we collect the relevant counting estimates in Appendix \ref{SEC:A}
and  the relevant definitions and estimates
for random matrices and tensors in Appendix \ref{SEC:C}.
We show in details how to reduce the relevant stochastic estimates
to some basic counting and (random) matrix/tensor estimates 
 studied in \cite[Section 4]{Bring} and~\cite{DNY2}.

In the remaining part of this section, we assume $0 < T < T_0 \leq 1$.

\subsection{Basic stochastic terms}
\label{SUBSEC:4.1}

We first present the proof of Lemma \ref{LEM:sto1}.

\begin{proof}[Proof of Lemma \ref{LEM:sto1}]

(i) 
Let $t \geq 0$.
From 
\eqref{sigma1}, we have
\begin{align}
\begin{split}
\E\big[|\ft{\<1>}_N(n, t)|^2\big] 
& \leq C(t) \jb{n}^{-2-2\al}
\end{split}
\label{sconv3}
\end{align}

\noi
for any $n \in \Z^3$ and $N \geq 1$.
Also, by the mean value theorem and an interpolation argument as in \cite{GKO2}, 
we have 
\[
\E\big[ |\ft{\<1>}_N(n, t_1) - \ft{\<1>}_N(n, t_2)|^2 \big]
\les_T \jb{n}^{-2(1+\al)+\theta} |t_1-t_2|^\theta
\]
for any $\theta \in [0, 1]$, $n \in \Z^3$,  and $0 \le t_2 \le t_1 \le T$ with $t_1-t_2 \le 1$,
uniformly in $N \in \N$.
Hence, from Lemma \ref{LEM:reg}, 
we conclude that 
 $\<1>_N \in C([0, T]; W^{\al - \frac 12- \eps, \infty}(\T^3))$
for any $\eps > 0$, 
 almost surely.
Moreover, a slight modification
of the argument, using Lemma \ref{LEM:reg},   yields
that $\{ \<1>_N \}_{N \in \N}$ is almost surely
 a Cauchy sequence in $C([0, T]; W^{\al -\frac 12 - \eps, \infty}(\T^3))$,
  thus converging to some limit $\<1>$.
Since the required modification is exactly the same as in  \cite{GKO2}, 
we omit the details here.

\begin{remark}	\rm
In the remaining part of this section,
we establish uniform (in $N$) regularity bounds
on the truncated stochastic  terms (such as $\<30>_N$)
but may omit the convergence part of the argument.
Furthermore, 
as for $\<30>_N\<1>_N $ studied in Lemma \ref{LEM:sto2}, 
we only establish a uniform (in $N$) regularity bound
on  $\<30>_N\<1>_N(t) $ for each fixed $0 < t \leq T \leq 1$.
A slight modification as above yields continuity in time but we omit details.

\end{remark}

\noi
(ii)
It is possible to prove 
this part
by proceeding as
in \cite{GKO2, OOComp}
(i.e.~without the use of the $X^{s, b}$-spaces).
In the following, however,  we follow Bringmann's approach
\cite{Bring}, adapted to the stochastic PDE setting.
More precisely, 
we show that 
given any $\dl_1> 0$ and sufficiently small $ \dl_2 > 0$, 
the sequence $\{\<3>_N\}_{N \in \N}$
is a Cauchy sequence 
in $X^{\al - 1 - \dl_1, - \frac 12 + \dl_2}([0, T])$, 
almost surely, 
and thus converges almost surely to  $\<3>$
in the same space, 
where
$\<3>$ is the almost sure limit of
$\{\<3>_N\}_{N \in \N}$ in  $C([0,T];W^{3\al -\frac 32 - ,\infty}(\T^3))$
discussed in Section \ref{SEC:1}.

Our first goal is to prove the following bound;
given any $\dl_1> 0$ and sufficiently small $ \dl_2 > 0$, 
there exists $\ta > 0$ such that 
\begin{align}
\Big\|\| \<3>_N\|_{X^{\al - 1 - \dl_1, - \frac 12 + \dl_2}_T}\big\|_{L^p(\O)}
\les p^\frac{3}{2} T^\ta 
\label{X1}
\end{align}

\noi
for any $p \ge 1$ and $0 < T \le 1$, uniformly in $N \in \N$.

Let us first compute the space-time Fourier transform of 
$\<3>_N$ (with a time cutoff function).
From  \eqref{sto3} with \eqref{sto2}, 
we can write the spatial Fourier transform 
 $\ft{\<3>}_N(n, t)$
  as the following multiple Wiener-Ito integral
 (as in \cite{MWX}):
\begin{align}
\ft{\<3>}_N(n, t) = 
\sum_{\substack{n = n_1 + n_2 + n_3\\|n_j|\leq N}}
\int_0^t \int_0^t \int_0^t
\prod_{j = 1}^3 \frac{\sin((t - t_j)\jb{n_j})}{\jb{n_j}^{1+\al}}
 dB_{n_3} (t_3)dB_{n_2}(t_2)dB_{n_1}(t_1).
\label{X2}
\end{align}

\noi
We emphasize that the renormalization in \eqref{sto3} is 
embedded in the definition of the multiple Wiener-Ito integral.

We now compute the space-time Fourier transform of $\ind_{[0, T]}\<3>$, 
where $\ind_{[0, T]}$ denotes the sharp cutoff function
on the time interval $[0, T]$.
From~\eqref{X2}
and 
the stochastic Fubini theorem (\cite[Theorem 4.33]{DPZ14}; see also Lemma \ref{LEM:B3}), we have
\begin{align}
\begin{split}
&  \ft{\ind_{[0, T]}\<3>}_N   (n, \tau) 
  = \frac{1}{\sqrt{2\pi}}
\sum_{\substack{n = n_1 + n_2 + n_3\\|n_j|\leq N}}
\int_\R \ind_{[0, T]} e^{- i t \tau} \int_0^t \int_0^{t} \int_0^{t}\\
& 
\hphantom{XXXXXXXXX}
\prod_{j = 1}^3 \frac{\sin((t - t_j)\jb{n_j})}{\jb{n_j}^{1+\al}}\,
 dB_{n_3} (t_3)dB_{n_2}(t_2)dB_{n_1}(t_1) dt\\
 & 
 \hphantom{XX}
 = \frac{1}{\sqrt{2\pi}}
\sum_{\substack{n = n_1 + n_2 + n_3\\|n_j|\leq N}}
\int_0^T   \int_0^{T} \int_0^{T}
F_{n_1, n_2, n_3} (t_1, t_2, t_3) 
 dB_{n_3} (t_3)dB_{n_2}(t_2)dB_{n_1}(t_1), 
\end{split}
\label{X5}
\end{align}

\noi
where $F_{n_1, n_2, n_3} (t_1, t_2, t_3, \tau) $ is defined by 
\begin{align}
F_{n_1, n_2, n_3} (t_1, t_2, t_3, \tau) 
= \int_{0}^T 
e^{- i t \tau}  
\prod_{j = 1}^3 \frac{\sin((t - t_j)\jb{n_j})}{\jb{n_j}^{1+\al}}
\ind_{[0,t]}(t_j)
dt .
\label{X5a}
\end{align}

\noi
Note that $F_{n_1, n_2, n_3} (t_1, t_2, t_3, \tau) $
is symmetric in $t_1, t_2, t_3$.

Given dyadic $N_j \ge 1$, $j = 1, 2, 3$, 
let us  denote by $A^N_{N_1, N_2, N_3}$
the contribution 
to $\ind_{[0, T]}\<3>_N$ from $|n_j|\sim N_j$, $j = 1, 2, 3$,  in \eqref{X5}.
We first  compute the $X^{s-1, b }$-norm of 
$A^N_{N_1, N_2, N_3}$ 
with $b = -\frac   12 - \dl$ for $\dl > 0$.
We then interpolate it with the trivial $X^{0, 0}$-bound.
Recall the trivial bound:
\begin{align}
\begin{split}
\| u\|_{X^{s, b}}
& = \| \jb{n}^{s} \jb{|\tau| - \jb{n}}^b \, 
\ft u(n, \tau)\|_{\l^2_n L^2_\tau}\\
& \leq \sum_{\eps_0 \in \{-1, 1\}}
\| \jb{n}^{s} \jb{\tau + \eps_0  \jb{n}}^b \, 
\ft u(n, \tau)\|_{\l^2_n L^2_\tau}\\
& = \sum_{\eps_0 \in \{-1, 1\}}
\| \jb{n}^{s} \jb{\tau }^b \, 
\ft u(n, \tau - \eps_0  \jb{n})\|_{\l^2_n L^2_\tau}
\end{split}
\label{X6}
\end{align}

\noi
for any $s, b \in \R$.
Then, 
defining
$\kk(\bar n) = \kk_{\eps_0, \eps_1, \eps_2, \eps_3}(n_1, n_2, n_3)$ by 
\begin{align}
\kk(\bar n) = \eps_0 \jb{n_{123}} + \eps_1\jb{n_1}
+ \eps_2\jb{n_2}+ \eps_3\jb{n_3}, 
\label{X6a}
\end{align}

\noi
with $\eps_j \in \{-1, 1\}$ for $j = 0,1,2,3$, 
it follows from 
\eqref{X6},  \eqref{X5}, 
Fubini's theorem, Ito's isometry, 
and  expanding the sine functions in 
\eqref{X5a} in terms of the complex exponentials
that 
\begin{align}
\Big\|\|  & A^N_{N_1, N_2, N_3}   \|_{X^{s - 1, -\frac 12 - \dl}_T}\Big\|_{L^2(\O)}^2
\notag \\
& \les 
\sum_{\eps_0\in \{-1, 1\}}
\sum_{n \in \Z^3}\int_\R \jb{n}^{2(s-1)} \jb{\tau}^{-1 - 2\dl} \notag\\
& 
\quad \times \Bigg\{\sum_{\substack{n = n_1 + n_2 + n_3\\|n_j|\leq N\\|n_j|\sim N_j }}
\int_{[0, T]^3} 
|F_{n_1, n_2, n_3} (t_1, t_2, t_3, \tau-\eps_0\jb{n}) |^2
dt_3 dt_2 dt_1\Bigg\} d\tau 
\notag \\
& \les \sum_{\eps_0, \eps_1, \eps_2, \eps_3 \in \{-1, 1\}}
\sum_{n \in \Z^3}\int_\R \jb{n}^{2(s-1)} \jb{\tau}^{-1 - 2\dl}
\Bigg\{\sum_{\substack{n = n_1 + n_2 + n_3\\|n_j|\leq N\\|n_j|\sim N_j }}
\prod_{j = 1}^3 
\frac{1}{\jb{n_j}^{2(1+\al)}}
\notag \\
& 
\quad 
\times 
\int_{[0, T]^3} \bigg|
\int_{\max(t_1, t_2, t_3)}^T
e^{- i t (\tau - \kk(\bar n))} 
dt\bigg|^2
dt_3 dt_2 dt_1\Bigg\} d\tau 
\notag \\
& 
\les \sum_{\eps_0, \eps_1, \eps_2, \eps_3 \in \{-1, 1\}}
\sum_{n \in \Z^3}\sum_{\substack{n = n_1 + n_2 + n_3\\|n_j|\sim N_j}}
 \frac{\jb{n}^{2(s-1)} }{\prod_{j =1}^3 \jb{n_j}^{2(1+\al)}}
\int_\R \frac{1}{\jb{\tau}^{1 + 2\dl} \jb{\tau - \kk(\bar n)}^2} d\tau 
\notag \\
& 
\les \sum_{\eps_0, \eps_1, \eps_2, \eps_3 \in \{-1, 1\}}
\sum_{n \in \Z^3}\sum_{\substack{n = n_1 + n_2 + n_3\\|n_j|\sim N_j}}
 \frac{\jb{n}^{2(s-1)} }{\prod_{j =1}^3 \jb{n_j}^{2(1+\al)}} \jb{\kk(\bar n)}^{-1 - 2\dl}
 \notag \\
& 
\les \sum_{\eps_0, \eps_1, \eps_2, \eps_3 \in \{-1, 1\}}
\sup_{m \in \Z}
\sum_{n \in \Z^3}\sum_{\substack{n = n_1 + n_2 + n_3\\|n_j|\sim N_j}}
 \frac{\jb{n}^{2(s-1)} }{\prod_{j =1}^3 \jb{n_j}^{2(1+\al)}} \cdot \ind_{\{|\kk(\bar n) - m|\leq 1\}}
\label{X7}
\end{align}

\noi
for any $\dl > 0$, uniformly in dyadic $N_j \geq 1$, $j = 1, 2, 3$.
By noting
\begin{align}
\prod_{j = 1}^3 \jb{n_j}^{- 2\al}\les \jb{n_{12}}^{-2\al},
\label{X7a} 
\end{align}

\noi
we can reduce the right-hand side of \eqref{X7}
to the setting of the Hartree nonlinearity studied in~\cite{Bring}.
In particular, 
from \eqref{X7} with \eqref{X7a}
and 
the cubic sum estimate (Lemma \ref{LEM:A1}), 
we obtain
\begin{align}
\Big\|\|  A^N_{N_1, N_2, N_3}   \|_{X^{s - 1, -\frac 12 - \dl}_T}\Big\|_{L^2(\O)}
\les N_{\max}^{s - \al}, 
\label{X8}
\end{align}

\noi
where 
$N_{\max} = \max(N_1, N_2, N_3)$.
This provides an estimate for $s < \al$ and $b = - \frac 12 - \dl < -\frac 12$.

On the other hand, using \eqref{X5}, we have 
\begin{align}
\begin{split}
\Big\|\| &  A^N_{N_1, N_2, N_3}   \|_{X^{0, 0}_T}\Big\|_{L^2(\O)}^2
= \Big\|\|   A^N_{N_1, N_2, N_3}   \|_{L^2_{T, x}}\Big\|_{L^2(\O)}^2
\\
& 
\les T^\ta 
\sum_{\substack{n_1 ,  n_2 ,  n_3\in \Z^3 \\|n_j|\sim N_j}}
\prod_{j =1}^3 \jb{n_j}^{-2(1+\al)}
 \\
& 
\les T^\ta   N_{\max}^{3 - 6\al}
\end{split}
\label{X9}
\end{align}

\noi
for some $\ta > 0$.
Hence, it follows from  interpolating   \eqref{X8} and \eqref{X9}
and then applying the Wiener chaos estimate
(Lemma~\ref{LEM:hyp})
that given $s < \al$, 
there exist small $\dl_2 > 0$ and  $\eps > 0$ such that 
\begin{align*}
\Big\|\|  A^N_{N_1, N_2, N_3}   \|_{X^{s - 1, -\frac 12 +  \dl_2}_T}\Big\|_{L^p(\O)}
\les p^\frac {3}{2} T^\ta N_{\max}^{-  \eps}
\end{align*}

\noi
for  any $p \geq 1$, 
uniformly in dyadic $N_j \geq 1$, $j = 1, 2, 3$.
By summing over dyadic blocks $N_j\geq 1$, $j = 1, 2, 3$, 
we obtain the bound \eqref{X1}
(with $b = -\frac 12 +  \dl_2 > -\frac 12$).

As for the convergence of $\<3>_N$
to $\<3>$ in 
$X^{\al - 1 - \dl_1, - \frac 12 + \dl_2}([0, T])$, 
we can simply repeat the computation 
above to estimate the difference
$\ind_{[0, T]}\<3>_M - \ind_{[0, T]}\<3>_N$
for $M \ge N \ge 1$.
Fix $s < \al$.
Then, in \eqref{X7}, 
we replace the restriction $|n_j| \leq N $
in the summation of $n_j$, $j = 1, 2, 3$, 
by 
 $N \le \max(|n_1|, |n_2|, |n_3|) \le M$, 
 which allows us to gain a small negative power of $N$.
As a result, in place of \eqref{X8}, we obtain
\begin{align*}
\Big\|\|  A^M_{N_1, N_2, N_3} -  A^N_{N_1, N_2, N_3}   \|_{X^{s - 1, -\frac 12 - \dl}_T}\Big\|_{L^2(\O)}
\les N^{-\eps} N_{\max}^{s - \al+\eps}
\end{align*}

\noi
for any  small $\eps > 0$
and $M \ge N \ge 1$.
Then, the interpolation argument with \eqref{X9} as above yields
that given $s < \al$, there exist small  $\dl_2>0$ and $\eps  > 0$ such that 
\begin{align}
\Big\|\|  \ind_{[0, T]}\<3>_M - \ind_{[0, T]}\<3>_N   \|_{X^{s - 1, -\frac 12 + \dl_2}_T}\Big\|_{L^p(\O)}
\les p^\frac{3}{2} T^\ta N^{-\eps} 
\label{ZZ0}
\end{align}

\noi
for any $p \ge 1$  
and $M \ge N \ge 1$.
Then, by applying Chebyshev's inequality 
and   the Borel-Cantelli lemma, 
we conclude the almost sure convergence of $\<3>_N$.
See \cite{OPTz}.

Finally, fix $s< \al$.
Given $N \in \N$, let 
$ H_N=  \I(\ind_{[0, T]}(\<3>_N - \<3>))$.
Then,  we have 
\begin{align}
\<30>_N (t) - \<30>(t)
= 
H_N(t)
\label{ZZ0a}
\end{align}

\noi
for $t \in [0, T]$, 
%
%
Note that  
from \eqref{X5}, we have 
$ \ft H_N(n, t) \in \H_3$ and,  furthermore, by the independence
of $\{B_n\}_{n \in \Z^3}$ (modulo $B_{-n} = \cj B_n$), we have 
\begin{align}
\E \big[\ft H_N(n, t_1)\ft H_N(m, t_2)\big] = 
\ind_{n+m = 0} \, 
\E \big[ \ft H_N(n, t_1) \cj{\ft H_N(n, t_2)}\big]
\label{ZZ1}
\end{align}

\noi 
for any $t_1, t_2 \in \R$.
Then, by 
 \eqref{ZZ0a}, 
Sobolev's inequality (with finite $r\gg 1$ such that $r\dl_0 > 3$
for some small $ \dl_0 > 0$), 
Minkowski's integral inequality, 
the Wiener chaos estimate (Lemma~\ref{LEM:hyp}) with \eqref{ZZ1}, 
Hausdorff-Young's inequality (in time),  we have, for any $p \geq \max(q, r) \gg1$,  
\begin{align*}
\big\|\| \<30>_N - \<30>\|_{L^\infty_T W_x^{s, \infty}}\big\|_{L^p(\O)}
& \les \big\|\| 
 H_N
\|_{ W_t^{\dl_0, r} W_x^{s+\dl_0, r}}\big\|_{L^p(\O)}\\
&  \leq \bigg\| 
\Big\|  \sum_{n \in \Z^3} \jb{\nb_t}^{\dl_0} \jb{n}^{s+\dl_0} \ft H_N(n, t) e_n(x)\Big\|_{L^p(\O)} \bigg\|_{   L^r_x L^r_t}\\
&  \les p^\frac{3}{2} \bigg\|
\Big\|  \sum_{n \in \Z^3} \jb{\nb_t}^{\dl_0} \jb{n}^{s+\dl_0}\ft H_N(n, t) e_n(x)\Big\|_{L^2(\O)} \bigg\|_{  L^r_x L^r_t }\\
&  \les p^\frac{3}{2} 
\| \jb{\tau}^{\dl_0} \jb{n}^{s+\dl_0} \ft H_N(n, \tau) \|_{L^2(\O; \l^2_nL^{r'}_\tau) }.
\end{align*}

\noi
Now, by the triangle inequality: $\jb{\tau}^{\dl_0}\les 
\jb{|\tau| - \jb{n}}^{\dl_0} \jb{n}^{\dl_0}$, 
H\"older's inequality (in $\tau$), 
followed by the  nonhomogeneous linear estimate (Lemma \ref{LEM:lin1})
and \eqref{ZZ0} (with $p = 2$, $M = \infty$, and $s$ replaced by $s + 2\dl_0 < \al$),
we obtain
\begin{align*}
\big\|\| \<30>_N - \<30>\|_{L^\infty_T W_x^{s, \infty}}\big\|_{L^p(\O)}
&  
   \les  p^\frac 32 \big\|  
\| H_N\|_{X^{s + 2\dl_0, \frac 12 + \dl_0}}\big\|_{L^2(\O)}\\
& \les p^\frac{3}{2} T^\ta N^{-\eps}
\end{align*}

\noi
by choosing $\dl_0 > 0$ sufficiently small.
Then, 
the regularity and convergence claim
for $\{\<30>_N\}_{N \in \N}$
follows from applying Chebyshev's inequality 
and   the Borel-Cantelli lemma as before.
\end{proof}

\begin{remark}\rm
Given a function $f \in L^2((\Z^3\times \R_+)^k)$, 
define the multiple stochastic integral $I_k[f]$ by 
\begin{align*}
I_k [f] = \sum_{n_1,\dots,n_k \in \Z^3} \int_{[0,\infty)^k} 
f( n_1, t_1, \dots,  n_k, t_k) 
d B_{n_1} (t_1) \cdots d B_{n_k} (t_k).
\end{align*}

\noi
See Appendix \ref{SEC:B} for the basic definitions and  properties
of multiple stochastic integrals.
In terms of   multiple stochastic integrals, 
we can express~\eqref{X2} as 
\begin{align*}
\ft{\<3>}_N(n, t) = 
I_3 \big[
f_{n, t}],
\end{align*}

\noi
where $f_{n, t}$ is defined  by 
\begin{align*}
f_{n, t}(n_1,t_1, n_2, t_2,n_3,t_3) =
\ind_{n = n_{123}}\cdot 
\bigg( \prod_{j = 1}^3 \frac{\sin((t - t_j)\jb{n_j})}{\jb{n_j}^{1+\al}} 
\cdot  \ind_{|n_j| \leq N}\cdot \ind_{[0,t]}(t_j)\bigg)
\end{align*}

\noi
for $(n_1,t_1, n_2, t_2,n_3,t_3) \in (\Z^3 \times \R) ^3$.
Then, by Fubini's theorem for multiple stochastic integrals (Lemma \ref{LEM:B3}), 
we have
\begin{align*}
 \ft{\ind_{[0, T]}\<3>}_N   (n, \tau) 
 = I_3 \big[\F_t(\ind_{[0, T]}f_{n, \cdot })(\tau)\big], 
\end{align*}

\noi
where $\F_t$ denotes the Fourier transform in time.
With this notation, 
it follows from Lemma~\ref{LEM:B1}
that 
we can write the second moment of the $X^{s, b}$-norm
of $A^N_{N_1, N_2, N_3}$, appearing in \eqref{X7} and~\eqref{X9}, 
in a concise manner:
\begin{align*}
\Big\|\|   A^N_{N_1, N_2, N_3}   \|_{X^{s , b}_T}\Big\|_{L^2(\O)}^2
=  3!
\sum_{n \in \Z^3}\int_\R \jb{n}^{2s} \jb{|\tau|- \jb{n}}^{2b}
\big\|\F_t(\ind_{[0, T]}f_{n, \cdot}^{\bar N})(\tau)\big\|_{\l^2_{n_1, n_2, n_3} L^2_{t_1, t_2, t_3} }^2d\tau, 
\end{align*}

\noi
where $f_{n, t}^{\bar N}$ is given by 
\[f_{n, t}^{\bar N}
= f_{n, t}\cdot \prod_{j = 1}^3 \ind_{|n_j|\sim N_j}.\]

In the following, for conciseness of the presentation, 
we express various stochastic objects
as  multiple stochastic integrals on $(\Z^3\times \R_+)^k$
and carry out analysis. 
For this purpose, we set
\begin{align}
z_j = (n_j, t_j) \in \Z^3 \times \R_+
\label{XX2}
\end{align}

\noi
and use the following short-hand notation:
\begin{align}
\| f(z_j) \|_{L^p_{z_j}} = \|f(n_j, t_j)\|_{\l^p_{n_j} L^p_{t_j}}.
\label{XX3}
\end{align}

\noi
Note, however, that one may also carry out equivalent analysis
at the level of
 multiple Wiener-Ito integrals
as in the proof of Lemma \ref{LEM:sto1} 
presented above.

\end{remark}

Next, we briefly discuss the proof of Lemma \ref{LEM:sto2}.

\begin{proof}[Proof of Lemma \ref{LEM:sto2}]

%

By the paraproduct decomposition \eqref{para1}, 
we have
\begin{align*}
\<30>_N\<1>_N =
\<30>_N\pl \<1>_N + \<30>_N\pe \<1>_N + \<30>_N\pg \<1>_N. 
\end{align*}

\noi
In view of Lemma \ref{LEM:para} with \eqref{Bes1}, 
the paraproducts
$\<30>_N\pl \<1>_N$ and  $\<30>_N\pg \<1>_N$
belong to 
$C([0,T];W^{\al - \frac12 - \eps,\infty}(\T^3))$
for any $\eps >0$, almost surely.
Hence, it remains to study 
the resonant product $\<31p>_N := \<30>_N\pe \<1>_N$.
We only study the regularity of the resonant product for
a fixed time since 
the continuity in time 
and the convergence follow from a systematic modification.
In the following, we show 
\begin{align}
\E\big[|\ft{\<31p>}_N(n, t)|^2\big]
& \les \jb{n}^{-2-4\al}
\label{XY0}
\end{align}

\noi
for any $n \in \Z^3$ and $N \geq 1$.
Note the bound \eqref{XY0} together with Lemma \ref{LEM:reg}
shows that the resonant product 
$\<31p>_N $ is smoother and has (spatial) regularity
$ 2\al - \frac 12 - = (\al - ) + \big(\al - \frac 12-\big)$.

As in \cite{MWX}, by decomposing
$\ft {\<31p>}_N(n, t)$ into components in 
the homogeneous Wiener chaoses $\H_k$, $k = 2, 4$,  
we have
\begin{align*}
\ft {\<31p>}_N(n, t)
= \ft{\<31p>}^{(4)}_N(n, t) + \ft{\<31p>}_N^{(2)}(n, t), 
\end{align*}

\noi
where 
$\ft{\<31p>}^{(4)}_N(n, t) \in \H_4$ and 
$\ft{\<31p>}^{(2)}_N(n, t) \in \H_2$.
See, for example, 
\cite[Proposition 1.1.2]{Nua} 
and Lemma \ref{LEM:prod} on
the product formula
for multiple Wiener-Ito integrals
(and it also follows from Ito's lemma  as explained in \cite{MWX}). 
From the orthogonality of $\H_4$ and $\H_2$, we have 
\begin{align*}
\E\Big[|\ft {\<31p>}_N(n, t)|^2\Big]
= \E\Big[|\ft{\<31p>}^{(4)}_N(n, t)|^2\Big] +  \E\Big[|\ft{\<31p>}_N^{(2)}(n, t)|^2\Big].
\end{align*}

\noi
Hence, it suffices to prove \eqref{XY0}
for $\<31p>_N^{(j)}$, $j = 2, 4$.

From a slight modification\footnote{Namely, with $s = 0$ and dropping the summation over $n$ in \eqref{X7}.} of \eqref{X7} with Lemma \ref{LEM:A1a}, we have 
\begin{align}
\E\big[|\ft{\<30>}_N(n, t)|^2\big] 
& \leq C(t) \jb{n}^{-3-2\al}
\label{XY2}
\end{align}

\noi
for any $n \in \Z^3$ and $N \geq 1$.
Then, from Jensen's inequality (see \eqref{Jen}),\footnote{See the discussion on $\<31p>$ in Section 4 of \cite{MWX}.
See also Section~10 in~\cite{Hairer}.} 
\eqref{sconv3}, \eqref{XY2}, and Lemma \ref{LEM:SUM}, we have
\begin{align}
\begin{split}
\E\big[|\ft{\<31p>}_N^{(4)}(n, t)|^2\big]
& \les \sum_{\substack{n \in \Z^3\\|n_1|\sim|n- n_1|}}
\E\big[|\ft{\<30>}_N(n_1, t)|^2\big] 
\E\big[|\ft{\<1>}_N(n-n_1, t)|^2\big] \\
& \le C(t)\sum_{\substack{n \in \Z^3\\|n_1|\sim|n- n_1|}}
 \frac{1}{\jb{n_1}^{3+2\al}\jb{n - n_1}^{2+2\al}}\\
& \le C(t) \jb{n}^{-2-4\al}
\end{split}
\label{XY3}
\end{align}

\noi
for any $n \in \Z^3$ and $N \geq 1$, 
where $|n_1|\sim|n- n_1|$ signifies the resonant product $\pe$. 
This yields~\eqref{XY0}
for $\ft{\<31p>}^{(4)}_N$.

From Ito's lemma (see also the product formula, 
Lemma \ref{LEM:prod}), 
\eqref{sto2},  and \eqref{X2} with \eqref{XX2}, we have
\begin{align*}
\ft{  \<31p>}^{(2)}_N   (n, t)
&  =  
3  
\int_0^t 
I_2 \big[  g_{n, t, t'}(z_2,z_3)  \big] dt', 
\end{align*}

\noi
where $g_{n, t, t'}$ is defined by 
\begin{align}\label{XY5}
\begin{split}
g_{n, t, t'}(z_2, z_3)  &= 
\sum_{\substack{|n_1|\leq N\\|n_1| \sim |n_{123}|}}
\ind_{n = n_{23}}\cdot \ind_{|n_2|\leq N}\cdot \ind_{|n_3|\leq N}
\int_{0}^{t'}    \frac{\sin((t - t')\jb{n_{123}})}{\jb{n_{123}}}    \\
& \qquad  \times  
\bigg(\prod_{j = 1}^3 \frac{\sin((t' - t_j)\jb{n_j})}{\jb{n_j}^{1+\al}} 
\cdot  \ind_{[0,t']}(t_j) \bigg)
\frac{\sin((t - t_1)\jb{n_1})}{\jb{n_1}^{1+\al}}  dt_1.
\end{split}
\end{align}

\noi
Note that $g_{n, t, t'}(z_2, z_3)$ is symmetric (in $z_2$ and $z_3$).
From Fubini's theorem (Lemma \ref{LEM:B3}), 
we have
\begin{align}
\begin{split}
\ft{  \<31p>}^{(2)}_N  (n, t)
&  =  
3  
I_2 \bigg[ \int_0^t  g_{n, t, t'}(z_2,z_3) dt' \bigg] .
\end{split}
\label{XY5a}
\end{align}

\noi
We now apply  Lemma \ref{LEM:B1}
to compute  the second moment of\eqref{XY5a}.
Then,
with $\kk(\bar n)$ as in~\eqref{X6a}, 
it follows from 
expanding the sine functions in 
\eqref{XY5} in terms of the complex exponentials
 and switching the order of integration in $t'$ and $t_1$ that 
\begin{align*}
\E\big[|\<31p>^{(2)}_N  (n, t)|^2\big]
& \sim 
\bigg\| \int_0^t  g_{n, t, t'}(n_2, t_2,n_3, t_3) dt' \bigg\|_{\l^2_{n_2, n_3} L^2_{t_2,t_3}}^2
\\
& \les 
\sum_{\eps_0, \eps_1, \eps_2, \eps_3 \in \{-1, 1\}}
\sum_{\substack{n = n_2 + n_3\\|n_j|\leq N}}
\frac{1}{\jb{n_2}^{2+2\al}\jb{n_3}^{2+2\al}}
 \\
& \hphantom{X}
\times 
\bigg( \sum_{\substack{|n_1|\leq N}}
\frac{1}{\jb{\kk(\bar n)}\jb{n_{123}} \jb{n_1}^{2 + 2\al}}
\bigg)^2
 \\
& \les
\sum_{\eps_0, \eps_1, \eps_2, \eps_3 \in \{-1, 1\}} 
\sum_{\substack{n = n_2 + n_3\\|n_j|\leq N}}
\frac{1}{\jb{n_2}^{2+2\al}\jb{n_3}^{2+2\al}}
\\
& \hphantom{X}
\times 
\bigg(  \sum_{m \in \Z} \sum_{ |n_1|\leq N}
\frac{1}{\jb{m} \jb{n_{123}} \jb{n_1}^{2+2\al}}
\cdot \ind_{\{|\kk(\bar n) - m|\leq 1\}}
\bigg)^2 .
\end{align*}

\noi
Under the condition $|n_1| \sim |n_{123}|$ and $n = n_2 + n_3$, 
we have $|n_1|\ges |n|$.
Then, by applying the basic resonant estimate (Lemma \ref{LEM:A2})
and Lemma \ref{LEM:SUM}, 
we obtain
\begin{align}
\label{XY61}
\begin{split}
\E\big[|\<31p>^{(2)}_N  (n, t)|^2\big]
& \les \frac 1{\jb{n}^{2}} \sum_{\substack{|n| \les N_1 \les N\\\text{dyadic}}}
\frac{\log^2 (2+ N_{1})}{N_1^{4\al}}
\sum_{\substack{n = n_2 + n_3\\|n_j|\le N_j}}
\frac{1}{\jb{n_2}^{2+2\al}\jb{n_3}^{2+2\al}} \\
& \hphantom{XX}
\times      \\
& \les \frac 1{\jb{n}^{2+4\al-}} 
\sum_{\substack{n = n_2 + n_3\\|n_j|\le N_j}}
\frac{1}{\jb{n_2}^{2+2\al}\jb{n_3}^{2+2\al}}  \les \jb{n}^{-3-8\al+}.
\end{split}
\end{align}

\noi
This computation with Lemma \ref{LEM:reg}
shows that 
$\<31p>^{(2)}_N $ is even smoother and has (spatial) regularity $4\al-$.

Therefore, 
putting \eqref{XY3} and \eqref{XY61} together, 
we obtain 
the desired bound 
\eqref{XY0}.
\end{proof}

\subsection{Quintic stochastic term}

In this subsection, we 
present the proof of Lemma \ref{LEM:sto3}\,(i)
on  the quintic stochastic process $\<320>_N$ defined in \eqref{sto6}.
In view of Lemma \ref{LEM:lin1}, we prove the following bound;
given any $\eps >0$ and sufficiently small $\delta_2 >0$,
there exists $\theta >0$ such that
\begin{align}
\label{T1}
\Big\| \big\|\<32>_N \big\|_{X_T^{ \al -\frac12 - \eps,  -\frac12 + \delta_2}}\Big\|_{L^p(\Omega)} \les p^{\frac52} T^\theta
\end{align}

\noi
for any $p \ge 1$ and $0 < T \le 1$, uniformly in $N \in \N$.

We start by computing the space-time Fourier transform of $\<32>_N$ with a  time cutoff.
As shown in \eqref{sto6}, 
the quintic stochastic objects $\ft{\<32>}_N$ is a convolution  of  $\ft{\<30>}_N$ in \eqref{sto4}
and  $\ft{\<2>}_N$ in~\eqref{sto3}:
\begin{align}
 \ft{\<32>}_N(n, t) = \sum_{n = n_{123} + n_{45}}
\ft{\<30>}_N (n_{123}, t)
\, \ft{\<2>}_N (n_{45}, t).
\label{Bes2}
\end{align}

\noi
Using  Lemma \ref{LEM:B3}, 
we can write 
$\ft{\<30>}_N$ 
and  $\ft{\<2>}_N$ 
as multiple stochastic integrals:
\begin{align}
\begin{split}
\ft{  \<30> }_N  (n, t)
&  =  
\int_0^t 
I_3 \big[  f_{n, t, t'}(z_1, z_2,z_3)  \big] dt'
  =  
I_3 \bigg[ \int_0^t  f_{n, t, t'}(z_1, z_2,z_3) dt' \bigg], \\
\ft{\<2>}_N(n, t) & = I_2 [g_{n, t}], 
\end{split}
\label{Wick1}
\end{align}

\noi
where $f_{n, t, t'}$ 
and $g_{n, t}$ are defined by 
\begin{align}
\begin{split}
f_{n, t, t'}(z_1, z_2, z_3)  
&= 
\ind_{n = n_{123}}
\cdot     \frac{\sin((t - t')\jb{n_{123}})}{\jb{n_{123}}}    \\
& \quad  \times  
\bigg(\prod_{j = 1}^3 \frac{\sin((t' - t_j)\jb{n_j})}{\jb{n_j}^{1+\al}} 
\cdot \ind_{|n_j|\leq N}
\cdot  \ind_{[0,t']}(t_j) \bigg), \\
g_{n, t}(z_1, z_2)
& =  \ind_{n = n_{12}} \cdot \bigg(\prod_{j = 1}^2  \frac{\sin((t - t_j)\jb{n_j})}{\jb{n_j}^{1+\al}} 
\cdot \ind_{|n_j|\le N}
\cdot \ind_{[0,t]}(t_j)\bigg).
\end{split}
\label{Wick1a}
\end{align}

\noi
By the product formula (Lemma \ref{LEM:prod}) to \eqref{Bes2}, 
we can decompose $\ft {\<32>}_N$ into the components in the homogeneous 
Wiener chaoses $\H_k$, $k =  1, 3, 5$:
\begin{align}
\label{product1}
\ft{\<32>}_N (n,t) = \ft{\<32>}_N^{(5)} (n,t)+  \ft{\<32>}_N^{(3)}(n,t) +  \ft{\<32>}_N^{(1)} (n,t),
\end{align}

\noi
where ${\ft{\<32>}_N}^{(5)} \in \H_5$,  ${\ft{\<32>}_N}^{(3)} \in \H_3$, and ${\ft{\<32>}_N}^{(1)} \in \H_1$.
By taking the Fourier transforms in time, 
the relation \eqref{product1} still holds.
Then, by using 
the orthogonality of $\H_5$, $\H_3$,  and $\H_1$, 
we have 
\begin{align*}
\E\Big[|\ft {\<32>}_N(n, t)|^2\Big]
= \sum_{j \in \{1, 3, 5\}}\E\Big[|\ft{\<32>}_N^{(j)}(n, t)|^2\Big] .
\end{align*}

\noi
Hence, it suffices to prove \eqref{T1}
for each $\<32>^{(j)}_N$, $j = 1, 3, 5$.

\medskip
\noi
$\bullet$ {\bf Case (i): Non-resonant term ${\ft{\<32>}_N}^{(5)}$.}
From \eqref{Wick1} and \eqref{Wick1a}, 
we have 
\begin{align*}
{\ft{\<32>}_N}^{(5)} (n,t)
& = I_5 \big[f^{(5)}_{n, t}\big], 
\end{align*}

\noi
where $f^{(5)}_{n, t}$ is defined by 
\begin{align}
\begin{split}
f^{(5)}_{n, t}(z_1, z_2, z_3, z_4, z_5)  
&= 
\ind_{n = n_{12345}}
\cdot \int_0^t    \frac{\sin((t - t')\jb{n_{123}})}{\jb{n_{123}}}    \\
& \quad  \times  
\bigg(\prod_{j = 1}^3 \frac{\sin((t' - t_j)\jb{n_j})}{\jb{n_j}^{1+\al}} 
\cdot \ind_{|n_j|\leq N}
\cdot  \ind_{[0,t']}(t_j) \bigg)
dt' \\
& \quad  \times  
 \bigg(\prod_{j = 4}^5  \frac{\sin((t - t_j)\jb{n_j})}{\jb{n_j}^{1+\al}} 
\cdot \ind_{|n_j|\le N}
\cdot \ind_{[0,t]}(t_j)\bigg).
\end{split}
\label{Q1a}
\end{align}

\noi
Let $\Sym(f^{(5)}_{n, t})$
be the symmetrization of $f^{(5)}_{n, t}$ defined in \eqref{sym}.
Then, from Lemma \ref{LEM:B1}\,(ii), we have 
\begin{align*}
{\ft{\<32>}_N}^{(5)} (n,t)
& = I_5 \big[\Sym(f^{(5)}_{n, t})\big].
\end{align*}

\noi
Then, by taking the temporal Fourier transform 
and applying Fubini's theorem (Lemma \ref{LEM:B3}), 
we have
\begin{align*}
\F_{t} \big( \ind_{[0,T]} {\ft{\<32>}_N}^{(5)} \big)  (n,\tau)
& = I_5 \big[\F_t(\ind_{[0,T]}\Sym(f^{(5)}_{n, \cdot})(\tau)\big]
 = I_5 \big[\Sym(\F_t(\ind_{[0,T]}f^{(5)}_{n, \cdot})(\tau))\big].
\end{align*}

\noi
Then,  by \eqref{X6}, Fubini's theorem, and Lemma \ref{LEM:B1}\,(iii)
with \eqref{XX2} and \eqref{XX3}, we have 
\begin{align}
\begin{split}
\Big\|\big\| 
& \ind_{[0,T]}  \<32>_N^{(5)}  \big\|_{X^{s , b}_T}\Big\|_{L^2(\O)}^2\\
& \les \sum_{\eps_0 \in \{-1, 1\}}
\sum_{n \in \Z^3}\int_\R \jb{n}^{2s} \jb{\tau}^{2b}
\big\|\Sym(\F_t(\ind_{[0,T]}f^{(5)}_{n, \cdot}(\bar z))(\tau -\eps_0\jb{n}))
\big\|_{L^2_{z_1, \dots,  z_5}  }^2d\tau, 
\end{split}
\label{Q4}
\end{align}

\noi
where  $\bar z = (z_1, \dots, z_5)$.

By expanding the sine functions in 
\eqref{Q1a} in terms of the complex exponentials, we have 
\begin{align}
\begin{split}
f^{(5)}_{n, t}  (z_1, z_2, z_3, z_4, z_5)  
&= c\cdot 
\ind_{n = n_{12345}}
\sum_{ \mathcal{E}}
\ft \eps\cdot 
   \frac{e^{i t\kk_1(\bar n )}}{\jb{n_{123}}}    
\int_{\max(t_1, t_2, t_3)}^t  
e^{- i t' \kk_2(\bar n )}
dt' \\
& \quad  \times  
 \bigg(\prod_{j = 1}^5  \frac{1}{\jb{n_j}^{1+\al}} 
\cdot \ind_{|n_j|\le N}\bigg)
\bigg(\prod_{j =4}^5  \ind_{[0,t]}(t_j)\bigg)
F_1(z_1, \dots, z_5), 
\end{split}
\label{Q6}
\end{align}

\noi
where 
$F_1(z_1, \dots, z_5)$ is independent of $t$ and $ t'$
with $|F_1|\leq 1$.
Here, $\EE$, $\ft \eps$, $\kk_1(\bar n)$, 
and $\kk_2(\bar n)$ are defined by 
\begin{align}
\begin{split}
\mathcal{E}  = \big\{\eps_1, &  \dots, \eps_5,  \eps_{123} \in \{-1, 1\}\big\},
\qquad 
\ft \eps  = \eps_{123} \prod_{j= 1}^5 \eps_j,  \\
\kk_1(\bar n) & = \eps_{123}  \jb{n_{123}}
+ \eps_{4}  \jb{n_{4}}+ \eps_{5}  \jb{n_{5}}, \\
\kk_2(\bar n) & = \eps_{123}  \jb{n_{123}} 
- \eps_{1}  \jb{n_{1}}- \eps_{2}  \jb{n_{2}} - \eps_{3}  \jb{n_{3}}.
\end{split}
\label{Q6a}
\end{align}

\noi
By integrating in $t'$, we have 
\begin{align}
\int_{\max(t_1, t_2, t_3)}^t  
e^{- i t' \kk_2(\bar n )}
dt' 
 =   \frac{e^{-it \kk_2(\bar n)} -e^{-it_{123}^* \kk_2(\bar n)}}{-i \kk_2(\bar n)}, 
\label{Q7}
\end{align}

\noi
where $t_{123}^* = \max(t_1, t_2, t_3)$.
Then, from \eqref{Q6} and \eqref{Q7}, we have
\begin{align}
\begin{split}
 \big|\F_t(\ind_{[0,T]}f^{(5)}_{n, \cdot}(\bar z) )(\tau - \eps_0 \jb{n})\big|
& \les \ind_{n = n_{12345}} \frac{1}{\jb{\kk_2(\bar n)}\jb{\min(|\tau - \kk_3(\bar n)|, |\tau - \kk_4(\bar n)|)}}\\
& \quad \times    \frac{1}{\jb{n_{123}}}    
 \bigg(\prod_{j = 1}^5  \frac{1}{\jb{n_j}^{1+\al}} \cdot \ind_{|n_j|\le N}
\cdot  \ind_{[0,T]}(t_j)\bigg), 
\end{split}
\label{Q8}
\end{align}

\noi
where   $\kk_3(\bar n)$
and $\kk_4(\bar n)$ are defined by 
\begin{align}
\begin{split}
\kk_3(\bar n) & = \eps_0 \jb{n_{12345}} + \eps_{123}  \jb{n_{123}}
+ \eps_{4}  \jb{n_{4}}+ \eps_{5}  \jb{n_{5}}, \\
\kk_4(\bar n) & = \eps_0 \jb{n_{12345}} 
+\sum_{j= 1}^5 \eps_{j}  \jb{n_{j}}.
\end{split}
\label{Q9}
\end{align}

Given dyadic $N_j \ge 1$, $j = 1,2,3,4,5$,
we denote by
$B^N_{N_1,\cdots,N_5}$
the contribution to $\ind_{[0,T]}  {\<32>}_N^{(5)} $
from $|n_j| \sim N_j$ in \eqref{Q8}.
Let $\EE_0 = \EE \cup \{\eps_0 \in \{-1, 1\}\}$
and  $N_{\max} = \max(N_1,\dots,N_5)$.
Then, from 
\eqref{Q4}, Jensen's inequality \eqref{Jen},  and \eqref{Q8}
with \eqref{PP1},  
we have 
\begin{align}
\begin{split}
\Big\|\big\| 
& \ind_{[0,T]} B^N_{N_1,\cdots,N_5}  \big\|_{X^{s -1, -\frac 12 - \dl}_T}\Big\|_{L^2(\O)}^2\\
& \les T^\ta
\sum_{ \mathcal{E}_0}
\sum_{n \in \Z^3}
\sum_{\substack{n = n_{12345}\\|n_j|\sim N_j}}
   \frac{\jb{n}^{2(s-1)}}{\jb{n_{123}}^2}    \frac{1}{\jb{\kk_2(\bar n)}^2}
 \bigg(\prod_{j = 1}^5  \frac{1}{\jb{n_j}^{2+2\al}}\bigg)\\
& \hphantom{XXXXXX}   
\times \int_\R  
\frac 1{\jb{\tau}^{1 + 2\dl}\jb{\min(|\tau - \kk_3(\bar n)|, |\tau - \kk_4(\bar n)|)}^2}d\tau
\\
 & \les T^\ta 
\sum_{ \mathcal{E}_0}
 \sup_{m,m' \in \Z}
\sum_{\substack{n_1, \dots, n_5 \in \Z^3\\|n_j|\sim N_j}}
\frac{\jb{n_{12345}}^{2(s - \al + \frac 12 \eps-1)} }{
\jb{n_{1234}}^{\frac{1}{2}\eps}\jb{n_{12}}^{\frac{1}{2}\eps}
\jb{n_{123}}^{2} \prod_{j=1}^5 \jb{n_j}^{2}} \\
& \hphantom{XXXXXX}   
\times 
\ind_{\{ |\kk_2(\bar n) -m|\leq 1\}}  
\Big(\ind_{\{ |\kk_3(\bar n) -m'|\leq 1\}}+\ind_{\{ |\kk_4(\bar n) -m'|\leq 1\}}\Big)
\end{split}
\label{Q10}
\end{align}

\noi
for some $\ta > 0$, provided that $\dl > 0$.
In the last step, we used the following bound:
\begin{align*}
 \int_\R  
& \frac 1{\jb{\tau}^{1 + 2\dl}\jb{\min(|\tau - \kk_3(\bar n)|, |\tau - \kk_4(\bar n)|)}^2}d\tau\\
& \le 
 \int_\R  
\frac 1{\jb{\tau}^{1 + 2\dl}\jb{\tau - \kk_3(\bar n)}^2}d\tau
+ 
 \int_\R  
\frac 1{\jb{\tau}^{1 + 2\dl}\jb{\tau - \kk_4(\bar n)}^2}d\tau\\
& \les \jb{\kk_3(\bar n )}^{-1-2\dl} + \jb{\kk_4(\bar n )}^{-1-2\dl}\\
& \les \sum_{m'\in \Z} 
\frac{1}
{\jb{m'}^{1+2\dl}} \Big(\ind_{\{ |\kk_3(\bar n) -m'|\leq 1\}}+\ind_{\{ |\kk_4(\bar n) -m'|\leq 1\}}\Big)
\end{align*}

\noi
for $\dl > 0$.
Then, by applying
Lemma \ref{LEM:A3} 
to \eqref{Q10}, we obtain 
\begin{align}
\Big\|\big\| 
& \ind_{[0,T]} B^N_{N_1,\cdots,N_5}  
\big\|_{X^{\al-\frac12- \eps, -\frac12 -\delta}_T}\Big\|_{L^2(\O)}^2
\les T^\ta N_{\max}^{- \dl_0}  
\label{Q11}
\end{align}

\noi
for some $\dl_0>0$, 
provided that $\eps , \dl> 0$.
Using \eqref{Q4} and \eqref{Q8}, a crude bound shows
\begin{align}
\bigg\| \Big\| \ind_{[0,T]} B^N_{N_1,\cdots,N_5} \Big\|_{X^{0,0} } 
\bigg\|_{L^2 (\Omega)}^2 \les T^\ta N_{\max}^{K} 
\label{Q12}
\end{align}

\noi
for some (possibly large) $K > 0$.
By interpolating \eqref{Q11} and \eqref{Q12}, applying 
 the Winner chaos estimate (Lemma \ref{LEM:hyp}), 
 and then summing over dyadic $N_j$, $j = 1, \dots,5$, 
we obtain 
\begin{align*}
\bigg\| \Big\| \ind_{[0,T]}  {\<32>}_N^{(5)}  \Big\|_{X^{\al-\frac12-\eps, -\frac12 +\dl_2} ([0,T])} 
\bigg\|_{L^p (\Omega)} \les p^{\frac 52} T^\ta
\end{align*}

\noi
for some $\theta >0$, 
uniformly in $N \in \N$.
Proceeding as in the end of the proof of Lemma \ref{LEM:sto1}\,(ii) on $\<3>_N$, 
a slight modification of the argument above yields
convergence of $\<32>_N^{(5)}$ to $\<32>^{(5)}$. 
Since the required modification is straightforward, 
we omit details.
A similar comment applies to 
$\<32>_N^{(3)}$ and $\<32>_N^{(1)}$ studied below.

\medskip
\noi
$\bullet$ {\bf Case (ii): Single-resonance term $\<32>_N^{(3)}$.}
In view of the product formula (Lemma~\ref{LEM:prod})\footnote{Note that both
$f_{n, t, t'}$ and $g_{n, t}$ 
in     \eqref{Wick1a} are symmetric in their arguments.}
and Definition \ref{DEF:B4}
together with 
 \eqref{Wick1} and  \eqref{Wick1a}, 
 we have
\begin{align*}
{\ft{\<32>}_N}^{(3)} (n,t)
& = I_3 \big[f^{(3)}_{n, t}\big], 
\end{align*}

\noi
where $f^{(3)}_{n, t}$ is defined by 
\begin{align*}
f^{(3)}_{n, t}(z_1, z_2,  z_4)  
&= \sum_{n_3 \in \Z^3}
\ind_{n = n_{124}}
\cdot 
 \bigg(\prod_{j = 1}^4  \ind_{|n_j|\le N}\bigg)
    \frac{\sin ((t - t_j)\jb{n_4})}{\jb{n_4}^{1+\al}} 
\cdot \ind_{[0,t]}(t_4)
 \\
& \quad  \times  
\int_0^t    \frac{\sin((t - t')\jb{n_{123}})}{\jb{n_{123}}} 
\bigg(\prod_{j = 1}^2 \frac{\sin((t' - t_j)\jb{n_j})}{\jb{n_j}^{1+\al}} 
\cdot  \ind_{[0,t']}(t_j) \bigg)
 \\
& \quad  \times  
\bigg(\int_0^{t'}  
\frac{\sin((t - t_3)\jb{n_3})
\sin((t' - t_3)\jb{n_3})}{\jb{n_3}^{2+2\al}} 
 dt_3\bigg) dt'.
\end{align*}

\noi
By the Wiener chaos estimate (Lemma \ref{LEM:hyp})
and  H\"older's inequality, we have
\begin{align}
\begin{split}
\Big\| \big\| {\<32>}_N^{(3)}   \big\|_{X^{\al - \frac12 -\eps, -\frac12 + \delta_2}_T} 
 \Big\|_{L^p(\O)}
& \les  \Big\| \big\| {\<32>}_N^{(3)}  \big\|_{L_T^{2} H_x^{\al - \frac12 -\eps} }
 \Big\|_{L^p(\O)}\\
& \les T^{\frac 12} p^{\frac32}
\sup_{t\in [0,T]} \Big\| \big\| {\<32>}_N^{(3)}(t)   \big\|_{H_x^{\al - \frac12 -\eps} } \Big\|_{L^2(\O)}
\end{split}
\label{Qa3}
\end{align} 

\noi
for small $\dl_2> 0$.
Hence, \eqref{T1} follows once we prove
\begin{align}
\sup_{t\in [0,T]} \Big\| \big\| {\<32>}_N^{(3)}  (t)\big\|_{H_x^{\al - \frac12 -\eps} } 
\Big\|_{L^2(\O)} < \infty
\label{Qa4}
\end{align}

\noi
for  $\eps >0$, 
uniformly in $N \in \N$.

With the symmetrization  $\Sym(f^{(3)}_{n, t})$
defined in  \eqref{sym}, 
it follows from   Lemma \ref{LEM:B1} 
and  Jensen's inequality \eqref{Jen} that 
\begin{align}
\begin{split}
\Big\| \big\|  {\<32>}_N^{(3)} &  (t)\big\|_{H_x^{\al - \frac12 -\eps}} \Big\|_{L^2(\O)}^2 
= 
\sum_{n \in \Z^3} 
 \jb{n}^{2\al -1 -2\eps}
\big\|I_3\big[\Sym(f^{(3)}_{n, t})\big]\big\|_{L^2(\O)}^2\\
& \les
\sum_{n \in \Z^3}
 \sum_{\substack{n = n_{124}\\ |n_j|\le N}} 
  \jb{n}^{2\al -1 -2\eps}
\bigg(\prod_{j \in \{1, 2, 4\} } \frac1{\jb{n_j}^{2+2\al}}\bigg)
\int_{[0, t]^3} |\If^{(3)} (z_1, z_2, t_4) | ^2 dt_1  dt_2  dt_4, 
\end{split} 
\label{Qa5}
\end{align}

\noi
where $\If^{(3)} (z_1, z_2, t_4) $ is defined by 
\begin{align}
\begin{split}
\If^{(3)} (z_1, z_2, t_4)& =   \sum_{|n_3|\leq N} 
 \frac1{\jb{n_{123}}
 \jb{n_3}^{2+2\al}}
 \int_{\max(t_1,t_2)}^t 
 \sin ((t-t') \jb{n_{123}})\\
& \hphantom{XXXXX} 
\times  \bigg(  \prod_{j=1}^2 \sin ((t'-t_j) \jb{n_{j}}) \bigg) \\ 
& \hphantom{XXXXX} 
\times \int_0^{t'}  
\sin ((t -t_3) \jb{n_{3}} )    \sin ((t'-t_3) \jb{n_{3}} ) dt_3 dt' .
\end{split}
\label{Qa6}
\end{align}

\noi
By switching the order of the integrals in~\eqref{Qa6}
(with $a = \max(t_1,t_2)$):
\[  \int_{a}^t 
\int_0^{t'} f dt_3 dt'
= \int_0^{a}
 \int_{a}^t 
 f dt' dt_3
 +  \int_{a}^t
 \int_{t_3}^t 
 f dt' dt_3 \]

\noi
and integrating in $t'$ first, we have 
\begin{align}
|\If^{(3)} (z_1, z_2, t_4)|
\les   
\sum_{\eps_1, \eps_2, \eps_3, \eps_{123} \in \{-1, 1\}}
\sum_{|n_3|\leq N} 
 \frac1{\jb{n_{123}}
 \jb{n_3}^{2+2\al}\jb{\kk_2(\bar n)}}, 
\label{Qa7}
\end{align}

\noi
where 
$\kk_2(\bar n)$ is as in \eqref{Q6a}.
Hence, from \eqref{Qa5},  \eqref{Qa7}, and Lemma \ref{LEM:A2}, 
we obtain 
\begin{align*}
\Big\| \big\|  {\<32>}_N^{(3)}   \big\|_{H_x^{\al - \frac12 -\eps}} \Big\|_{L^2(\O)}^2 
& \les\sum_{\eps_1, \eps_2, \eps_3, \eps_{123} \in \{-1, 1\}}
\sum_{n \in \Z^3}
 \sum_{\substack{n = n_{124}\\ |n_j|\le N}} 
  \jb{n}^{2\al -1 -2\eps}
\bigg(\prod_{j \in \{1, 2, 4\} } \frac1{\jb{n_j}^{2+2\al}}\bigg)\\
& \quad \times
 \bigg| \sum_{\substack{1\le N_3 \les N\\\text{dyadic}}}\sum_{m \in \Z}\sum_{|n_3|\sim N_3} 
 \frac{\ind_{\{|\kk_2(\bar n) - m |\le 1\}}}{\jb{n_{123}}
 \jb{n_3}^{2+2\al}\jb{m}}\bigg|^2\\
 & \les \sum_{n \in \Z^3}
 \sum_{\substack{n = n_{124}\\ |n_j|\le N}} 
\frac{1} { \jb{n}^{1 - 2\al + 2\eps}\jb{n_{12}}^2}
\bigg(\prod_{j \in \{1, 2, 4\} } \frac1{\jb{n_j}^{2+2\al}}\bigg)\\
& 
=  \sum_{|n_1|\leq N}
\frac{1}{\jb{n_1}^{2+2\al}}
\Bigg\{\sum_{|n_2|\leq N}
\frac{1} {\jb{n_{12}}^2\jb{n_2}^{2+2\al}}\\
& \quad \times
\bigg(\sum_{|n_4|\leq N}
\frac{1} { \jb{n_{124}}^{1 - 2\al + 2\eps}\jb{n_4}^{2+2\al}}\bigg)\Bigg\}.
\end{align*}

\noi
By applying  Lemma \ref{LEM:SUM} iteratively, we then obtain 
\begin{align*}
\Big\| \big\|  {\<32>}_N^{(3)}   \big\|_{H_x^{\al - \frac12 -\eps}} \Big\|_{L^2(\O)}^2 
& \les
1, 
\end{align*}

\noi
provided that $\dl_1 > 0$.
This yields  \eqref{Qa4}.

\medskip
\noi
$\bullet$ {\bf Case (iii): Double-resonance term ${\ft{\<32>}_N}^{(1)}$.}
As in Case (ii), from  the product formula (Lemma~\ref{LEM:prod})
and Definition \ref{DEF:B4}
together with 
 \eqref{Wick1} and   \eqref{Wick1a}, 
 we have
\begin{align*}
{\ft{\<32>}_N}^{(1)} (n,t)
& = I_1 \big[f^{(1)}_{n, t}\big], 
\end{align*}

\noi
where $f^{(1)}_{n, t}$ is defined by 
\begin{align*}
f^{(1)}_{n, t}(z_1)
&= \sum_{n_2, n_3 \in \Z^3}
\ind_{n = n_{1}}
\cdot 
 \bigg(\prod_{j = 1}^3  \ind_{|n_j|\le N}\bigg)
 \\
& \quad  \times  
\int_0^t    \frac{\sin((t - t')\jb{n_{123}})}{\jb{n_{123}}} 
 \frac{\sin((t' - t_1)\jb{n_1})}{\jb{n_1}^{1+\al}} 
\cdot  \ind_{[0,t']}(t_1) 
 \\
& \quad  \times  
\bigg(\int_0^{t'}  \int_0^{t'}  
\prod_{j = 2}^3 \frac{\sin((t - t_j)\jb{n_j})
\sin((t' - t_j)\jb{n_j})}{\jb{n_j}^{2+2\al}} 
dt_2 dt_3\bigg) dt'.
\end{align*}

\noi
Arguing as in \eqref{Qa3}, 
it suffices to show 
\begin{align}
\label{Qb4}
\sup_{t\in [0,T]} \Big\| \big\| {\<32>}_N^{(1)}  (t) \big\|_{H_x^{\al - \frac12 -\eps}  } 
\Big\|_{L^2(\O)} < \infty
\end{align}

\noi
for  $\eps >0$, 
uniformly in $N \in \N$.

With the symmetrization  $\Sym(f^{(1)}_{n, t})$
defined in  \eqref{sym}, 
it follows from   Lemma \ref{LEM:B1}  and Jensen's inequality \eqref{Jen} that 
\begin{align}
\begin{split}
\Big\| \big\|  {\<32>}_N^{(1)} &  (t)\big\|_{H_x^{\al - \frac12 -\eps}} \Big\|_{L^2(\O)}^2 
= 
\sum_{n \in \Z^3}
 \jb{n}^{2\al -1 -2\eps}
\big\| I_1\big[\Sym(f^{(1)}_{n, t})\big]\big\|_{L^2(\O)}^2 \\
& \les  
 \sum_{|n_1|\le N}
  \jb{n_1}^{-3 -2\eps}
\int_{[0, t]} |\If^{(1)} (z_1) | ^2 dt_1, 
\end{split} 
\label{Qb5}
\end{align}

\noi
where $\If^{(1)} (z_1) $ is defined by 
\begin{align}
\begin{split}
\If^{(1)} (z_1)& =   \sum_{|n_2|,  |n_3|\leq N} 
 \frac1{\jb{n_{123}}
 \jb{n_2}^{2+2\al}
 \jb{n_3}^{2+2\al}}\\
& \hphantom{XXXXX} 
\times  \int_{t_1}^t 
 \sin ((t-t') \jb{n_{123}}
 \sin ((t'-t_1) \jb{n_{1}}) \\
& \hphantom{XXXXX} 
\times \int_0^{t'} \int_0^{t'}  
\prod_{j = 2}^3 \sin ((t -t_j) \jb{n_{j}} )    \sin ((t'-t_j) \jb{n_{j}} ) dt_2 dt_3 dt' .
\end{split}
\label{Qb6}
\end{align}

\noi
By switching the order of the integrals in \eqref{Qb6}
and integrating in $t'$ first, we have 
\begin{align}
|\If^{(1)} (z_1)|
\les   
\sum_{|n_2|,  |n_3|\leq N} 
 \frac1{\jb{n_{123}}
 \jb{n_2}^{2+2\al}
 \jb{n_3}^{2+2\al}\jb{\kk_2(\bar n)}}, 
\label{Qb7}
\end{align}

\noi
where 
$\kk_2(\bar n)$ is as in \eqref{Q6a}.
Hence, from \eqref{Qb5} and   \eqref{Qb7},
we obtain 
\begin{align*}
\Big\| \big\|  {\<32>}_N^{(1)}   \big\|_{H_x^{\al - \frac12 -\eps}} \Big\|_{L^2(\O)}^2 
& \les\sum_{\eps_1, \eps_2, \eps_3, \eps_{123} \in \{-1, 1\}}
 \sum_{|n_1|\le N}
  \jb{n_1}^{-3 -2\eps}\\
& \quad   \times
\bigg|\sum_{m \in \Z}
\sum_{|n_2|,  |n_3|\leq N} 
 \frac{\ind_{\{|\kk_2(\bar n) - m |\le 1\}}}{\jb{n_{123}}
 \jb{n_2}^{2+2\al}
 \jb{n_3}^{2+2\al}\jb{m}}\bigg|^2.
\end{align*}

\noi
Now, apply the dyadic decompositions
$|n_j|\sim N_j$,  $j = 1, 2, 3$.
By noting that $\jb{n_{12}}^\al\les N_1^\al N_2^\al$
and 
that $|\kk_2(\bar n) - m |\le 1$ implies 
$|m| \les N_{\max} = \max(N_1, N_2, N_3)$, 
it follows from 
 Lemma \ref{LEM:A5} that 
\begin{align*}
\Big\| \big\|  {\<32>}_N^{(1)}   \big\|_{H_x^{\al - \frac12 -\eps}} \Big\|_{L^2(\O)}^2 
& \les\sum_{\eps_1, \eps_2, \eps_3, \eps_{123} \in \{-1, 1\}}
\sum_{\substack{1\le N_1, N_2, N_3\les N\\\text{dyadic}}}
N_{\max}^{\g} \frac{N_1^{2\al}}{N_2^{2\al} N_3^{4\al}}\\
& \quad   \times
 \sum_{|n_1|\sim N_1}
  \jb{n_1}^{-3 -2\eps}
\bigg|\sup_{m \in \Z}
\sum_{\substack{|n_2| \sim N_2  \\|n_3| \sim N_3}}
 \frac{\ind_{\{|\kk_2(\bar n) - m |\le 1\}}}{\jb{n_{123}}
\jb{n_{12}}^\al \jb{n_2}^{2}
 \jb{n_3}^{2}}\bigg|^2\\
& \les
\sum_{\substack{1\le N_1, N_2, N_3\les N\\\text{dyadic}}}
N_{\max}^{\g}
 \frac{N_1^{2\al-2\eps}}{N_2^{2\al} N_3^{4\al}}
 \max(N_1, N_2)^{-2\al + 2\g}\\
 & \les 1, 
\end{align*}

\noi
provided that $\eps > 0$, where 
 $\g =\g(\eps, \al) > 0$ is sufficiently small.
This yields  \eqref{Qb4}.

This concludes the proof of  Lemma \ref{LEM:sto3}\,(i).

\subsection{Septic stochastic term}\label{SUBSEC:4.3}

In this subsection, we present the proof of 
Lemma \ref{LEM:sto3}\,(ii)
on
the septic stochastic term $\Sep$ defined in \eqref{sto6a}.
Proceeding as in  \eqref{Qa3}, it suffices to show
\begin{align}
 \sup_{t\in [0,T]}\bigg\| \Big\| \Sepp(t) \Big\|_{H_x^{\al - \frac12 -\eps} } \bigg\|_{L^2(\O)} \les 1
\label{Sp1}
\end{align}

\noi
for $\eps > 0$, uniformly in $N \in \N$.
As in the previous subsections, 
we decompose $\fSepp (n,t)$ into the components in the homogeneous Wiener chaoses $\H_k$, 
$k = 1, 3, 5, 7$:
\begin{align}
\fSepp (n,t)  = \sum_{j=0}^3 \fSepp^{(2j+1)} (n,t) ,
\label{Sp2}
\end{align}

\noi
where $\fSepp^{(2j+1)}  \in \H_{2j+1}$.
From the orthogonality of $\H_k$,  we have 
\begin{align*}
\E\Big[|\fSepp(n, t)|^2\Big]
= \sum_{j=0}^3\E\Big[|\fSepp^{(2j+1)}(n, t)|^2\Big].
\end{align*}

\noi
Hence, it suffices to prove \eqref{Sp1}
for $\Sepp^{(2j+1)}$, $j = 0,1,2,3$.

\medskip

\noi
$\bullet$ {\bf Case (i): Non-resonant septic term.}
We first study  the non-resonant term $\fSepp^{(7)} \in \H_7$.
From \eqref{sto2} and \eqref{Wick1} with  \eqref{Wick1a}
and \eqref{XX2}, 
we have 
\begin{align}
\fSepp^{(7)} (n,t)
& = I_7 \big[f^{(7)}_{n, t}\big], 
\label{Sp4}
\end{align}

\noi
where $f^{(7)}_{n, t}$ is defined by 
\begin{align}
\begin{split}
f^{(7)}_{n, t}(z_1, \dots,  z_7)  
&= 
\ind_{n = n_{1234567}}\cdot \bigg(\prod_{j = 1}^7 \ind_{|n_j|\le N}\bigg)\\
& \hphantom{X} 
\times 
\int_0^t  \frac{\sin ((t-t') \jb{n_{123}})}{\jb{n_{123}}} 
\bigg(\prod_{j = 1}^3 \frac{\sin((t' - t_j)\jb{n_j} )}{\jb{n_j}^{1+\al}} 
\ind_{[t_j,t]} (t')\bigg)
dt' \\
& \hphantom{X} 
\times\int_0^t  \frac{\sin ((t-t'') \jb{n_{456}})}{\jb{n_{456}}}
\bigg(\prod_{j = 4}^6 \frac{\sin((t'' - t_j)\jb{n_j} )}{\jb{n_j}^{1+\al}} 
\ind_{[t_j,t]} (t'')\bigg) dt'' \\
& \hphantom{X} \times \frac{\sin ( (t-t_7) \jb{n_7})}{\jb{n_7}^{1+\al}}
\ind_{[0,t]} (t_7).
\end{split}
\label{Sp5}
\end{align}

\noi
By defining  the amplitude $\Phi$ by 
\begin{align}
\Phi(t, z_1, z_2, z_3) = \int_{\max(t_1,t_2,t_3)}^t  
\frac{\sin ((t-t') \jb{n_{123}})}{\jb{n_{123}}} 
\prod_{j= 1}^3 \frac{\sin((t' - t_j)\jb{n_j} )}{\jb{n_j}^{1+\al}} dt', 
\label{Sp6}
\end{align}

\noi
we have 
\begin{align*}
f^{(7)}_{n, t}(z_1, \dots,  z_7)  
& = \Phi(t, z_1, z_2, z_3) \Phi(t, z_4, z_5, z_6)
\frac{\sin ( (t-t_7) \jb{n_7})}{\jb{n_7}^{1+\al}}.
\end{align*}

\noi
Let $\kk_2(\bar n)$ be as in \eqref{Q6a}.
Then, from \eqref{Sp6}, 
we have 
\begin{align*}
\sup_{t \in [0,T]} | \Phi(t, z_1, z_2, z_3)|  
 \les K (n_1,n_2,n_3)\prod_{j = 1}^3 \jb{n_j}^{-\al} ,
\end{align*}

\noi
where $K (n_1,n_2,n_3)$ is defined by 
\begin{align}
K (n_1,n_2,n_3) = \frac{1}{\jb{n_{123}} \jb{\kk_2(\bar n)}} 
\prod_{j= 1}^3 \frac{1}{\jb{n_j}} .
\label{Sp9}
\end{align}

\noi
Note that from Lemma \ref{LEM:A1}, we have 
\begin{align}
\sum_{\substack{n_1, n_2, n_3\in \Z^3\\|n_j|\sim N_j}} 
K^2 (n_1,n_2,n_3) \les \max (N_1, N_2, N_3)^\g
\label{Sp10}
\end{align}

\noi
for any $\g > 0$.
In view of \eqref{Sp9} and \eqref{Q6a}, 
$K (n_1,n_2,n_3)$ depends on 
$\eps_{123}, \eps_1, \eps_2, \eps_3 \in \{-1, 1\}$.
In the following, however, we 
drop the dependence on 
$\eps_{123}, \eps_1, \eps_2, \eps_3 \in \{-1, 1\}$
since \eqref{Sp10} uniformly
in $\eps_{123}, \eps_1, \eps_2, \eps_3 \in \{-1, 1\}$.
The same comment applies to \eqref{Sp12} below.

With the symmetrization  $\Sym(f^{(7)}_{n, t})$
defined in  \eqref{sym}, 
it follows from   Lemma \ref{LEM:B1}, 
  Jensen's inequality \eqref{Jen}, and 
Lemma \ref{LEM:SUM} (to sum over $n_7$)
   that 
\begin{align*}
\Big\| \big\| &  \Sepp^{(7)}   (t)\big\|_{H_x^{\al - \frac12 -\eps}} \Big\|_{L^2(\O)}^2 \\
&  \sim 
\sum_{n \in \Z^3}
 \jb{n}^{2\al -1 -2\eps}
 \sum_{\substack{n = n_{1234567}\\|n_j|\le N}}
 \int_{[0, t]}
\big| \Sym(f^{(7)}_{n, t})\big|^2 dt_1 \cdots dt_7 \\
& \les  T^\ta 
 \sum_{\substack{n_1, \dots, n_7 \in \Z^3\\|n_j|\le N}}
\frac{ \jb{n_{1234567}}^{2\al -1 -2\eps}}{\jb{n_7}^{2+2\al}}
\bigg(\prod_{j = 1}^6 \frac 1{\jb{n_j}^{2\al}} \bigg)
K^2 (n_1,n_2,n_3)K^2 (n_4,n_5,n_6)\\
& \les  T^\ta 
 \sum_{\substack{ n_1, \dots, n_6 \in \Z^3\\|n_j|\le N}}
\frac{ 1}{\jb{n_{123456}}^{2\eps}}
\bigg(\prod_{j = 1}^6 \frac 1{\jb{n_j}^{2\al}} \bigg)
K^2 (n_1,n_2,n_3)K^2 (n_4,n_5,n_6)
\end{align*}

\noi
for some $\ta > 0$, provided that $\dl_1 > 0$. 
By applying the dyadic decomposition $|n_j| \sim N_j$, $j = 1, \dots, 7$, 
and then applying \eqref{Sp10}, we then obtain
\begin{align*}
\Big\| \big\|   \Sepp^{(7)}   (t)\big\|_{H_x^{\al - \frac12 -\eps}} \Big\|_{L^2(\O)}^2 
  \sim 
\sum_{\substack{1\le N_1, \dots, N_6\les N\\\text{dyadic}}}
\bigg(\prod_{j = 1}^6 \frac 1{N_j^{2\al-\g}} \bigg)
\les 1, 
\end{align*}

\noi
as long as $\g < 2\al$. 
This proves \eqref{Sp1}.

\medskip

\noi
$\bullet$ {\bf Case (ii): General septic terms.}
As we saw in the previous subsections, 
all other terms in \eqref{Sp2}
come from the contractions of the product of $\<30>_N \cdot \<30>_N\cdot\<1>_N$.
In order to fully describe these terms,  
we recall  the notion of a pairing from  
\cite[Definition 4.30]{Bring}
to describe the structure of the contractions.

\begin{definition}[pairing] \rm
\label{DEF:pair}
Let $J\ge1$. We call a relation $\mathcal P \subset \{1,\dots,J\}^2$
a pairing if
\begin{enumerate}
\item[(i)] $\mathcal P$ is reflexive, i.e. $(j,j) \notin \mathcal P$ for all $1 \le j \le J$,

\smallskip

\item[(ii)] $\mathcal P$ is symmetric, i.e. $(i,j) \in \mathcal P$ if and only if $(j,i) \in \mathcal P$,

\smallskip

\item[(iii)] $\mathcal P$ is univalent, i.e. for each $1 \le i \le J$, $(i,j) \in \mathcal P$ for at most one $1 \le j \le J$.
\end{enumerate}

\smallskip

\noi
If $(i,j) \in \mathcal P$, the tuple $(i,j)$ is called a pair.
If $1 \le j \le J$ is contained in a pair, we say that $j$ is paired.
With a slight abuse of notation, we also write $j \in \mathcal P$ if $j$ is paired.
If $j$ is not paired, we also say that $j$ is unpaired and write $j \notin \mathcal P$.
Furthermore, 
given a partition $\mathcal A = \{A_\l\}_{\l = 1}^L$ 
 of $\{1, \cdots, J\}$, 
we say that $\mathcal P$ respects $\mathcal A$ if $i,j\in  A_\l$ 
for some $1 \le \l \le L$ implies that $(i,j)\notin \mathcal P$.
Namely, $\mathcal P$ does not pair elements of the same set $A_\l \in \mathcal{A}$.
We say that  $(n_1, \dots, n_J) \in (\Z^3)^J$ is  admissible if $(i,j) \in \mathcal P$ implies that $n_i +  n_j = 0$.
\end{definition}

In order to represent $\fSepp^{(k)} (n,t) $, $k = 1, 3, 5$, 
as multiple stochastic integrals as in \eqref{Sp4}, 
we start with 
\eqref{Sp5} and perform a contraction over the variables $z_j = (n_j, t_j)$, 
namely,  we consider a (non-trivial)\footnote{Namely,  $\mathcal P = \varnothing$.} pairing
on $\{1, \dots, 7\}$.
Then, by integrating in $t'$ and $t''$ first
in \eqref{Sp5} after a contraction, 
a computation analogous to that in Case (i) 
yields
\begin{align}
\begin{split}
\bigg\| \Big\|  & \Sepp^{(k)}(t)   \Big\|_{H_x^{\alpha - \frac12 - \eps} } \bigg\|_{L^2(\O)}\\
&
 \les \sum_{\eps_{123}, \eps_1, \eps_2, \eps_3 \in \{-1, 1\}}\sum_{\mathcal P \in \Pi_k} 
\sum_{\{n_j\}_{j \notin \mathcal P}} 
 \jb{n_{\text{nr}}}^{2\al -1-2\eps} \\
 &
  \quad \times
 \bigg( \sum_{\{n_j\}_{j \in \mathcal P}}    
 \ind_{\substack{(n_1, \dots, n_7) 
 \\\text{admissible}}}
 \cdot 
\frac{K (n_1,n_2,n_3) K (n_4,n_5,n_6)}{\jb{n_7}^{1+\al}}
  \prod_{j=1}^6\frac1{\jb{n_j}^\al} \bigg)^2,
\end{split}
\label{Sp12}
\end{align}

\noi
where 
 $K$ is as in  \eqref{Sp9}  and 
the non-resonant frequency $n_{\text{nr}}$ is defined by 
\begin{align}
n_{\text{nr}} = \sum_{j \notin \mathcal P} n_j.
\label{nr1}
\end{align}

\noi
Here,  $\Pi_k$
denotes the collection of pairings $\mathcal P$ on $\{1, \dots, 7\}$
such that 
(i) $\mathcal P$ respects the partition $\mathcal A= \big\{\{1,2,3\}, \{4,5,6\}, \{7\}\big\}$
and (ii) $|\mathcal P| = 7-k$ (when we view $\mathcal P$ as a subset of $\{1, \dots, 7\}$).
Note
 that the estimate on $\Sepp^{(7)}$  
discussed in Case (i) 
is a special case of~\eqref{Sp12} with $\mathcal P = \varnothing$.
By applying Lemma \ref{LEM:A6} (with \eqref{PP1}), 
we then obtain 
\begin{align*}
\bigg\| \Big\|  \Sepp^{(k)}(t) \Big\|_{H_x^{\alpha - \frac12 - \eps} } \bigg\|_{L^2(\O)}\les 1, 
\end{align*}

\noi
provided that $\eps >0 $.
This concludes the proof of Lemma \ref{LEM:sto3}\,(ii).

\subsection{Random operator}
In this subsection, we present the proof of 
Lemma \ref{LEM:ran1} on the random operator $\If^{\<2>_N}$ defined in \eqref{ran1}.

In view of \eqref{Op1} and \eqref{Op2}
in the definition of $ \L^{s_1, s_2, b}_{T_0}$, 
\eqref{ran1}, 
and the nonhomogeneous linear estimate (Lemma \ref{LEM:lin1}), 
it suffices to show the following bound:
\begin{align}
\Big\| 
\sup_{T\in [0, 1]}\sup_{\, \| v \|_{X_T^{\frac12 + \dl_1, \frac12 + \dl_2}} \leq 1}
\| \<2>_N v  \|_{X_T^{-\frac12 + \dl_1, - \frac12 +2 \dl_2}} 
\Big\|_{L^p(\O)} \les  p
\label{R1}
\end{align}

\noi
for some small $\dl_1, \dl_2 >0$ and any $p \geq 1$, 
uniformly in $N \in \N$.
From \eqref{Xsb2}, 
we see that \eqref{R1} follows once we prove
\begin{align}
\Big\| 
\sup_{T\in [0, 1]}\sup_{\, \|  v \|_{X^{\frac12 + \dl_1, \frac12 + \dl_2}} \leq 1}
\|  \<2>_N v  \|_{X_T^{-\frac12 + \dl_1, - \frac12 +2 \dl_2}} 
\Big\|_{L^p(\O)} \les  p.
\label{R1a}
\end{align}

\noi
Furthermore, by  inserting a sharp  time-cutoff function on $[0, 1]$, 
we may drop the supremum in $T$ and reduce the bound
\eqref{R1a} to proving 
\begin{align}
\Big\| 
\sup_{\, \| v \|_{X^{\frac12 + \dl_1, \frac12 + \dl_2}} \leq 1}
\| \ind_{[0, 1]}(t) \cdot \<2>_N v  \|_{X^{-\frac12 + \dl_1, - \frac12 +2 \dl_2}} 
\Big\|_{L^p(\O)} \les  p.
\label{R1b}
\end{align}

\noi
As in the proof of Lemma \ref{LEM:sto1}\,(ii), 
we first prove 
\begin{align}
\Big\| 
\sup_{\, \| v \|_{X^{\frac12 + \dl_1, \frac12 + \dl_2}} \leq 1}
\| \ind_{[0, 1]}(t)\cdot  \<2>_N v  \|_{X^{-\frac12 + \dl_1, - \frac 12 - \dl}} 
\Big\|_{L^p(\O)} \les  p, 
\label{R2}
\end{align}

\noi
namely with $b = - \frac 12 - \dl < - \frac 12$ 
on the $X^{s, b}$-norm of  $\ind_{[0, 1]}(t)\cdot  \<2>_N v $
for  $\dl>0$.
In fact, we prove a frequency-localized version of \eqref{R2}
(see \eqref{H9} below)
and interpolate it with 
a trivial $X^{0, 0}$ estimate (see \eqref{H10} below), as in the proof of Lemma \ref{LEM:sto1}\,(ii) 
and Lemma \ref{LEM:sto3}\,(i), 
to establish~\eqref{R1b}
with $b = - \frac 12 + 2\dl_2 > - \frac 12$

We start by computing the space-time Fourier transform of 
$\ind_{[0, 1]}(t)\cdot \<2> _N v$.
From \eqref{Wick1} and~\eqref{Wick1a}, we have 
\begin{align*}
\F_x(\<2>_N \cdot v) (n,t) & = 
\sum_{n_3\in \Z^3}
 \ft{v}(n_3, t )  I_2[g_{n - n_3, t}], 
\end{align*}

\noi
where $g_{n - n_3, t}(z_1, z_2)$ is as in \eqref{Wick1a}.
Now, write $v =  v_1 + v_{-1}$, where
\[\ft v_{1}(n, \tau) = \ind_{[0, \infty)}(\tau)\cdot \ft  v(n, \tau)
\qquad \text{and}
\qquad 
\ft v_{-1}(n, \tau) = \ind_{(-\infty, 0)}(\tau)\cdot \ft  v(n, \tau).\]
 
\noi
Then, by noting 
$|\ft  v(n, \tau)|^2
= |\ft  v_{1}(n, \tau)|^2+ |\ft   v_{-1}(n, \tau)|^2$, 
we have
\begin{align}
\begin{split}
\|  v\|_{X^{s, b}}^2
& = \sum_{\eps_3 \in \{-1, 1\}}\|  v_{\eps_3}\|_{X^{s, b}}^2\\
& = \sum_{\eps_3 \in \{-1, 1\}}
\big\| \jb{n}^{s} \jb{\tau }^b \, 
\ft  v_{\eps_3}(n, \tau + \eps_3  \jb{n})\big\|_{\l^2_n L^2_\tau}^2.
\end{split}
\label{R3a}
\end{align}
 
\noi
With this in mind,  
we write
\begin{align}
\begin{split}
\F_{x,t}&( \ind_{[0, 1]}(t)\cdot \<2>_N  v_{\eps_3}) (n,\tau-\eps_0 \jb{n}) \\
& = \jb{n}^{\frac12 - \delta_1}
\sum_{ |n_3|\leq N } \jb{n_3}^{\frac12 + \delta_1} 
   \int_{\R} \ft  v_{\eps_3}(n_3, \tau_3 +\eps_3 \jb{n_3}) H(n,n_3,\tau,\tau_3)  d\tau_3 , 
\end{split}
\label{R4}
\end{align}

\noi
where $\eps_0, \eps_3 \in \{-1, 1\}$
and  the kernel $H = H^{\eps_0, \eps_3}$ is given by 
\begin{align*}
 H(n,n_3,\tau,\tau_3)  
 &=  \jb{n}^{-\frac12 + \delta_1} \jb{n_3}^{-\frac12 - \delta_1} 
\frac{1}{\sqrt{2\pi }} \int_0^1 
 e^{-i t (\tau  - \tau_3 - \eps_0\jb{n} - \eps_3\jb{n_3})}  
   I_2[g_{n - n_3, t}] dt.
\end{align*}

\noi
By Fubini's theorem (Lemma \ref{LEM:B3}), we can write $H$ as 
\begin{align}
\begin{split}
 H(n,n_3,\tau,\tau_3)  
 &=  \jb{n}^{-\frac12 + \delta_1} \jb{n_3}^{-\frac12 - \delta_1} 
   I_2[h_{n,  n_3, \tau, \tau_3}], 
\end{split}
\label{H2}
\end{align}

\noi
where $h_{n,  n_3, \tau, \tau_3}$ is given by 
\begin{align}
\begin{split}
h_{n,  n_3, \tau, \tau_3}(z_1, z_2)
 &=  \ind_{n- n_3 = n_{12}}
\cdot \frac{1}{\sqrt{2\pi }}  \int_0^1 
 e^{-i t (\tau  - \tau_3 - \eps_0\jb{n} - \eps_3\jb{n_3})}  \\
& \hphantom{XXXXX} 
\times \bigg(\prod_{j = 1}^2  \frac{\sin((t - t_j)\jb{n_j})}{\jb{n_j}^{1+\al}} \cdot \ind_{|n_j|\le N}
\cdot \ind_{[0,t]}(t_j)\bigg) dt.
\end{split}
\label{H3}
\end{align}

\noi
Then, by   \eqref{X6}, \eqref{R4},  Cauchy-Schwarz's inequality, 
and  \eqref{R3a}, 
 we have
\begin{align*}
 \Big\| & 
\sup_{\, \| v \|_{X^{\frac12 + \dl_1, \frac12 + \dl_2}} \leq 1}
\| \ind_{[0, 1]}(t)\cdot  \<2>_N v  \|_{X^{-\frac12 + \dl_1, - \frac 12 - \dl}} 
\Big\|_{L^p(\O)} \\
& \le
\sum_{\eps_3\in \{-1, 1\}}
\Big\| 
\sup_{\, \| v \|_{X^{\frac12 + \dl_1, \frac12 + \dl_2}} \leq 1}
\| \ind_{[0, 1]}(t)\cdot  \<2>_N v_{\eps_3}  \|_{X^{-\frac12 + \dl_1, - \frac 12 - \dl}} 
\Big\|_{L^p(\O)}\\
& \les \sup_{\eps_0, \eps_3 \in \{-1, 1\}} 
 \Bigg\|
 \sup_{\, \| v \|_{X^{\frac12 + \dl_1, \frac12 + \dl_2}} \leq 1}
\bigg\|  \jb{\tau}^{-\frac12 -\delta} \sum_{|n_3|\le N}
\jb{n_3}^{\frac12 + \delta_1}\\
& \hphantom{XXXXX}
 \times 
\bigg|\int_{\R} \ft{v}_{\eps_3}(n_3, \tau_3 +\eps_3 \jb{n_3}) H(n,n_3,\tau,\tau_3)  d\tau_3 
\bigg|
\bigg\|_{\l_n^2}
\Bigg\|_{L^p(\O; L^2_\tau)}\\
&  \les \sup_{\eps_0, \eps_3\in \{-1, 1\}} 
\bigg\| \jb{\tau}^{-\frac12 - \delta} \jb{\tau_3}^{-\frac12 - \delta_2} 
\| H(n,n_3,\tau,\tau_3)  \|_{\l^2_{n_3} \to \l^2_n} \bigg\|_{L^p(\O; L^2_{\tau, \tau_3})}\\
& \les   \sup_{\eps_0, \eps_3} \sup_{\tau,\tau_3 \in \R}  
\Big\| 
\| H(n,n_3,\tau,\tau_3)  \|_{\l^2_{n_3} \to \l^2_n} \Big\|_{L^p(\O)} , 
\end{align*}

\noi
as long as $\dl, \dl_2 > 0$, 
where, in the last step, 
we used  Minkowski's integral inequality followed
by 
 H\"older's inequality (in $\tau$ and $\tau_3$).
 Here, we viewed 
 $H(n,n_3,\tau,\tau_3)$ (for fixed $\tau, \tau_3 \in \R$)
 as an infinite dimensional matrix operator mapping
 from $\l^2_{n_3}$ into $\l^2_n$.
Hence, 
the estimate \eqref{R2} is reduced to proving 
\begin{align}
 \sup_{\eps_0, \eps_3} \sup_{\tau,\tau_3 \in \R} 
  \Big\| \| H(n,n_3,\tau,\tau_3)  \|_{\l^2_{n_3} \to \l^2_n} \Big\|_{L^p(\O)} \les  p.
\label{R5x}
\end{align}

As mentioned above, we instead establish
a frequency-localized version of \eqref{R5x}:
\begin{align}
 \sup_{\eps_0, \eps_3} \sup_{\tau,\tau_3 \in \R} 
  \Big\| \| H_{N_1, N_2, N_3} (n,n_3,\tau,\tau_3)  \|_{\l^2_{n_3} \to \l^2_n} \Big\|_{L^p(\O)} \les  p
  N_{\max}^{-\dl_0}, 
\label{R5}
\end{align}

\noi
for some small $\dl_0 > 0$, 
uniformly in dyadic $N_1, N_2, N_3\geq 1$, 
where $N_{\max} = \max(N_1, N_2, N_3)$
and $H_{N_1, N_2, N_3}$
is defined by \eqref{H2} and \eqref{H3}
with extra frequency localizations $\ind_{|n_j|\sim N_j}$, $j = 1, 2, 3$.
Namely, we have 
\begin{align}
\begin{split}
 H_{N_1, N_2, N_3}(n,n_3,\tau,\tau_3)  
 &=  \jb{n}^{-\frac12 + \delta_1} \jb{n_3}^{-\frac12 - \delta_1} 
   I_2\big[h^{N_1, N_2, N_3}_{n,  n_3, \tau, \tau_3}\big], 
\end{split}
\label{H4}
\end{align}

\noi
where $h^{N_1, N_2, N_3}_{n,  n_3, \tau, \tau_3}$ is given by 
\begin{align}
\begin{split}
h^{N_1, N_2, N_3}_{n,  n_3, \tau, \tau_3}(z_1, z_2)
 &=  \sum_{\eps_1, \eps_2\in \{-1, 1\}}c_{\eps_1, \eps_2}
 \ind_{n- n_3 = n_{12}}\cdot \ind_{|n_3|\sim N_3}\cdot
 \frac{1}{\sqrt{2\pi }} 
 \int_0^1 
 e^{-i t (\tau  - \tau_3 - \kk (\bar n) )}  \\
& \hphantom{XXXXX} 
\times \bigg(\prod_{j = 1}^2  \frac{e^{-it_j \eps_j \jb{n_j}}}{\jb{n_j}^{1+\al}} \cdot 
\ind_{\substack{|n_j|\sim N_j\\|n_j|\le N}}
\cdot \ind_{[0,t]}(t_j)\bigg) dt
\end{split}
\label{H5}
\end{align}

\noi
with  $\kk(\bar n)$  as in 
\eqref{X6a}.

For $m \in \Z$, define the tensor  $\hf^m$  by
\begin{align}
\begin{split}
\hf^m_{nn_1n_2n_3}  
& = c_{\eps_1, \eps_2} \ind_{n = n_{123} } 
\cdot
\ind_{|n_3|\sim N_3}
\bigg(\prod_{j=1}^2\ind_{\substack{|n_j|\sim N_j\\|n_j|\le N}}\bigg)\\
& \quad \times 
\ind_{\{|\kk(\bar n) - m|\le 1\}}
\frac{ \jb{n}^{-\frac12 + \dl_1}}{\jb{n_1}^{1+ \al} \jb{n_2}^{1+ \al}
 \jb{n_3}^{\frac12 + \dl_1}}.
\end{split}
\label{H6}
\end{align}

\noi
Then, from \eqref{H4}, \eqref{H5}, and \eqref{H6}, we have 
\begin{align}
\begin{split}
 H_{N_1, N_2, N_3}(n,n_3,\tau,\tau_3)  
 & = \sum_{\eps_1,  \eps_2 \in \{-1, 1\}}\sum_{m \in \Z} H^m(n,n_3, \tau,\tau_3)\\
 : \!& =\sum_{\eps_1 \eps_2 \in \{-1, 1\}} \sum_{m \in \Z} I_2\big[ \hf^m_{nn_1n_2n_3}
 \Hf^m_{n_3, \tau, \tau_3} \big], 
\end{split}
\label{H6a}
\end{align}

\noi
where $\Hf^m_{n_3, \tau, \tau_3}$ is given by 
\begin{align*}
\begin{split}
\Hf^m_{n_3, \tau, \tau_3}  (z_1, z_2)
 &=  \frac{1}{\sqrt{2\pi }} 
 \int_0^1 
 \ind_{\{|\kk(\bar n) - m| \leq 1\}}
 e^{-i t (\tau  - \tau_3 - \kk (\bar n) )}  
 \Big(\prod_{j = 1}^2  e^{-it_j \eps_j \jb{n_j}} 
\cdot \ind_{[0,t]}(t_j)\Big) dt.
\end{split}
\end{align*}

\noi
Performing $t$-integration, we have
\begin{align}
\| 
 \Hf^m_{n_3, \tau, \tau_3}  (z_1, z_2)
\|_{ \l^{\infty}_{n_1,n_2} L^2_{t_1, t_2}([0, 1]^2)} 
 \les  \jb{\tau - \tau_3 - m}^{-1}.
\label{H7}
\end{align}

\noi
Then from Lemma \ref{LEM:DNY},   \eqref{H7}, 
and Lemma \ref{LEM:Bring} (with \eqref{PP1}), 
there exists $\dl_3 > 0$ such that 
\begin{align}
\begin{split}
   \Big\| & \| H^m (n,n_3,\tau,\tau_3)  \|_{\l^2_{n_3} \to \l^2_n} \Big\|_{L^p(\O)}
\\ 
&  \les p N_{\max}^{\eps} \jb{\tau - \tau_3 - m}^{-1} \\
& \quad 
\times \max\Big( \| \hf^m \|_{n_1n_2n_3 \to n}, 
\| \hf^m \|_{n_3 \to nn_1n_2},
 \| \hf^m \|_{n_1n_3 \to nn_2}, 
\| \hf^m \|_{n_2n_3 \to nn_1} \Big) \\
&  \les  p N_{\max}^{\eps-\dl_3}  \jb{\tau - \tau_3 - m}^{-1} . 
\end{split}
\label{H8}
\end{align}

\noi
for any  $\eps >0$, provided that $\dl_1 < \al$, 
which is needed to apply Lemma \ref{LEM:Bring}.
Hence, by 
noting that the condition $|\kk(\bar n) - m|\le 1$
implies $|m| \les N_{\max}$ and 
summing over $m \in \Z$, 
the bound~\eqref{R5} follows 
from \eqref{H6a} and \eqref{H8}
(by taking $\eps>0$ sufficiently small), which in turn implies
\begin{align}
\Big\| 
\sup_{\, \| v \|_{X^{\frac12 + \dl_1, \frac12 + \dl_2}} \leq 1}
\| \ind_{[0, 1]}(t)\cdot  \<2>_N^{N_1, N_2} v_{N_3}  \|_{X^{-\frac12 + \dl_1, - \frac 12 - \dl}} 
\Big\|_{L^p(\O)} \les  p N_{\max}^{-\dl_0} 
\label{H9}
\end{align}

\noi
for some $\dl_0 > 0$, 
where $v_{N_3} = \F_x^{-1} (\ind_{|n|\sim N_3} \ft v(n))$
and
\begin{align*}
\ft{\<2>}_N^{N_1, N_2}(n, t) & = I_2 \bigg[  \ind_{n = n_{12}} \cdot \bigg(\prod_{j = 1}^2  \frac{\sin(t - t_j)\jb{n_j})}{\jb{n_j}^{1+\al}} 
\cdot \ind_{\substack{|n_j|\sim N_j\\|n_j|\le N}}
\cdot \ind_{[0,t]}(t_j)\bigg)\bigg].
\end{align*}

\noi
Namely, the frequencies $n_1$, $n_2$, and $n_3$
are localized to the dyadic blocks $\{|n_j|\sim N_j\}$, $j = 1, 2, 3$.

On the other hand,  a crude bound shows
\begin{align}
\Big\| 
\sup_{\, \| v \|_{X^{\frac12 + \dl_1, \frac12 + \dl_2}} \leq 1}
\| \ind_{[0, 1]}(t)\cdot  \<2>_N^{N_1, N_2} v_{N_3}  \|_{X^{0, 0}} 
\Big\|_{L^p(\O)} \les  p N_{\max}^K
\label{H10}
\end{align}

\noi
for some (possibly large) $K > 0$.
By interpolating \eqref{H9} and \eqref{H10}
 and then summing over dyadic $N_j$, $j = 1, \dots,3$, 
 we obtain \eqref{R1b}
 for some small $ \dl_2 > 0$.

Lastly, 
as for the convergence of $\If^{\<2>_N}$
to $\If^{\<2>}$,  
we can simply repeat the computation 
above to estimate the difference
$\ind_{[0,1]} \<2>_M  v - \ind_{[0,1]}  \<2>_N  v $
for $M \ge N \ge 1$.
 In considering the difference of 
the tensors $\hf^m$ in \eqref{H6}, 
we then 
obtain 
a new restriction  $ \max(|n_1|, |n_2|) \ges N$, 
 which allows us to gain a small negative power of $N$.
As a result, we obtain
\begin{align*}
\bigg\| 
\sup_{\, \| v \|_{X^{\frac12 + \dl_1, \frac12 + \dl_2}} \leq 1}
\Big\| \ind_{[0, 1]}(t)\cdot  \Big(\<2>_N^{N_1, N_2} - \<2>_M^{N_1, N_2}\Big) v_{N_3} 
\Big\|_{X^{-\frac12 + \dl_1, - \frac 12 - \dl}} 
\bigg\|_{L^p(\O)} \les  p N^{-\eps} N_{\max}^{-\dl_0'} 
\end{align*}

\noi
for some small $\eps, \dl_0' > 0$, 
Then, interpolating this with \eqref{H10}
and summing over dyadic blocks, 
we then obtain
\begin{align*}
\Big\| \| \If^{\<2>_M} -  \If^{\<2>_N} \|_{\L^{\frac 12  + \dl_1, \frac 12 +\dl_1,  \frac 12 +\dl_2}_{T_0}} \Big\|_{L^p(\O)} \les pN^{-\e} ,
\end{align*}
\noi
for any $p \ge 1$  
and $M \ge N \ge 1$.
Then, by applying Chebyshev's inequality, summing over $N \in \N$, 
and applying the Borel-Cantelli lemma, 
we conclude the almost sure convergence of~$\If^{\<2>_N}$.
This concludes the proof of Lemma \ref{LEM:ran1}.

\appendix

\section{Counting estimates}\label{SEC:A}

In this section, we state the counting estimates 
used in Section \ref{SEC:sto2}
to study the regularities of the stochastic terms.
These lemmas are  taken 
from Bringmann \cite{Bring}.
Note that some statements are given in 
a slightly simplified form.
The same comment applies
to Lemma \ref{LEM:Bring}.

\begin{lemma}[Proposition 4.20 in \cite{Bring}]
\label{LEM:A1}
Let $0 < s  \le \frac12$
and $0 \le  \be  \le \frac12$.
Given  $\eps_j \in \{-1, 1 \}$ for $j = 0,1,2,3$,
let 
$\kk(\bar n) = \kk_{\eps_0, \eps_1, \eps_2, \eps_3}(n_1, n_2, n_3)$ 
 be as in \eqref{X6a}.
Then, we have 
\begin{align*}
\sup_{m \in \Z} \sum_{\substack{n_1,n_2,n_3 \in \Z^3\\|n_j|\sim N_j}} 
&  \jb{n_{123}}^{2(s-1)} 
\frac{\ind_{\{|\kk(\bar n)- m| \le 1\}}}
{\jb{n_{12}}^{2\be} \prod_{j=1}^3 \jb{n_j}^{2}
}
   \les N_{\max}^{2(s- \be)},
\end{align*}

\noi
uniformly in dyadic $N_1,N_2,N_3 \ge 1$
and  $\eps_j \in \{-1, 1 \}$ for $j = 0,1,2,3$, 
where $N_{\max} = \max(N_1,N_2,N_3)$.

\end{lemma}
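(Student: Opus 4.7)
The plan is to reduce the trilinear sum to a counting problem on the resonance level set $\{|\kk(\bar n)-m|\le 1\}$, by summing out the ``largest'' frequency first and then dispatching the remaining two-variable sum via elementary convolution estimates (Lemma~\ref{LEM:SUM}). By the symmetry of the summand in $(n_1,n_2)$ and the fact that the weight $\jb{n_{12}}^{-2\be}$ is symmetric in these two variables, we may reduce to two dyadic configurations: (A) $N_3=N_{\max}$, or (B) $N_1=N_{\max}\ge N_2$ (after relabeling $n_1 \leftrightarrow n_2$). It furthermore suffices to establish the bound at the endpoints $\be=0$ and $\be=\tfrac12$, since the general case then follows by log-convexity in $\be$.

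The core input is a standard level-set counting estimate of the form
\begin{align*}
\#\bigl\{k\in\Z^3:\ |k|\sim K,\ \bigl|\eps_a\jb{k+p}+\eps_b\jb{k}-c\bigr|\le 1\bigr\}
\;\les\; K^2+\frac{K^3}{\jb{p}},
\end{align*}
valid uniformly in $p\in\Z^3$, $c\in\R$, and $\eps_a,\eps_b\in\{-1,1\}$. It is obtained by foliating the shell $\{|k|\sim K\}$ into level sets of the symbol $k\mapsto\eps_a\jb{k+p}+\eps_b\jb{k}$, whose gradient has magnitude $\gtrsim 1$ except in the ``opposite-sign'' case $\eps_a=-\eps_b$, where the magnitude degrades to $\gtrsim\jb{p}/K$; the two contributions $K^2$ and $K^3/\jb{p}$ correspond to the respective slab thicknesses times the lattice density. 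Applying this with $(k,p,K)=(n_3,n_{12},N_3)$ in Case~A and $(k,p,K)=(n_1,n_{23},N_1)$ in Case~B, and then summing the remaining two variables via Lemma~\ref{LEM:SUM} against the weights $\jb{n_j}^{-2}$ and $\jb{n_{123}}^{2(s-1)}$, produces the target bound $N_{\max}^{2s}$ at $\be=0$. At $\be=\tfrac12$ the analogous bookkeeping, using the additional factor $\jb{n_{12}}^{-1}$ to absorb the $K^3/\jb{p}$ contribution, yields $N_{\max}^{2s-1}$.

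The main obstacle is the degenerate ``parallel'' regime of the phase $\eps_0\jb{n_{123}}+\eps_3\jb{n_3}$ in Case~A with $\eps_0=-\eps_3$ and $|n_{12}|\ll N_3$, where the gradient collapses and the naive resonance count grows like $N_3^3/\jb{n_{12}}$. Precisely in this regime, however, the weight $\jb{n_{12}}^{-2\be}$ is largest, and one must track the interplay between these two growths carefully: the counting loss from small $\jb{n_{12}}$ is exactly compensated by $\jb{n_{12}}^{-2\be}$ at the endpoint $\be=\tfrac12$, via the second term on the right-hand side of the displayed counting estimate above. These sharp slab counts on $\Z^3$ are precisely those developed and systematically deployed in \cite[Section~4]{Bring}, which also records several refinements (such as the treatment of the ``Case~B'' phase $\eps_0\jb{n_{123}}+\eps_1\jb{n_1}$) that we rely on to conclude; the present lemma is extracted directly from the relevant result therein.
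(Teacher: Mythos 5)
The paper does not actually prove this lemma: it is quoted verbatim from Bringmann's work and used as a black box (\cite[Proposition 4.20]{Bring}), so your closing deferral to that reference is consistent with what the paper does. The problem is that the outline you give of how the proof would go contains a genuine gap, and the specific bookkeeping you describe does not produce the stated bound.

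The gap concerns the weight $\jb{n_{123}}^{2(s-1)}$. Your order of operations is to sum out the largest frequency via the level-set count $\#\{k:\ |k|\sim K,\ |\eps_a\jb{k+p}+\eps_b\jb{k}-c|\le 1\}\les K^2+K^3/\jb{p}$ and then to ``sum the remaining two variables against the weights $\jb{n_j}^{-2}$ and $\jb{n_{123}}^{2(s-1)}$.'' But $n_{123}$ depends on the variable you have just counted out, so $\jb{n_{123}}^{2(s-1)}$ is not a function of the two remaining variables and cannot be used at that stage. If you instead discard it beforehand (bounding it by $1$, as the stated procedure implicitly does), the claimed bound is simply false: take $s=\tfrac12$, $N_1=N_2=N_3=N$. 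By pigeonholing over the $O(N)$ possible values of $\kk(\bar n)$, some unit window $\{|\kk(\bar n)-m|\le 1\}$ contains $\gtrsim N^{8}$ triples, for most of which $\jb{n_{12}}\sim N$; hence $\sup_m\sum \ind\,\jb{n_{12}}^{-2\be}\prod_j\jb{n_j}^{-2}\ges N^{8}\cdot N^{-2\be}\cdot N^{-6}=N^{2-2\be}$, which exceeds the target $N^{2s-2\be}=N^{1-2\be}$ by a full factor of $N$. The factor $\jb{n_{123}}^{2(s-1)}$ is precisely what encodes the multilinear smoothing here and is indispensable; a correct execution requires an additional dyadic localization $|n_{123}|\sim N_{123}$ together with counting estimates that carry this extra constraint (which is how Bringmann's proof is organized), not the unconstrained slab count you quote. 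Relatedly, in your Case B the gain produced by the count is in $\jb{n_{23}}^{-1}$, not $\jb{n_{12}}^{-1}$, so your remark about the weight $\jb{n_{12}}^{-2\be}$ absorbing the $K^3/\jb{p}$ term at $\be=\tfrac12$ does not apply there. (Your symmetry reduction in $(n_1,n_2)$ and the H\"older interpolation in $\be$ between the endpoints $\be=0$ and $\be=\tfrac12$ are both fine.)
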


\begin{lemma}[Lemma 4.22\,(i) in \cite{Bring}]
\label{LEM:A1a}
Given  $\eps_j \in \{-1, 1\}$ for $j = 0,1,2,3$,
let 
$\kk(\bar n) = \kk_{\eps_0, \eps_1, \eps_2, \eps_3}(n_1, n_2, n_3)$ 
 be as in \eqref{X6a}.
Then, we have 
\begin{align*}
\sup_{m \in \Z} \sup_{n \in \Z^3} \, 
& \#\Big\{(n_1, n_2, n_3) \in \Z^3:
|n_j|\sim N_j, \, j = 1, 2, 3, \, n = n_{123}, \, 
\{|\kk(\bar n)- m| \le 1\}\Big\}  \\
& \les \med(N_1, N_2, N_3)^3 \min(N_1, N_2, N_3)^2, 
\end{align*}

\noi
uniformly in dyadic $N_1,N_2,N_3 \ge 1$
and  $\eps_j \in \{-1, 1 \}$ for $j = 0,1,2,3$.

\end{lemma}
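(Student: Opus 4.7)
The plan is to sum freely over one three-dimensional frequency and use the modulation condition $|\kappa(\bar n) - m| \le 1$ to constrain a second. Since the target bound is symmetric in $(N_1,N_2,N_3)$, I may relabel so that $N_1 \le N_2 \le N_3$; then the goal becomes $\lesssim N_2^3 N_1^2$. Because $n = n_{123}$ is fixed, $n_3 = n - n_1 - n_2$ is determined by $(n_1,n_2)$, and it suffices to count such pairs. First I would sum over $n_2$ with $|n_2|\sim N_2$ trivially, contributing at most $\lesssim N_2^3$ choices. For each fixed $n_2$, set $c := n - n_2 \in \Z^3$ and $m' := m - \epsilon_0 \langle n\rangle - \epsilon_2 \langle n_2\rangle \in \R$; the modulation condition reduces to the one-body constraint
\[
\bigl|\, \epsilon_1 \langle n_1\rangle + \epsilon_3 \langle n_1 - c\rangle - m' \,\bigr| \le 1,
\]
and the problem is reduced to showing that, uniformly in $(c, m', \epsilon_1, \epsilon_3)$, the number of $n_1 \in \Z^3$ with $|n_1|\sim N_1$ satisfying this inequality is $\lesssim N_1^2$.

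Next I would split into two sign cases. When $\epsilon_1 = \epsilon_3$, the function $\phi(x) := \langle x\rangle + \langle x - c\rangle$ has ellipsoidal level sets with foci $0$ and $c$, so $\{|\phi - \epsilon_1 m'|\le 1\}$ is a shell of unit transverse thickness; intersecting with the dyadic annulus $\{|n_1|\sim N_1\}$ produces a two-dimensional region which, via a standard slicing argument along a coordinate in which $\phi$ is monotone, contains at most $\lesssim N_1^2$ lattice points. When $\epsilon_1 = -\epsilon_3$, I would use the identity
\[
\langle n_1\rangle - \langle n_1 - c\rangle = \frac{2\, n_1 \cdot c - |c|^2}{\langle n_1\rangle + \langle n_1 - c\rangle}
\]
to convert the constraint into an approximate linear condition on the scalar product $n_1 \cdot c$, pinning it to an interval of length comparable to $\langle n_1\rangle + \langle n_1 - c\rangle$; this restricts $n_1$ to a slab of uniformly bounded width transverse to $c$, and intersecting with $\{|n_1|\sim N_1\}$ yields $\lesssim N_1^2$ lattice points (a planar slice of the three-dimensional dyadic ball). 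Combining this with the $N_2^3$ from the first step produces the claimed bound $\med(N_1,N_2,N_3)^3 \min(N_1,N_2,N_3)^2$.

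The main obstacle is the degenerate regime where the natural slicing direction loses its gradient: in the $\epsilon_1 = \epsilon_3$ case this occurs near the segment joining $0$ and $c$, and in the $\epsilon_1 = -\epsilon_3$ case it occurs when $|c| \ll N_1$, where the putative "width" of the slab becomes large relative to $|c|$ and the conversion to a linear constraint on $n_1 \cdot c$ degenerates. In each such subcase I would resolve the issue by further decomposing the annulus $\{|n_1|\sim N_1\}$ into a neighborhood of the critical locus of $\phi$, where a direct volume bound suffices, and its complement, where the gradient of $\phi$ is bounded below and the slicing argument applies cleanly. Since this is precisely the geometric content of Lemma~4.22(i) in Bringmann~\cite{Bring}, one may equivalently invoke that result directly rather than redo the case analysis.
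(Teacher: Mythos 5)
The paper itself offers no proof of this lemma: it is imported verbatim from Bringmann (Lemma 4.22(i) of \cite{Bring}), so your closing remark that one may simply invoke that result is exactly what the paper does. As a self-contained argument, however, your sketch has a genuine gap in the difference case $\epsilon_1=-\epsilon_3$. The claim you reduce to --- that \emph{uniformly} in $c=n-n_2$ the number of $n_1$ with $|n_1|\sim N_1$ and $|\epsilon_1\langle n_1\rangle+\epsilon_3\langle n_1-c\rangle-m'|\le 1$ is $\lesssim N_1^2$ --- is false. Take $n_2=n$, so $c=0$ and $n_3=-n_1$: then $\epsilon_1\langle n_1\rangle+\epsilon_3\langle n_3\rangle=0$ for every $n_1$, and (whenever $|m'|\le 1$) the constraint holds for all $\sim N_1^3$ points of the annulus. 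More generally, for $1\le |c|\ll N_1$ the relevant level sets of $\langle x\rangle-\langle x-c\rangle$ on $\{|x|\sim N_1\}$ are slabs of thickness $\sim N_1/|c|\gg 1$ in the $c$-direction, giving $\sim N_1^3/|c|$ points. Your proposed patch --- a volume bound near the critical locus --- cannot recover $N_1^2$ here, because for $|c|\ll N_1$ one has $\big|\tfrac{x}{\langle x\rangle}-\tfrac{x-c}{\langle x-c\rangle}\big|=O(|c|/N_1)$ on the \emph{entire} annulus, so the ``critical neighborhood'' is everything and the volume bound is only $N_1^3/\max(|c|,1)$. (Note this does not contradict the lemma itself, since such degenerate $n_2$ are rare; it only breaks your decoupled ``$N_2^3$ times a uniform $N_1^2$'' structure.)

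The repair is to count jointly rather than uniformly in $n_2$: decompose dyadically in $R=|n-n_2|$, use that there are $\lesssim \min(R,N_2)^3$ admissible $n_2$ at scale $R$, and pair this with the $n_1$-count $\lesssim N_1^2\max(N_1/R,1)$ in the difference case. For $R\lesssim N_1$ this gives $\lesssim R^2N_1^3\le N_1^5\le N_1^2N_2^3$ after summing the dyadic scales, and for $R\gtrsim N_1$ one is back to your $N_2^3\cdot N_1^2$. The sum case $\epsilon_1=\epsilon_3$ (confocal ellipsoid shells) is fine as you describe, including the focal-segment degeneracy, where the tube radius $\sqrt{\min(t,|c|-t)}$ still yields $\lesssim N_1^2$ lattice points.
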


\medskip

Next, we recall  the basic resonant estimate.

\begin{lemma}
[Lemma~4.25 in \cite{Bring}]
\label{LEM:A2}
Given  $\eps_j \in \{- 1, 1\}$ for $j = 0,1,2,3$,
let 
$\kk(\bar n) = \kk_{\eps_0, \eps_1, \eps_2, \eps_3}(n_1, n_2, n_3)$ 
 be as in~\eqref{X6a}.
Then, we have 
\begin{align*}
\sum_{m \in \Z} \sum_{\substack{n_1 \in \Z^3\\|n_1|\sim N_1}} 
\frac{\ind_{\{|\kk(\bar n)- m| \le 1\}}}{
\jb{m} \jb{n_{123}} \jb{n_1}^{2} }
  \les  \frac{\log(2+N_1)}{\jb{n_{23}}},
\end{align*}

\noi
\noi
uniformly in dyadic $N_1 \ge 1$
and  $\eps_j \in \{-1, 1 \}$ for $j = 0,1,2,3$.

\end{lemma}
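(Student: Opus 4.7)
The plan is to begin by performing the sum over $m$, which is essentially free: the constraint $|\kk(\bar n) - m|\le 1$ admits at most $O(1)$ values of $m$ for each fixed triple $(n_1,n_2,n_3)$, and for those $m$ we have $\jb{m}\sim \jb{\kk(\bar n)}$. Hence
\begin{align*}
\sum_{m\in\Z}\frac{\ind_{\{|\kk(\bar n)-m|\le 1\}}}{\jb{m}} \les \frac{1}{\jb{\kk(\bar n)}},
\end{align*}
and it remains to establish the reduced bound
\begin{align*}
\sum_{\substack{n_1\in\Z^3\\|n_1|\sim N_1}}\frac{1}{\jb{\kk(\bar n)}\jb{n_{123}}\jb{n_1}^2}\les \frac{\log(2+N_1)}{\jb{n_{23}}},
\end{align*}
uniformly in $n_2,n_3\in\Z^3$ and in the signs $\eps_j\in\{-1,1\}$.

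Next, I extract the dependence on $n_{23}$ via the triangle inequality $\jb{n_{23}}\les\jb{n_1}+\jb{n_{123}}$, which yields
\begin{align*}
\frac{\jb{n_{23}}}{\jb{n_{123}}\jb{n_1}^2}\les \frac{1}{\jb{n_{123}}\jb{n_1}}+\frac{1}{\jb{n_1}^2}.
\end{align*}
This splits the task into proving the two estimates
\begin{align*}
S_1 := \sum_{|n_1|\sim N_1} \frac{1}{\jb{\kk(\bar n)}\jb{n_{123}}\jb{n_1}} \les \log(2+N_1), \quad S_2 := \sum_{|n_1|\sim N_1}\frac{1}{\jb{\kk(\bar n)}\jb{n_1}^2}\les \log(2+N_1).
\end{align*}

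Both quantities are controlled through a dyadic decomposition in $\jb{\kk(\bar n)}\sim 2^j$, combined with the geometric observation that, viewed as a function of $n_1\in\R^3$ with $n_2,n_3$ fixed, the map $n_1\mapsto\kk(\bar n)$ is $2$-Lipschitz since $|\nabla_{n_1}\kk|\le 2$. Consequently the preimage of an interval of length $2^j$ is a Lipschitz slab of thickness $\sim 2^j$, whose intersection with the dyadic shell $\{|n_1|\sim N_1\}$ contains at most $\les 2^j N_1^2$ lattice points in nondegenerate directions. Using $\jb{n_1}\sim N_1$, this yields $S_2\les N_1^{-2}\sum_{j=0}^{O(\log N_1)} 2^{-j}\cdot 2^j N_1^2\les \log(2+N_1)$; the estimate for $S_1$ follows analogously after an additional dyadic decomposition in $\jb{n_{123}}\sim M$, using that for fixed $M$ the annulus $\{|n_{123}|\sim M\}$ around $-n_{23}$ has area $\sim M^2$ so that only $\sim \min(M,N_1)\cdot M^2$ points in the shell $|n_1|\sim N_1$ satisfy this extra constraint, and the dyadic sums over $j$ and $M$ each produce a single logarithm.

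The hard part will be the degenerate regime where the gradient $\nabla_{n_1}\kk(\bar n) = \eps_0\, n_{123}/\jb{n_{123}} + \eps_1\, n_1/\jb{n_1}$ is small, which can only occur when $\eps_0=-\eps_1$ and $n_1$ is nearly parallel to $n_{123}$. On this exceptional one-parameter family of directions the naive Lipschitz-slab argument collapses, and the $2^j N_1^2$ counting bound must be recovered either by exploiting the nondegeneracy of the transverse Hessian of $\kk$ in the directions orthogonal to the resonant line, or by crudely estimating the volume of the resonant tube and verifying that this contribution is absorbed by the outer dyadic sums. This is precisely the type of geometric counting carried out in detail in Section~4 of \cite{Bring}, from which the lemma is quoted.
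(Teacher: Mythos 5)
The paper offers no proof of this lemma---it is quoted verbatim from Bringmann \cite[Lemma 4.25]{Bring}---so your argument has to stand on its own, and it does not. The first reduction (summing over $m$ and replacing $\jb{m}^{-1}$ by $\jb{\kk(\bar n)}^{-1}$) is fine, but the triangle-inequality splitting that follows is fatally lossy: the intermediate claims $S_1, S_2 \les \log(2+N_1)$ are false. Take $\eps_0 = 1$, $\eps_1 = -1$, $n_2 = n_3 = 0$, and $\eps_2 = -\eps_3$; then $n_{123} = n_1$ and $\kk(\bar n) \equiv 0$, so $S_2 = \sum_{|n_1|\sim N_1}\jb{n_1}^{-2} \sim N_1$, and likewise $S_1 \sim N_1$. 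The lemma itself survives in this example only because the full weight $\jb{n_{123}}\jb{n_1}^{2} = \jb{n_1}^{3}$ exactly matches the $\sim N_1^{3}$ lattice points in the shell; discarding one power of $\jb{n_1}$ via $\jb{n_{23}} \les \jb{n_1} + \jb{n_{123}}$ destroys the estimate.

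The underlying problem is that your level-set counting is justified backwards and the ``degenerate regime'' is mischaracterized. An upper bound $|\nabla_{n_1}\kk|\le 2$ makes $\kk$ Lipschitz, which says nothing about preimages of short intervals being thin---a constant function is $0$-Lipschitz and its level set fills the whole shell, which is exactly what happens in the example above. Thin slabs require a \emph{lower} bound on the gradient, and when $\eps_0 = -\eps_1$ and $|n_{23}| \ll N_1$ the gradient $\eps_0\, n_{123}/\jb{n_{123}} + \eps_1\, n_1/\jb{n_1}$ has size $O(|n_{23}|/N_1)$ on essentially the \emph{entire} shell $|n_1|\sim N_1$: this is the main case, not an exceptional family of directions. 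There one has $\kk(\bar n) = \eps_0(\jb{n_1+n_{23}}-\jb{n_1}) + \eps_2\jb{n_2}+\eps_3\jb{n_3} \approx \eps_0\, \hat n_1\cdot n_{23} + \mathrm{const}$, so $\{|\kk(\bar n) - m|\le 1\}$ is an angular band of relative width $\sim \jb{n_{23}}^{-1}$ containing $\sim N_1^{3}\jb{n_{23}}^{-1}$ points; the factor $\jb{n_{23}}^{-1}$ on the right-hand side must be \emph{produced by this counting}, not extracted in advance by the triangle inequality. A correct proof keeps the full weight and treats the transversal and angular cases separately---which is what Bringmann does, and which your proposal in the end simply defers to him.
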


The next two lemmas (and Lemma \ref{LEM:A2} above) are used
for estimating the quintic stochastic term.

\begin{lemma}
\label{LEM:A3}
Let $s \le \frac12 - \eta$ and $\be > 0$ for some $\eta > 0$.
Given  $\eps_{123}, \eps_j \in \{- 1, 1\}$ for $j = 0, \dots, 5$,
let 
$\kk_2(\bar n)$, $\kk_3(\bar n)$, and $\kk_4(\bar n)$
be as in  \eqref{Q6a} and  \eqref{Q9}.
Then, we have 
\begin{align}
\begin{split}
 & \sup_{m, m' \in \Z} \sum_{\substack{n_1, \dots, n_5 \in \Z^3\\|n_j|\sim N_j}}
\frac{ \jb{n_{12345}}^{2(s-1)}}{ 
\jb{n_{1234}}^{2\be}
\jb{n_{12}}^{2\be}
\jb{n_{123}}^{2} \prod_{j=1}^5 \jb{n_j}^{2}} \\
&\quad \times  \ind_{\{ |\kk_2(\bar n) -m|\leq 1\}}  
\Big(\ind_{\{ |\kk_3(\bar n) -m'|\leq 1\}}+\ind_{\{ |\kk_4(\bar n) -m'|\leq 1\}}\Big)
\\
 & \les \max(N_1, N_2, N_3, N_4)^{-2\be + \eps} N_5^{-\eta}
\end{split}
\label{XA1}
\end{align}

\noi
for any $\eps > 0$, 
uniformly in dyadic $N_1,\dots,N_5 \ge 1$
and   $\eps_{123}, \eps_j \in \{- 1, 1\}$ for $j = 0, \dots, 5$, 
where $N_{\max} = \max(N_1,\dots,N_5)$.
\end{lemma}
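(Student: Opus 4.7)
Plan. The argument rests on two observations. The first is the algebraic identity
\begin{align*}
\kappa_3(\bar n) = \kappa_2(\bar n) + \kappa_4(\bar n),
\end{align*}
which is an immediate consequence of the definitions in \eqref{Q6a} and \eqref{Q9}, since the $\pm\eps_j\jb{n_j}$ contributions for $j=1,2,3$ cancel. On the joint support of $\ind_{|\kappa_2-m|\le 1}$ and $\ind_{|\kappa_4-m'|\le 1}$, this identity forces $|\kappa_3-(m+m')|\le 2$, so the $\kappa_4$-contribution in \eqref{XA1} is dominated, after relabeling $m'\mapsto m+m'$, by the $\kappa_3$-contribution. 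Thus it suffices to bound the sum with only the $\ind_{|\kappa_3-m'|\le 1}$ indicator in place. The second observation is that the hypothesis $s\le \tfrac12-\eta$ gives $\jb{n_{12345}}^{2(s-1)}\le \jb{n_{12345}}^{-1-2\eta}$, placing the output weight at exponent $s'=\tfrac12-\eta\in (0,\tfrac12]$, precisely the range covered by Lemma~\ref{LEM:A1}.

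The proof itself then iterates Lemma~\ref{LEM:A1} along the natural ``cubic-in-cubic'' structure of the sum. I introduce the auxiliary variable $k=n_{123}$ and decompose dyadically $|k|\sim M\lesssim \max(N_1,N_2,N_3)$. The outer sum, over the triple $(k,n_4,n_5)$ with partial sum $n_{1234}=k+n_4$ and full sum $n_{12345}=k+n_4+n_5$, carries the $\kappa_3$ resonance and fits the hypotheses of Lemma~\ref{LEM:A1} verbatim; applying that lemma with $s'=\tfrac12-\eta$ and the given $\beta$ bounds this outer sum by $\max(M,N_4,N_5)^{1-2\eta-2\beta}$. The inner sum, over $(n_1,n_2,n_3)$ with the constraint $n_{123}=k$, carries the $\kappa_2$ resonance together with the weight $\jb{n_{12}}^{-2\beta}$; a further application of Lemma~\ref{LEM:A1} to the aggregated sum $\sum_{|k|\sim M}\sum_{|n_j|\sim N_j,\,n_{123}=k}$, with an exponent $s_1\in (0,\tfrac12]$ chosen to balance against $\beta$, yields a bound of the form $M^{2(1-s_1)}\max(N_1,N_2,N_3)^{2(s_1-\beta)}$. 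Multiplying these inner and outer bounds, summing over dyadic $M$ and over the finitely many sign choices, and extracting the $\eta$-decay from the surplus factor $\jb{n_{12345}}^{-2\eta}$ then produces the desired estimate.

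The main obstacle is that Lemma~\ref{LEM:A1} supplies only one power of $\max(\cdot)^{-2\beta}$ per application, while \eqref{XA1} carries \emph{two} $\beta$-weighted factors, $\jb{n_{12}}^{-2\beta}$ and $\jb{n_{1234}}^{-2\beta}$. Synthesizing these into the single factor $\max(N_1,\dots,N_4)^{-2\beta+\eps}$ while simultaneously isolating the asymmetric $N_5^{-\eta}$ decay will require a delicate balancing of the inner exponent $s_1$ against $\beta$, together with a case distinction based on the relative sizes of $|n_{12345}|$, $|n_{1234}|$, and $N_5$. The case analysis is what permits the $\eta$-gain to attach to $N_5$ alone rather than to $\max(N_1,\dots,N_5)$: on the dominant region $|n_{12345}|\gtrsim N_5$ one has $\jb{n_{12345}}^{-2\eta}\lesssim N_5^{-\eta}$ directly, while the complementary near-cancellation region $|n_{12345}|\ll N_5$ forces $|n_{1234}|\sim N_5$ and yields the same factor through the $\jb{n_5}^{-2}$ weight after a pigeonhole argument.
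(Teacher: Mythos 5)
Your two preliminary observations are sound: the identity $\kk_3(\bar n)=\kk_2(\bar n)+\kk_4(\bar n)$ does follow from \eqref{Q6a} and \eqref{Q9}, and it legitimately reduces the $\kk_4$-contribution to the $\kk_3$-contribution after enlarging the indicator width and relabeling $m'$; likewise $s\le\frac12-\eta$ places the output weight in the admissible range of Lemma \ref{LEM:A1}. The main counting step, however, has a genuine gap in how the two applications of Lemma \ref{LEM:A1} are combined. Writing $k=n_{123}$, the quintic sum has the form $\sum_{k,n_4,n_5}W_{\mathrm{out}}(k,n_4,n_5)\,\ind_{\{|\kk_3-m'|\le1\}}\,S(k)$ with $S(k)=\sum_{n_{123}=k}W_{\mathrm{in}}\,\ind_{\{|\kk_2-m|\le1\}}$. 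To ``multiply the inner and outer bounds'' you must first replace $S(k)$ by $\sup_{|k|\sim M}S(k)$; but the inner estimate you produce is a bound on the aggregated sum $\sum_{|k|\sim M}S(k)$, not on the supremum, so your two bounds only combine through $\sup_k S(k)\le\sum_k S(k)$, which double-counts the $k$-summation. A pointwise-in-$k$ control of $S(k)$ is a fixed-output-frequency counting estimate (in the spirit of Lemma \ref{LEM:A1a}, i.e.\ Bringmann's Lemma 4.22, or his single-variable Lemma 4.17), which Lemma \ref{LEM:A1} does not supply.

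Even if the factorization were repaired, the outer application of Lemma \ref{LEM:A1} cannot yield the claimed decay. Take $N_1=N_2=N_3=1$ and $N_4=N_5=N\to\infty$: then $\sup_m S(k)\ges 1$ for some $|k|\les 1$, while your outer bound is $\max(M,N_4,N_5)^{2(s-\be)}=N^{1-2\eta-2\be}$, so the method outputs an upper bound that \emph{grows} in $N$, whereas \eqref{XA1} asserts decay $\les N^{-2\be+\eps}N^{-\eta}$. The loss is structural: Lemma \ref{LEM:A1} is sharp only when all three of its input frequencies are comparable to $N_{\max}$, and in the unbalanced regime $|k|\ll N_4\sim N_5$ the constraint $|\kk_3-m'|\le1$ gains a full extra power of $N$ that your scheme never recovers; the case analysis you sketch at the end only addresses attributing the $\eta$-gain to $N_5$, not this loss. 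For comparison, the paper does not prove Lemma \ref{LEM:A3} by iterating Lemma \ref{LEM:A1}: it imports Bringmann's Lemma 4.27 essentially verbatim (checking only that replacing $\kk_4(\bar n)+\eps_{123}\jb{n_{123}}$ by $\kk_4(\bar n)$ is harmless), and that proof proceeds by summing the variables one at a time with refined single-variable counting bounds.
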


Lemma \ref{LEM:A3} is 
essentially 
Lemma 4.27 in \cite{Bring}, 
where 
the condition 
$ |\kk_4(\bar n) -m'|\leq 1$ in \eqref{XA1} is replaced
by $ |\kk_4(\bar n) + \eps_{123}\jb{n_{123}} -m'|\leq 1$.
We point out that this modification does not make any difference in the proof.
In our notation, the first step of the proof of Lemma~4.27 in \cite{Bring} is
to sum over $n_5$, using \cite[Lemma 4.17]{Bring}, 
for which the conditions  
$ |\kk_4(\bar n) -m'|\leq 1$ in \eqref{XA1} 
and $ |\kk_4(\bar n) + \eps_{123}\jb{n_{123}} -m'|\leq 1$ 
do not make any difference since the extra term 
$ \eps_{123}\jb{n_{123}}$ is fixed in summing over $n_5$.

\begin{lemma}[Lemma 4.29 in \cite{Bring}]
\label{LEM:A5}
Let $\be > 0$.
Given  $\eps_{123}, \eps_j \in \{- 1, 1\}$ for $j = 1, 2, 3$,
let 
$\kk_2(\bar n)$
be as in  \eqref{Q6a}.
Then, we have 
\begin{align*}
  \sup_{m \in \Z^3}  \sup_{|n_1| \sim N_1}
\sum_{\substack{|n_2|\sim N_2\\|n_3|\sim N_3}} 
 \frac{\ind_{\{|\kk_2(\bar n) - m |\le 1\}}}{\jb{n_{123}}
\jb{n_{12}}^\be \jb{n_2}^{2}
 \jb{n_3}^{2}}
  \les \max(N_1,N_2)^{-\be + \eps}
\end{align*}

\noi
\noi
for any $\eps>0$, 
uniformly in dyadic $N_1,N_2,N_3 \ge 1$
and   $\eps_{123}, \eps_j \in \{- 1, 1\}$ for $j = 1,2, 3$. 
\end{lemma}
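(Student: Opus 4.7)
The strategy is to fix $m\in\Z$ and $n_1$ with $|n_1|\sim N_1$, sum first over $n_3$ using the modulation constraint to obtain a decay bound in $\jb{n_{12}}$, and then perform the $n_2$-summation with a case analysis based on the size of $|n_{12}|$ relative to $N_{12} := \max(N_1, N_2)$.

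For the inner sum in $n_3$, fix $n_1, n_2$ and set $\mu = m + \eps_1\jb{n_1} + \eps_2\jb{n_2}$. The constraint $|\kk_2(\bar n) - m|\leq 1$ then reads $|\eps_{123}\jb{n_{12}+n_3} - \eps_3\jb{n_3} - \mu|\leq 1$, localizing $n_3$ to a thin tube around a two-dimensional level set in $n_3$-space. The plan is to invoke a wave-type lattice-point counting estimate, in the spirit of the argument underlying Lemma \ref{LEM:A2} (i.e.\ Lemma 4.25 of \cite{Bring}) and the fundamental counting Lemma 4.17 of \cite{Bring}, to obtain
\begin{align*}
\sup_{\mu \in \R} \sum_{|n_3|\sim N_3} \frac{\ind_{\{|\eps_{123}\jb{n_{12}+n_3}-\eps_3\jb{n_3}-\mu|\leq 1\}}}{\jb{n_{123}} \jb{n_3}^2} \les \frac{\log(2+N_3)}{\jb{n_{12}}},
\end{align*}
uniformly in $n_{12}\in\Z^3$ and dyadic $N_3\geq 1$. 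Heuristically, the $\jb{n_{12}}^{-1}$ gain arises from the transverse spread of the relevant level set in the direction of $n_{12}$.

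Substituting this estimate, the remaining task is to bound
\begin{align*}
\log(2+N_3)\sum_{|n_2|\sim N_2} \frac{1}{\jb{n_{12}}^{\be+1}\jb{n_2}^2}
\end{align*}
uniformly in $n_1$. I would split based on the size of $|n_{12}|$. In the regime $|n_{12}|\gtrsim N_{12}$, factoring out $\jb{n_{12}}^{-\be}\leq N_{12}^{-\be}$ and applying a dyadic decomposition of the $n_{12}$-shell together with Lemma \ref{LEM:SUM} yields a bound of $N_{12}^{-\be}$ up to logarithmic factors. In the resonant regime $|n_{12}|\ll N_{12}$, necessarily $N_1\sim N_2$ and $\jb{n_2}\sim N_{12}$, so the sum reduces to $N_{12}^{-2}\sum_{|n_{12}|\ll N_{12}}\jb{n_{12}}^{-\be-1}$, which produces $N_{12}^{-\min(2,\be)}$ up to logarithms using $\be > 0$. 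Combining the two cases and absorbing the logarithmic factors into $N_{\max}^{\eps}$ gives the claimed bound.

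The expected main obstacle lies in the inner counting step: extracting the clean $\jb{n_{12}}^{-1}$ gain uniformly across the two regimes $N_3\gg\jb{n_{12}}$ (where the modulation shell is a thin transverse tube transverse to $n_{12}$) and $N_3\lesssim\jb{n_{12}}$ (where the shell may intersect a small ball around the origin), without introducing powers of $N_3$ that would overwhelm the target $N_{12}^{-\be+\eps}$ bound. The outer $n_2$-summation, by contrast, reduces to standard discrete-convolution estimates once the inner bound is secured. My approach is therefore to follow the detailed case analysis in the proof of Lemma 4.29 of \cite{Bring}, adapting it to the present setting.
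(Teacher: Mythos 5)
The paper does not actually prove this statement: Lemma \ref{LEM:A5} is imported verbatim from Bringmann (\cite{Bring}, Lemma 4.29), and Appendix \ref{SEC:A} explicitly only \emph{states} the counting estimates it borrows. So there is no in-paper proof to compare your argument against; the ``intended proof'' here is simply the citation. Your proposal, read on its own terms, is a reasonable outline of how such a proof goes, and its architecture (sum in $n_3$ first to extract a $\jb{n_{12}}^{-1}$ gain from the modulation constraint, then a two-regime analysis of the $n_2$-sum according to whether $|n_{12}|\gtrsim \max(N_1,N_2)$ or not) is consistent with the counting arguments used throughout \cite{Bring}. Your resonant-regime computation is also informative: the output $\max(N_1,N_2)^{-\min(2,\be)}$ shows the bound as stated can only hold for $\be \le 2$ (take $N_1=N_2=N$, $N_3=1$, $n_2=-n_1$, $\eps_1=-\eps_2$ to see that the left-hand side is at least of order $N^{-2}$), which is harmless here since the lemma is applied with $\be = \al \le \tfrac12$, but is worth flagging given that the hypothesis reads only ``$\be>0$''.

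That said, the proposal is not a proof. The entire difficulty is concentrated in the inner counting claim
\begin{align*}
\sup_{\mu \in \R} \sum_{|n_3|\sim N_3} \frac{\ind_{\{|\eps_{123}\jb{n_{12}+n_3}-\eps_3\jb{n_3}-\mu|\leq 1\}}}{\jb{n_{123}} \jb{n_3}^2} \les \frac{\log(2+N_3)}{\jb{n_{12}}},
\end{align*}
which you assert ``in the spirit of'' Lemma \ref{LEM:A2} and Bringmann's Lemma 4.17 but do not establish; note in particular that Lemma \ref{LEM:A2} as stated carries a sum over $m$ with a $\jb{m}^{-1}$ weight rather than a supremum over a fixed $\mu$, so it cannot be invoked directly. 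One must verify the $\jb{n_{12}}^{-1}$ gain uniformly over all sign choices (elliptic and hyperbolic level sets), all relative sizes of $N_3$ and $|n_{12}|$, and the degenerate angular configurations where the gradient of $\eps_{123}\jb{n_{12}+n_3}-\eps_3\jb{n_3}$ vanishes. You correctly identify this as the main obstacle, but then resolve it only by deferring to the proof of Lemma 4.29 in \cite{Bring} --- which is exactly what the paper itself does by citation, so the gap, while real, is the same one the authors chose to leave to the reference.
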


Lastly, we state 
 the septic counting estimate.
 See Definition \ref{DEF:pair} in Subsection  \ref{SUBSEC:4.3}
 for the definition of a paring.

\begin{lemma}[Lemma 4.31 in  \cite{Bring}]
\label{LEM:A6}

Let $\frac12 < s < 1$ and $\be > 0$. 
Given  $\eps_{123}, \eps_j \in \{- 1, 1\}$ for $j = 1, 2, 3$,
let 
$\kk_2(\bar n)$
be as in  \eqref{Q6a}
and set 
\[
\mathcal{K} (n_1,n_2,n_3) = 
\sum_{m \in \Z} \frac{\ind_{\{|\kk_2(\bar n) -m|\le 1\}}}{\jb{m}\jb{n_{123}} \jb{n_{12}}^\be}
 \prod_{j=1}^3 \frac 1 {\jb{n_j}}.
\]

\noi
Let $\mathcal P$ be a pairing on $\{1, \cdots, 7\}$ which respects the partition 
$\big\{\{1,2,3\}, \{4, 5, 6\}, \{7\}\big\}$.
Then, we have 
\begin{align*}
  \sum_{\{n_j\}_{j \notin \mathcal P}}  & \jb{n_{\textup{nr}}}^{2(s-1)} 
 \bigg( \sum_{\{n_j\}_{j  \in \mathcal P}} 
\ind_{|n_{1234567}|\sim N_{1234567}}
\cdot \ind_{|n_{1237}|\sim N_{1237}}
 \cdot\ind_{|n_{456}|\sim N_{456}}
\cdot \ind_{|n_7|\sim N_7}
\\
& \quad 
\times 
 \ind_{\substack{(n_1, \dots, n_7) 
 \\\textup{admissible}}}
\cdot
\frac{\mathcal{K} (n_1,n_2,n_3) \mathcal{K} (n_4,n_5,n_6)}{ \jb{n_7}}\bigg)^2\\
  &    \les N_{\max}^{2 s- 1 + \eps}
\end{align*}

\noi
for any $\eps > 0$, 
uniformly in dyadic $N_{1234567}, N_{1237}, N_{456}, N_7 \ge 1$
and $\eps_{123}, \eps_j \in \{- 1, 1\}$ for $j = 1,2, 3$, 
where $N_{\max} = \max (N_1, \cdots, N_7)$
and 
$n_{\textup{nr}}$ is as in  \eqref{nr1}.

\end{lemma}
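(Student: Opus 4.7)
The plan is to split the analysis according to the size $|\mathcal{P}| \in \{0,2,4,6\}$ of the pairing (since $\mathcal{P}$ respects the partition $\{\{1,2,3\}, \{4,5,6\},\{7\}\}$, all pairs are cross-group pairs, so these are the only possibilities, corresponding to $k = 7 - |\mathcal{P}| \in \{7, 5, 3, 1\}$). In each case, we plan to first apply the basic resonant bound (Lemma \ref{LEM:A2}) to the inner $m$-sum in $\mathcal{K}(n_1,n_2,n_3)$, which converts the raw counting factor into the tame estimate
\[
\mathcal{K}(n_1,n_2,n_3) \les \frac{\log(2+N_{\max})}{\jb{n_{123}}\jb{n_{12}}^{\be}\jb{n_{\text{res}}}}\prod_{j=1}^3 \frac{1}{\jb{n_j}},
\]
where one of the $n_j$'s has been resummed away via Lemma \ref{LEM:A2} and $n_{\text{res}}$ denotes a residual linear combination of the remaining frequencies. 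The same simplification will be performed on $\mathcal{K}(n_4,n_5,n_6)$.

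Next, I would open up the square and view the double inner sum over $\{n_j\}_{j\in\mathcal{P}}$ as a product of two resummed kernels; the admissibility constraint $n_i + n_j = 0$ for each pair freezes one variable per pair and correspondingly removes one frequency degree of freedom. By Cauchy--Schwarz across the two $\mathcal{K}$-factors (applied inside the square after using the admissibility relations to identify frequencies from different groups), we reduce to bounding products of two sums, each of which is (essentially) a cubic configuration sum of the type controlled by the cubic counting estimate (Lemma \ref{LEM:A1}). The frequency-localization indicators $\ind_{|n_{1234567}|\sim N_{1234567}}$, $\ind_{|n_{1237}|\sim N_{1237}}$, $\ind_{|n_{456}|\sim N_{456}}$, $\ind_{|n_7|\sim N_7}$ are used to track the sizes $N_{\max}$ and to insert the controlling factor $\jb{n_{\mathrm{nr}}}^{2(s-1)}$ (with $s < 1$) which provides the crucial negative power in the unpaired variables. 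The $\be$-weighted sums arising from $\jb{n_{12}}^{-\be}$ and $\jb{n_{45}}^{-\be}$ will be handled by Lemma \ref{LEM:A5} (when the endpoint of summation is a paired frequency) or absorbed into the cubic sum estimate.

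The enumeration of cases proceeds as follows. The case $|\mathcal{P}| = 0$ is the non-resonant one: all seven frequencies are unpaired, $n_{\mathrm{nr}} = n_{1234567}$, and after the above simplifications of $\mathcal{K}$ one is left with a seven-fold sum that factorises (modulo $n_{\mathrm{nr}}$) into two cubic sums plus a singleton $n_7$ sum, each estimated by Lemma \ref{LEM:A1}. The cases $|\mathcal{P}| = 2, 4$ correspond to one or two of the paired frequencies being eliminated; here one must check that the residual modulation $\kk_2$ retains enough frequencies to run Lemma \ref{LEM:A2} or \ref{LEM:A1}, and that the Cauchy--Schwarz step preserves the net $N_{\max}^{2s-1+\e}$ bound. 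The extremal case $|\mathcal{P}| = 6$ (everything except $n_7$ paired) is the most degenerate: almost all admissibility relations are active and $n_{\mathrm{nr}} = n_7$, so the $\jb{n_{\mathrm{nr}}}^{2(s-1)}$ factor alone controls the outer sum, and one is left to estimate a small collection of double sums directly using Lemma \ref{LEM:SUM} and Lemma \ref{LEM:A2}.

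The main obstacle is the bookkeeping for the intermediate pairings ($|\mathcal{P}| = 2, 4$): one must carefully verify that in every admissible configuration the Cauchy--Schwarz split preserves the decay structure and that the frequency $n_{12}$ appearing in the $\be$-weight is not completely trivialised by the pairing (if it were, the $\be$-weight would contribute nothing and one would need an alternative route via Lemma \ref{LEM:A5}). In practice this requires organising the paired variables so that the ``surviving'' frequencies in each $\mathcal{K}$-factor are still separated by at least one of the size constraints $N_{1234567}, N_{1237}, N_{456}, N_7$, which is where the partition-respecting hypothesis on $\mathcal{P}$ is genuinely used. Since the estimate is a direct adaptation of \cite[Lemma 4.31]{Bring} via the substitution of the Hartree weight by $\jb{n_{12}}^{-\be}\prod_{j=1}^3\jb{n_j}^{-1}$, using the elementary bound \eqref{PP1}, we expect this case-by-case analysis to go through with only routine modifications.
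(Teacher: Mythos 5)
The paper does not actually prove this lemma: it is imported verbatim (in slightly simplified form) from Bringmann \cite[Lemma 4.31]{Bring}, and Appendix \ref{SEC:A} states explicitly that all the counting lemmas there are taken from that work. So there is no in-paper argument to match; your proposal is a plan whose substance is, by your own closing sentence, deferred to ``routine modifications'' of Bringmann's proof. The skeleton is the right one --- the case split $|\mathcal P|\in\{0,2,4,6\}$, the use of admissibility to freeze one variable per pair, Cauchy--Schwarz on the squared inner sum, and Lemmas \ref{LEM:A1}, \ref{LEM:A2}, \ref{LEM:A5} as the counting inputs --- but as written the plan does not establish the estimate, and it contains two concrete errors.

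First, your displayed ``tame estimate'' on $\mathcal{K}(n_1,n_2,n_3)$ misuses Lemma \ref{LEM:A2}: that lemma bounds a sum over \emph{both} $m$ and one of the frequencies $n_j$, so it cannot yield a pointwise bound on $\mathcal{K}$ in which ``one of the $n_j$'s has been resummed away'' while $\mathcal{K}$ is still evaluated at all three variables. The pointwise collapse of the $m$-sum gives only $\mathcal{K}(n_1,n_2,n_3)\les \jb{\kk_2(\bar n)}^{-1}\jb{n_{123}}^{-1}\jb{n_{12}}^{-\be}\prod_{j=1}^3\jb{n_j}^{-1}$, with no extra gain $\jb{n_{\mathrm{res}}}^{-1}$; whether Lemma \ref{LEM:A2} may instead be invoked depends on whether the relevant $n_j$ is paired (summed \emph{inside} the square, where the sum is squared and the lemma cannot be applied linearly without first doing a weighted Cauchy--Schwarz) or unpaired (summed outside). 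Second, the role of $\jb{n_{\mathrm{nr}}}^{2(s-1)}$ is mischaracterized: since $2(s-1)\in(-1,0)$, this factor makes nothing converge, and in particular it does not ``alone control the outer sum'' when $|\mathcal P|=6$. In that case (say $\{1,\dots,6\}$ fully paired, so $n_{\mathrm{nr}}=n_7=n_{1234567}$) the outer sum over the dyadic shell $|n_7|\sim N_7$ of $\jb{n_7}^{2(s-1)}\jb{n_7}^{-2}$ already contributes $N_7^{3+2(s-1)-2}=N_7^{2s-1}$, which is the entire allowed loss $N_{\max}^{2s-1}$; the actual burden is then to show that the squared inner sum over the paired variables is $O(N_{\max}^{\eps})$, and that inner-sum estimate --- one for each pairing class, with the correct Cauchy--Schwarz weights --- is precisely the non-routine content of Bringmann's proof that your proposal leaves unaddressed.
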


\section{Multiple stochastic integrals}
\label{SEC:B}

In this section, we go over the basic definitions
and properties of multiple stochastic integrals.
See \cite{Nua} and also \cite[Section 4]{Bring}
for further discussion.

Let $\ld$ be the measure on $Z: = \Z^3 \times \R_+$ defined by 
\[ d \ld=   d n d t, \]

\noi
where $d n$ is the counting measure on $\Z^3$. 
Given $ k \in \N$, 
we set $\ld_k = \bigotimes_{j =1}^k \ld$
and $L^2(Z^k) = L^2( (\Z^3 \times \R_+)^k,  \ld _k )$. 
Given a function $f \in L^2(Z^k)$, 
we can  adapt 
the discussion in \cite[Section 1.1]{Nua}
(in particular,   \cite[Example 1.1.2]{Nua})
to the complex-valued setting and 
define the multiple stochastic integral $I_k[f]$ by 
\begin{align*}
I_k [f] = \sum_{n_1,\dots,n_k \in \Z^3} \int_{[0,\infty)^k} 
f( n_1, t_1, \dots,  n_k, t_k) 
d B_{n_1} (t_1) \cdots d B_{n_k} (t_k).
\end{align*}

\noi
Given a function $f \in L^2(Z^k)$, we define its symmetrization $\Sym(f)$ by
\begin{align}
\Sym(f)(z_1, \dots, z_k) = \frac{1}{k!} \sum_{\s \in S_k} f( z_{\s(1)},  \dots,z_{\s(k)}  ),
\label{sym}
\end{align}		
\noi
where 
$z_j = (n_j, t_j)$ as in  \eqref{XX2} and $S_k$ denotes the symmetric group on $\{1, \dots, k\}$.
Note that by Jensen's equality, we have 
\begin{align}
|\Sym(f)(z_1, \dots, z_k)|^p \le \frac{1}{k!} \sum_{\s \in S_k} |f( z_{\s(1)},  \dots,z_{\s(k)}  )|^p
\label{Jen}
\end{align}

\noi
for any $p \geq 1$.
We say that $f$ is symmetric if $\Sym(f) = f$.
We now recall some basic properties of  multiple stochastic integrals.

\noi
\begin{lemma}\label{LEM:B1}
Let $k, \l \in \N$. 
The following statements hold for any $f \in L^2(Z^k)$ and $g \in L^2(Z^\l)$\textup{:}
\begin{itemize}
\item[\textup{(i)}] $I_k : L^2(Z^k) \to \H_k \subset L^2(\O)$ is a linear operator, 
where $\H_k$ denotes the $k$th Wiener chaos.

\smallskip

\item[\textup{(ii)}]
 $I_k[ \mathtt{Sym}(f)] = I_k[ f] $.

\smallskip
\item[\textup{(iii)}] Ito isometry\textup{:}
\[ \E \big[ I_k[f] \overline{ I_{\l}[g] } \, \big] = \ind_{k=\ell}\cdot  k! \int_{(\Z^3 \times \R)^k} \mathtt{Sym}(f) \overline{ \Sym(g) }  d\ld_k.\]

\smallskip

\item[\textup{(iv)}]
Furthermore, suppose that $f$ is symmetric.
Then, we have 
\begin{align*}
I_k[f] = k! \sum_{n_1, \cdots, n_k \in \Z^3} 
\int_{0}^\infty \int_{0}^{t_1} \int_{0}^{t_{k-1}} f(n_1,t_1, \dots, n_k,t_k)
 dB_{n_k}(t_k) \cdots dB_{n_1}(t_1),
\end{align*}

\noi
where the iterated integral on the right-hand side is understood as an iterated 
Ito integral.

\end{itemize}

\end{lemma}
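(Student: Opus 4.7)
The plan is to follow the standard construction of multiple Wiener--It\^o integrals as in \cite[Section~1.1]{Nua}, with minor adaptations to account for the complex-valued setting and the reality constraint $B_{-n}=\overline{B_n}$. First I will define $I_k$ on the class $\EE_k$ of \emph{off-diagonal elementary functions}, i.e.\ finite linear combinations of tensor products
\[
f = \sum_{\substack{(i_1,\dots,i_k)\\ \text{pairwise distinct}}} c_{i_1\cdots i_k}\,\ind_{A_{i_1}}\otimes\cdots\otimes\ind_{A_{i_k}},
\]
where the $A_i=\{m_i\}\times(s_i,t_i]$ are mutually disjoint ``rectangles'' in $Z$ (with further care when $m_i$ and $-m_i$ both appear, to respect $B_{-n}=\overline{B_n}$). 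For such $f$ I set
\[
I_k[f] = \sum_{\substack{(i_1,\dots,i_k)\\ \text{pairwise distinct}}} c_{i_1\cdots i_k}\,\prod_{j=1}^k B_{m_{i_j}}(A_{i_j}),
\]
where $B_m(A)=\int_s^t dB_m(r)$ for $A=\{m\}\times(s,t]$. Linearity in $f$ is then immediate, giving (i) on $\EE_k$.

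Next, for $f\in \EE_k$ the restriction to pairwise distinct indices immediately yields $I_k[f]=I_k[\Sym(f)]$, proving (ii) on $\EE_k$. For (iii), expanding the product $I_k[f]\overline{I_\ell[g]}$, using the mutual independence of the rectangles (modulo the reality constraint, which I encode by pairing $B_m(A)$ with $\overline{B_m(A)}=B_{-m}(A)$ when computing expectations) and the basic variance identity $\E[B_m(A)\overline{B_{m'}(A')}]=\ind_{m=m'}|A\cap A'|$, only the fully paired terms (which force $k=\ell$ and a permutation of indices) survive. Symmetrizing yields
\[
\E\big[I_k[f]\overline{I_\ell[g]}\big]= \ind_{k=\ell}\cdot k!\int_{Z^k}\Sym(f)\overline{\Sym(g)}\,d\ld_k,
\]
which is (iii) on $\EE_k$. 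In particular $\|I_k[f]\|_{L^2(\O)}^2\leq k!\,\|f\|_{L^2(Z^k)}^2$.

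Now $\EE_k$ is dense in $L^2(Z^k)$ (this uses only that simple functions are dense on the $\sigma$-finite measure space $(Z^k,\ld_k)$ and that the diagonals have $\ld_k$-measure zero). The continuity bound from the previous paragraph therefore allows $I_k$ to be extended uniquely by continuity to a bounded linear map $L^2(Z^k)\to L^2(\O)$; properties (ii) and (iii) then pass to the limit. The fact that the image lies in $\H_k$ follows by approximating by $\EE_k$-functions whose associated random variables are polynomials of degree $k$ in the $\{B_m(A)\}$, hence lie in $\bigoplus_{j\leq k}\H_j$, and then using (iii) to show orthogonality to $\H_j$ for $j<k$. This completes (i)--(iii).

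Finally, for (iv), on a symmetric $f\in\EE_k$ whose support is restricted to an ordered simplex $\{t_1>t_2>\cdots>t_k\}$, the iterated It\^o integral on the right-hand side is elementary to compute and, using symmetry of $f$, directly equals $\tfrac{1}{k!} I_k[f]$. The passage to general symmetric $f\in L^2(Z^k)$ then uses density of such ordered elementary functions together with the $L^2$-isometry for iterated It\^o integrals (applied coordinate-wise in $n_1,\dots,n_k$, with absolute convergence guaranteed by $f\in L^2(Z^k)$ and the Fubini-type lemma already discussed). The main technical subtlety throughout is bookkeeping of the reality constraint $B_{-n}=\overline{B_n}$ when pairing increments; this is handled by treating the pair $\{m,-m\}$ as a single complex degree of freedom, which does not alter any of the above computations once expectations are taken.
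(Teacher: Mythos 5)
Your construction is the standard Wiener--It\^o one from Nualart's book, which is exactly what the paper relies on: Lemma \ref{LEM:B1} is stated there without proof, with a pointer to \cite[Section 1.1]{Nua} adapted to the complex-valued setting. The only genuinely nonstandard point --- that under the constraint $B_{-n}=\overline{B_n}$ the diagonal to be excluded is $\{n_i=-n_j,\ t_i=t_j\}$, so elementary rectangles must be taken disjoint after identifying $n$ with $-n$ --- is precisely the subtlety you flag and resolve by treating $\{m,-m\}$ as a single complex degree of freedom, so your argument matches the intended proof and is complete.
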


We  state a version of Fubini's theorem for multiple stochastic integrals
that is convenient for our purpose.
See, for example, 
\cite[Theorem 4.33]{DPZ14}
for a version of the stochastic Fubini theorem.

\noi
\begin{lemma}\label{LEM:B3}
Let  $k \geq 1$. 
Given finite $T> 0$, let $f \in L^2 ( (\Z^3 \times [0, T])^k \times [0, T], 
d \ld _k \otimes  dt \big)$.  
\textup{(}In particular, we assume that the temporal support \textup{(}for the variables $t_1, \dots, t_k, t$\textup{)} of 
$f$ is contained in $[0, T]^{k+1}$
for any $(n_1, \dots, n_k)$.\textup{)}
Then,  we have
\begin{align}
\int_{0}^T I_k [f(\cdot,t)] d t = I_k \bigg[ \int_{0}^T f(\cdot,t) d t \bigg]
\label{fub0}
\end{align}

\noi
in $L^2(\O)$.

\end{lemma}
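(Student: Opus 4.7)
My plan is to proceed by first verifying the identity on a dense subclass of $L^2((\Z^3\times[0,T])^k\times[0,T])$ of elementary functions for which both sides can be computed explicitly, and then extending to the full space by continuity using the Ito isometry.

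First, I would consider elementary functions of the form
\[
f(z_1,\dots,z_k,t)
= \prod_{j=1}^k \ind_{A_j}(z_j) \cdot h(t),
\]
where $A_1,\dots,A_k$ are pairwise disjoint ``rectangles'' of the form $\{n_j\}\times [s_j,u_j] \subset \Z^3\times [0,T]$ with distinct spatial labels (so that the associated Brownian increments $B_{n_j}(u_j)-B_{n_j}(s_j)$ are independent), and $h \in L^2([0,T])$. For such $f$, Lemma \ref{LEM:B1}\,(iv) gives
\[
I_k[f(\cdot,t)] = h(t)\prod_{j=1}^k \big(B_{n_j}(u_j)-B_{n_j}(s_j)\big),
\]
so that $\int_0^T I_k[f(\cdot,t)]\,dt = \big(\int_0^T h(t)\,dt\big)\prod_{j=1}^k (B_{n_j}(u_j)-B_{n_j}(s_j))$. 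On the other hand, $\int_0^T f(\cdot,t)\,dt = \big(\int_0^T h(t)\,dt\big)\prod_{j=1}^k \ind_{A_j}$, whose $I_k$-image is the same expression. Hence \eqref{fub0} holds on this elementary class, and by linearity it holds on finite linear combinations of such $f$'s.

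Next, the key step is to observe that both sides of \eqref{fub0} define bounded linear maps from $L^2((\Z^3\times[0,T])^k\times[0,T])$ into $L^2(\Omega)$. Indeed, by Minkowski's inequality, the Ito isometry of Lemma \ref{LEM:B1}\,(iii), and Cauchy-Schwarz in time,
\[
\bigg\| \int_0^T I_k[f(\cdot,t)]\,dt \bigg\|_{L^2(\Omega)}
\le \int_0^T \|I_k[f(\cdot,t)]\|_{L^2(\Omega)}\,dt
\le \sqrt{k!\,T}\, \|f\|_{L^2(Z^k\times[0,T])}.
\]
Similarly, using Cauchy-Schwarz inside the $L^2(Z^k)$-norm,
\[
\bigg\| I_k\bigg[\int_0^T f(\cdot,t)\,dt\bigg] \bigg\|_{L^2(\Omega)}
\le \sqrt{k!}\, \bigg\| \int_0^T f(\cdot,t)\,dt\bigg\|_{L^2(Z^k)}
\le \sqrt{k!\,T}\, \|f\|_{L^2(Z^k\times[0,T])}.
\]
Since the elementary functions above are dense in $L^2((\Z^3\times[0,T])^k\times[0,T])$, the identity \eqref{fub0} extends to all admissible $f$.

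The only subtle point is to justify that the left-hand side is well-defined as a Bochner integral in $L^2(\Omega)$, which amounts to verifying joint measurability of $t \mapsto I_k[f(\cdot,t)]$; this is automatic from the boundedness of $I_k: L^2(Z^k)\to L^2(\Omega)$ together with the strong measurability of $t \mapsto f(\cdot,t) \in L^2(Z^k)$ guaranteed by the hypothesis $f\in L^2(Z^k\times[0,T])$. I expect no serious obstacle; the argument is a standard application of the density/linearity/continuity scheme once the elementary computation above is in place.
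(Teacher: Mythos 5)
Your route is genuinely different from the paper's, and in spirit it is the cleaner, more standard one. Both proofs use the same reduction step: the two $L^2(\O)$-bounds you state (Minkowski plus the Ito isometry of Lemma \ref{LEM:B1}\,(iii), then Cauchy--Schwarz in $t$) are exactly the paper's \eqref{fub3}--\eqref{fub4}, and they show both sides of \eqref{fub0} are bounded linear maps on $L^2(Z^k\times[0,T])$, so it suffices to check the identity on a dense class. The difference is the choice of dense class: the paper reduces to \emph{symmetric, smooth} integrands with compactly supported frequencies and then verifies the interchange by writing $I_k$ as an iterated Ito integral (Lemma \ref{LEM:B1}\,(iv)) and pushing the $dt$-integral through one stochastic integral at a time via Riemann-sum approximations of the Wiener/Ito integrals. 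You instead verify the identity on elementary tensor products $\big(\prod_j\ind_{A_j}\big)h(t)$, where both sides factor as $\big(\int_0^T h\,dt\big)\prod_j\big(B_{n_j}(u_j)-B_{n_j}(s_j)\big)$ and equality is immediate. That buys a much shorter verification step and avoids the iterated approximation argument entirely; the paper's approach, on the other hand, never needs the explicit elementary-function description of $I_k$ and works directly with the iterated-integral representation. One small citation issue: your elementary $f$ is not symmetric, so Lemma \ref{LEM:B1}\,(iv) does not apply verbatim; the product formula $I_k[\prod_j\ind_{A_j}]=\prod_j W(A_j)$ is really the \emph{definition} of $I_k$ on off-diagonal elementary functions in Nualart's construction (or apply (iv) after symmetrizing).

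There is, however, one genuine gap: your elementary class is not dense. By insisting that the rectangles $A_j=\{n_j\}\times[s_j,u_j]$ carry \emph{distinct spatial labels}, every function in the linear span vanishes identically on the set $\{n_i=n_j \text{ for some } i\ne j\}$, which has positive $\ld_k$-measure (the spatial measure is counting measure, so coinciding frequencies are not a null set — only the full diagonal $z_i=z_j$ is null). Consequently the closure of your span misses, e.g., any $f$ supported on $(\{n\}\times[0,T])^k$, and the continuity argument only yields \eqref{fub0} on a proper closed subspace. The fix is immediate: take the $A_j$ pairwise disjoint as subsets of $\Z^3\times[0,T]$, allowing repeated spatial labels provided the corresponding time intervals are disjoint; the increments $B_{n_j}(u_j)-B_{n_j}(s_j)$ are then still independent (independent increments of a single Brownian motion), the explicit computation is unchanged, and this larger class is dense by the standard non-atomicity argument. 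You should also acknowledge the complex-valued convention $B_{-n}=\cj{B_n}$, which makes increments at $n$ and $-n$ over overlapping time intervals correlated; this is a subtlety of the definition of $I_k$ itself (which the paper also leaves to an adaptation of Nualart) rather than of your Fubini argument, but it affects what ``disjoint'' must mean for the elementary class.
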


\noi
\begin{proof} From Lemma \ref{LEM:B1}\,(ii), we may assume that $f(z_1, \dots, z_k,  t)$ is symmetric
in $z_j = (n_j, t_j)$, $j = 1, \dots, k$. 
Let $\pmb{n} = (n_1, \dots, n_k)$
and $\pmb{t} = (t_1, \dots, t_k)$. 
From Minkowski's integral inequality,  Lemma \ref{LEM:B1}\,(iii),  
and Cauchy-Schwarz's inequality, we have 
\begin{align}
\begin{split}
\Big\| \int_0^T I_k [(f-\varphi)(\cdot,t)]  d t  \Big\|_{L^{2}(\O)} 
& \les  \int_0^T \| (f - \varphi)(\cdot,t) \|_{\l^2_{\pmb{n}} ((\Z^3)^k; L^2_{\pmb{t}}([0, T]^k))}dt  \\
& \le T^\frac 12    \| f - \varphi \|_{\l^2_{\pmb{n}}((\Z^3)^k; L_{t, \pmb{t}}^2([0, T]^{k+1}))}. 
\end{split}
\label{fub3}
\end{align}

\noi
On the other hand, 
by Lemma \ref{LEM:B1}\,(iii)
and Cauchy-Schwarz's inequality, we have  
\begin{align}
\begin{split}
\bigg\| I_k \Big[ \int_0^T  (f -\varphi)(\cdot,t) d t \Big] \bigg\|_{L^2(\O)} 
& \sim \bigg\|  \int_0^T  (f -\varphi)(\cdot,t) d t 
 \bigg\|_{\l^2_{\pmb{n}} ((\Z^3)^k; L^2_{\pmb{t}}(\R_+^k))}\\
& \le T^\frac 12  
 \| f - \varphi \|_{\l^2_{\pmb{n}}((\Z^3)^k; L_{t, \pmb{t}}^2([0, T]^{k+1}))}. 
\end{split}
\label{fub4}
\end{align}

\noi
Hence, it follows from \eqref{fub3}, \eqref{fub4} and the density\footnote{By identifying
a function $f \in \l^2_{\pmb{n}}((\Z^3)^k;L^2_{t, \pmb{t}}([0, T]^{k+1}))$ 
with a sequence $\{f_{\pmb{n}}\}_{\pmb{n} \in   
(\Z^3)^k}\subset  L^2_{t, \pmb{t}}([0, T]^{k+1})$, 
we can approximate each $f_{\pmb{n}}$ by a smooth function
$\varphi_{\pmb{n}}$ such that 
$\| f_{\pmb{n}} - \varphi_{\pmb{n}}\|_{L^2_{t, \pmb{t}}([0, T]^{k+1})} < \eps_{\pmb{n}}$
such that $\eps_{\pmb{n}}$ is symmetric in $\pmb{n}$
and  $\sum_{\pmb{n} \in (\Z^3)^k} \eps_{\pmb{n}} = \eps$.
Then, the function $\varphi \cong \{\varphi_{\pmb{n}}\}_{\pmb{n}\in (\Z^3)^k}$
approximates $f$ 
within distance $\eps$ in $\l^2_{\pmb{n}}((\Z^3)^k;L^2_{t, \pmb{t}}([0, T]^{k+1}))$.
Since $f$ is symmetric, we can choose $\varphi$ to be symmetric.
}
of 
$\l^2_{\pmb{n}}((\Z^3)^k;C^\infty_{t, \pmb{t}}([0, T]^{k+1}))$
in $\l^2_{\pmb{n}}((\Z^3)^k; L_{t, \pmb{t}}^2([0, T]^{k+1}))$
 that  we may assume that $f$ is symmetric
 and belongs to $\l^2_{\pmb{n}}((\Z^3)^k;C^\infty_{t, \pmb{t}}([0, T]^{k+1}))$.
Furthermore, we may assume that $f$ has a compact support in $\pmb{n}$.
Namely, there exists $K > 0$ such that 
if $\max(|n_1|, \dots, |n_k|) > K$, 
then $f(n_1, t_1, \dots, n_k, t_k, t) = 0$
for any $t_1, \dots, t_k, t \in [0, T]$.
Then, together with Lemma \ref{LEM:B1}\,(iv), we have

\begin{align}
\begin{split}
& \int_0^T   I_k   [f(\cdot,t)] dt \\
& = k! 
\int_0^T \sum_{\substack{n_1, \dots, n_k \in \Z^3\\\max(|n_1|, \dots, |n_k|) \le K}}\int_{0}^{T} \int_{0}^{t_1} \cdots \int_{0}^{t_{k-1}} 
f(z_1,  \dots,z_k,t) d B_{n_k}(t_k) \cdots d B_{n_1}(t_1)dt \\ 
& =  
k!  \sum_{\substack{n_1, \dots, n_k \in \Z^3\\\max(|n_1|, \dots, |n_k|) \le K}}
 \int_0^T \int_0^T  \int_0^{t_1} \cdots \int_{0}^{t_{k-1}} 
f(z_1,  \dots,z_k,t) d B_{n_k}(t_k) \cdots d B_{n_1}(t_1)dt,  
\end{split}
\label{fub5}
\end{align} 

\noi
since the summation is over a finite set of 
indices $\pmb{n} = (n_1, \dots, n_k)$
and $f$ is symmetric.
Hence, it remains to justifying the $t$-integration
with the stochastic integrals for each fixed 
$\pmb{n} = (n_1, \dots, n_k)$.
For this reason, we suppress the dependence of $f$ on $\pmb{n} = (n_1, \dots, n_k)$ in the following.

When $k = 1$, we can exploit the smoothness of $f$
and have 
\begin{align*}
\int_0^T    \int_{0}^{T} & f(t_1, t) d B_{n_1}(t_1) dt 
   =  \int_0^T f(T, t) B_{n_1}(T) dt  - 
  \int_0^T 
  \int_{0}^{T}  B_{n_1}(t_1) \dd_{t_1} f (t_1, t) dt_1 dt \\
&   = B_{n_1}(T) \int_0^T f(T, t)  dt  - 
  \int_{0}^{T}  B_{n_1}(t_1) \dd_{t_1} \bigg(\int_0^T f (t_1, t)dt \bigg) dt_1  \\
& = \int_0^T    \int_{0}^{T} f(t_1, t) dt d B_{n_1}(t_1), 
\end{align*}

\noi
where, at the second equality, we used the standard Fubini's theorem
in view of the almost sure boundedness of $B_{n_1}$ on $[0, T]$.
This proves \eqref{fub0} when $k = 1$.

For the general case, let us first consider the innermost integral in \eqref{fub5}.
For notational simplicity, 
let us suppress all the variables of $f$ except for $t_k$ and $t$.
Let $\Dl_m = \{ 
0 \leq \tau_0 < \tau_1 < \cdots < \tau_m \leq T\}$
be a partition of $[0, T]$
and define a step function $f_m(\cdot, t)$ by setting $f_m(\tau, t) = f(\tau_{j-1}, t)$
for $\tau_{j-1} < \tau \le \tau_j$.
Then, by defining $J_m$ by 
\begin{align} 
J_m(t)  := 
\int_0^{t_{k-1}}f_m(t_k, t) dB_{n_{k}}(t_k)
= \sum_{j = 1}^{m} (\ind_{[0, t_{k-1}]} f)(\tau_{j-1}, t) \big(B_{n_k}(\tau_j) - B_{n_k}(\tau_{j-1})\big), 
\label{fb1}
\end{align}

\noi 
it follows from  
the definition of the Wiener integral that 
\begin{align} 
J_m(t) \too   \int_{0}^{t_{k-1}}  f(t_k,t) d B_{n_k}(t_k)  \quad \text{ in } L^2(\O), 
\label{fb2}
\end{align}

\noi
as $m \to \infty$ (such that $|\Dl_m|\to 0$).
By integrating \eqref{fb1} in $t$, we have  
\begin{align}
\int_0^T J_m(t) dt = 
\sum_{j = 1}^{m} \bigg(\int_0^T (\ind_{[0, t_{k-1}]} f)(\tau_{j-1}, t) dt \bigg)
\big(B_{n_k}(\tau_j) - B_{n_k}(\tau_{j-1})\big).
\label{fb3}
\end{align}

\noi
By the definition of the Wiener integral once again, we have 
\begin{align}
 \text{RHS of } \eqref{fb3}
\too 
 \int_{0}^{t_{k-1}} 
\int_0^Tf(t_k,t)dt  d B_{n_k}(t_k)  \quad \text{ in } L^2(\O), 
\label{fb4}
\end{align}

\noi
while 
from Minkowski's integral inequality, \eqref{fb2}, 
and the bounded convergence theorem (recall that $f$ is smooth), we have 
\begin{align}
\begin{split}
\bigg\|\int_0^T & J_m(t) dt
- \int_0^T
 \int_{0}^{t_{k-1}} 
f(t_k,t)  d B_{n_k}(t_k)  dt \bigg\|_{L^2(\O)}\\
& \leq 
\int_0^T \Big\| J_m(t) 
- 
 \int_{0}^{t_{k-1}} 
f(t_k,t)  d B_{n_k}(t_k)  \Big\|_{L^2(\O)} dt
\too 0, 
\end{split}
\label{fb5}
\end{align}

\noi
as $m \to \infty$.
Hence, from \eqref{fb3}, \eqref{fb4}, and \eqref{fb5}, 
we conclude that 
\begin{align}
\int_0^T
 \int_{0}^{t_{k-1}} 
f(t_k,t)  d B_{n_k}(t_k)  dt 
= 
 \int_{0}^{t_{k-1}} 
\int_0^T f(t_k,t) dt  d B_{n_k}(t_k)\quad \text{ in } L^2(\O).
\label{fb6}
\end{align}

Next, we consider 
\begin{align}
\begin{split}
\int_0^{t_{k-2}} & \int_0^T F(t_{k-1}, t)  dt dB_{n_{k-1}}(t_{k-1}) \\
& : = 
\int_0^{t_{k-2}} \int_0^T\bigg(
 \int_{0}^{t_{k-1}} 
f(t_{k-1}, t_k,t)  d B_{n_k}(t_k)  \bigg)dt  dB_{n_{k-1}}(t_{k-1}) .
\end{split}
\label{fb7}
\end{align}

\noi
Given the partition $\Dl_m$ of $[0, T]$ as above, 
we define an adaptive step function $F_m(\cdot, t)$ by setting 
$F_m(\tau, t;\o) = F(\tau_{j-1}, t;\o)$
for $\tau_{j-1} < \tau \le \tau_j$.
Then, we can simply repeat the previous computation 
(but with Ito integrals instead of Wiener integrals)
and obtain
\begin{align}
\int_0^T
 \int_{0}^{t_{k-2}} 
F(t_{k-1},t)  d B_{n_{k-1}}(t_{k-1})  dt 
= 
 \int_{0}^{t_{k-2}} 
\int_0^T F(t_{k-1},t) dt  d B_{n_{k-1}}(t_{k-1})
\label{fb8}
\end{align}

\noi
in $L^2(\O)$.
Combining \eqref{fb7} and \eqref{fb8} with \eqref{fb6}, we then obtain
\begin{align*}
\int_0^T
& \int_0^{t_{k-2}} 
 \int_{0}^{t_{k-1}} 
f(t_{k-1}, t_k,t)  d B_{n_k}(t_k)    dB_{n_{k-1}}(t_{k-1})dt \\
& = 
\int_0^{t_{k-2}} 
 \int_{0}^{t_{k-1}} 
\int_0^T
f(t_{k-1}, t_k,t)dt  d B_{n_k}(t_k)    dB_{n_{k-1}}(t_{k-1})
\end{align*}

\noi
in $L^2(\O)$.
By iterating this process, we conclude
\begin{align*}
 \int_0^T & \int_0^T  \int_0^{t_1} \cdots \int_{0}^{t_{k-1}} 
f(t_1,  \dots,t_k,t) d B_{n_k}(t_k) \cdots d B_{n_1}(t_1)dt\\
& =  \int_0^T  \int_0^{t_1} \cdots \int_{0}^{t_{k-1}} 
 \int_0^Tf(t_1,  \dots,t_k,t)dt d B_{n_k}(t_k) \cdots d B_{n_1}(t_1)
\end{align*} 

\noi
in $L^2(\O)$. Together with \eqref{fub5}, 
this proves \eqref{fub0}.
\end{proof}

We conclude this section by stating the product formula (Lemma \ref{LEM:prod}).
Before doing so, we first recall the contraction of two functions.

\begin{definition} \label{DEF:B4}\rm
Let $k, \ell \in \N$. 
Given an integer $0 \leq r \leq \min (k,l)$, 
 we define the contraction
$ f \otimes_r g $
  of $r$ indices of $f\in L^2(Z^k)$ and $g\in L^2(Z^\l)$ by 
\begin{align*}\label{contraction}
 (f \otimes_r g)(z_1, \dots, z_{k + \ell - 2 r})  
 & =   \sum_{m_1, \dots, m_r \in \Z^3} \int_{\R_+^r} 
f(z_1, \dots, z_{k-r}, \zeta_1, \dots, \zeta_r)\\
& \hphantom{XXXXX} \times 
g(z_{k+1-r}, \dots, z_{k+\ell -2r},
\wt \zeta_1, \dots, \wt \zeta_r ) ds_1 \cdots ds_r, 
\end{align*}

\noi
where
$\zeta_j = (m_j, s_j)$
and $\wt \zeta_j = (-m_j, s_j)$.

\end{definition}

Note that even if $f$ and $g$ are symmetric, 
their contraction $f\otimes_r g$ is not symmetric in general.
We now state the product formula.
See \cite[Proposition 1.1.3]{Nua}.

\noi
\begin{lemma}[product formula]\label{LEM:prod}
Let  $k, \ell \in \N$. 
Let  $f \in L^2(Z^k)$ and $g \in L^2(Z^\l)$
be symmetric functions. Then,  we have 
\begin{align*}
& I_k[f] \cdot I_{\ell}[g] = \sum_{r=0}^{\min(k,\ell)} r! \binom{k}{r} \binom{\ell}{r} I_{k + \ell - 2r}[f \otimes_r g].
\end{align*}
\end{lemma}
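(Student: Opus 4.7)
The plan is to adapt the standard proof of the product formula for real-valued multiple Wiener-It\^o integrals, as in \cite[Proposition 1.1.3]{Nua}, to the present complex-valued setting where $B_{-n} = \overline{B_n}$ forces the second-moment pairing $\E[B_n(t_1) B_m(t_2)] = \ind_{n+m = 0} \min(t_1, t_2)$.

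First, by the bilinearity of both sides in $(f,g)$, the $L^2$-continuity of $f \mapsto I_k[f]$ (Lemma \ref{LEM:B1}\,(iii)), and the $L^2$-continuity of the contraction $(f,g) \mapsto f \otimes_r g$ (which follows from Cauchy-Schwarz), I would reduce to the case in which $f$ and $g$ are symmetrizations of indicators of ``elementary blocks'' $A_1 \times \cdots \times A_k$ and $C_1 \times \cdots \times C_\ell$, where each $A_i, C_j \subset Z$ has the form $\{n\} \times I$ for some $n \in \Z^3$ and some interval $I \subset \R_+$, and where the underlying collection of atoms is closed under the reflection $(n,t) \mapsto (-n,t)$ with $I$ preserved. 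This class is dense in the symmetric $L^2$-closure of simple functions, so it suffices to verify the identity there.

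Next, on each such atom, I would invoke the standard identity expressing iterated It\^o integrals of constants as Hermite polynomials to write $I_k[\mathbf{1}_{A^{\otimes p}}]$ (for $p$ copies of an atom $A = \{n\} \times I$) in terms of $|I|^{p/2} H_p(B_n(I)/|I|^{1/2})$, where $B_n(I) = \int \ind_I \, dB_n$. For an atom with $n \ne 0$, the reality constraint means that the correlation between $B_n(I)$ and $B_m(I)$ is nontrivial precisely when $m = -n$, so the paired atoms $\{n\} \times I$ and $\{-n\} \times I$ are naturally coupled, and this is what produces the identification $\wt\zeta_j = (-m_j, s_j)$ in Definition \ref{DEF:B4}. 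With this reduction, $I_k[f] \cdot I_\ell[g]$ decomposes into products of Hermite polynomials in independent (real or paired complex) Gaussian variables, one factor for each atom appearing in either $f$ or $g$. Applying the classical Hermite multiplication formula
\[
H_m(x) H_n(x) = \sum_{r=0}^{\min(m,n)} r! \binom{m}{r}\binom{n}{r} H_{m+n-2r}(x)
\]
separately on each shared atom, and then reassembling, yields $\sum_r r! \binom{k}{r}\binom{\ell}{r} I_{k+\ell-2r}[f \otimes_r g]$; the combinatorial factor arises from choosing which $r$ indices of $f$ and of $g$ are to be contracted, using the assumed symmetry of $f$ and $g$ to absorb the relabeling.

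The main obstacle is the bookkeeping imposed by the complex constraint $B_{-n} = \overline{B_n}$, which dictates that contractions identify $n$ with $-n$ rather than with itself. This is precisely what the definition $\wt\zeta_j = (-m_j, s_j)$ in Definition \ref{DEF:B4} encodes, and verifying that the real-valued Hermite calculation on the elementary class produces exactly this form of contraction is the only nontrivial check. A clean alternative route, which avoids tracking complex phases by hand, is to decompose $B_n = \beta_n + i \gamma_n$ for $n \ne 0$ (with the symmetry $\beta_{-n} = \beta_n$, $\gamma_{-n} = -\gamma_n$ and $B_0 = \beta_0$ real) into an independent real Gaussian family indexed by a fundamental domain of $\Z^3 / \{\pm 1\}$, rewrite $I_k[f]$ as a real multiple Wiener-It\^o integral against this family, invoke \cite[Proposition 1.1.3]{Nua} directly, and then translate the resulting contractions back through the change of variables $\beta_n, \gamma_n \leftrightarrow B_n, \overline{B_n}$ to recover the contraction of Definition \ref{DEF:B4}. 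Either route completes the proof.
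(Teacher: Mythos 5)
Your proposal is correct, but note that the paper does not actually prove this lemma: it states it and refers directly to \cite[Proposition 1.1.3]{Nua}, relying on the remark earlier in the appendix that the construction of $I_k$ adapts the real-valued theory to the present complex setting. Your argument is precisely the content of that implicit adaptation, and your second route --- decomposing $B_n = \beta_n + i\gamma_n$ over a fundamental domain of $\Z^3/\{\pm 1\}$ so as to invoke Nualart's result verbatim and then translating the contractions back --- is the cleanest way to make the paper's citation rigorous, since it sidesteps the complex Hermite (Laguerre-type) multiplication formula that your first route would otherwise need in place of the real identity $H_m H_n = \sum_r r!\binom{m}{r}\binom{n}{r}H_{m+n-2r}$. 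Either way, the key point you correctly isolate is that the covariance $\E[dB_n\,dB_m] = \ind_{n+m=0}\,dt$ forces contractions to pair $(m_j, s_j)$ with $(-m_j, s_j)$, which is exactly what Definition \ref{DEF:B4} encodes.
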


\section{Random tensors} \label{SEC:C}

In this section, we provide the basic definition and 
some lemmas on (random) tensors from~\cite{DNY2, Bring}.
See~\cite[Sections 2 and 4]{DNY2} and \cite[Section 4]{Bring}
for further discussion.

\begin{definition} \label{def_tensor} \rm
Let $A$ be a finite index set. We denote by $n_A$ the tuple $ (n_j : j \in A)$. 
 A tensor $h = h_{n_A}$ is a function: $(\Z^3)^{A} \to \mathbb{C} $ with the input variables $n_A$. Note that the tensor $h$ may also depend on $\o \in \O$. 
 The support of a tensor $h$ is the set of $n_A$ such that $h_{n_A} \neq 0$. 

Given a finite index set  $A$, 
let $(B, C)$ be a partition of $A$. We define the norms 
 $\| \cdot \|_{n_A}$ and 
$\| \cdot \|_{n_{B} \to n_{C}}$ by 
\[ \| h \|_{n_A}  = \|h\|_{\l^2_{n_A}} = \bigg(\sum_{n_A} |h_{n_A}|^2\bigg)^\frac{1}{2}\]
and
\begin{align}
  \| h \|^2_{n_{B} \to n_{C}} = \sup \bigg\{ 
\sum_{n_{C}} \Big| \sum_{n_{B}} h_{n_A} f_{n_{B}} \Big|^2 :  \| f \|_{\l^2_{n_{B}}} =1  \bigg\},  
\label{Z0a}
\end{align}

\noi
where  we used the short-hand notation $\sum_{n_Z}$ for $\sum_{n_Z \in (\Z^3)^Z}$ for a finite index set $Z$.
Note that, by duality, we have  $\| h \|_{n_{B} \to n_{C}} = \| h \|_{n_{C} \to n_{B}} 
= \| \cj h \|_{n_{B} \to n_{C}}$ for any tensor $h = h_{n_A}$. 
If $B = \varnothing$ or $C = \varnothing$,  then we have
$  \| h \|_{n_{B} \to n_{C}} = \| h \|_{n_A}$.
\end{definition}

For example, when $A = \{1, 2\}$, 
the norm  $\| h \|_{n_{1} \to n_{2}}$ denotes the usual operator norm
$\| h \|_{\l^2_{n_{1}} \to \l^2_{n_{2}}}$
for an infinite dimensional matrix operator $\{h_{n_1 n_2}\}_{n_1, n_2 \in \Z^3}$.
By bounding the matrix operator norm by the Hilbert-Schmidt norm (= the Frobenius norm), we have
\begin{align}
\| h \|_{\l^2_{n_{1}} \to \l^2_{n_{2}}} \le \| h\|_{\l^2_{n_1, n_2}}
\label{Z0}
\end{align}

Let $(B, C)$ be a partition of $A$.
Then, 
by duality, we can write \eqref{Z0a} as 
\begin{align*}
  \| h \|_{n_{B} \to n_{C}} = \sup \bigg\{ 
\sum_{n_{C}} \Big| \sum_{n_{B}, n_C} h_{n_A} f_{n_{B}} g_{n_C}\Big| : 
\| f \|_{\l^2_{n_{B}}} =  \| g \|_{\l^2_{n_{C}}} =1   \bigg\}, 
\end{align*}

\noi
from which we obtain
\begin{align}
\sup_{n_A}|h_{n_A}|
= \sup_{n_B, n_C}|h_{n_Bn_C}|
\le   \| h \|_{n_{B} \to n_{C}}.
\label{Z0b}
\end{align}

%
%

\medskip

Next, we recall a key deterministic tensor bound in the study of the random cubic NLW from \cite{Bring}.

\begin{lemma}[Lemma 4.33 in \cite{Bring}]\label{LEM:Bring}

Let $s < \frac 12+ \be$ for some $\be > 0$.
Given  $\eps_j \in \{-1, 1\}$ for $j = 0,1,2,3$,
let $\kk(\bar n)$
 be as in \eqref{X6a}.
For $m \in \Z$, define the tensor  $h^m$  by
\begin{align*}
h^m_{nn_1n_2n_3}  
 = 
\bigg(\prod_{j=1}^3\ind_{\substack{|n_j|\sim N_j\\|n_j|\le N}}\bigg)
\ind_{\{|\kk(\bar n) - m|\le 1\}}
\frac{ \jb{n}^{s-1}}{\jb{n_{12}}^\be\jb{n_1} \jb{n_2}
 \jb{n_3}^{\frac12}}.
\end{align*}

\noi
Then, there exists $\dl_0 > 0$ such that 
\begin{align*}
 \max\Big( & \| h^m \|_{n_1n_2n_3 \to n}, 
\| h^m \|_{n_3 \to nn_1n_2},
 \| h^m \|_{n_1n_3 \to nn_2}, 
\| h^m \|_{n_2n_3 \to nn_1} \Big) \\
& \les \max(N_1, N_2, N_3)^{-\dl_0}, 
\end{align*}

\noi
uniformly in $N \geq 1$, $m\in \Z$, dyadic $N_1,N_2,N_3 \ge 1$, 
and  $\eps_j \in \{-1, 1 \}$ for $j = 0,1,2,3$.

\end{lemma}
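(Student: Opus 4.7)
\medskip

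\noindent \textit{Proof sketch for Lemma~\ref{LEM:Bring}.}
The plan is to treat the four tensor norms on a case-by-case basis, reducing each to an application of the cubic counting estimate (Lemma~\ref{LEM:A1}) after the appropriate use of Cauchy--Schwarz or the Schur test, and to extract the small negative power $N_{\max}^{-\dl_0}$ from the strict inequality $s< \tfrac 12 + \be$. Throughout, the implicit constraint $n = n_{123}$ encoded in $\kk(\bar n)$ should be used to reduce the effective number of free frequencies from four to three. Without loss of generality it suffices to handle each dyadic arrangement $N_{\max}=N_{j_0}$ separately, since the desired bound is symmetric in the $N_j$.

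The first step is to dispose of the two ``extreme'' partitions. For $\|h^m\|_{n_1n_2n_3\to n}$, duality gives $\|h^m\|_{n_1n_2n_3\to n}=\|h^m\|_{n\to n_1n_2n_3}$, and for a unit vector $f_n$ one writes $\sum_{n_1,n_2,n_3}|\sum_n h^m f_n|^2$, applies Cauchy--Schwarz in $n$, and reduces to the Hilbert--Schmidt-type sum controlled by Lemma~\ref{LEM:A1} (after absorbing the half-derivative loss from $\jb{n_3}^{-\frac12}$ using $|n_3|\sim N_3 \leq N_{\max}$ and the weight $\jb{n_{12}}^{-\be}$). Lemma~\ref{LEM:A1} yields a bound of the form $N_{\max}^{2(s-\be)-\be+\eps}$ or similar, which is $\les N_{\max}^{-2\dl_0}$ since $s<\tfrac12+\be$.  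The partition $\|h^m\|_{n_3\to nn_1n_2}$ is even simpler: the constraint $n=n_{123}$ fixes $n_3 = n-n_{12}$ once $(n,n_1,n_2)$ are chosen, so the operator acts diagonally in $n_3$ and one obtains $\|h^m\|_{n_3\to nn_1n_2}^2 = \sup_{n_3}\sum_{n_1,n_2}|h^m_{n_{123},n_1,n_2,n_3}|^2$, which again reduces to the counting estimate of Lemma~\ref{LEM:A1}.

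The heart of the argument lies in the two mixed partitions $\|h^m\|_{n_1n_3\to nn_2}$ and $\|h^m\|_{n_2n_3\to nn_1}$, where neither a direct Cauchy--Schwarz in a single free index nor the Hilbert--Schmidt bound is sharp enough. The plan is to apply the $TT^*$ trick: square the operator to obtain
\[
\|h^m\|_{n_1n_3\to nn_2}^2 \le \|h^m(h^m)^*\|_{nn_2\to nn_2},
\]
and bound the resulting kernel via a Schur test, i.e. by estimating the row and column sums uniformly in $(n,n_2)$. After unfolding $h^m(h^m)^*$, each Schur sum takes the form of a convolution of two $\ind_{|\kk-m|\le 1}$ factors with a symbol of the form $\jb{n_{12}}^{-2\be}\prod\jb{n_j}^{-2}$, which, after one summation, becomes a sum structurally of the same type as in Lemma~\ref{LEM:A1}. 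The two dispersive constraints coming from squaring typically allow one to replace the counting exponent $2(s-\be)$ by an improved one and to gain the negative power $N_{\max}^{-\dl_0}$. The partition $\|h^m\|_{n_2n_3\to nn_1}$ is handled identically after exchanging the roles of $n_1$ and $n_2$, using that $\jb{n_{12}}^{-\be}$ is symmetric in $(n_1,n_2)$.

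\textbf{Main obstacle.} The principal difficulty is extracting the strictly negative exponent $-\dl_0$ rather than a merely bounded estimate, since $s<\tfrac12+\be$ is only a \emph{strict} inequality and the cubic counting Lemma~\ref{LEM:A1} delivers a bound of order $N_{\max}^{2(s-\be)}$ with exponent precisely at the borderline.  One has to argue carefully by splitting into sub-cases depending on which of $N_1,N_2,N_3$ is largest: in the case $N_3 = N_{\max}$, the factor $\jb{n_3}^{-\frac12}$ is the key source of gain, whereas in the case $N_1 \vee N_2 = N_{\max}$, the weight $\jb{n_{12}}^{-\be}$ combined with the sharp use of $s<\tfrac12+\be$ and a small-$\eps$ loss from applying Lemma~\ref{LEM:A1} at the strict endpoint produces the required surplus. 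Tracking these exponents through the $TT^*$ reduction in the mixed-partition cases is the most delicate bookkeeping in the proof.
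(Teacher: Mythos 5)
This lemma is not proved in the paper: it is imported verbatim (in slightly simplified form) from Bringmann's work, cited as \cite[Lemma 4.33]{Bring}, so there is no in‑paper argument to compare yours against. Judged on its own terms, your sketch correctly identifies the overall shape of such a proof (treat the four partitions separately, use Cauchy--Schwarz or Schur‑type reductions exploiting the constraint $n=n_{123}$, then invoke counting estimates and a case analysis on which $N_j$ is maximal), but it has concrete gaps at exactly the points where the lemma's content lies.

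First, the reduction of $\|h^m\|_{n_1n_2n_3\to n}$ to ``the Hilbert--Schmidt-type sum controlled by Lemma~\ref{LEM:A1}'' does not work. Because the weight on $n_3$ in $h^m$ is $\jb{n_3}^{-1/2}$ rather than $\jb{n_3}^{-1}$, the full Hilbert--Schmidt sum $\sum_{n_1,n_2,n_3}|h^m|^2$ exceeds the sum in Lemma~\ref{LEM:A1} by a factor of $N_3$, so the resulting bound is of size $N_3 N_{\max}^{2(s-\be)}$, which can be as large as $N_{\max}^{2-}$ — far from $N_{\max}^{-2\dl_0}$. (Lemma~\ref{LEM:A1} also assumes $s\le\frac12$ and $\be\le\frac12$, which does not cover the full range $s<\frac12+\be$.) The correct Cauchy--Schwarz bound here is $\|h^m\|_{n_1n_2n_3\to n}^2\le \sup_n\sum_{n_{123}=n}|h^m|^2$, with a supremum rather than a sum over $n$; this is a pointwise counting problem of Lemma~\ref{LEM:A1a} type, not Lemma~\ref{LEM:A1}, and it is only after that counting, combined with the $\jb{n_{12}}^{-\be}$ weight, that one can extract a negative power. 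Relatedly, your suggestion to ``absorb the half-derivative loss from $\jb{n_3}^{-1/2}$ using $|n_3|\sim N_3\le N_{\max}$'' goes the wrong way: replacing $\jb{n_3}^{-1/2}$ by $\jb{n_3}^{-1}$ costs a factor $N_3^{1/2}$, it does not gain one. Second, for the mixed partitions $\|h^m\|_{n_1n_3\to nn_2}$ and $\|h^m\|_{n_2n_3\to nn_1}$ — which you rightly flag as the heart of the matter — the $TT^*$/Schur plan is only a plan: the Schur row and column sums after squaring are genuinely new counting problems (with two frequency constraints and a free translation variable), and no exponent bookkeeping is carried out to show they yield $N_{\max}^{-\dl_0}$. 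Since extracting the strictly negative exponent in every sub-case is the entire assertion of the lemma, the proposal as written does not establish it.
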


We conclude this section with the following random matrix estimate.
This lemma is essentially Propositions 2.8 and 4.14 in \cite{DNY2};
see also Proposition 4.50 in \cite{Bring}.
In our stochastic PDE setting, however, 
we need a slightly different formulation (in particular, adapted
to multiple stochastic integrals with general integrands)
and thus for readers' convenience, 
we present its proof.

Let $A$ be a finite index set.
As in \eqref{XX2} and \eqref{XX3}, 
we set $z_A = (k_A,t_A)$ for $(k_A, t_A) \in  (\Z^3)^A\times  \R^A$
and write $f_{z_A} = f(z_A) = f(n_A, t_A)$.

\begin{lemma}\label{LEM:DNY}
Let $A$ be a finite index set with $k = |A| \geq 1$. Let $h = h_{bcn_A}$ be a tensor such that $n_j \in \Z^3$ for each $j \in A$ and $(b,c) \in (\Z^3)^d$ for some integer $d \geq 2$. 
Given  $N \geq 1$, 
 assume that 
\begin{align}\label{cond_DNY}
\supp h \subset \big\{ |b|, |c|, |n_j| \les N
\text{ for each $j \in A$}
 \big\}.
\end{align} 

\noi
Given a \textup{(}deterministic\textup{)} tensor $h_{bcn_A} \in \l^2_{bcn_A}$, define the tensor $H = H_{bc}$ by
\begin{align}
H_{bc} =   I_k \big[ h_{bcn_A} f_{z_A} \big]
\label{Z1}
\end{align}

\noi
for   $f \in \l^{\infty}_{n_A}((\Z^3)^A; L^2_{t_A}(\R_+^A ))$, 
where $I_k$ denotes the multiple stochastic integral 
defined in Appendix~\ref{SEC:B}.
Then,  for any $\theta > 0$,  we have
\begin{align}
\big\| \| H_{bc} \|_{b \to c} \big\|_{L^p(\O)} 
\les p^{\frac k2} N^{\theta}\Big( \max_{(B,C)} \| h \|_{b n_B \to c n_C}\Big)
 \| f(n_A, t_A)\|_{\l^{\infty}_{n_A} L^2_{t_A}}, 
\label{Z1a}
\end{align}

\noi
where the maximum is taken over  all partitions $(B,C)$ of $A$.
\end{lemma}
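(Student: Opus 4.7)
The approach is the moment method of Deng-Nahmod-Yue \cite{DNY2}, adapted to the multiple stochastic integral setting so as to handle the general integrand $f$. The plan is to bound the operator norm $\|H\|_{b\to c}$ via the trace of a high power of $HH^*$, evaluate the expectation of that trace by the Wick-type diagram formula for products of multiple stochastic integrals, and bound each pairing contribution by a tensor contraction norm $\|h\|_{bn_B\to cn_C}$ together with $\|f\|_{\l^\infty_{n_A}L^2_{t_A}}$.

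First I would reduce \eqref{Z1a} to a second-moment estimate on a trace. Fix a large integer $q \ge 1$ to be chosen. From the elementary bound $\|H\|_{b\to c}^{2q} \le \operatorname{tr}((HH^*)^q)$ and the observation that each entry $H_{bc} = I_k[h_{bcn_A}f_{z_A}]$ lies in the Wiener chaos $\mathcal{H}_k$, the product formula (Lemma~\ref{LEM:prod}) places $\operatorname{tr}((HH^*)^q)$ in $\mathcal{H}_{\le 2qk}$. Applying the Wiener chaos estimate (Lemma~\ref{LEM:hyp}) and extracting the $(2q)$-th root then yields
\[\Big\| \|H\|_{b\to c} \Big\|_{L^p(\Omega)} \les p^{k/2}\, \mathbb{E}\big[\operatorname{tr}((HH^*)^q)\big]^{1/(2q)}\]
for $p\ge 2$, so it suffices to control the right-hand side uniformly in $q$ (with constants depending mildly on $q$).

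Next I would expand
\[\operatorname{tr}((HH^*)^q) = \sum_{b_1,\ldots,b_q,\, c_1,\ldots,c_q} H_{b_1c_1}\overline{H_{b_2c_1}}H_{b_2c_2}\overline{H_{b_3c_2}}\cdots H_{b_qc_q}\overline{H_{b_1c_q}},\]
substitute the integral representation of each $H_{b_ic_i}$, and evaluate the expectation of the resulting $2q$-fold product of multiple stochastic integrals of order $k$ by the Wick-type diagram formula, which is obtained by iterating the product formula (Lemma~\ref{LEM:prod}) and retaining only the $\mathcal{H}_0$-component. The expectation then becomes a sum over complete pairings $\mathcal{P}$ of the $2qk$ noise slots (one slot per element of $A$ in each of the $2q$ factors) that respect the partition into $2q$ blocks of size $k$. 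Each pair in $\mathcal{P}$ forces two noise indices to coincide up to sign and, by Ito's isometry (Lemma~\ref{LEM:B1}(iii)), produces an $L^2_t$-pairing in the two corresponding time variables; since $f\in \l^\infty_{n_A}L^2_{t_A}$, each such pair contributes at most $\|f\|_{\l^\infty_{n_A}L^2_{t_A}}^2$, and these factors combine to yield $\|f\|_{\l^\infty_{n_A}L^2_{t_A}}^{2q}$ in total.

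The hard part is the combinatorial bookkeeping needed to bound each pairing contribution by $\max_{(B,C)}\|h\|_{bn_B\to cn_C}^{2q}$. For each pairing $\mathcal{P}$ and each factor $H_{b_ic_i}$, I would partition its slot set as $A = B_i \sqcup C_i$ according to whether the pair-partner of each slot lies to the ``left'' or to the ``right'' of that factor in the cyclic trace. The iterated sum over the external indices $b_i, c_i$ and over the shared paired $n_A$-indices then factorises, and a repeated application of Cauchy-Schwarz together with the dual characterisation~\eqref{Z0a} of the tensor norms bounds the contribution of $\mathcal{P}$ by a product of $q$ factors, each of size at most $\max_{(B,C)}\|h\|_{bn_B\to cn_C}^{2}$. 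Finally, the number of admissible pairings is controlled by $(2qk)! \le (Cqk)^{qk}$, and the crude loss from the support constraint \eqref{cond_DNY} (restricting $|b|, |c|, |n_j|\les N$ in the various residual summations) is at most $N^{Cqk}$; taking the $(2q)$-th root and choosing $q=q(\theta)$ sufficiently large absorbs both of these losses into $N^\theta$, yielding \eqref{Z1a}.
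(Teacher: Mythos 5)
Your overall strategy is the same as the paper's: both are the higher-order $TT^*$/moment method of Deng--Nahmod--Yue, with the product formula generating the chaos decomposition, iterated Cauchy--Schwarz organized by a partition of each factor's slots extracting the operator norms $\max_{(B,C)}\|h\|_{bn_B\to cn_C}$, the Wiener chaos estimate supplying the $p^{k/2}$, and a large power absorbing the crude $N$-loss. The only organizational difference is that you expand $\E\big[\operatorname{tr}((HH^*)^q)\big]$ as a sum over complete Wick pairings, whereas the paper bounds $\|(TT^*)^m\|_{\l^2_{b'}\to\l^2_b}$ by the Hilbert--Schmidt norm of its kernel and proves by induction on the number of factors (via Lemma \ref{LEM:prod} and the contraction bound \eqref{pf_DNY4}--\eqref{pf_DNY5}) that the kernel's integrand satisfies $\|y\|_{\l^2} \les (\max_{(B,C)}\|h\|_{bn_B\to cn_C})^{j-1}\|h\|_{\l^2_{bcn_A}}\|f\|_{\l^\infty_{n_A}L^2_{t_A}}^{j}$; these are two bookkeepings of the same computation.

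There is, however, one step where your accounting as written does not close. You claim the loss from the support constraint \eqref{cond_DNY} is ``at most $N^{Cqk}$'' and that taking the $(2q)$-th root with $q$ large absorbs it; but $(N^{Cqk})^{1/2q}=N^{Ck/2}$, a fixed positive power of $N$ that no choice of $q$ can reduce below $N^\theta$. The argument only works because the cyclic Cauchy--Schwarz around the trace leaves a \emph{$q$-independent} number of Hilbert--Schmidt (i.e.\ $\l^2$) factors --- two in the trace formulation, one in the paper's version, cf.\ the single factor $\|h\|_{\l^2_{bcn_A}}^{1/2m}$ in \eqref{pf_DNY7} --- each costing $N^{\frac32(d+k)}\max_{(B,C)}\|h\|_{bn_B\to cn_C}$ by \eqref{pf_DNY8}, while all remaining $2q-2$ factors must be bounded by operator norms with no $N$-loss at all. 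You should make this explicit, since it is exactly the point of the exercise. Two smaller imprecisions: the time integral over a pairing is not a product over pairs each contributing $\|f\|_{\l^\infty_{n_A}L^2_{t_A}}^{2}$ (that would give $\|f\|^{2qk}$); the correct bound is one $L^2_{t_A}$-norm per factor of $H$, obtained by Cauchy--Schwarz over the pairing graph, giving $\|f\|^{2q}$ as you conclude. And passing from $\|\,\|H\|_{b\to c}\|_{L^p}$ to $\E[\operatorname{tr}((HH^*)^q)]^{1/2q}$ with a fixed $q$ requires, besides Lemma \ref{LEM:hyp}, the reverse H\"older inequality $\|X\|_{L^2}\les_{qk}\|X\|_{L^1}$ on $\H_{\le 2qk}$ (or else one works with second moments of kernels, as the paper does, avoiding the first moment of the trace altogether).
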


\begin{remark}\rm
(i)  The assumption that  $h_{bcn_A} \in \l^2_{bcn_A}$
and  $f \in \l^{\infty}_{n_A}((\Z^3)^A; L^2_{t_A}(\R_+^A ))$
 ensures that the multiple stochastic integral $I_k \big[ h_{bcn_A} f_{z_A}\big]$ 
 in \eqref{Z1} 
 is well defined. 
 Note that if for instance we have a  stronger condition 
 $f \in \l^2 \big( (\Z^3)^A; L^{2}(\R_+^A) \big) $,  then the conclusion \eqref{Z1a} trivially holds without any loss in $N$. 
We also note that even if the tensor $h$ is random,
Lemma \ref{LEM:DNY} holds with the same proof as long as 
$h$ is independent of the Brownian motions $\{B_{n_A}\}$ defining multiple stochastic integrals.

\smallskip

\noi 
(ii) By translation invariance, 
we may replace the condition \eqref{cond_DNY}
in Lemma \ref{LEM:DNY}
by 
\begin{align*}
\supp h \subset \big\{ |b - b_*|, |c - c_*|, |n_j - n_{j, *}| \les N
\text{  for each $j \in A$} \big\}
\end{align*} 

\noi
\noi
for some $(b_*, c_* ) \in (\Z^3)^d$
and $n_{j, *} \in \Z^3$, $j \in A$.

\end{remark}

\begin{proof}[Proof of Lemma \ref{LEM:DNY}] 
We follow the proof of Proposition 4.14 in \cite{DNY2}
and 
use a higher order version of Bourgain's $TT^*$-argument \cite{BO96}.
%
Let $T : \ell^2_c \to \ell^2_b$ be the linear operator whose kernel is $H_{bc}$. 
Namely,  $T$ is defined by 
\begin{align}
(Tg)_b  = \sum_{c} H_{bc} g_c, \quad g \in \ell^2_c.
\label{Z2}
\end{align}

\noi
For  $j \in \N$,  we define the operator $T_j$ 
by $T_j = (T T^*)^m$ if $j = 2m$, and $T_j = (TT^*)^m T$ if $j = 2m +1$. 
We claim that  $T_j$ has a kernel which 
is given by  a linear combination of terms $\mathcal{T}_j$ of the form
\begin{align}
\mathcal{T}_j = \begin{cases} I_{\ell} \big[y_{bb'}(z_D)\big],
&   \text{when $j$ is even},   \\
I_{\ell} \big[y_{bc}(z_D)\big],&  \text{when $j$ is  odd},
\end{cases}
\label{Z3}
\end{align} 

\noi
for some finite index set $D$ and  $\ell = |D| \leq k j$, 
where 
$y_{bb'}(z_D)$ (or $y_{bc}(z_D)$)
satisfies the following bound:
\begin{align}
\begin{split}
&  \| y_{bb'}(z_D) \|_{\l^2_{bcn_D} L^2_{t_D}} 
\quad \text{\big(or $\| y_{bc}(z_D) \|_{\l^2_{bcn_D} L^2_{t_D}}$\big)}  \\
& \qquad \les \Big( \max_{(B,C)} \| h \|_{b n_B \to c n_C} \Big)^{j-1} \| h_{bcn_A} \|_{\l^2_{bcn_A}}
 \| f(n_A, t_A)\|_{\l^{\infty}_{n_A} L^2_{t_A}}^j.
\end{split}
\label{Z4}
\end{align}

\noi
where the maximum is taken over  all partitions $(B,C)$ of $A$.
Here,  the implicit constant depends on $k$, $\l$, 
and $j$.  While it grows with $j$ (and $\l$), 
this does not cause an issue since for a given small $\ta > 0$
in \eqref{Z1a}, we fix $j = j(\ta) \gg 1$.

Let  $j = 1$.
In this case,
comparing  
\eqref{Z3} with 
\eqref{Z2} and  \eqref{Z1}
and using 
Lemma \ref{LEM:B1}\,(ii), 
we have $y_{bc}(z_D) = \Sym(h_{bc n_A} f(z_A))$ with $D = A$
and thus the bound \eqref{Z4} follows from 
H\"older's inequality.
Note that, in this case, 
it follows from Lemma \ref{LEM:B1}\,(iii) that 
\[
\| y_{bc}(z_A) \|_{\l^2_{bcn_A} L^2_{t_A}}
= (k!)^{-1}
\big\|\|H_{bc}\|_{\l^2_{bc}}\big\|_{L^2(\O)}, \] 

\noi
where the right-hand side is 
the second moment of the Hilbert-Schmidt norm of the operator~$T$. 
By taking higher powers $T_j$, we control the operator norm of $T$. 

Now, assume that the claim with \eqref{Z3} and \eqref{Z4} hold true
for $j - 1$.
We assume that $j$ is odd.
The proof for even $j$ is analogous.
Noting that  $T_j = T_{j-1}T$, 
it follows from the inductive hypothesis \eqref{Z3} with \eqref{Z1}
and Lemma \ref{LEM:B1}\,(ii)
that  the kernel for $T_j$ is given by 
a linear combination of terms $\TT_j$ of the form
\begin{align*}
\begin{split}
(\mathcal{T}_j)_{bc} & = \sum_{b'} (\mathcal{T}_{j-1})_{bb'} H_{b'c}   \\
& = \sum_{b'} I_{\ell} \big[ y_{bb'}(z_D) \big] \cdot  I_k \big[ h_{b'c n_A} f(z_A) \big]\\
& = \sum_{b'} I_{\ell} \big[ \Sym(y_{bb'}(z_D)) \big] \cdot  I_k \big[ \Sym(h_{b'c n_A} f(z_A)) \big] .
\end{split} 
\end{align*}

\noi
 Then, from the product formula (Lemma \ref{LEM:prod}), we have 
\noi
\begin{align*}
(\TT_j)_{bc}  = \sum_{r=0}^{\min(k,\ell)} r! \binom{k}{r}  \binom{\ell}{r}  I_{k + \ell - 2r}
 \bigg[ \sum_{b'} \big( \Sym(y_{bb'}) \otimes_r  \Sym(h_{b'c} f) \big) \bigg]. 
\end{align*}

\noi
Hence, 
it suffices to show that 
$ \sum_{b'} ( \Sym(y_{bb'}) \otimes_r  \Sym(h_{b'c} f) ) $ 
satisfies  \eqref{Z4} for each $0 \leq r \leq \min(k,\ell)$.
For notational simplicity, we 
drop $\Sym$ in 
$\Sym(y_{bb'})$ and $\Sym(h_{b'c} f) $ in the following.
Note that this does not cause any issue since, 
in taking the $L^2(\O)$-norm, 
we can remove $\Sym$ by  Jensen's inequality \eqref{Jen}
as in Section \ref{SEC:sto2}.

Fix $0 \leq r \leq \min(k, \ell)$. 
From Definition \ref{DEF:B4} on the contraction, 
we have 
\begin{align}
\begin{split}
  \big( y_{bb'}&  \otimes_r h_{b'c} f \big)(z_B)  \\
& 
=  \sum_{n_C} \int_{\R_+^r}  
y_{bb'}(z_{B_1}, z_{C} )\cdot  (h_{b'c} f)(z_{B_2}, \wt z_C )dt_C,  
\end{split}
\label{pf_DNY3}
\end{align}

\noi
where $\wt z_C = (- n_C, t_C)$ for given $z_C = (n_C, t_C)$.
Here, $B_1$, $B_2$, and $C$ are pairwise disjoint sets such that 
$|B_1| = \ell - r$, $|B_2| = k - r$, $ |C| = r $, $B = B_1 \cup B_2$, 
and (by suitable relabeling of indices) 
\begin{align}
B_1 \cup C =  D
\qquad \text{and}\qquad  B_2 \cup C =   A.
\label{Z5} 
\end{align}

\noi
Then, from 
\eqref{pf_DNY3},  
Cauchy-Schwarz's inequality (in $t_C$), 
Minkowski's integral inequality (with $L^2_{t_B} = L^2_{t_{B_1}}L^2_{t_{B_2}}$, 
 \eqref{Z0a}, and the identification in~\eqref{Z5}, 
we have 
\begin{align}
\begin{split}
 \Big\| &   \sum_{b'} \big( y_{bb'} \otimes_r h_{b'c} f \big)(n_B, t_B) \Big\|_{\ell^2_{bc n_B}L^2_{t_B}} 
 \\
 &  \leq  
 \Big\|  \sum_{b', n_C}  
\| y_{bb'}(n_{B_1}, t_{B_1}, n_{C}, t_{C} )\|_{L^2_{t_{B_1}t_{C}}}\cdot  
\|(h_{b'c} f)(n_{B_2},t_{B_2},-n_C, t_C )\|_{L^2_{t_{B_2}t_C}}
   \Big\|_{\ell^2_{bc n_{B}} } \\
 &  
 \le   \|y_{bb'}(n_{B_1}, t_{B_1}, n_C, t_C ) 
 \|_{\ell^2_{bb'n_{B_1} n_C}L^2_{t_{B_1}t_C} }
 \Big\| \|(h_{b'c} f)(z_{B_2}, z_C)\|_{L^2_{t_{B_2}t_C}} \Big\|_{b' n_C \to cn_{B_2}} \\
 &  
=    \|y_{bb'}(z_D ) \|_{\ell^2_{bb'n_D} L^2_{t_D}} 
\Big\| h_{b'cn_A} \|f(z_{A} )\|_{L^2_{t_A}} \Big\|_{b' n_C \to cn_{A\setminus C}} .
\end{split}
 \label{pf_DNY4}
\end{align}

\noi
Moreover, from \eqref{Z0a}, 
we have 
\begin{align}
\begin{split}
\Big\| h_{b'cn_A} \|f(z_{A} )\|_{L^2_{t_A}} \Big\|_{b' n_C \to cn_{A\setminus C}} 
& \leq \| h_{b'c n_A } 
\|_{b' n_C \to cn_{A\setminus C}}   \| f(n_A, t_A)\|_{\l^{\infty}_{n_A} L^2_{t_A}} \\
& \leq \Big(\max_{(A_1,A_2)} \| h_{bc n_A } \|_{bn_{A_1} \to cn_{A_2}} \Big)
 \| f(n_A, t_A)\|_{\l^{\infty}_{n_A} L^2_{t_A}}, 
\end{split}
\label{pf_DNY5}
\end{align}

\noi
where the maximum is taken over  all partitions $(A_1,A_2)$ of $A$.
Hence, 
from \eqref{pf_DNY4}, \eqref{pf_DNY5}, and the inductive hypothesis
\eqref{Z4} (with $j -1$ in place of $j$),
we obtain \eqref{Z4} for $j$.
Therefore, by induction, the claim holds for any $j \in \N$.

We are now ready to prove \eqref{Z1a}.
Consider the product $T_{2m} = (TT^*)^m $ for $m \ge 1$. 
Let us denote by $\mathcal{R}_{2m}$ 
the kernel of $T_{2m}$, 
which consists of terms $\TT_j$, satisfying  \eqref{Z3} and~\eqref{Z4}. 
Namely, we have 
\begin{align}
(\RR_{2m})_{bb'}
= \sum_{j = 1}^J
I_{2k \l_j}
 \big[y_{bb'}^{(j)}(z_{D^{(j)}})\big]
\label{Z7}
\end{align} 

\noi
for some $J \geq 1$, $0 \le \l_j \le m$, 
and $y_{bb'}^{(j)}$, satisfying  \eqref{Z4}. 
Note that we have
 $\mathcal{R}_{2m} \in \H_{\le 2mk}$.
Then, by 
 the standard $TT^*$ argument, \eqref{Z0}, 
 Minkowski's integral inequality, \eqref{Z7}, 
the Wiener chaos estimate (Lemma \ref{LEM:hyp}), 
Lemma \ref{LEM:B1}\,(iii), and \eqref{Z4}, we obtain
\begin{align}
\begin{split}
\big\| \| H_{bc} \|_{b \to c} \big\|_{L^p(\O)} 
& = \big\| \| H_{bc} \|_{b \to c}^{2m} \big\|_{L^{\frac{p}{2m}}(\O)}^{\frac{1}{2m}} 
= \big\| \| T \|_{\ell^2_c \to \ell^2_b }^{2m} \big\|_{L^{\frac{p}{2m}}(\O)} ^{\frac{1}{2m}} \\
&   =  \big\| \| (T T^*)^m\|_{\ell^2_{b'} \to \ell^2_{b} }\big\|_{L^{\frac{p}{2m}}(\O)} ^{\frac{1}{2m}}  
 \le \big\| \| (\mathcal{R}_{2m})_{bb'} \|_{\l^2_{bb'}} \big\|_{L^{\frac{p}{2m}}(\O)} ^{\frac{1}{2m}} \\
& \le \big\| \| (\mathcal{R}_{2m})_{bb'} \|_{L^{\frac{p}{2m}}(\O)} \big\|_{\l^2_{bb'}} ^{\frac{1}{2m}} \\
&  \le p^\frac k2 
\bigg( \sum_{j = 1}^J 
 \Big\| \big\| I_{2k \l_j} \big[y_{bb'}^{(j)}(z_{D^{(j)}})\big] \big\|_{L^{2}(\O)} \Big\|_{\l^2_{bb'}} 
 \bigg)^{\frac{1}{2m}} \\
&  \les p^\frac k2 
\bigg( \sum_{j = 1}^J
 \| y_{bb'}^{(j)}(z_{D^{(j)}}) \|_{\l^2_{bcn_{D^{(j)}}} L^2_{t_{D^{(j)}}}} \bigg)^{\frac{1}{2m}} \\
& \les p^{\frac k2} \Big( \max_{(B,C)} \| h \|_{b n_B \to c n_C} \Big)^{1 - \frac{1}{2m}} 
\|h \|_{\l^2_{bcn_A}}^{\frac{1}{2m}}
 \| f(n_A, t_A)\|_{\l^{\infty}_{n_A} L^2_{t_A}}
\end{split}
\label{pf_DNY7}
\end{align}

\noi
for any $p \geq 4m$.
Moreover, from \eqref{cond_DNY} and  \eqref{Z0b}, 
we have 
\begin{align}\label{pf_DNY8}
\|h \|_{\l^2_{bcn_A}} \leq N^{\frac 3 2(d + k)} 
\sup_{b,c,n_A} |h_{bcn_A}| 
\leq N^{\frac 3 2(d + k)}  \max_{(B,C)} \| h \|_{b n_B \to c n_C}.
\end{align}

\noi
Therefore, by 
combining \eqref{pf_DNY7} and \eqref{pf_DNY8} 
and taking $m$ sufficiently large, 
we obtain the desired bound \eqref{Z1a}.
\end{proof}

\begin{ackno}\rm
T.O.~would like to thank Istv\'an Gy\"ongy for his kind continual support
since T.O.'s arrival in Edinburgh in 2013
 and also for joyful  chat
over the daily tea break.
Y.W.~would like to thank Istv\'an Gy\"ongy
for his kindness and teaching during his stay at Edinburgh in 2016-2017.

T.O.~and Y.Z.~were supported by the 
 European Research Council (grant no.~864138 ``SingStochDispDyn"). 
Y.W.~was supported by 
supported by 
 the EPSRC New Investigator Award 
 (grant no.~EP/V003178/1).
Lastly, 
the authors wish to thank the anonymous referee for the helpful comments.
\end{ackno}

\medskip

\noi
{\bf Data availability statement.}
Data sharing not applicable to this article as no datasets were generated or analysed during the current study.


\begin{thebibliography}{99}



\bibitem{BCD}
H.~Bahouri, J.-Y.~Chemin, 
R.~Danchin, 
{\it Fourier analysis and nonlinear partial differential equations,}
Grundlehren der Mathematischen Wissenschaften [Fundamental Principles of Mathematical Sciences], 
343. Springer, Heidelberg, 2011. xvi+523 pp. 







\bibitem{BOP1}
\'A.~B\'enyi, T.~Oh, O.~Pocovnicu,
{\it Wiener randomization on unbounded domains and an application to almost sure well-posedness of NLS}, Excursions in harmonic analysis. Vol. 4, 3--25, Appl. Numer. Harmon. Anal., Birkh\"auser/Springer, Cham, 2015. 

\bibitem{BOP2}
\'A.~B\'enyi, T.~Oh, O.~Pocovnicu,
{\it On the probabilistic Cauchy theory of the cubic nonlinear Schr\"odinger equation on 
$\R^3$, $d\geq 3$}, Trans. Amer. Math. Soc. Ser. B  2 (2015), 1--50. 


\bibitem{BOP3}
\'A.~B\'enyi, T.~Oh, O.~Pocovnicu,
{\it Higher order expansions for the probabilistic local Cauchy theory of the cubic nonlinear Schr\"odinger equation on $\R^3$,}
 Trans. Amer. Math. Soc.
 B 6 (2019), 114--160. 



\bibitem{Bog}
V.~Bogachev, 
{\it Gaussian measures,}
 Mathematical Surveys and Monographs, 62. American Mathematical Society, Providence, RI, 1998. xii+433 pp.


\bibitem{Bony}
J.-M.~Bony, 
{\it Calcul symbolique et propagation des singularit\'es pour les \'equations aux d\'eriv\'ees partielles non lin\'eaires,}
Ann. Sci. \'Ecole Norm. Sup.  14 (1981), no. 2, 209--246.




\bibitem{BO93}
J.~Bourgain, 
{\it Fourier transform restriction phenomena for certain lattice subsets and applications to nonlinear evolution equations, I: Schr\"odinger equations,} Geom. Funct. Anal. 3 (1993), 107--156.




\bibitem{BO94}
J.~Bourgain,
{\it Periodic nonlinear Schr\"odinger equation and invariant measures},
Comm. Math. Phys. 166 (1994), no. 1, 1--26.


\bibitem{BO96}
J.~Bourgain, 
{\it Invariant measures for the 2D-defocusing nonlinear Schr\"odinger equation}, 
Comm. Math. Phys. 176 (1996), no. 2, 421--445. 


\bibitem{BO97}
J.~Bourgain,
{\it Invariant measures for the Gross-Piatevskii equation}, 
J. Math. Pures Appl.  76 (1997), no. 8, 649--702.


%


\bibitem{Bring1}
B.~Bringmann, 
{\it Invariant Gibbs measures for the three-dimensional wave equation
with a Hartree nonlinearity I: measures}, 
 Stoch. Partial Differ. Equ. Anal. Comput.
 (2021). https://doi.org/10.1007/s40072-021-00193-y


\bibitem{Bring}
B.~Bringmann,
{\it Invariant Gibbs measures for the three-dimensional wave equation with a Hartree nonlinearity II: dynamics}, 
to appear in  J. Eur. Math. Soc. 








\bibitem{BT2}
N.~Burq, N.~Tzvetkov,
{\it Probabilistic well-posedness for the cubic wave equation},
 J. Eur. Math. Soc. (JEMS)  16 (2014), no. 1, 1--30.




\bibitem{CC}
R.~Catellier, K.~Chouk, 
{\it Paracontrolled distributions and the 3-dimensional stochastic quantization equation},
  Ann. Probab. 46 (2018), no. 5, 2621--2679.







\bibitem{DPZ14}
G.~Da Prato, J.~Zabczyk,
\textit{Stochastic equations in infinite dimensions},
Second edition. Encyclopedia of Mathematics and its Applications, 152. Cambridge University Press, Cambridge, 2014. xviii+493 pp.


\bibitem{DD}
A.~de Bouard, A.~Debussche, 
{\it The stochastic nonlinear Schr\"odinger equation in $H^1$}, 
Stochastic Anal. Appl. 21 (2003), no. 1, 97--126.


\bibitem{DNY1}
Y.~Deng, A.~Nahmod, H.~Yue,
{\it Invariant Gibbs measures and global strong solutions for nonlinear Schr\"odinger equations in dimension two}, 
arXiv:1910.08492 [math.AP].


\bibitem{DNY2}
Y.~Deng, A.~Nahmod, H.~Yue,
{\it Random tensors, propagation of randomness, and nonlinear dispersive equations}, 
Invent. Math. (2021). https://doi.org/10.1007/s00222-021-01084-8




\bibitem{Deya1}
A.~ Deya,
{\it A nonlinear wave equation with fractional perturbation},
Ann. Probab. 47 (2019), no. 3, 1775--1810.

\bibitem{Deya2}
A.~Deya,
{\it On a non-linear 2D fractional wave equation},
Ann. Inst. Henri Poincar\'e Probab. Stat. 56 (2020), no. 1, 477--501.



\bibitem{FOW}
J.~Forlano, T.~Oh, Y.~Wang,
{\it  Stochastic cubic nonlinear Schr\"odinger equation with almost space-time white noise}, 
J. Aust. Math. Soc. 109 (2020), no. 1, 44--67. 



\bibitem{GTV}
J.~Ginibre, Y.~Tsutsumi, G.~Velo, 
{\it On the Cauchy problem for the Zakharov system}, J. Funct. Anal. 151 (1997), no. 2, 384--436.


\bibitem{GV}
J.~Ginibre, G.~Velo, 
{\it Generalized Strichartz inequalities for the wave equation,} J. Funct. Anal. 133 (1995),
50--68.





\bibitem{GIP}
M.~Gubinelli, P.~Imkeller, N.~Perkowski, 
{\it Paracontrolled distributions and singular PDEs,} Forum Math. Pi 3 (2015), e6, 75 pp. 

%





\bibitem{GKO}
M.~Gubinelli, H.~Koch, T.~Oh,
{\it  Renormalization of the two-dimensional stochastic nonlinear wave equations,}
 Trans. Amer. Math. Soc.
 370 (2018), no 10, 7335--7359.


\bibitem{GKO2}
M.~Gubinelli, H.~Koch, T.~Oh,
{\it Paracontrolled approach to the three-dimensional stochastic nonlinear wave equation with quadratic nonlinearity,}
to appear in  J. Eur. Math. Soc. 

 
\bibitem{GKOT}
M.~Gubinelli, H.~Koch, T.~Oh, L.~Tolomeo,
{\it Global dynamics for  the two-dimensional stochastic nonlinear wave equations,}
 Int. Math. Res. Not. (2021), rnab084, https://doi.org/10.1093/imrn/rnab



\bibitem{HHK}
M.~Hadac, S.~Herr, H.~Koch, 
{\it Well-posedness and scattering for the KP-II equation in a critical space}, 
Ann. Inst. H. Poincar\'e Anal. Non Lin\'eaire 26 (2009), no. 3, 917--941. 
Erratum to ``Well-posedness and scattering for the KP-II equation in a critical space'', 
Ann. Inst. H. Poincar\'e Anal. Non Lin\'eaire 27 (2010), no. 3, 971--972.

\bibitem{Hairer}
M.~Hairer, 
{\it A theory of regularity structures,} Invent. Math. 198 (2014), no. 2, 269--504. 


\bibitem{HRW}
M.~Hairer, M.D.~Ryser, H.~Weber,
{\it Triviality of the 2D stochastic Allen-Cahn equation},
Electron. J. Probab. 17 (2012), no. 39, 14 pp. 



\bibitem{HTT}
 S.~Herr, D.~Tataru, N.~Tzvetkov, 
 {\it Global well-posedness of the energy-critical nonlinear Schr\"odinger
equation with small initial data in $H^1(\T^3)$}, Duke Math. J. 159 (2011), no. 2, 329--349.



\bibitem{KeelTao}
M.~Keel, T.~Tao, {\it Endpoint Strichartz estimates},  Amer. J. Math. 120 (1998), no. 5, 955--980.


\bibitem{KSV}
R.~Killip, B.~Stovall, M.~Visan, 
{\it Blowup behaviour for the nonlinear Klein-Gordon equation,} Math. Ann. 358 (2014), no. 1-2, 289--350.




\bibitem{KM}
S.~Klainerman, M.~Machedon, 
{\it Space-time estimates for null forms and the local existence theorem}, Comm. Pure Appl. Math., 46 (1993), 1221--1268.



\bibitem{KS}
S.~Klainerman, S.~Selberg, {\it Bilinear estimates and applications to nonlinear wave equations,} Commun. Contemp. Math. 4 (2002), no. 2, 223--295.


\bibitem{KT}
S.~Klainerman, D.~Tataru, 
{\it On the optimal local regularity for Yang-Mills equations in $\R^{4+1}$}, 
J. Amer. Math. Soc. 12 (1999), no. 1, 93--116. 


\bibitem{KochT}
H.~Koch, D.~Tataru, 
{\it A priori bounds for the 1D cubic NLS in negative Sobolev spaces}, Int. Math. Res.
Not. 2007, no. 16, Art. ID rnm053, 36 pp.

\bibitem{LS}
H.~Lindblad, C.~Sogge, {\it On existence and scattering with minimal regularity for semilinear wave equations,} J. Funct. Anal. 130 (1995), 357--426.








\bibitem{MW2}
J.-C.~Mourrat, H.~Weber,
{\it Global well-posedness of the dynamic $\Phi^4$ model in the plane,}
 Ann. Probab. 45 (2017), no. 4, 2398--2476.



\bibitem{MW1}
J.-C.~Mourrat, H.~Weber,
{\it The dynamic $\Phi^4_3$ model comes down from infinity}, 
Comm. Math. Phys. 356 (2017), no. 3, 673--753.



\bibitem{MWX}
J.-C.~Mourrat, H.~Weber, W.~Xu,
{\it Construction of $\Phi^4_3$ diagrams for pedestrians,}
 From particle systems to partial differential equations, 1--46, Springer Proc. Math. Stat., 209, Springer, Cham, 2017.





\bibitem{Nelson2}
E.~Nelson, 
{\it A quartic interaction in two dimensions}, 
 1966 Mathematical Theory of Elementary Particles (Proc. Conf., Dedham, Mass., 1965), pp. 69--73, M.I.T. Press, Cambridge, Mass.



\bibitem{Nua}
D.~Nualart, 
{\it The Malliavin calculus and related topics,}
 Second edition. Probability and its Applications (New York). Springer-Verlag, Berlin, 2006. xiv+382 pp. 




\bibitem{OOk}
T.~Oh, M.~Okamoto,
{\it  On the stochastic nonlinear Schr\"odinger equations at critical regularities},  Stoch. Partial Differ. Equ. Anal. Comput. 8 (2020), no. 4, 869--894. 

\bibitem{OOComp}
T.~Oh, M.~Okamoto,
{\it  Comparing the stochastic nonlinear wave and heat equations: a case study}, 
Electron. J. Probab. 26 (2021), paper no. 9, 44 pp. 


\bibitem{OOP}
T.~Oh, M.~Okamoto, O.~Pocovnicu,
{\it  On the probabilistic well-posedness of the nonlinear Schr\"odinger equations with non-algebraic nonlinearities},  Discrete Contin. Dyn. Syst. A. 39 (2019), no. 6, 3479--3520. 

\bibitem{OOR}
T.~Oh, M.~Okamoto, T.~Robert,
 {\it A remark on triviality for the two-dimensional stochastic nonlinear wave equation}, 
Stochastic Process. Appl. 130 (2020), no. 9, 5838--5864. 


\bibitem{OOTol}
T.~Oh, M.~Okamoto, L.~Tolomeo,
{\it Focusing $\Phi^4_3$-model with a Hartree-type nonlinearity},
arXiv:2009.03251 [math.PR].


\bibitem{OOTol2}
T.~Oh, M.~Okamoto, L.~Tolomeo,
{\it Stochastic quantization of the $\Phi^3_3$-model}, 
arXiv:2108.06777 [math.PR].

\bibitem{OOTz}
T.~Oh, M.~Okamoto, N.~Tzvetkov,
{\it  Uniqueness and non-uniqueness of the Gaussian free field evolution under the two-dimensional Wick ordered cubic wave equation}, preprint.




%


\bibitem{OPTz}
T.~Oh, O.~Pocovnicu,  N.~Tzvetkov,
{\it Probabilistic local well-posedness of the cubic nonlinear wave equation in negative Sobolev spaces},
 to appear in Ann. Inst. Fourier (Grenoble). 

\bibitem{OPW}
T.~Oh, O.~Pocovnicu,  Y.~Wang,
{\it On the stochastic nonlinear Schr\"odinger equations with non-smooth additive noise},
Kyoto J. Math. 60 (2020), no. 4, 1227--1243. 


\bibitem{ORTz}
T.~Oh, T.~Robert, N.~Tzvetkov, 
 {\it Stochastic nonlinear wave dynamics on compact surfaces}, 
arXiv:1904.05277 [math.AP].

\bibitem{ORSW}
T.~Oh, T.~Robert, P.~Sosoe, Y.~Wang, 
{\it On the two-dimensional hyperbolic 
stochastic sine-Gordon equation}, 
 Stoch. Partial Differ. Equ. Anal. Comput. 9 (2021), no. 1, 1--32.


\bibitem{ORSW2}
T.~Oh, T.~Robert, P.~Sosoe, Y.~Wang, 
{\it Invariant Gibbs dynamics for the dynamical sine-Gordon model}, 
 Proc. Roy. Soc. Edinburgh Sect. A 
 151 (2021), no. 5, 1450--1466. 



\bibitem{ORW}
T.~Oh, T.~Robert, Y.~Wang, 
{\it On the parabolic and hyperbolic Liouville equations}, 
Comm. Math. Phys. 387 (2021), no. 3 1281--1351.

\bibitem{OTh2}
T.~Oh, L.~Thomann, 
{\it Invariant Gibbs measure for 
the 2-$d$ defocusing nonlinear wave  equations}, 
Ann. Fac. Sci. Toulouse Math.
29 (2020), no. 1, 1--26. 








\bibitem{Poc}
O.~Pocovnicu,
{\it Probabilistic  global well-posedness of the energy-critical defocusing  cubic nonlinear
wave equations on $\R^4$}, 
 J. Eur. Math. Soc. (JEMS) 19 (2017), 2321--2375. 



\bibitem{Rich}
G.~Richards, 
{\it Invariance of the Gibbs measure for the periodic quartic gKdV,}
 Ann. Inst. H. Poincar\'e Anal. Non Lin\'eaire 33 (2016), no. 3, 699--766. 

\bibitem{RSS}
S.~Ryang, T.~Saito, K.~Shigemoto, 
{\it Canonical stochastic quantization}, Progr. Theoret. Phys. 73 (1985),
no. 5, 1295--1298.



\bibitem{Seong}
K.~Seong, 
{\it 
Invariant Gibbs dynamics for the two-dimensional Zakharov-Yukawa system},
arXiv:2111.11195 [math.AP].

\bibitem{Shige}
I.~Shigekawa, 
{\it Stochastic analysis,}
Translated from the 1998 Japanese original by the author. Translations of Mathematical Monographs, 224. Iwanami Series in Modern Mathematics. American Mathematical Society, Providence, RI, 2004. xii+182 pp.


\bibitem{Simon}
B.~Simon, 
{\it  The $P(\varphi)_2$ Euclidean (quantum) field theory,} Princeton Series in Physics. Princeton University Press, Princeton, N.J., 1974. xx+392 pp.


\bibitem{TAO}
T.~Tao, {\it Nonlinear dispersive equations. Local and global analysis,} CBMS Regional Conference Series in Mathematics, 106. Published for the Conference Board of the Mathematical Sciences, Washington, DC; by the American Mathematical Society, Providence, RI, 2006. xvi+373 pp.

\bibitem{TTz}
L.~Thomann, N.~Tzvetkov, 
{\it Gibbs measure for the periodic derivative nonlinear Schr\"odinger equation},
 Nonlinearity 23 (2010), no. 11, 2771--2791.




\bibitem{Tolo}
L.~Tolomeo, {\it Global well-posedness of the two-dimensional stochastic nonlinear wave equation on an unbounded domain},
Ann. Probab. 49 (2021), no. 3, 1402--1426,


\end{thebibliography}
\end{document}